\newcommand{\journal}[1]{}
\newcommand{\WithTorelli}[1]{#1}
\newcommand{\WithoutTorelli}[1]{}
\newcommand{\hide}[1]{}
\numberwithin{equation}{section}
\theoremstyle{plain}
\newtheorem{thm}{Theorem}[section]
\newtheorem{prop}[thm]{Proposition}
\newtheorem{claim}[thm]{Claim}
\newtheorem{conj}[thm]{Conjecture}
\newtheorem{cor}[thm]{Corollary}
\newtheorem{lem}[thm]{Lemma}
\newtheorem{observation}[thm]{Observation}
\theoremstyle{definition}
\newtheorem{defi}[thm]{Definition}
\newtheorem{question}[thm]{Question}
\newtheorem{example}[thm]{Example}
\newtheorem{rem}[thm]{Remark}
\newcommand{\C}{{\mathcal C}}
\newcommand{\E}{{\mathcal E}}
\newcommand{\F}{{\mathcal F}}
\newcommand{\G}{{\mathcal G}}
\newcommand{\IC}{{\mathcal I}{\mathcal C}}
\newcommand{\K}{{\mathcal K}}
\newcommand{\BK}{{\mathcal B}{\mathcal K}}
\newcommand{\LB}{{\mathcal L}}
\newcommand{\M}{{\mathcal M}}
\newcommand{\PP}{{\mathbb P}}
\newcommand{\X}{{\mathcal X}}
\newcommand{\Y}{{\mathcal Y}}
\newcommand{\Z}{{\mathcal Z}}
\newcommand{\RealNumbers}{{\mathbb R}}
\newcommand{\Integers}{{\mathbb Z}}
\newcommand{\ComplexNumbers}{{\mathbb C}}
\newcommand{\RationalNumbers}{{\mathbb Q}}
\newcommand{\linsys}[1]{{\mid}#1{\mid}}
\newcommand{\RightArrowOf}[1]{\stackrel{#1}{\rightarrow}}
\newcommand{\LongRightArrowOf}[1]{\stackrel{#1}{\longrightarrow}}
\newcommand{\StructureSheaf}[1]{{\mathcal O}_{#1}}
\newcommand{\EndProof}{\hfill  $\Box$}
\newcommand{\restricted}[2]{#1_{\mid_{#2}}}
\newcommand{\rank}{{\rm rank}}
\newcommand{\Pic}{{\rm Pic}}
\newcommand{\Pef}{{\rm Pef}}
\newcommand{\Gram}[2]{
\left(\begin{array}{cc}
(#1,#1)&(#1,#2)
\\
(#1,#2)&(#2,#2)
\end{array}\right)
}
\renewcommand{\div}{{\rm div}}
\newcommand{\Ext}{{\rm Ext}}
\newcommand{\Hom}{{\rm Hom}}
\newcommand{\End}{{\rm End}}
\newcommand{\MV}{{\mathcal M}{\mathcal V}}
\newcommand{\Abs}[1]{|\!#1\!|}
\newcommand{\Wedge}[1]{\stackrel{#1}{\wedge}}
\newcommand{\Choose}[2]{\left(\!\!\begin{array}{c}#1\\#2\end{array}\!\!\right)}
\begin{document}
\title[Prime exceptional divisors]
{Prime exceptional divisors on holomorphic symplectic varieties
and monodromy-reflections}
\author{Eyal Markman}

\address{Department of Mathematics and Statistics, 
University of Massachusetts, Amherst, MA 01003}
\email{markman@math.umass.edu}

\subjclass[2010]{Primary 14D05, 14J60; Secondary 14J28, 53C26, 14C20}

\begin{abstract}
Let $X$ be a projective irreducible holomorphic symplectic
manifold. The second integral cohomology of $X$ is a lattice
with respect to the Beauville-Bogomolov pairing. 
A divisor $E$ on $X$ is called a {\em prime exceptional divisor}, 
if $E$ is reduced and irreducible and of negative Beauville-Bogomolov degree.

Let $E$ be a prime exceptional divisor on $X$. 
We first observe that associated to $E$ is 
a monodromy involution of the integral cohomology 
$H^*(X,\Integers)$, which acts on the second cohomology lattice
as the reflection by the cohomology class $[E]$ of $E$ 
(Theorem \ref{cor-introduction}). 

We then specialize to the case that
$X$ is deformation equivalent to the Hilbert scheme of length $n$ 
zero-dimensional subschemes of a $K3$ surface, $n\geq 2$.
We determine the set of classes of exceptional divisors on $X$
(Theorem  
\ref{conj-exceptional-line-bundles}).
This leads to a determination of the closure of the movable cone of $X$.
%A cohomology class $e\in H^2(X,\Integers)$ is called
%{\em monodromy-reflective}, if $e$ is 
%a primitive class of Beauville-Bogomolov degree $-2$ or $2-2n$, and  
%the class $(e,\bullet)$ in the dual lattice $H^2(X,\Integers)^*$
%is divisible by $(e,e)/2$. 
%Let $E$ be a prime exceptional divisor on $X$. 
%We show that $[E]=e$ or $[E]=2e$, for
%a monodromy-reflective class $e$ (Theorem \ref{thm-2}). 
%
%A primitive class $e\in H^2(X,\Integers)$ is 
%{\em $\RationalNumbers$-effective}, if some non-zero integral multiple of $e$
%is the class of an effective divisor. There are many examples 
%of monodromy-reflective classes $e$ of Hodge type $(1,1)$, which 
%are not $\RationalNumbers$-effective. Instead, the reflection by $e$ 
%is induced by a birational involution of $X$.
%We introduce monodromy-invariants of monodromy-reflective classes $e$, 
%and use them to formulate a numerical 
%criterion for the $\RationalNumbers$-effectiveness of $e$
%(Theorem  
%\ref{conj-exceptional-line-bundles}). 
\end{abstract}

\maketitle

\centerline{\sf In memoriam Professor Masaki Maruyama}

%\newpage

\tableofcontents 
%***************************************************************
%
%***************************************************************
\section{Introduction}
An {\em irreducible holomorphic symplectic manifold} is a simply connected
compact K\"{a}hler manifold $X$, such that $H^0(X,\Omega^2_X)$ is generated
by an everywhere non-degenerate holomorphic two-form 
\cite{beauville,huybrects-basic-results}. 
The dimension of $X$ is even, say $2n$. 
The second integral cohomology of $X$ is a lattice
with respect to the Beauville-Bogomolov pairing \cite{beauville}. 
A divisor $E$ on $X$ is called a {\em prime exceptional divisor}
if $E$ is reduced and irreducible and of negative Beauville-Bogomolov degree
\cite{boucksom}.
% (see also section \ref{sec-monodromy-reflection} below).

%**********************************************
%
%**********************************************
\subsection{A prime exceptional divisor is monodromy-reflective}
When $\dim(X)=2$, then $X$ is a K\"{a}hler $K3$ surface. 
Let $E$ be a prime divisor of negative degree on $X$. Then $E$ is necessarily
a smooth rational curve. Its degree is thus $-2$. $E$ may be
contracted, resulting in a surface $Y$ with an ordinary double point
(\cite{BHPV}, Ch. III).
A class $x\in H^2(X,\Integers)$ is {\em primitive} if it is not  
a multiple of another integral class by an integer larger than $1$.
Let $c\in H^2(X,\Integers)$ be a primitive class of negative degree.
Then $c$ has degree $-2$ if and only if the reflection
$R_c:H^2(X,\RationalNumbers)\rightarrow H^2(X,\RationalNumbers)$, 
given by
\begin{equation}
\label{eq-reflection-R-c}
R_c(x) \ \ = \ \ x - \frac{2(x,c)}{(c,c)}c,
\end{equation}
has integral values, since the lattice $H^2(X,\Integers)$ is even and
unimodular.

Druel recently established the birational-contractibility of a 
prime exceptional divisor $E$ on a projective irreducible holomorphic
symplectic manifold $X$ of arbitrary dimension $2n$. 
There exists a sequence of flops of $X$, resulting in a 
projective irreducible holomorphic
symplectic manifold $X'$, and a projective birational morphism
$\pi:X'\rightarrow Y$ onto a normal projective variety $Y$,
such that the exceptional divisor $E'\subset X'$ of $\pi$ 
is the strict transform of $E$
(see \cite{druel} and Proposition \ref{prop-druel} below).
The result relies on the work of several authors, in particular,  
on Boucksom's work on the divisorial Zariski decomposition and on
recent results in the minimal model program \cite{boucksom,BCHM}. 

Let $E$ be a prime exceptional divisor on a projective irreducible holomorphic 
symplectic manifold $X$. 
Let $c$ be the class of $E$ in $H^2(X,\Integers)$ and 
consider the reflection 
$R_c:H^2(X,\RationalNumbers)\rightarrow H^2(X,\RationalNumbers)$, 
given by (\ref{eq-reflection-R-c}).
%\[
%R_c(x) \ \ = \ \ x - \frac{2(x,c)}{(c,c)}c.
%\]
Building on Druel's result, we prove the following statement.

\begin{thm}
\label{cor-introduction}
The reflection $R_c$ is a monodromy operator.
In particular, $R_c$ is an integral isometry.
Furthermore, $c$ is either a primitive class or two times a 
primitive class.
\end{thm}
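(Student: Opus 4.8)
The plan is to use Druel's contraction theorem to reduce to the case of a birational morphism, and then to exhibit $R_c$ as the Picard--Lefschetz monodromy of a one-parameter smoothing of the contracted variety.

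First I would reduce to a birational contraction. By Proposition \ref{prop-druel} there is a sequence of flops turning $X$ into an irreducible holomorphic symplectic manifold $X'$ on which the strict transform $E'$ of $E$ is contracted by a projective birational morphism $\pi\colon X'\to Y$. A birational map between irreducible holomorphic symplectic manifolds is an isomorphism in codimension one, so it induces an isometry $g\colon H^2(X,\Integers)\to H^2(X',\Integers)$ which is a parallel-transport operator and which sends $[E]$ to $[E']$. Since $R_{g(v)}=g\,R_v\,g^{-1}$ and conjugation by $g$ carries monodromy operators of $X'$ to monodromy operators of $X$, it suffices to prove that $R_{[E']}$ is a monodromy operator of $X'$ and that $[E']$ is primitive or twice primitive. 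I therefore replace $X$ by $X'$ and assume from now on that $E$ is contracted by $\pi\colon X\to Y$.

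Next I would extract the local structure and deduce the divisibility statement. The morphism $\pi$ is crepant and divisorial; by the known local structure of such contractions the image $Z=\pi(E)$ has codimension two in $Y$, the generic fibre of $E\to Z$ is a single smooth rational curve $C$, and a generic transverse slice $\tilde S\to S$ to $Z$ is the minimal resolution of a du Val singularity. Having a single exceptional curve, this singularity is of type $A_1$, so $C$ is a $(-2)$-curve in the smooth surface $\tilde S$, and $\mathcal O_X(E)|_{\tilde S}=\mathcal O_{\tilde S}(C)$. Consequently the integer $E\cdot C=\deg\mathcal O_X(E)|_C$ equals $C^2_{\tilde S}=-2$, where $[C]\in H_2(X,\Integers)$ is an integral class. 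Writing $c=[E]=m\,e$ with $e$ primitive and $m\ge 1$ gives $m\,(e\cdot C)=-2$ with $e\cdot C\in\Integers$, forcing $m\mid 2$, that is $c=e$ or $c=2e$. This proves the last assertion of the theorem.

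Finally I would produce the reflection as genuine monodromy. Because the transverse singularities are of type $A_1$, the symplectic variety $Y$ admits a smoothing $f\colon\mathcal Y\to\Delta$ whose general fibre $\mathcal Y_t$ is a smooth irreducible holomorphic symplectic manifold deformation equivalent to $X$. The monodromy $T$ of the local system $R^2f_*\Integers$ over $\Delta^{*}$ is, by the local invariant cycle theorem together with the transverse $A_1$ Picard--Lefschetz formula, equal to $-1$ on the line spanned by the vanishing class $v$ and to $+1$ on its Beauville--Bogomolov orthogonal complement; any isometry of this form is the reflection $R_v$, \emph{independently} of the value of $(v,v)$. Since the vanishing class is supported on the contracted locus it is proportional to $c=[E]$, whence $T=R_c$. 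Transporting $T$ back to $X$ exhibits $R_c$ as a monodromy operator, and a monodromy operator is in particular an integral isometry. The hard part is exactly this last paragraph: because the singular locus $Z$ is positive dimensional I must assemble the transverse vanishing cycles over $Z$ into a single rational class and show the global monodromy is one reflection rather than a product of many. The structure group of the resulting local system is $\Aut(\Integers v)=\{\pm 1\}$, and its two cases match precisely the dichotomy $c=e$ versus $c=2e$ (the latter being the Hilbert--Chow situation); controlling this $\Integers/2$ monodromy, and guaranteeing a smoothing whose general fibre is deformation equivalent to $X$, are the main technical obstacles.
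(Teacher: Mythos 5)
Your first two steps track the paper's argument: the reduction via Druel's contraction and parallel transport along the birational map $X\dashrightarrow X'$ is exactly Proposition \ref{prop-druel} combined with Huybrechts' correspondence argument, and the divisibility of $[E]$ is indeed extracted from pairing $[E]$ against the class of the generic fiber of $E'\to Y$. But your final paragraph is the entire content of the theorem, and you have left it as an acknowledged sketch. The paper does not run a one-parameter Picard--Lefschetz argument over a disc. Instead it invokes Namikawa's theorem (Theorem \ref{thm-namikawa}) that $Def(Y)$ is \emph{smooth}, that the generic fiber $Y_t$ of the semi-universal family is smooth and isomorphic to a deformation of $X'$, and that $f\colon Def(X')\to Def(Y)$ is a finite surjective map; it is then a branched Galois double cover whose $\Integers/2\Integers$ Galois group acts on $H^*(X',\Integers)$ by monodromy operators, the generator acting on $H^2$ as the reflection $R$ (this is the content of Lemmas 4.10 and 4.23 of \cite{markman-galois}, which the paper cites rather than reproves). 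Your proposed substitute --- ``local invariant cycle theorem plus transverse $A_1$ Picard--Lefschetz'' --- is not a routine application of either: the singular locus of $Y$ is positive-dimensional, you are computing monodromy on $H^2$ of a $2n$-fold rather than on middle cohomology, and you would still need to know that the vanishing part of $H^2$ is exactly the line $\RationalNumbers c$ and that the global monodromy is a single involution rather than a product of commuting transvections along components of the discriminant. You name these as ``technical obstacles,'' but they are precisely the theorem; without the input of \cite{namikawa} and \cite{markman-galois} (or an equivalent analysis of $Def(Y)$) the proof is not complete.

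There is also a concrete error in your second step. You assert that the generic fiber of $E\to Z$ is a single smooth rational curve and conclude the transverse singularity is $A_1$. This is false in general: since $E'$ is only assumed irreducible, the transverse du Val singularity can be of type $A_2$ with the two exceptional $(-2)$-curves interchanged by the monodromy of $\pi_1(\Sigma\setminus\Sigma_0)$, in which case the generic fiber is a union of \emph{two} homologous rational curves meeting at a point (the paper records exactly this dichotomy in Corollary \ref{cor-1}\,(\ref{item-integrality}), and the ruling out of all other ADE types is an argument about transitive Dynkin-diagram automorphisms, not an assertion). Your divisibility conclusion happens to survive, because the class of the full fiber still pairs to $-2$ with $[E]$ --- indeed in the $A_2$ case a single component $C_1$ satisfies $E\cdot C_1=-1$, forcing $[E]$ primitive --- but as written your argument silently excludes a case that genuinely occurs (and which is needed later for Lemma \ref{lemma-divisibility}).
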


See Corollary \ref{cor-1} for a more detailed statement and a proof. 
The reflection $R_c$ arises as a monodromy operator as follows.
Let $Def(Y)$ be the Kuranishi deformation space of $Y$ and 
$\bar{\psi}:\Y\rightarrow Def(Y)$ the semi-universal family.
%Namikawa proved that 
Then $Def(Y)$ is smooth, as is the fiber $Y_t$ of $\bar{\psi}$, 
over a generic point $t\in Def(Y)$,  
and the smooth fiber $Y_t$ is deformation equivalent to $X$ \cite{namikawa}. 
Let $U\subset Def(Y)$ be the complement of
the discriminant locus. $R_c$ is exhibited as a monodromy operator of
the local system $R^2_{\bar{\psi}_*}\Integers$ over $U$.

%****************************************************************
%
%****************************************************************
\subsection{Prime-exceptional divisors in the $K3^{[n]}$-type case}
Theorem \ref{cor-introduction} imposes 
a rather strong numerical condition on a class $c$ to be 
the class of a prime exceptional divisor.
%for a class $e$ in $H^2(X,\Integers)$ to be prime exceptional, in the sense of
%Definition \ref{def-exceptional}. 
We get, for example, the following Theorem.
Assume that $X$ is deformation equivalent to the
Hilbert scheme $S^{[n]}$ of length $n$ subschemes of a $K3$ surface $S$.
We will abbreviate this statement by saying that $X$ is of
{\em $K3^{[n]}$-type.} Assume that $n \geq 2$ and $X$ is projective.

\begin{thm}\label{thm-2}
Let $e$ be a primitive class in $H^2(X,\Integers)$, with negative
Beauville-Bogomolov degree $(e,e)<0$,
such that some integer multiple of $e$
is the class of an irreducible divisor $E$. Then
$(e,e)= -2$ or $(e,e)=2-2n$. 
If $(e,e)=2-2n$, then the class
$(e,\bullet)$ in $H^2(X,\Integers)^*$ is divisible by $n-1$.
\end{thm}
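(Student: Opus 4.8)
The plan is to combine the monodromy-reflectivity result (Theorem \ref{cor-introduction}) with the explicit structure of the second cohomology lattice of a $K3^{[n]}$-type variety. Recall that if $X$ is of $K3^{[n]}$-type, then $H^2(X,\Integers)$ with the Beauville-Bogomolov pairing is isometric to the lattice $\Lambda := U^{\oplus 3} \oplus E_8(-1)^{\oplus 2} \oplus \langle 2-2n\rangle$, where $U$ is the hyperbolic plane and $\langle 2-2n\rangle$ is the rank-one lattice generated by a class $\delta$ with $(\delta,\delta) = 2-2n$. The key arithmetic invariant of this lattice is its discriminant group $\Lambda^*/\Lambda$, which is cyclic of order $2n-2$, generated by the image of $\delta/(2n-2)$.

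Write $c = me$ for the class of the irreducible divisor $E$, where by Theorem \ref{cor-introduction} we know $m \in \{1,2\}$ and that $R_c = R_e$ (since reflection in a class depends only on its ray) is an integral isometry of $\Lambda$. First I would extract the divisibility consequence: $R_e(x) = x - \frac{2(x,e)}{(e,e)}e$ maps $\Lambda$ into $\Lambda$ if and only if $(e,e)$ divides $2(x,e)$ for every $x \in \Lambda$, i.e. $\frac{(e,e)}{2}$ divides the element $(e,\bullet) \in \Lambda^*$. Let $\div(e)$ denote the divisibility of $e$, the positive generator of the ideal $\{(e,x) : x \in \Lambda\} \subseteq \Integers$; then integrality of $R_e$ forces $\frac{(e,e)}{2} \mid \div(e)$. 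Since $e$ is primitive, the image of $e/\div(e)$ is a nonzero element of the discriminant group $\Lambda^*/\Lambda$, so $\div(e)$ divides the exponent $2n-2$ of that group. Combined with $(e,e) < 0$ this already constrains $(e,e)$ severely.

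Next I would run the arithmetic. Writing $d = \div(e)$, the integrality condition gives $\frac{(e,e)}{2} \mid d$ and $d \mid (2n-2)$. Moreover, on the discriminant group the generator has order $2n-2$, and one computes the self-pairing of $e/d$ modulo $\Integers$: because $(e,e)/d^2 \equiv (e/d, e/d)$ must be compatible with the quadratic form on $\Lambda^*/\Lambda$, which takes values in $\frac{1}{2n-2}\Integers / 2\Integers$ generated by the class $\frac{1}{2-2n}$. The upshot is a short list of possibilities for the pair $((e,e), d)$. The dominant cases are $d=1$, forcing $\frac{(e,e)}{2} \mid 1$ and hence $(e,e) = -2$; and $d = n-1$, which is the other divisor making $\frac{(e,e)}{2}$ split the constraint, forcing $(e,e) = 2-2n$ with $(e,\bullet)$ divisible by $d = n-1$. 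I expect the main obstacle to be ruling out the remaining intermediate divisors of $2n-2$: for a general $d \mid (2n-2)$ one must check that the self-pairing value $(e,e)/2$ cannot simultaneously satisfy the divisibility $\frac{(e,e)}{2}\mid d$, the negativity $(e,e) < 0$, and the congruence imposed by the discriminant form, and this requires a careful elementary number-theoretic case analysis rather than a single clean inequality.

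The cleanest way to organize the obstacle is to use the isometry class of $\Lambda$ directly: every primitive $e$ with prescribed $(e,e)$ and $\div(e)$ has, up to the action of the (monodromy) isometry group, a normal form, essentially $e = a u + b\delta$ with $u \in U$ and suitable integers $a,b$. In such coordinates $\div(e) = \gcd(a, (2n-2)b)$ and $(e,e) = 2ab' + (2-2n)b^2$ for the appropriate hyperbolic splitting, so the constraint $\frac{(e,e)}{2}\mid\div(e)$ becomes an explicit divisibility among $a, b, n-1$. Reducing modulo small primes dividing $n-1$ then kills all cases except the two asserted ones. Thus the proof is: invoke Theorem \ref{cor-introduction} to get integrality of $R_e$; translate it to $\frac{(e,e)}{2}\mid\div(e)\mid(2n-2)$; and finish with the lattice-theoretic case analysis, where the divisibility conclusion $\div(e) = n-1$ in the degree $2-2n$ case falls out automatically since $\frac{(e,e)}{2} = n-1$ must divide $\div(e)$ which in turn divides $2n-2 = 2(n-1)$, leaving $\div(e) \in \{n-1, 2n-2\}$ and, by primitivity together with the structure of the discriminant form, exactly $n-1$.
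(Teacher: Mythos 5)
There is a genuine gap. Your argument extracts from Theorem \ref{cor-introduction} only the \emph{integrality} of the reflection $R_e$, translates it into the divisibility chain $\frac{(e,e)}{2}\mid\div(e)\mid(2n-2)$, and then asserts that a case analysis using the discriminant form of $\Lambda$ eliminates everything except $(e,e)=-2$ and $(e,e)=2-2n$. This last step fails: integrality of $R_e$ is strictly weaker than the conclusion, and the paper itself points this out immediately after the statement of Theorem \ref{thm-2} --- there are classes $e$ on $X$ of $K3^{[n]}$-type with $2e$ effective, $R_e$ an integral reflection, and $2-2n<(e,e)<-2$ (\cite{markman-constraints}, Example 4.8). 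Concretely, on $S^{[7]}$ take $e=2c_1(L)+\delta$ with $(L,L)=2$: then $(e,e)=-4$, $\div(e)=2$, so $\frac{(e,e)}{2}=-2$ divides $\div(e)=2$ which divides $2n-2=12$, and the compatibility of $(e/\div(e),e/\div(e))$ with the discriminant quadratic form is automatic for any primitive lattice vector (it imposes no constraint at all). Every condition you list is satisfied, yet $(e,e)\notin\{-2,2-2n\}$. So ``reducing modulo small primes dividing $n-1$'' cannot kill this case.

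The missing ingredient is the full strength of $R_e$ being a \emph{monodromy operator}, exploited via Theorem \ref{thm-monodromy-constraints}: $Mon^2(X)$ is exactly the subgroup of $O_+H^2(X,\Integers)$ acting by multiplication by $+1$ or $-1$ on the discriminant group $H^2(X,\Integers)^*/H^2(X,\Integers)\cong\Integers/(2n-2)\Integers$. In the $S^{[7]}$ example above, $R_e$ acts on the generator $\delta/(2n-2)$ by multiplication by $7\bmod 12$, which is neither $1$ nor $-1$, and that is why it is excluded. The paper's proof of Proposition \ref{prop-reflection-by-a-numerically-prime-exceptional-is-in-Mon} runs your kind of lattice computation (writing $e=xa+y\delta$ in the splitting $H^2(S,\Integers)\oplus\Integers\delta$ and using integrality), but the decisive condition is the congruence $y'\equiv\pm 1\pmod{2n-2}$ coming from the $\pm 1$ action on the discriminant group; it is this congruence, not the divisibility chain or the discriminant form, that eliminates the intermediate degrees. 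To repair your argument you must import Theorem \ref{thm-monodromy-constraints} and impose the $\pm 1$ condition on the induced automorphism of the discriminant group, and then redo the elementary number theory; otherwise the conclusion is simply false for the data you retain.
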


%Theorem \ref{thm-2} is proven in section \ref{sec-degrees-2-and-2-2n}.
The Theorem is related to Theorem 22 in the beautiful paper 
\cite{hassett-tschinkel}. In particular, the case of fourfolds
is settled in that paper. 
The hypothesis that the divisor $E$ is irreducible is necessary.
There exist examples of pairs $(X,e)$, with $X$ of $K3^{[n]}$-type,
$e\in H^2(X,\Integers)$, such that $2e$ is effective, 
the reflection by $e$ is an integral reflection of $H^2(X,\Integers)$, 
but $2-2n<(e,e)<-2$
%(see Example \ref{example-effective-non-prime-divisors-of-negative-degree}).
 (\cite{markman-constraints}, Example 4.8). 

\begin{defi}
\label{def-Mon-2}
An isometry $g$ of 
$H^2(X,\Integers)$ is called a {\em monodromy operator}, if there exists a 
family $\X \rightarrow T$ (which may depend on $g$) 
of irreducible holomorphic symplectic manifolds, having $X$ as a fiber
over a point $t_0\in T$, 
and such that $g$ belongs to the image of $\pi_1(T,t_0)$ under
the monodromy representation. 
The {\em monodromy group} $Mon^2(X)$ of $X$ is the subgroup 
of $O[H^2(X,\Integers)]$ generated by all the monodromy operators. 
\end{defi}

\begin{defi}
\label{def-monodromy-reflective}
\begin{enumerate}
%\item
%A class $e\in H^2(X,\Integers)$ is {\em primitive}, if it is not  
%a multiple of another integral class by an integer larger than $1$.
\item
A class $e\in H^2(X,\Integers)$ is said to be {\em monodromy-reflective}, if
$e$ is primitive, and the reflection $R_e(x):=x-\frac{2(x,e)}{(e,e)}e$, 
with respect to the class $e$, 
belongs to $Mon^2(X)$.
\item
A line bundle $L$ is said to be {\em monodromy-reflective}, if 
the class $c_1(L)$ is.
\end{enumerate}
\end{defi}

Theorem \ref{thm-2}
is an immediate consequence of Theorem \ref{cor-introduction} and 
the following characterization of 
monodromy-reflective line bundles on $X$ of $K3^{[n]}$-type.

\begin{prop}
\label{prop-reflection-by-a-numerically-prime-exceptional-is-in-Mon}
Let $e\in H^2(X,\Integers)$ be a primitive class of
negative degree $(e,e)$. Then 
the reflection $R_e$ belongs to $Mon^2(X)$, if and only if 
$e$ has one of the following two properties.
\begin{enumerate}
\item 
$(e,e)=-2$, or
\item
$(e,e)=2-2n$, and $n-1$ divides the class 
$(e,\bullet)\in H^2(X,\Integers)^*.$
\end{enumerate}
\end{prop}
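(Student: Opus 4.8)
The plan is to reduce the assertion to the known description of the monodromy group of a $K3^{[n]}$-type manifold and then to an explicit computation on the discriminant group. Write $L:=H^2(X,\Integers)$, let $A_L:=L^*/L$ be the discriminant group, and recall that for $X$ of $K3^{[n]}$-type $A_L$ is cyclic of order $2n-2$ and that $Mon^2(X)$ consists precisely of the isometries of $L$ that preserve the orientation of the positive cone and act on $A_L$ as multiplication by $\pm 1$. I would first dispose of the orientation condition: since $(e,e)<0$, the hyperplane $e^{\perp}$ has signature $(3,19)$ and therefore contains a maximal positive-definite subspace that $R_e$ fixes pointwise, so $R_e$ preserves the orientation automatically. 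The problem thus reduces to deciding, for a primitive class $e$ of negative degree, when $R_e$ is an integral isometry whose induced action on $A_L$ is $\pm\mathrm{Id}$.

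Next I would pin down the integrality constraint. Let $d:=\div(e)$, so that $e^{*}:=e/d$ lies in $L^{*}$, is primitive there, and $d\mid(e,e)$. Demanding $R_e(L)\subseteq L$, i.e. $(e,e)\mid 2(x,e)$ for every $x\in L$, is equivalent to $(e,e)\mid 2d$; combined with $d\mid(e,e)$ this forces $(e,e)=-d$ or $(e,e)=-2d$. Since also $d\mid 2n-2$, the two numerical families of the Proposition are already in sight, and only the sign of the discriminant action remains to be determined.

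The core of the argument is the action of $R_e$ on $A_L$. A one-line substitution gives $R_e(e^{*})=-e^{*}$, so $R_e$ acts as $-1$ on the cyclic subgroup $\langle\overline{e^{*}}\rangle\subseteq A_L$, which has order exactly $d$; writing $R_e|_{A_L}$ as multiplication by a unit $u$ of $\Integers/(2n-2)$ this already yields $u\equiv-1\pmod d$, so $u=+1$ can occur only for $d\mid 2$. To determine $u$ modulo $2n-2$ I would fix a generator $\sigma$ of $A_L$ with discriminant value $q(\sigma)\equiv-\tfrac{1}{2n-2}\pmod{2\Integers}$, write $\overline{e^{*}}=\tfrac{2n-2}{d}\,k_0\,\sigma$ with $\gcd(k_0,d)=1$, and evaluate $R_e(\sigma)$ by expressing $(y,e)\bmod d$ through the pairing $b(\sigma,\overline{e^{*}})$. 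This exhibits $u$ as an explicit function of $d$ and $k_0^{2}$. The decisive input is then the discriminant \emph{quadratic} form: the identity $q(\overline{e^{*}})\equiv(e,e)/d^{2}\pmod{2\Integers}$ fixes $k_0^{2}$ modulo $2n-2$ (not merely modulo $n-1$), and feeding this back into the formula for $u$ shows that $u=\pm 1$ holds exactly when $d\in\{2,\,2n-2\}$ in the case $(e,e)=-d$ and when $d\in\{1,\,n-1\}$ in the case $(e,e)=-2d$. In all four surviving situations $(e,e)\in\{-2,\,2-2n\}$, and for $(e,e)=2-2n$ the surviving divisibilities are exactly those with $n-1\mid d$; since $(e,\bullet)=d\cdot(\text{a primitive element of }L^{*})$, the condition $n-1\mid d$ is the same as $n-1$ dividing $(e,\bullet)$, which is precisely condition (2).

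The step I expect to be the main obstacle is exactly this quadratic refinement. Working only with the bilinear discriminant form (values in $\RationalNumbers/\Integers$) determines $u$ only modulo $n-1$, leaving the ambiguity between $u\equiv-1$ and $u\equiv n-2$; for $n>3$ the latter is not $\pm 1$, so a careless argument would either wrongly admit a class with $(e,e)=2-2n$ of divisibility $n-1$ whose reflection lies outside $Mon^2(X)$, or wrongly exclude one that belongs to it. Sharpening the congruence on $k_0^{2}$ from modulus $n-1$ to modulus $2n-2$ by means of the $\RationalNumbers/2\Integers$-valued form is what removes this ambiguity and isolates exactly the listed classes. I would finally remark that the equivalence is established class-by-class and uses no realisability input, so both implications of the Proposition follow at once from the single determination of $u$.
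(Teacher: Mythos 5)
Your proof is correct, and it rests on the same external pillar as the paper's argument --- the identification of $Mon^2(X)$ with the orientation-preserving isometries acting as $\pm 1$ on the discriminant group (Theorem \ref{thm-monodromy-constraints}) --- but the execution is genuinely different. The paper first deforms $(X,e)$ so that $X=S^{[n]}$, writes $e=xa+y\delta$ in the splitting $H^2(S,\Integers)\oplus\Integers\delta$, extracts the integrality congruences by applying $R_e$ to test classes, and finishes with a gcd analysis; for sufficiency it quotes \cite{markman-monodromy-I}, Theorem 1.6, in the $(e,e)=-2$ case and computes the discriminant action only when $(e,e)=2-2n$ and $\div(e,\bullet)=n-1$. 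You instead work intrinsically with $d:=\div(e,\bullet)$, the cyclic discriminant group, and the $\RationalNumbers/2\Integers$-valued quadratic form, determining the multiplier $u$ once and reading off all admissible pairs $((e,e),d)$ simultaneously, with no deformation to the Hilbert scheme and no case outsourced to a citation. I checked that your sketch closes: setting $N:=2n-2$, $m:=N/d$, and $\overline{e/d}=mk_0\sigma$, one finds $u\equiv 1-2mk_0^2 \pmod{N}$ when $(e,e)=-d$ and $u\equiv 1-mk_0^2 \pmod{N}$ when $(e,e)=-2d$, while the quadratic form gives $mk_0^2\equiv -(e,e)/d \pmod{2d}$; these congruences eliminate exactly the spurious cases (for instance $(e,e)=-d$ with $d=n-1$ dies by parity, since $2k_0^2\equiv 1$ modulo an even number is impossible, and $(e,e)=-2d$ with $d=2n-2$ dies because $k_0^2\equiv 2\pmod{4}$ is impossible), leaving $d\in\{2,2n-2\}$, respectively $d\in\{1,n-1\}$, precisely as you claim. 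Your diagnosis that the quadratic refinement is the crux is accurate: it is the coordinate-free counterpart of the paper's use of evenness, which enters Step 1 through $(e,e)=x^2(a,a)+(2-2n)y^2$ and Step 2 through equation (\ref{eq-t-square-1-equal-n-1-xi-xi-over-2}). What the paper's coordinates buy is explicitness (the same normalization is reused in later sections); what your formulation buys is uniformity over both implications and all divisibilities. In writing it up, the only points to make explicit are the standard facts that $\overline{e/d}$ has order exactly $d$ in the discriminant group (hence $d\mid 2n-2$) and the identity $q(\overline{e/d})\equiv (e,e)/d^2$ in $\RationalNumbers/2\Integers$, which is where the evenness of $H^2(X,\Integers)$ is used.
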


The proposition is proven in section \ref{sec-degrees-2-and-2-2n}.
A class $e\in H^{1,1}(X,\Integers)$ is said to be 
{\em $\RationalNumbers$-effective}, if some non-zero integer multiple of
$e$ is the class of an effective divisor. 
Examples of monodromy-reflective line bundles, 
which are not $\RationalNumbers$-effective, 
are exhibited in section \ref{sec-non-effective}. 

%***************************************************************
%
%***************************************************************
\subsection{A classification of monodromy-reflective line bundles}
\hspace{1ex}\\
When $X$ is a $K3$ surface, a monodromy-reflective line bundle 
$L$ has degree $-2$,
%More generally, given a line bundle $L$ of degree $-2$ on $X$, then 
and precisely one of $L$ or $L^{-1}$ is isomorphic to 
$\StructureSheaf{X}(E)$, where $E$ is an effective divisor
(see \cite{BHPV}, chapter VIII, Proposition 3.6). 
If $\Pic(X)$ is cyclic
%\footnote{If the Picard group has higher rank, 
%$E$ may be exceptional, i.e., a tree of smooth rational 
%curves, whose dual graph is a Dynkin diagram of $ADE$-type. 
%On the other hand, $E$ may have a non-trivial Zariski decomposition
%(Definition \ref{def-divizorial-Zariski-decomposition}
%and Example \ref{ex-being-exceptional-is-not-a-closed-property}),
%a case we wish to avoid.}, 
then $E$ is necessarily a smooth rational curve. 
We may deform to this case upon deforming 
the pair $(X,L)$ to a nearby deformation equivalent pair.
%See Example \ref{ex-being-exceptional-is-not-a-closed-property}
%for a discussion of some limit cases with higher rank Picard groups.

Monodromy-reflective line bundles of degree $2-2n$, 
over $X$ of $K3^{[n]}$-type, need 
not be $\RationalNumbers$-effective if $n> 1$. 
Whether the line bundle is or is not $\RationalNumbers$-effective
depends on a monodromy-invariant defined in Proposition
\ref{prop-definition-of-rs} below.
The definition depends on the following Theorem.

The topological $K$-group $K(S)$ of a $K3$ surface $S$, 
endowed with the {\em Mukai pairing} $(v,w):=-\chi(v^\vee\otimes w)$,
is called the {\em Mukai lattice}. $K(S)$ is a rank $24$ 
even unimodular lattice isometric to the orthogonal direct sum
$\widetilde{\Lambda}:=E_8(-1)^{\oplus 2}\oplus U^{\oplus 4}$,
where $E_8(-1)$ is the negative definite $E_8$ lattice and
$U$ is the rank $2$ lattice with Gram matrix
{\scriptsize
$\left(\begin{array}{cc}
0 & 1\\
1 & 0
\end{array}
\right)$}.

Let $\Lambda:=E_8(-1)^{\oplus 2}\oplus U^{\oplus 3}\oplus\Integers\delta$,
with $(\delta,\delta)=2-2n$. 
Then $H^2(X,\Integers)$ is isometric to $\Lambda$, 
for any $X$ of $K3^{[n]}$-type, $n>1$ \cite{beauville}.
Let $O(\Lambda,\widetilde{\Lambda})$ be the set of primitive isometric
embeddings $\iota:\Lambda\hookrightarrow\widetilde{\Lambda}$.
$O(\widetilde{\Lambda})$ acts on $O(\Lambda,\widetilde{\Lambda})$ by
compositions. If $n-1$ is a prime power, then 
$O(\Lambda,\widetilde{\Lambda})$ consists of a single 
$O(\widetilde{\Lambda})$-orbit. 
The Euler number $\eta:=\eta(n-1)$ is the number of distinct primes
$p_1, \dots, p_\eta$ in the prime factorization 
$n-1=p_1^{e_1}\cdots p_\eta^{e_\eta}$, with positive integers $e_i$.
For $n>2$, there are $2^{\eta-1}$ distinct 
$O(\widetilde{\Lambda})$-orbits in $O(\Lambda,\widetilde{\Lambda})$ 
(see \cite{oguiso} or \cite{markman-constraints}, Lemma 4.3).

\begin{thm}
\label{thm-a-natural-orbit-of-embeddings-of-H-2-in-Mukai-lattice}
(\cite{markman-constraints}, Theorem 1.10).
An irreducible holomorphic symplectic manifold $X$ of $K3^{[n]}$-type, 
$n\geq 2$, comes with a natural choice of an $O(\widetilde{\Lambda})$-orbit
of primitive isometric embeddings of $H^2(X,\Integers)$ 
in $\widetilde{\Lambda}$. 
%(see also the definition of the refined period map in 
%equation (48) of \cite{markman-constraints}).
This orbit is monodromy-invariant, i.e., 
$\iota:H^2(X,\Integers)\hookrightarrow \widetilde{\Lambda}$ belongs to
this orbit, if and only if $\iota\circ g$ does, for all $g\in Mon^2(X)$. 
\end{thm}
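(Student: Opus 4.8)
The plan is to construct the orbit first for the Hilbert scheme, from its description as a moduli space of sheaves, and then to transport it to an arbitrary $X$ of $K3^{[n]}$-type by parallel transport; the monodromy-invariance of the orbit is, in the end, precisely the statement that this transport is well defined. \emph{Construction for the Hilbert scheme.} The Mukai vector $v_0=(1,0,1-n)\in K(S)$ of the ideal sheaf of a length-$n$ subscheme satisfies $(v_0,v_0)=2n-2>0$, and $S^{[n]}$ is the moduli space $M_{v_0}$ of such sheaves. A quasi-universal sheaf $\E$ on $S\times S^{[n]}$ defines, through Mukai's homomorphism (shown to be an isometric isomorphism by O'Grady and Yoshioka), an isometry $\theta\colon v_0^\perp \LongIsomRightArrow H^2(S^{[n]},\Integers)$, where $v_0^\perp$ denotes the orthogonal complement inside $K(S)$. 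Since $v_0$ is primitive with $(v_0,v_0)\neq 0$, the sublattice $v_0^\perp$ is primitive of rank $23$, and composing $\theta^{-1}$ with any isometric marking $K(S)\cong\widetilde{\Lambda}$ yields a primitive isometric embedding $\iota\colon H^2(S^{[n]},\Integers)\hookrightarrow\widetilde{\Lambda}$; the signatures match, both being $(3,20)$.

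\emph{Well-definedness of the orbit.} The embedding $\iota$ depends on the surface $S$, the polarization defining $M_{v_0}$, the twist of the quasi-universal family, and the marking $K(S)\cong\widetilde{\Lambda}$. I would verify that each of these choices alters $\iota$ only by post-composition with an element of $O(\widetilde{\Lambda})$, so that the orbit $\mathcal{O}(X):=O(\widetilde{\Lambda})\cdot\iota$ is canonical. The marking contributes an $O(\widetilde{\Lambda})$-ambiguity by construction; twisting $\E$ by a line bundle pulled back from $S^{[n]}$ leaves the induced map on $v_0^\perp$ unchanged; and two realizations of one manifold as moduli spaces $M_v(S,H)$ and $M_{v'}(S',H')$ are related by a Fourier--Mukai equivalence $D^b(S)\simeq D^b(S')$ carrying $v$ to $v'$ and acting on Mukai lattices by an isometry $K(S)\cong K(S')$, again an $O(\widetilde{\Lambda})$-change after markings. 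This last point is the most delicate, and is where one invokes the known description of the isometries of $\widetilde{\Lambda}$ realized by derived equivalences.

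\emph{Propagation and monodromy-invariance.} For arbitrary $X$, choose a smooth proper family $\pi\colon\X\to T$ over a connected base with $X=\X_{t_0}$ and with some Hilbert scheme as another fiber; parallel transport in the local system $R^2\pi_*\Integers$ from the Hilbert-scheme fiber then carries its orbit to an orbit of embeddings of $H^2(X,\Integers)$, which I take as the definition of $\mathcal{O}(X)$. That this is independent of the family and of the path is \emph{exactly} the monodromy-invariance asserted: by Definition \ref{def-Mon-2}, every $g\in Mon^2(X)$ is realized by a loop in some such $T$, and along that loop parallel transport acts on $H^2(X,\Integers)$ by $g$ and on the ambient lattice by some element of $O(\widetilde{\Lambda})$, whence $\iota\circ g$ and $\iota$ lie in a common orbit. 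Conceptually, this amounts to producing a single local system with fiber $\widetilde{\Lambda}$ over the moduli of marked $K3^{[n]}$-type manifolds, carrying a flat embedded sub-local-system isomorphic to $R^2\pi_*\Integers$. The main obstacle is constructing this ambient lattice \emph{globally}, including over the generic loci where $X$ is not itself a moduli space of sheaves and no universal sheaf is available. I would circumvent it by reducing to a generating set of $Mon^2(X)$ --- operators induced by deformations of the underlying $K3$ surface, which preserve the moduli-space description, together with reflections and Fourier--Mukai-type operators treated in the previous step --- and using a presentation of $Mon^2(X)$ by such generators: invariance of $\mathcal{O}(X)$ under each generator has already been established, so invariance under the whole group follows.
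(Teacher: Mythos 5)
The paper does not prove this statement at all: it is quoted verbatim from \cite{markman-constraints}, Theorem 1.10, so there is no in-paper proof to compare against. Judged on its own terms, your construction of the orbit for the Hilbert scheme (Mukai's isometry $\theta\colon v_0^\perp\to H^2(S^{[n]},\Integers)$ for $v_0=(1,0,1-n)$, primitivity of $v_0^\perp$, independence of the twist of the quasi-universal family) is the right starting point and is essentially how the cited reference begins.

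The genuine gap is in the last step. You reduce both well-definedness of the parallel transport and the asserted invariance to the claim that $\mathcal{O}(X)$ is preserved by each element of a generating set of $Mon^2(X)$, and then assert that ``invariance under each generator has already been established.'' It has not: the only operators you actually treated are those coming from re-presenting one manifold as two different moduli spaces via a derived equivalence. Reflections and the other generators of $Mon^2(S^{[n]})$ are never addressed, and without them the argument is circular (you are using monodromy-invariance to define the orbit on general $X$, and using that definition to argue monodromy-invariance). The clean way to close this, and the one the cited reference uses, is lattice-theoretic rather than generator-by-generator: by Theorem \ref{thm-monodromy-constraints}, $Mon^2(X)$ is exactly the subgroup of $O_+H^2(X,\Integers)$ acting by $\pm 1$ on the discriminant group $H^2(X,\Integers)^*/H^2(X,\Integers)\cong\Integers/(2n-2)$; by Nikulin's criterion for extending isometries across a primitive embedding whose orthogonal complement is the rank-one lattice $\Integers v$ with $(v,v)=2n-2$ (whose discriminant group is the same cyclic group with the pairing negated), an isometry $g$ of $H^2(X,\Integers)$ satisfies $\iota\circ g\in O(\widetilde{\Lambda})\cdot\iota$ if and only if $g$ acts by $\pm1$ on the discriminant group. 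Hence $Mon^2(X)$ stabilizes the orbit, with no need to construct a global ambient local system or to enumerate generators. This also disposes of your ``most delicate'' point about comparing different moduli descriptions, since any two primitive embeddings differ by an isometry of $H^2$ whose discriminant action one can read off.
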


Let $S$ be a $K3$ surface, $H$ an ample line bundle on $S$,
and $v\in K(S)$ a primitive class satisfying
$(v,v)=2n-2$, $n\geq 2$.
Assume that $X:=M_H(v)$ is a smooth and compact moduli space of 
$H$-stable sheaves of class $v$. Then $X$ is of $K3^{[n]}$-type
and the orbit in the Theorem is that of Mukai's isometry
$\iota:H^2(M_H(v),\Integers)\rightarrow v^\perp$, where
$v^\perp\subset K(S)$ is the sub-lattice orthogonal to $v$
(see \cite{markman-constraints}, Theorem 1.14 or Theorem 
\ref{thm-item-orbit-of-inverse-of-Mukai-isom-is-natural} below). 
The monodromy-invariance, of the $O(K(S))$-orbit of Mukai's isometry, 
uniquely characterizes the orbit in the above Theorem,
for every $X$ of $K3^{[n]}$-type.

Let $X$ be of $K3^{[n]}$-type, $n>1$.
Let $I''_n(X)\subset H^2(X,\Integers)$ be the subset of 
monodromy-reflective classes of degree $2-2n$,
and $e$ a class in  $I''_n(X)$.
Choose a primitive isometric embedding 
$\iota:H^2(X,\Integers)\hookrightarrow \widetilde{\Lambda}$,
in the natural orbit of Theorem 
\ref{thm-a-natural-orbit-of-embeddings-of-H-2-in-Mukai-lattice}.
Choose a generator $v$ of the rank $1$ 
sub-lattice of $\widetilde{\Lambda}$ orthogonal to the image of
$\iota$. Then $(v,v)=2n-2$. Indeed, $(v,v)>0$,
since the signature of $\widetilde{\Lambda}$ is $(4,20)$, and 
$(v,v)$ is equal to the order of 
$(\Integers v)^*/\Integers v$, which is equal to the order of 
$H^2(X,\Integers)^*/H^2(X,\Integers)$, which is $2n-2$. 
Let $\rho$ be the positive integer, such that $(e+v)/\rho$
is an integral and primitive class in $\widetilde{\Lambda}$. 
Define the integer $\sigma$ similarly using $e-v$. 
Let $\div(e,\bullet)$ be the integer in $\{(e,e)/2,(e,e)\}$, 
such that the class $(e,\bullet)/\div(e,\bullet)$ 
is an integral and primitive class in $H^2(X,\Integers)^*$.
Given a rational number $m$, 
let $\F(m)$ be  the set of unordered pairs $\{r,s\}$ of
positive integers, such that $rs=m$ and $\gcd(r,s)=1$. 
If $m$ is not a positive integer, then $\F(m)$ is empty.
Set 
\[
\Sigma''_n \ \ := \ \ \F(n-1) \ \cup \ \F([n-1]/2) \ \cup \ \F([n-1]/4).
\]  
Note that $\Sigma''_n$ is a singleton if and only if $n=2$ or 
$n-1$ is an odd prime power. 
%We outline below the construction of a monodromy invariant function
%$rs :  I''_n(X) \rightarrow \Sigma''_n.$

\begin{prop}
\label{prop-definition-of-rs}
If $\div(e,\bullet)=n-1$ and $n$ is even, set $\{r,s\}:=\{\rho,\sigma\}$.
Otherwise, set $\{r,s\}:=\{\frac{\rho}{2},\frac{\sigma}{2}\}$.
Then $\{r,s\}$ is a pair of relatively prime integers in $\Sigma''_n$,
and the function
\[
rs \ : \ I''_n(X) \ \ \ \longrightarrow \ \ \ \Sigma''_n,
\]
sending the class $e$ to the unordered pair $\{r,s\}$, is
monodromy-invariant. 
The function $rs$ is surjective, if $n\equiv 1$ modulo $8$, and its image is
$\F(n-1)\cup\F([n-1]/2)$ otherwise. 
\end{prop}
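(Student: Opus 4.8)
The plan is to work entirely in the explicit Mukai model furnished by Theorem \ref{thm-a-natural-orbit-of-embeddings-of-H-2-in-Mukai-lattice}. Write $N:=n-1$. Fix $\iota$ in the natural orbit, identify $H^2(X,\Integers)$ with $\Lambda\subset\widetilde{\Lambda}$, and let $v$ be the chosen generator of $\Lambda^\perp$, so $(v,v)=2N$ and $(e,e)=-2N$. Since $M:=E_8(-1)^{\oplus2}\oplus U^{\oplus3}$ is unimodular it splits off, so $\widetilde{\Lambda}=M\oplus K$ with $K:=M^\perp$ an even unimodular rank-$2$ lattice of signature $(1,1)$, hence a hyperbolic plane containing both $\delta$ and $v$. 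A short computation in an isotropic basis $f,g$ of $K$ gives $v=\alpha f+\beta g$ and $\delta=\alpha f-\beta g$ with $\alpha\beta=N$ and $\gcd(\alpha,\beta)=1$. Writing $e=m+k\delta$ ($m\in M$, $k\in\Integers$) and using that in the unimodular lattice $\widetilde{\Lambda}$ the divisibility of a vector is the gcd of its coordinates, I obtain $\rho=\gcd(c_m,\alpha(k+1),\beta(k-1))$ and $\sigma=\gcd(c_m,\alpha(k-1),\beta(k+1))$, where $c_m:=\div_M(m)$ is the content of $m$.

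Next I record the arithmetic inputs. Proposition \ref{prop-reflection-by-a-numerically-prime-exceptional-is-in-Mon} forces $\div(e,\bullet)=:d\in\{N,2N\}$, and directly $d=\gcd(c_m,2N)$; hence $N\mid c_m$ (so $\alpha\mid c_m$, $\beta\mid c_m$), with $d=N\iff v_2(c_m)=v_2(N)$ and $d=2N\iff v_2(c_m)>v_2(N)$. From $(e,e)=-2N$ I get $(m,m)=2N(k^2-1)$, and writing $m=c_m m'$ with $m'$ primitive shows $c_m^2\mid(m,m)$ with even quotient $(m',m')$. These divisibilities drive a prime-by-prime analysis. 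For odd $p\mid N$ they force $k\equiv\pm1\pmod p$ and show the full prime power $p^{v_p(N)}$ divides exactly one of $\rho,\sigma$ (the choice governed by whether $p\mid k-1$ or $p\mid k+1$) and is prime to the other; thus $v_p(\rho\sigma)=v_p(N)$ and $\gcd(\rho,\sigma)$ is odd. At $p=2$ the same inequalities, together with the two cases of $d$ and with $k\bmod 4$, pin down $v_2(\rho)$ and $v_2(\sigma)$: one finds $\rho/2,\sigma/2$ are integral exactly when $v_2(c_m)\geq1$, i.e. in every case except $d=N$ with $n$ even, and that after the prescribed halving the pair is coprime at $2$ as well. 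Assembling the valuations yields $rs\in\{N,\,N/2,\,N/4\}$ with $\gcd(r,s)=1$, so $\{r,s\}\in\Sigma''_n$.

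Monodromy-invariance is then immediate from Theorem \ref{thm-a-natural-orbit-of-embeddings-of-H-2-in-Mukai-lattice}: for $g\in Mon^2(X)$ the embedding $\iota\circ g^{-1}$ again lies in the natural orbit, has the same orthogonal generator $v$, and satisfies $(\iota\circ g^{-1})(g(e))=\iota(e)$. Since divisibilities in $\widetilde{\Lambda}$, the value $\div(g(e),\bullet)$, and the parity of $n$ are unchanged by the isometry $g$, the pair $\{r,s\}$ computed from $g(e)$ equals that of $e$; the sign ambiguity in the choice of $v$ only interchanges $\rho$ and $\sigma$ and so does not affect the unordered pair.

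Finally I determine the image. The valuation count shows $rs=N$ occurs precisely when $d=2N$ or ($d=N$ and $n$ even), $rs=N/2$ for $d=N$ and $n$ odd in the generic range of $v_2(k\mp1)$, and $rs=N/4$ only when $v_2(k\mp1)=v_2(N)-1\geq2$, i.e. only when $v_2(N)\geq3$, which is exactly $n\equiv1\pmod 8$. To attain every admissible pair I fix the target factorization and build $e$ by prescribing via the Chinese Remainder Theorem the residues of $k$ modulo the odd primes dividing $N$ and modulo $4$ (controlling which of $\rho,\sigma$ each prime power enters), choosing $d\in\{N,2N\}$ to fix $rs$, and invoking an existence lemma producing $m\in M$ of the prescribed content with $(m,m)=2N(k^2-1)$ and $\gcd(c_m,k)=1$; such $m$ exists because $M$ is indefinite of large rank with hyperbolic summands. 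This yields a genuine class in $I''_n(X)$ realizing the chosen value, giving surjectivity onto $\F(N)\cup\F(N/2)$ in general and onto all of $\Sigma''_n$ when $n\equiv1\pmod 8$. I expect the main obstacle to be twofold: the bookkeeping of the $2$-adic valuations (in particular the sharp inequality that simultaneously forbids $rs<N/4$ and confines $\F(N/4)$ to the case $n\equiv1\pmod 8$), and the existence lemma in the last step, which must produce classes in the correct monodromy orbit rather than merely abstract lattice vectors.
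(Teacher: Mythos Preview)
Your approach is sound but organized differently from the paper's. First, the paper does not carry a general coprime factorization $\alpha\beta=\pm N$: it normalizes the embedding so that $v=(1,0,1-n)$ in Mukai notation (equivalently $\{|\alpha|,|\beta|\}=\{1,N\}$), which collapses your formulas and removes one layer of bookkeeping. Second, and more substantively, the paper's organizing object is the saturation $\widetilde L$ of ${\rm span}_{\Integers}\{e,v\}$ in $\widetilde\Lambda$: Lemmas \ref{lem-faithful-Mon-invariant-in-case-divisibility-2n-2} and \ref{lem-non-unimodular-rank-two-lattice} show that $\widetilde L\cong U$, $U(2)$, or $H_{ev}$ according to $\div(e,\bullet)$ and the residue of $n$, and $\{r,s\}$ is then read off from the $O(\widetilde L)$-orbit of $e$ via the elementary classification of Lemma \ref{lemma-isometry-orbits-in-rank-2}. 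This repackages exactly your $2$-adic case split---the three lattice types correspond precisely to $\rho\sigma\in\{4N,\,2N,\,N\}$---but invariance becomes transparent (it is visibly an isometry invariant of the pair $(\widetilde L,e)$), and faithfulness, needed for Proposition \ref{prop-introduction-Mon-2-orbit-is-determined-by-three-invariants}, comes along for free via Nikulin's extension theorem. For surjectivity the paper exhibits explicit classes on moduli spaces (Examples \ref{example-any-factorization-rs-is-possible}, \ref{example-rs-is-surjective-divisibility-n-1}, \ref{example-yet-another}) rather than your CRT-plus-existence argument; both work, though in yours you should prescribe $k$ modulo the odd prime \emph{powers} dividing $N$, not just the primes, so that the constraint $c_m^2\mid 2N(k^2-1)$ is actually met.
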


The proposition is proven in 
Lemmas \ref{lem-faithful-Mon-invariant-in-case-divisibility-2n-2} 
and \ref{lem-non-unimodular-rank-two-lattice}. 
A more conceptual definition of the monodromy-invariant
$rs$ is provided in the statements of these Lemmas.
The proof relies on the classification of the 
isometry classes of all possible pairs 
$(\widetilde{L},e)$, where $\widetilde{L}$ is the saturation in
$\widetilde{\Lambda}$ of the rank $2$ sub-lattice ${\rm span}\{e,v\}$.
The classification is summarized in the table following 
Lemma \ref{lemma-isometry-orbits-in-rank-2}.
We finally arrive at the classification of monodromy-reflective
line bundles.

\begin{prop}
\label{prop-introduction-Mon-2-orbit-is-determined-by-three-invariants}
Let $X$ be of $K3^{[n]}$-type and 
$L$ a monodromy-reflective line bundle. Set $e:=c_1(L)$. 
\begin{enumerate}
\item
\label{prop-item-Mon-orbit-in-degree-2-minus-2n}
If $(e,e)=2-2n$, then the $Mon^2(X)$-orbit of the class $e$ is 
determined by $\div(e,\bullet)$ and the value $rs(e)$.
\item
\label{prop-item-Mon-orbit-in-degree-minus-2}
If $(e,e)=-2$, then the $Mon^2(X)$-orbit of the class $e$ is 
determined by $\div(e,\bullet)$.
\end{enumerate}
\end{prop}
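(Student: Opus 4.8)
The plan is to move the entire computation into the Mukai lattice $\widetilde\Lambda$, where $Mon^2(X)$ becomes transparent, and to reduce the statement to the rank two classification of Lemma \ref{lemma-isometry-orbits-in-rank-2} combined with Nikulin's uniqueness of primitive embeddings. Since $\div(e,\bullet)$ is an isometry invariant and $rs(e)$ is monodromy invariant by Proposition \ref{prop-definition-of-rs}, the forward implication (equal orbits have equal invariants) is immediate; the content is the converse. I must therefore show that if $e=c_1(L)$ and $e'=c_1(L')$ are monodromy-reflective of the same degree, with $\div(e,\bullet)=\div(e',\bullet)$ and, in the degree $2-2n$ case, $rs(e)=rs(e')$, then some monodromy operator carries $e$ to $e'$.

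First I would fix a primitive isometric embedding $\iota$ in the natural orbit of Theorem \ref{thm-a-natural-orbit-of-embeddings-of-H-2-in-Mukai-lattice}, with orthogonal generator $v$ of square $2n-2$, and use the monodromy-invariance of this orbit to reduce the problem to one inside $\widetilde\Lambda$: it suffices to produce an isometry of $\widetilde\Lambda$ that carries the pair $(v,\iota(e))$ to $(\pm v,\iota(e'))$ and that restricts, on $v^\perp\cong H^2(X,\Integers)$, to a monodromy operator. This is precisely the advantage of the embedding, since isometries of the unimodular lattice $\widetilde\Lambda$ fixing $\langle v\rangle$ are governed by Nikulin's theory, whereas $Mon^2(X)$ itself is cut out inside $O(\Lambda)$ by orientation and discriminant conditions.

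Next I would compare the two pairs through their saturated rank two sublattices. Writing $\widetilde L$ for the saturation in $\widetilde\Lambda$ of ${\rm span}\{\iota(e),v\}$, the Gram matrix of $\{e,v\}$ is diagonal with entries $2-2n$ and $2n-2$, and the integers $\rho,\sigma$ record the primitivity of $e\pm v$ in $\widetilde L$. The classification summarized in Lemma \ref{lemma-isometry-orbits-in-rank-2} and its following table shows that the isometry class of the marked triple $(\widetilde L;e,v)$ is pinned down by $\div(e,\bullet)$ together with the unordered pair $\{\rho,\sigma\}$, hence by $\div(e,\bullet)$ and $rs(e)$ through the recipe of Proposition \ref{prop-definition-of-rs}; in the degree $-2$ case this classification collapses so that $\div(e,\bullet)\in\{1,2\}$ alone determines the triple, which is why no $rs$-invariant enters part (\ref{prop-item-Mon-orbit-in-degree-minus-2}). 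Having matched the triples by an abstract isometry, I would promote it to $O(\widetilde\Lambda)$ using Nikulin: the complement $\widetilde L^\perp$ is even, indefinite, of signature $(3,19)$ and rank $22$, and $\ell(A_{\widetilde L^\perp})=\ell(A_{\widetilde L})\le 2\le 22-2$, so $\widetilde L^\perp$ is unique in its genus with surjective $O(\widetilde L^\perp)\to O(A_{\widetilde L^\perp})$; hence the primitive embedding $\widetilde L\hookrightarrow\widetilde\Lambda$ is unique up to $O(\widetilde\Lambda)$ and every isometry of $\widetilde L$ extends to $\widetilde\Lambda$.

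The main obstacle is the final bookkeeping guaranteeing that the extended isometry restricts to a genuine monodromy operator on $v^\perp$, not merely to an isometry of $H^2(X,\Integers)$. The delicate point is that the discriminant classes of $e$ and $e'$ may agree only up to sign, so the extension may be forced to send $v$ to $-v$. I would treat both normal forms: an orientation-preserving isometry of $\widetilde\Lambda$ fixing $v$ restricts to an orientation-preserving isometry of $v^\perp$ acting trivially on $A_\Lambda$, while an orientation-reversing isometry sending $v\mapsto -v$ again restricts to an orientation-preserving isometry of $v^\perp$, now acting by $-1$ on $A_\Lambda$. By Markman's description of $Mon^2(X)$ for $K3^{[n]}$-type as the orientation-preserving isometries acting by $\pm 1$ on the discriminant group, both restrictions lie in $Mon^2(X)$. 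The interchange $e+v\leftrightarrow e-v$ induced by $v\mapsto -v$ is exactly what makes $\{r,s\}$ unordered, so the residual sign is absorbed into the invariant itself. Verifying that, for equal invariants, one of these two normal forms always applies --- and hence that a monodromy operator carrying $e$ to $e'$ genuinely exists --- is where the precise entries of the rank two table and the $\pm 1$ form of the discriminant action are indispensable.
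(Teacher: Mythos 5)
Your proposal is correct and follows essentially the same route as the paper: reduce to the isometry class of the saturated rank-two lattice $(\widetilde{L},e)$ in $\widetilde{\Lambda}$, classify those pairs by the unordered pair $\{r,s\}$ (Lemma \ref{lemma-isometry-orbits-in-rank-2}), extend a matching isometry to $O(\widetilde{\Lambda})$ via Nikulin, and land in $Mon^2(X)$ via the $\pm 1$ discriminant characterization. The only cosmetic difference is the sign bookkeeping: the paper normalizes to $g(v)=v$ by replacing $g$ with $-(R_{e_2}\circ g)$ before extending, whereas you keep both normal forms $v\mapsto\pm v$ and absorb the sign into the unordered pair; for part (\ref{prop-item-Mon-orbit-in-degree-minus-2}) the paper simply cites Lemma 8.9 of \cite{markman-monodromy-I} rather than rerunning the lattice argument.
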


Part \ref{prop-item-Mon-orbit-in-degree-2-minus-2n} is proven in
Lemmas \ref{lem-faithful-Mon-invariant-in-case-divisibility-2n-2} 
and \ref{lem-non-unimodular-rank-two-lattice}. 
Part \ref{prop-item-Mon-orbit-in-degree-minus-2} is Lemma 8.9
in \cite{markman-monodromy-I}.  

%***************************************************************
%
%***************************************************************
\subsection{A numerical characterization of exceptional 
\WithoutTorelli{classes}
\WithTorelli{classes}}
%label{sec-exceptional-class-conjecture}
\hspace{1ex}\\
Let $(X_1,L_1)$ and $(X_2,L_2)$ be two pairs as in
Proposition \ref{prop-introduction-Mon-2-orbit-is-determined-by-three-invariants}.
Set $e_i:=c_1(L_i)$. 
If $(e_i,e_i)=2-2n$, $\div(e_1,\bullet)=\div(e_2,\bullet)$, and
$rs(e_1)=rs(e_2)$, then 
%***********************
% WithoutTorelli 
%***********************
\WithoutTorelli{the pairs $(X_1,e_1)$ and  $(X_2,e_2)$ are deformation 
equivalent (allowing $e_i$ to loose it Hodge type along the deformation),
by Proposition 
\ref{prop-introduction-Mon-2-orbit-is-determined-by-three-invariants} and 
Lemma \ref{lem-monodromy-invariants-and-deformation-equivalence}.
Furthermore, a deformation relating the two 
pairs and preserving the Hodge type exists as well, assuming
a version of the Torelli Theorem holds (see section 
\ref{sec-deformation-equivalence-and-torelli}). Finally,}
%***********************
% End \WithoutTorelli 
%***********************
%***********************
% \WithTorelli 
%***********************
\WithTorelli{the pairs $(X_1,L_1)$ and $(X_2,L_2^\epsilon)$ are deformation 
equivalent (requiring $e_i$ to preserve its Hodge type along the deformation), 
for $\epsilon=1$ or $\epsilon=-1$, by Proposition
\ref{prop-introduction-Mon-2-orbit-is-determined-by-three-invariants} and 
Lemma \ref{lem-monodromy-invariants-and-deformation-equivalence} 
(the proof of the latter depends on the Torelli Theorem 
\cite{verbitsky}).
Furthermore,}
%***********************
% End \WithTorelli 
%***********************
if $X_1$ is projective and 
$L_1^{\otimes k}\cong\StructureSheaf{X_1}(E_1)$, for some $k>0$ and 
a prime exceptional divisor $E_1$, 
then a generic small deformation $(X,L)$ of $(X_2,L_2)$
consists of $L$ satisfying 
$L^{\otimes d}\cong\StructureSheaf{X}(E)$, for a prime exceptional divisor $E$
and for $d=k$ or $d=-k$
%$\RationalNumbers$-effective 
(Proposition \ref{prop-main-question-on-deformation-equivalence}).
This leads us to the
%***********************
% WithoutTorelli 
%***********************
\WithoutTorelli{conjectural} 
%***********************
% End \WithTorelli 
%***********************
numerical characterization of exceptional line bundles described in this section. 

Let $X$ be an irreducible holomorphic symplectic manifold of $K3^{[n]}$-type, 
$n\geq 2$.
Let $L$ be a monodromy-reflective line bundle on $X$, $e:=c_1(L)$, 
and $R_e$ the reflection by $e$. 
$R_e$ preserves the Hodge structure, and so acts on 
$H^{1,1}(X)\cong H^1(X,T_X)$. 
The Kuranishi deformation space $Def(X)$ is an open neighborhood of $0$ in
$H^1(X,T_X)$, which may be chosen to be $R_e$ invariant. 
Hence, $R_e$ acts on $Def(X)$. 
The Local Kuranishi deformation space $Def(X,L)$, of the pair $(X,L)$,
is the smooth divisor $D_e\subset Def(X)$ of fixed points of $R_e$. 

\begin{defi}
\label{def-numerically-prime-exceptional}
Let $h\in H^2(X,\RealNumbers)$ be a K\"{a}hler class.
A line bundle $L\in \Pic(X)$ is called {\em numerically exceptional},
if its first Chern class $e:=c_1(L)$ is a primitive class 
in $H^2(X,\Integers)$, satisfying $(h,e)>0$ and the following properties.
The Beauville-Bogomolov degree is either $(e,e)=-2$, or $(e,e)=2-2n$
and $n:=\dim_\ComplexNumbers(X)/2>2$. 
In the latter case one of the following properties holds: 
\begin{enumerate}
\item
$\div(e,\bullet)=2n-2$ and 
$rs(e)=\{1,n-1\}$.
\item
$\div(e,\bullet)=2n-2$ and $rs(e)=\{2,(n-1)/2\}$. We must have
$n\equiv 3$ (modulo $4$) for the pair $rs(e)$ to be relatively prime.
\item
$\div(e,\bullet)=n-1$, $n$ is even, and $rs(e)=\{1,n-1\}$.
\item
$\div(e,\bullet)=n-1$, $n$ is odd, and $rs(e)=\{1,(n-1)/2\}$.
\end{enumerate}
A cohomology class $e\in H^{1,1}(X,\Integers)$ is 
{\em numerically exceptional}, if $e=c_1(L)$, for 
a numerically exceptional line bundle $L$.
\end{defi}

\begin{defi}
\label{def-stably-prime-exceptional}
\begin{enumerate}
\item
A line bundle $L\in \Pic(X)$ is called {\em stably-prime-exceptional},
if there exists a closed complex analytic subset $Z\subset D_e$, 
of codimension $\geq 1$, such that the linear system $\linsys{L_t}$
consists of a prime-exceptional divisor $E_t$, for all $t\in [D_e\setminus Z]$.
\item
$L$ is said to be {\em stably-$\RationalNumbers$-effective},
if there exists a non-zero integer $k$, 
%and a closed complex analytic subset $Z\subset D_e$,  of codimension $\geq 1$, 
such that the linear system $\linsys{L_t^k}$
is non-empty, for all $t\in D_e$.
\end{enumerate} 
\end{defi}

If $E$ is a prime exceptional divisor on a projective irreducible
holomorphic symplectic manifold $X$, then $\StructureSheaf{X}(E)$
is stably-prime-exceptional, by Proposition 
\ref{prop-generic-prime-exceptional}.

Let $L$ be a line bundle on an irreducible
holomorphic symplectic manifold $X$ of $K3^{[n]}$-type
with a primitive first Chern class.
Recall that a necessary condition for the linear system 
$\linsys{L^k}$ to consist of an exceptional divisor $E$,
is that $L$ is monodromy-reflective (Definition \ref{def-monodromy-reflective}), 
by Theorem \ref{cor-introduction}.
Assume that $L$ is monodromy-reflective.
Set $e:=c_1(L)$. 
%Let $Def(X)$ be the Kuranishi local deformation space
Let $D_e\subset Def(X)$ be the divisor fixed by the reflection $R_e$. 
%Recall that $D_e=Def(X,L)$ and so
%it parametrizes pairs 
%$(X_t,L_t)$, $t\in D_e$, deformation equivalent to $(X,L)$.
%%by the characterization of $D_e$ given in
%%equation (\ref{eq-wall}).

\WithoutTorelli{\begin{conj}}
\WithTorelli{\begin{thm}}
\label{conj-exceptional-line-bundles}
\begin{enumerate}
\item
\label{conj-item-effective}
Assume that $L$ is numerically exceptional. 
Then $L^k$ is stably-prime-exceptional, where $k$ is determined as follows.
%Then $(X,L)$ is of type A, in the sense of Definition \ref{def-type-A-or-B}. 
If the degree of $L$ is $2-2n$, then
\[
k = \left\{
\begin{array}{ccl}
2, & \mbox{if} & \div(e,\bullet)=2n-2 \ \mbox{and} \ rs(e)=\{1,n-1\},
\\
1, & \mbox{if} & \div(e,\bullet)=2n-2 \ \mbox{and} \  rs(e)=\{2,(n-1)/2\},
\\
1, & \mbox{if} & \div(e,\bullet)=n-1.
\end{array}
\right.
\]
If the degree of $L$ is $-2$, then
\[
k = \left\{
\begin{array}{ccl}
2, & \mbox{if} & \div(e,\bullet)=2 \ \mbox{and} \ n=2,
\\
1, & \mbox{if} & \div(e,\bullet)=2 \ \mbox{and} \  n>2,
\\
1, & \mbox{if} & \div(e,\bullet)=1.
\end{array}
\right.
\]
%there exists a dense open subset $U\subset D_e$, such that 
%$\linsys{L^k_t}$ consists of a prime exceptional 
%divisor $E_t$, for all $t\in U$, and such that 
%$D_e\setminus U$ is a closed analytic subset of $U$. 
\item
\label{conj-item-vanishing}
If $L$ is not numerically exceptional, then $L$ is not 
stably-$\RationalNumbers$-effective. I.e., 
for every non-zero integer $k$, there exists a dense open subset
$U^k$ of $D_e$, such that 
$H^0(X_t,L_t^k)$ vanishes, for all $t\in U^k$.
\end{enumerate}
\WithoutTorelli{\end{conj}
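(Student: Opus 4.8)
The plan is to prove both halves by the same two-step scheme: reduce to one explicit representative in each monodromy-orbit, and then settle that representative by hand. The lattice-theoretic classification of Proposition \ref{prop-introduction-Mon-2-orbit-is-determined-by-three-invariants} shows that, for fixed degree, the $Mon^2(X)$-orbit of a monodromy-reflective class $e$ is pinned down by the pair of invariants $\div(e,\bullet)$ and $rs(e)$, and there are only finitely many such orbits. To turn equality of invariants into a usable geometric statement I would invoke Lemma \ref{lem-monodromy-invariants-and-deformation-equivalence}, whose proof rests on Verbitsky's Torelli theorem \cite{verbitsky}, to produce a Hodge-type-preserving deformation between any two pairs $(X_1,L_1)$ and $(X_2,L_2)$ sharing the same invariants. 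Because both conclusions of the theorem—stable-prime-exceptionality and failure of stable-$\RationalNumbers$-effectivity—are deformation invariants of the pair along $D_e$ by Proposition \ref{prop-main-question-on-deformation-equivalence}, it then suffices to verify each numerical type on a single convenient model.

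For Part \ref{conj-item-effective} I would realize each numerically exceptional type on a moduli space $M_H(v)$ of $H$-stable sheaves on a $K3$ surface, matching invariants through Mukai's isometry $H^2(M_H(v),\Integers)\cong v^\perp$ and the natural orbit of Theorem \ref{thm-a-natural-orbit-of-embeddings-of-H-2-in-Mukai-lattice}. In each of the four degree-$(2-2n)$ cases and the two degree-$(-2)$ cases the goal is to exhibit an explicit divisorial contraction whose exceptional divisor $E$ is reduced and irreducible of negative degree, and to compute its Mukai vector precisely enough to read off $\div$, $rs$, and the multiplicity $k$ with $[E]=k\,e$. The value $k=2$ in the two cases with $\div(e,\bullet)=2n-2$, $rs(e)=\{1,n-1\}$ (respectively $\div=2$, $n=2$) reflects that the primitive class $e$ is only half of $[E]$, so that $L^2$, and not $L$, is effective. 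Proposition \ref{prop-generic-prime-exceptional} then promotes each concrete prime exceptional divisor to the stably-prime-exceptional conclusion on a dense open subset of the corresponding $D_e$.

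For Part \ref{conj-item-vanishing} I would show that, for every non-numerically-exceptional orbit, the monodromy reflection $R_e$ is realized on a suitable projective deformation-equivalent model by a \emph{birational} involution that is an isomorphism in codimension one—a flop attached to a small contraction—rather than by a divisorial contraction; the models of section \ref{sec-non-effective} provide exactly this. On such a model $e$ is orthogonal to the flopping curves and lies on a wall of the movable cone, hence is not pseudoeffective, so no non-zero multiple of $e$ is the class of an effective divisor. Spreading this vanishing across the deformation family via Proposition \ref{prop-main-question-on-deformation-equivalence} yields, for each non-zero $k$, the dense open $U^k\subset D_e$ on which $H^0(X_t,L_t^k)=0$.

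The hard part will be the explicit geometry underlying Part \ref{conj-item-effective}. For the subtler divisibility types—notably $\div(e,\bullet)=n-1$ with $n$ odd, and the type $rs(e)=\{2,(n-1)/2\}$, which forces $n\equiv 3\pmod{4}$ for $r$ and $s$ to be coprime—I must produce a genuinely prime exceptional divisor, which demands checking that the contracted locus is irreducible of the correct codimension rather than a union of strata, and then certifying the multiplicity $k$ by a direct Mukai-vector computation. Dually, for Part \ref{conj-item-vanishing} the delicate point is verifying that each non-exceptional contraction is small rather than divisorial. By contrast, the reduction itself is formal once Torelli is granted, so the real work is the case-by-case moduli-space geometry and not the lattice bookkeeping.
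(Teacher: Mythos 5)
Your overall strategy is the paper's: classify the monodromy orbits by the invariants $\div(e,\bullet)$ and $rs(e)$, use Torelli (via Lemma \ref{lem-monodromy-invariants-and-deformation-equivalence} and Proposition \ref{prop-main-question-on-deformation-equivalence}) to reduce to a single projective moduli-space model per orbit, and verify the two alternatives on the explicit examples of sections \ref{sec-examples} and \ref{sec-non-effective}. Your treatment of Part \ref{conj-item-effective} matches the paper's.

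Two steps in your treatment of Part \ref{conj-item-vanishing} would fail or need repair as written. First, the claim that $e$ ``lies on a wall of the movable cone, hence is not pseudoeffective'' is not valid: a wall of the movable cone can carry effective classes (the class of a prime exceptional divisor spans an extremal ray of the pseudoeffective cone), and in any case non-pseudoeffectivity would only rule out \emph{positive} multiples of $e$. The argument the paper uses, and which your own setup already supplies, is Observation \ref{observation-not-Q-effective}: the birational involution inducing $R_e$ is biregular off a locus of codimension $\geq 2$, so $h^0(L^k)=h^0(\iota^*L^k)=h^0(L^{-k})$, and both cannot be positive for $k\neq 0$ since $e$ is a nonzero primitive class. (Also, on the models of section \ref{sec-non-effective} the involution is $F\mapsto F^*$ or $F\mapsto F^*\otimes H$ on the locus of locally free slope-stable sheaves, not literally a flop of a small contraction.) Second, vanishing does not ``spread'' via Proposition \ref{prop-main-question-on-deformation-equivalence}, which propagates effectivity, not its absence, and the relevant parameter spaces $D_e$ for $X$ and for the model are different Kuranishi neighborhoods. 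The paper closes Part \ref{conj-item-vanishing} by contradiction: if $h^0(X_t,L_t^k)>0$ on all of $D_e$ for some minimal $|k|$, then a generic projective point of $D_e$ carries a prime exceptional divisor of class $ke$, and Corollary \ref{cor-if-L-1-is-prime-exceptional-L-2-not-Q-effective-then-not-def-equiv} then contradicts the vanishing of all $H^0(M,L_2^d)$ on the model. With these two repairs your argument coincides with the paper's.
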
}
\WithTorelli{\end{thm}}

See Remark \ref{rem-Euler-characteristic} for the
Euler characteristic $\chi(L^k)$. 
Note that in part \ref{conj-item-effective} above $L^k$
is effective as well, for the specified integer $k$, by the semi-continuity 
theorem. 
%The property in part \ref{conj-item-vanishing} simply says that
%$L$ is not stably-$\RationalNumbers$-effective.

\WithoutTorelli{
\begin{thm}
\label{thm-main-conjecture-follows-from-torelli}
Let $X$ be an irreducible holomorphic symplectic manifold of 
$K3^{[n]}$-type and $L$ a monodromy-reflective line bundle on $X$.
Assume an affirmative answer to the Torelli
Question \ref{thm-torelli}
[or the weaker Question \ref{question-connectedness} for the pair $(X,L)$]. 
%with a marking $\eta$ for $X$ and with $\lambda=\eta(c_1(L))$.
Then Conjecture \ref{conj-exceptional-line-bundles}
holds for $(X,L)$.
\end{thm}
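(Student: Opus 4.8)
The plan is to reduce the entire statement—both the effectiveness part and the non-effectiveness part—to a single deformation-equivalence principle, and then invoke Torelli to propagate information across the whole fixed divisor $D_e$. First I would set up the local picture: for a monodromy-reflective $L$ with $e:=c_1(L)$, the reflection $R_e$ fixes the smooth divisor $D_e\subset Def(X)$, and along $D_e$ the class $e$ stays of Hodge type $(1,1)$, so $L$ deforms to a line bundle $L_t$ on each $X_t$, $t\in D_e$. The key input from earlier in the excerpt is Proposition \ref{prop-introduction-Mon-2-orbit-is-determined-by-three-invariants}: the $Mon^2(X)$-orbit of $e$ is completely determined by $\div(e,\bullet)$ and, in degree $2-2n$, by the monodromy-invariant $rs(e)$. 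Combined with Lemma \ref{lem-monodromy-invariants-and-deformation-equivalence}, this means that any two pairs $(X_1,L_1)$, $(X_2,L_2)$ of $K3^{[n]}$-type with equal invariants are deformation equivalent.

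\textbf{Effectiveness (part \ref{conj-item-effective}).} The strategy is to exhibit, for each of the numerically exceptional types listed in Definition \ref{def-numerically-prime-exceptional}, a single model $(X_0,L_0)$ on which the prescribed power $L_0^k$ is genuinely the class of a prime exceptional divisor—for instance a moduli space $M_H(v)$ of sheaves on a $K3$ surface carrying an explicit Brill--Noether or exceptional contraction, so that $\StructureSheaf{X_0}(E_0)\cong L_0^{k}$. Having one such model, I would verify that its invariants $(\div,rs)$ match the given type, so that $(X_0,L_0)$ and $(X,L)$ lie in the same deformation class. Proposition \ref{prop-generic-prime-exceptional} guarantees that prime-exceptionality is a stably generic condition, hence holds off a proper analytic subset of the base; the content of Torelli (Question \ref{thm-torelli}, or the weaker connectedness Question \ref{question-connectedness}) is precisely what lets me transport the prime-exceptional divisor from the model's deformation space into $D_e$ itself, concluding that $L^k$ is stably-prime-exceptional over a dense open subset of $D_e$.

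\textbf{Non-effectiveness (part \ref{conj-item-vanishing}).} For this direction I would argue by contraposition through the monodromy invariants. If $L$ were stably-$\RationalNumbers$-effective, then for some $k\neq 0$ the system $\linsys{L_t^k}$ is nonempty on all of $D_e$; specializing to a generic projective $X_t$ and applying Theorem \ref{cor-introduction} (a prime exceptional divisor forces $R_e$ to be a monodromy reflection) together with Theorem \ref{thm-2}, I would extract that the class must satisfy one of the four numerical profiles in Definition \ref{def-numerically-prime-exceptional}. The delicate point is passing from $\RationalNumbers$-effectiveness of an arbitrary multiple to the rigid numerical constraint: one writes the effective multiple as a sum of prime exceptional and movable parts via the divisorial Zariski decomposition, observes that only the negative-degree exceptional summand can contribute to the negative self-intersection of $e$, and checks that the invariant $rs(e)$ computed from the embedding $\iota:H^2(X,\Integers)\hookrightarrow\widetilde{\Lambda}$ (Theorem \ref{thm-a-natural-orbit-of-embeddings-of-H-2-in-Mukai-lattice}) must then fall into the numerically exceptional list. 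Again Torelli supplies the deformation connecting $(X,L)$ to a model where this analysis is explicit.

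The main obstacle I expect is the effectiveness half: producing a \emph{uniform} family of models realizing every numerically exceptional type as an honest prime exceptional divisor, and confirming the exact power $k$ (the $k=2$ versus $k=1$ dichotomy in the table) rather than merely some multiple. This is where the interplay between $\div(e,\bullet)$ and $rs(e)$ is sharpest—the case $\div(e,\bullet)=2n-2$, $rs(e)=\{1,n-1\}$ demands $L^2$, reflecting that $e$ itself is only $\RationalNumbers$-effective through its double—and where one must rely most heavily on the Torelli-based transport to replace case-by-case geometric constructions with a single deformation-equivalence argument per orbit.
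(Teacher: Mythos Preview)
Your approach to part \ref{conj-item-effective} matches the paper's: exhibit one projective model $(M,c)$ per value of the invariants $(\div(e,\bullet),rs(e))$ with an explicit prime exceptional divisor, invoke Proposition \ref{prop-introduction-Mon-2-orbit-is-determined-by-three-invariants} and Lemma \ref{lem-monodromy-invariants-and-deformation-equivalence} to identify deformation classes, and then use Proposition \ref{prop-main-question-on-deformation-equivalence} (which packages Proposition \ref{prop-generic-prime-exceptional} together with the Torelli-based connectedness) to propagate.

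Your argument for part \ref{conj-item-vanishing}, however, has a genuine gap. You propose to show that stable $\RationalNumbers$-effectiveness forces $e$ to be numerically exceptional by invoking Theorems \ref{cor-introduction} and \ref{thm-2} on a generic projective fiber. But those theorems only yield that $e$ is \emph{monodromy-reflective}---i.e.\ $(e,e)\in\{-2,2-2n\}$ with the divisibility constraint---which is already your hypothesis. They say nothing whatsoever about the value of $rs(e)$, and no amount of Zariski decomposition on a single fiber will pin it down: $rs(e)$ is a deformation invariant of the pair $(X,e)$ that is computed from the embedding $\iota$ into $\widetilde{\Lambda}$, independent of any effectivity. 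So your contraposition collapses to ``monodromy-reflective implies monodromy-reflective''.

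The paper closes this gap by a different mechanism. For each \emph{non}-numerically-exceptional value of the invariants it constructs (in section \ref{sec-non-effective}) a projective model $(M,c)$ on which $H^0(M,\LB^d)=0$ for every $d\neq 0$; the vanishing is forced by a birational involution $\iota$ of $M$ acting on $H^2$ as $R_c$ (Observation \ref{observation-not-Q-effective}). Now assume part \ref{conj-item-vanishing} fails for $(X,L)$: take $k$ of minimal $|k|$ with $h^0(X_t,L_t^k)>0$ for all $t\in D_e$, so on a generic fiber with cyclic Picard group $\linsys{L_t^k}$ consists of a single prime divisor; run the construction of Proposition \ref{prop-generic-prime-exceptional} to obtain a projective $(X_1,e_1)$, deformation equivalent to $(X,e)$, carrying a prime exceptional divisor of class $ke_1$. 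Since $(X_1,e_1)$ and $(M,c)$ share the same invariants, they are deformation equivalent---but Corollary \ref{cor-if-L-1-is-prime-exceptional-L-2-not-Q-effective-then-not-def-equiv} (which is where Torelli enters) says a prime-exceptional pair cannot be deformation equivalent to a pair with all cohomology groups $H^0(M,\LB^d)$ vanishing. This contradiction is the missing idea: you need the non-effective models with their involutions just as essentially as the effective ones.
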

Verbitsky  recently posted a proof of an affirmative answer to 
Question \ref{thm-torelli} \cite{verbitsky}.
Theorem \ref{thm-main-conjecture-follows-from-torelli}
is proven in section
\ref{sec-numerical-characterization-via-torelli}.}
%*********************
% End \WithoutTorelli
%*********************
\WithTorelli{Theorem
\ref{conj-exceptional-line-bundles}
is proven in section \ref{sec-numerical-characterization-via-torelli}.}
%*********************
% End \WithTorelli
%*********************
The proof relies both on the Torelli Theorem \cite{verbitsky}
and  the examples worked out in sections 
\ref{sec-examples} and \ref{sec-non-effective}. 
We exhibit an example of a pair $(X,L)$, for each 
possible value of the monodromy invariants $(e,e)$, $\div(e,\bullet)$, and $rs(e)$, and verify
\WithTorelli{Theorem}\WithoutTorelli{Conjecture}
\ref{conj-exceptional-line-bundles}
for $(X,L)$. 
%In particular, we verify that it is indeed of type A or B.
All values of the monodromy invariants are realized by 
examples where $X$ is a smooth and projective moduli space of sheaves 
on a $K3$ surface. See the table in section 
\ref{sec-numerical-characterization-via-torelli} 
for a reference to an example, for each value of the 
monodromy-invariants.

The vanishing in part \ref{conj-item-vanishing} 
of \WithTorelli{Theorem}\WithoutTorelli{Conjecture}
\ref{conj-exceptional-line-bundles}
is verified in the examples as follows. 
In all the examples of monodromy-reflective but
non-numerically-exceptional
line bundles considered in section \ref{sec-non-effective}, 
$X$ admits a birational involution $\iota: X\rightarrow X$,
inducing the reflection $R_{e}$. 

The following simple observation is proven in section \ref{sec-non-effective}.
\begin{observation}
\label{observation-not-Q-effective}
If $L$ is a monodromy reflective line bundle on $X$, and
there exists a bimeromorphic involution $\iota:X\rightarrow X$
inducing the reflection $R_{e}$, $e=c_1(L)$, then 
the line bundle $L$ is not $\RationalNumbers$-effective.
\end{observation}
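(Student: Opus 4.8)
The plan is to show that the reflection $R_e$, being induced by a bimeromorphic involution, cannot be realized by an effective divisor because its negativity would contradict the geometry of the involution. Suppose for contradiction that $L$ is $\RationalNumbers$-effective, so that $L^{\otimes m}\cong\StructureSheaf{X}(D)$ for some positive integer $m$ and some effective divisor $D$, with $[D]=me$ and $(me,me)<0$. First I would use that $\iota$ is a bimeromorphic map between smooth minimal (hence $K$-trivial) holomorphic symplectic manifolds, so $\iota$ is an isomorphism in codimension one and $\iota^*$ acts on $H^2(X,\Integers)$ as the given isometry $R_e$. The key property of $R_e$ is that it sends the class $e$ to $-e$, since $R_e(e)=e-\frac{2(e,e)}{(e,e)}e=-e$.

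Next I would analyze the image of the effective divisor $D$ under the involution. Because $\iota$ is an isomorphism in codimension one, it acts on divisor classes and sends effective divisors to effective divisors (strict transforms), so $\iota^*[D]$ is again represented by an effective divisor, namely the strict transform $D'$ of $D$. But $\iota^*[D]=R_e([D])=R_e(me)=-me=-[D]$. Thus both $[D]$ and $-[D]$ are classes of effective divisors. The plan is then to derive a contradiction from the negativity $(e,e)<0$: writing $[D]$ and $[D']$ as effective, their sum $[D]+[D']=0$ forces both to be trivial as classes, yet a nonzero effective divisor of negative Beauville-Bogomolov square cannot have a numerically trivial class. More carefully, I would pair with a K\"{a}hler class $h$: effectiveness gives $(h,[D])\geq 0$ and $(h,[D'])\geq 0$, while $[D']=-[D]$ forces $(h,[D])=0$, which for a nonzero effective divisor is impossible since the intersection of an effective divisor with the $(2n-1)$-st power of a K\"{a}hler class is strictly positive.

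The main obstacle is making the comparison between $\iota^*[D]$ and the strict transform rigorous, given that $\iota$ is only bimeromorphic rather than a genuine morphism. I would handle this by resolving the indeterminacy of $\iota$ via a common smooth modification $p,q:\widetilde{X}\rightarrow X$ with $q=\iota\circ p$ as rational maps, and then comparing $p^*[D]$ with $q^*[D]$. Since $X$ has trivial canonical class and the symplectic form is preserved, the exceptional divisors of $p$ and $q$ coincide, so on $H^2$ we genuinely recover $\iota^*=R_e$ without correction terms; pushing the effective divisor $p^*D$ down by $q$ yields an effective divisor in the class $R_e([D])=-[D]$. This codimension-one analysis, standard for bimeromorphic maps of minimal symplectic manifolds, is the crux; once it is in place the positivity pairing with $h$ closes the argument immediately, and the same reasoning shows no multiple $L^{\otimes m}$ can be effective, which is precisely the assertion that $L$ fails to be $\RationalNumbers$-effective.
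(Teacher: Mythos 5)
Your proof is correct and follows essentially the same route as the paper: both arguments rest on the fact that a bimeromorphic map between holomorphic symplectic manifolds is an isomorphism in codimension one, so that $\iota$ transports (a multiple of) $L$ to a line bundle of class $R_e(me)=-me$, making both $me$ and $-me$ effective, which is absurd. The paper phrases this more compactly — restricting $\iota$ to an open subset with codimension $\geq 2$ complement (O'Grady, Prop.\ 1.6.2) to get $\iota^*L\cong L^{-1}$, leaving the final positivity contradiction implicit — whereas you realize the same mechanism via a common resolution and make the K\"ahler-class pairing explicit.
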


\subsection{Cones}
Let $X$ be a projective irreducible holomorphic symplectic manifold.
Set $N^1(X):=H^{1,1}(X,\Integers)\otimes_\Integers\RealNumbers$ and let $\C_X^{1,1}$ 
be the connected component of the cone $\{\lambda\in N^1(X) \  : \ (\lambda,\lambda)>0\}$,
which contains the ample cone. Denote by $\overline{\C}_X^{1,1}$ its closure. 
A divisor $D$ on $X$ is called {\em movable}, if the base locus of the linear system $\linsys{D}$
has codimension $\geq 2$ in $X$. Denote by
$\MV_X$ the convex cone in $N^1(X)$ generated by classes of movable divisors. 
Let $\overline{\MV}_X$ be its closure in $N^1(X)$. Then $\overline{\MV}_X$
is equal to the sub-cone of $\overline{\C}_X^{1,1}$, 
consisting of classes $\lambda$, such that $(\lambda,[E])\geq 0$, for every 
prime exceptional divisor $E$ \cite[Lemma 6.22]{boucksom,markman-torelli}. 

The closure of the movable cone can be described also in terms of the set of stably-prime-exceptional divisors.
$\overline{\MV}_X$  is the sub-cone of $\overline{\C}_X^{1,1}$, 
consisting of classes $\lambda$, such that $(\lambda,e)\geq 0$, for every 
stably-prime-exceptional class $e$ \cite[Theorem 6.17 and Lemma 6.22]{markman-torelli}. 
Hence, Theorem \ref{conj-exceptional-line-bundles} above determines the closure of the movable cone. 
Furthermore, a stably-prime-exceptional class $e$ is prime exceptional, if and only if 
the hyperplane orthogonal to $e$ intersects $\overline{\MV}_X$ along a face 
of codimension one of the latter \cite[Lemma 6.20]{markman-torelli}. 
In this sense Theorem \ref{conj-exceptional-line-bundles} determines the set of 
classes of prime exceptional divisors. 

%**********************************************
%
%**********************************************
\subsection{The structure of the paper}
The paper is organized as follows. 
In section \ref{sec-easy-examples} we provide a sequence of easy examples 
of monodromy-reflective line bundles on moduli spaces of sheaves on $K3$ 
surfaces. We calculate their invariants, and determine whether or not they are
effective, illustrating 
%***************
% With Torelli
%***************
\WithTorelli{Theorem}
%***************
% Without Torelli
%***************
\WithoutTorelli{Conjecture}
\ref{conj-exceptional-line-bundles}.

In section \ref{sec-monodromy-reflection} we prove 
Theorem \ref{cor-introduction} stating 
that associated to a prime exceptional divisor $E$ is 
a monodromy involution of the integral cohomology 
$H^*(X,\Integers)$, which acts on the second cohomology lattice
as the reflection by the cohomology class $[E]$ of $E$ 
(Corollary \ref{cor-1}).
In section \ref{sec-degrees-2-and-2-2n}
we specialize to the $K3^{[n]}$-type case, $n\geq 2$,
and prove Theorem \ref{thm-2} about the possible degrees of prime exceptional
divisors.

Let $(X_i,E_i)$, $i=1,2$, be two pairs, each consisting of an 
irreducible holomorphic symplectic manifold $X_i$, 
and a prime exceptional divisor $E_i$.
Let $e_i\in H^{2}(X,\Integers)$ be the class of $E_i$.
In section \ref{sec-deformation-equivalence}
we define two notions of deformation equivalence:

(1) Deformation equivalence of the two pairs $(X_i,E_i)$, $i=1,2$.

(2) Deformation equivalence of the two pairs $(X_i,e_i)$, $i=1,2$.

\noindent
We relate these two notions via Torelli.
%This relationship motivates Conjecture 
%\ref{conj-type-determined-by-Mon-orbit}. 

%Let $\widetilde{\Lambda}$ be the Mukai lattice, i.e., 
%the orthogonal direct sum of $2$ copies of the negative definite $E_8$
%lattice and $4$ copies of the rank two unimodular hyperbolic lattice. 
In section \ref{sec-Mukai} we return to the case where $X$ is of 
$K3^{[n]}$-type, $n\geq 2$.
We associate, to each monodromy-reflective 
class $e\in H^2(X,\Integers)$ of degree $2-2n$, 
an isometry class of a pair $(\widetilde{L},\tilde{e})$, consisting of 
%$O(\widetilde{\Lambda})$-orbit 
a rank $2$ integral lattice $\widetilde{L}$ of signature $(1,1)$ and a primitive class
$\tilde{e}\in \widetilde{L}$, with $(\tilde{e},\tilde{e})=2-2n$.
The isometry class of the pair $(\widetilde{L},\tilde{e})$ is a 
monodromy-invariant, denoted by $f(X,e)$.
%The construction relies on Theorem 1.10 in \cite{markman-constraints}.
%The isometry class of the lattice $\widetilde{\Lambda}$
%is determined by $n$ and $\div(e,\bullet)$.
In section \ref{sec-invariant-rs} we calculate the monodromy-invariant
$f(X,e)$ explicitly as the function $rs$ in Proposition
\ref{prop-introduction-Mon-2-orbit-is-determined-by-three-invariants}.
We then prove Proposition 
\ref{prop-introduction-Mon-2-orbit-is-determined-by-three-invariants}.
%(Lemmas \ref{lem-faithful-Mon-invariant-in-case-divisibility-2n-2} and
%\ref{lem-non-unimodular-rank-two-lattice}).

In section \ref{sec-numerical-characterization-via-torelli}
we prove 
\WithTorelli{Theorem \ref{conj-exceptional-line-bundles}, which
provides a numerical characterization of exceptional classes.}
\WithoutTorelli{Theorem \ref{thm-main-conjecture-follows-from-torelli}
stating that the numerical characterization of exceptional classes,  
suggested in Conjecture 
\ref{conj-exceptional-line-bundles}, 
follows from a version of Torelli.}

Sections \ref{sec-conditions-for-existence-ofslope-stable-vector-bundles},
\ref{sec-examples}, and \ref{sec-non-effective} are devoted to
examples of monodromy-reflective line bundles over moduli
spaces of stable sheaves.
In section \ref{sec-conditions-for-existence-ofslope-stable-vector-bundles} 
we study the exceptional locus of 
Jun Li's morphism from certain  moduli spaces, of Gieseker-Maruyama 
$H$-stable sheaves on a $K3$-surface, to 
the Uhlenbeck-Yau compactifications of the moduli spaces of
$H$-slope stable vector-bundles.

In section \ref{sec-examples} we exhibit an example of
a prime exceptional divisor, for each value of the 
invariants of a monodromy-reflective line bundle $L$, 
for which $L$ is stated to be stably-$\RationalNumbers$-effective
in \WithTorelli{Theorem}\WithoutTorelli{Conjecture}
\ref{conj-exceptional-line-bundles}.

In section \ref{sec-non-effective} we exhibit an example of
a monodromy-reflective line bundle $L$, which is
not $\RationalNumbers$-effective, for each value of the 
invariants for which $L$ is stated not
to be $\RationalNumbers$-effective 
in \WithTorelli{Theorem}\WithoutTorelli{Conjecture}
\ref{conj-exceptional-line-bundles}.

%******************
% Hide
%******************
\hide{
In section \ref{sec-zariski-decomposition} we consider pairs $(X,L)$,
with $X$ of $K3^{[n]}$-type and $L$ a line bundle
of negative Beauville-Bogomolov degree, which is {\em not}
monodromy-reflective. We review first the 
Zariski decomposition of effective divisors, 
due to Boucksom \cite{boucksom}. Theorem \ref{thm-2},
and the existence of a divisorial Zariski decomposition, 
imply that $L$ is not $\RationalNumbers$-effective, for
a generic such pair $(X,L)$  (Lemma \ref{lemma-generic-vanishing}).
%******************
% End Hide
%******************
}

%***************************************************************
%
%***************************************************************
\section{Easy examples of monodromy-reflective line bundles}
\label{sec-easy-examples}
In section \ref{sec-Mukai-notation} 
we review basic facts about moduli spaces of coherent sheaves
on $K3$ surfaces.
In section \ref{sec-sequence-of-examples} 
we briefly describe a sequence of examples of pairs $(X,e)$,
with $X$ of $K3^{[n]}$-type, $e$ a monodromy-reflective class of
degree $2-2n$ with $\div(e,\bullet)=2n-2$, for each $n\geq 2$,
and for each value of the invariant $rs$. For details, 
references, as well as for examples of degree $-2$, or with 
$\div(e,\bullet)=n-1$, see sections
\ref{sec-examples} and \ref{sec-non-effective}.

%***************************************************************
%
%***************************************************************
\subsection{The Mukai isomorphism}
\label{sec-Mukai-notation}
The group $K(S)$, endowed with the {\em Mukai pairing}
\[
(v,w) \ \ := \ \ -\chi(v^\vee\otimes w), 
\]
is called the 
{\em Mukai lattice}. Let us recall Mukai's notation for elements of $K(S)$.
Identify the group $K(S)$ with $H^*(S,\Integers)$, via the 
isomorphism  sending a class $F$ to its {\em Mukai vector} 
$ch(F)\sqrt{td_S}$. Using the grading of $H^*(S,\Integers)$, 
the Mukai vector is 
\begin{equation}
\label{eq-Mukai-vector}
(\rank(F),c_1(F),\chi(F)-\rank(F)),
\end{equation} 
where the rank is considered in
$H^0$ and $\chi(F)-\rank(F)$ in $H^4$ via multiplication by the 
orientation class of $S$. The homomorphism 
$ch(\bullet)\sqrt{td_S}:K(S)\rightarrow H^*(S,\Integers)$ 
is an isometry with respect 
to the Mukai pairing on $K(S)$ and the pairing 
\[
\left((r',c',s'),(r'',c'',s'')\right) \ \ = \ \ 
\int_{S}c'\cup c'' -r'\cup s''-s'\cup r''
\]
on $H^*(S,\Integers)$ (by the Hirzebruch-Riemann-Roch Theorem). 
For example, $(1,0,1-n)$ is the Mukai vector in $H^*(S,\Integers)$, of the
ideal sheaf of a length $n$ subscheme. 
Mukai defines a weight $2$ Hodge structure on the Mukai lattice 
$H^*(S,\Integers)$, 
and hence on $K(S)$, by extending that of $H^2(S,\Integers)$, 
so that the direct summands $H^0(S,\Integers)$ and $H^4(S,\Integers)$
are of type $(1,1)$.

Let $v\in K(S)$ be a primitive class with $c_1(v)$ of Hodge-type $(1,1)$.
There is a system of hyperplanes in the ample cone of $S$, called $v$-walls,
that is countable but locally finite \cite{huybrechts-lehn-book}, Ch. 4C.
An ample class is called {\em $v$-generic}, if it does not
belong to any $v$-wall. Choose a $v$-generic ample class $H$. 
Let $M_H(v)$ be the moduli space of $H$-stable  
sheaves on the $K3$ surface $S$ with class $v$.
When non-empty, the moduli space 
$M_H(v)$ is a smooth projective irreducible holomorphic symplectic variety
of $K3^{[n]}$ type, with $n=\frac{(v,v)+2}{2}$.
This result is due to several people, including 
Huybrechts, Mukai, O'Grady, and Yoshioka. It can be found in its final form in
\cite{yoshioka-abelian-surface}.

Over $S\times M_H(v)$ there exists a universal sheaf
$\F$, possibly twisted with respect to a non-trivial 
Brauer class pulled-back from $M_H(v)$.
Associated to $\F$ is a class $[\F]$ in $K(S\times M_H(v))$
(\cite{markman-integral-generators}, Definition 26).
Let $\pi_i$ be the projection from $S\times M_H(v)$ onto the $i$-th factor. 
Assume that $(v,v)>0$. 
The second integral cohomology $H^2(M_H(v),\Integers)$, its Hodge structure, 
and its Beauville-Bogomolov pairing, are all described by 
Mukai's Hodge-isometry
\begin{equation}
\label{eq-Mukai-isomorphism}
\theta \ : \ v^\perp \ \ \  \longrightarrow \ \ \  H^2(M_H(v),\Integers),
\end{equation}
given by $\theta(x):=c_1\left(\pi_{2_!}\{\pi_1^!(x^\vee)\otimes [\F]\}\right)$
(see \cite{yoshioka-abelian-surface}). 

Let $\widetilde{\Lambda}$ be the unimodular lattice 
$E_8(-1)^{\oplus 2}\oplus U^{\oplus 4}$, where $U$ is the rank two unimodular 
hyperbolic lattice. 
$\widetilde{\Lambda}$ is isometric 
to the Mukai lattice of a $K3$ surface. 
Let $X$ be an irreducible holomorphic symplectic 
manifold of $K3^{[n]}$-type, $n\geq 2$.
Recall that $X$ comes with a natural choice of an 
$O(\widetilde{\Lambda})$-orbit
of primitive isometric embeddings of $H^2(X,\Integers)$ 
in $\widetilde{\Lambda}$, by
Theorem \ref{thm-a-natural-orbit-of-embeddings-of-H-2-in-Mukai-lattice}.

\begin{thm}
\label{thm-item-orbit-of-inverse-of-Mukai-isom-is-natural}
(\cite{markman-constraints}, Theorem 1.14).
When $X$ is isomorphic to the moduli space 
$M_H(v)$, of $H$-stable sheaves on a $K3$ surface of class $v\in K(S)$,
then the above mentioned $O(\widetilde{\Lambda})$-orbit is 
that of the composition
\begin{equation}
\label{eq-iota-for-a-moduli-space}
H^2(M_H(v),\Integers) \LongRightArrowOf{\theta^{-1}}v^\perp
\subset K(S) \cong \widetilde{\Lambda},
\end{equation}
where $\theta^{-1}$ is the inverse of the Mukai isometry given in
(\ref{eq-Mukai-isomorphism}).
\end{thm}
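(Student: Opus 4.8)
The plan is to identify the two $O(\widetilde{\Lambda})$-orbits by exhibiting each of them as a parallel-transport-invariant assignment $X \mapsto (\text{orbit})$, and then matching the two invariant sections at a single convenient base point. The orbit furnished by Theorem \ref{thm-a-natural-orbit-of-embeddings-of-H-2-in-Mukai-lattice} is, by construction, natural under parallel transport: if $X_0$ and $X_1$ are fibers of a family of irreducible holomorphic symplectic manifolds and $PT:H^2(X_0,\Integers)\to H^2(X_1,\Integers)$ is the induced isometry, then $\iota\in O(\Lambda,\widetilde{\Lambda})$ lies in the natural orbit of $X_0$ if and only if $\iota\circ PT^{-1}$ lies in the natural orbit of $X_1$. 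It therefore suffices to prove that the assignment $M_H(v)\mapsto [\theta^{-1}]$ enjoys the same compatibility, and that the two assignments agree for one moduli space.

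First I would dispose of the base case $v_0=(1,0,1-n)$, for which $M_H(v_0)=S^{[n]}$ and $v_0^\perp = H^2(S,\Integers)\oplus\Integers(1,0,n-1)$, the generator $(1,0,n-1)$ having square $2-2n$. Here $\theta^{-1}$ recovers the standard decomposition $H^2(S^{[n]},\Integers)\cong H^2(S,\Integers)\oplus\Integers\delta$, with $2\delta$ the class of the diagonal and $\theta((1,0,n-1))=\pm\delta$, and one checks directly that the resulting embedding into $K(S)\cong\widetilde{\Lambda}$ represents the natural orbit of Theorem \ref{thm-a-natural-orbit-of-embeddings-of-H-2-in-Mukai-lattice} for the Hilbert scheme.

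Next I would show that the Mukai orbit is invariant under both monodromy and parallel transport. The operators I would use are of three geometric types: (i) monodromy arising from deformations of the underlying $K3$ surface $S$, which act on $K(S)$ by isometries of $\widetilde{\Lambda}$ fixing $v$; (ii) Fourier--Mukai and more general derived equivalences, whose induced action on $K(S)$ is an isometry carrying $v$ to $\pm v'$ and hence $v^\perp$ to $(v')^\perp$; and (iii) parallel transport between different presentations $M_H(v)$ and $M_{H'}(v')$ with $(v,v)=(v',v')=2n-2$. In each case the operator is, through the Mukai isometry $\theta$, the restriction of an honest isometry of $K(S)\cong\widetilde{\Lambda}$, so $\theta^{-1}\circ g$ lies in the $O(\widetilde{\Lambda})$-orbit of $\theta^{-1}$ and the Mukai orbit is preserved. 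Combined with the base-case computation and the transitivity of these operators on primitive classes of square $2n-2$, this identifies $[\theta^{-1}]$ with the natural orbit for every $M_H(v)$.

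The main obstacle is step (iii) and the flatness claim underlying it: I must show that $\theta$ intertwines the Gauss--Manin connection on the local system $R^2\pi_*\Integers$ of a family of moduli spaces with the constant lattice $v^\perp$, i.e.\ that $\theta$ varies as a \emph{flat} section as $(S,H,v)$ deforms and as one passes between derived-equivalent presentations. This demands a careful analysis of a universal (possibly twisted) sheaf over the total space of such a family, of the behavior of the class $[\F]$ and of $\theta(x)=c_1(\pi_{2_!}\{\pi_1^!(x^\vee)\otimes[\F]\})$ under deformation, and in particular of the compatibility of the twisted universal sheaf with parallel transport. Verifying this compatibility, and checking that the three types of operators above generate enough of $Mon^2(M_H(v))$ to connect an arbitrary $M_H(v)$ to the Hilbert scheme, is where the real work lies.
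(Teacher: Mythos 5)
This statement is not proved in the paper at all: it is quoted verbatim from \cite{markman-constraints}, Theorem 1.12, so there is no in-text argument to compare against. Your strategy, however, is essentially the one carried out in that reference, and it is the ``meta-principle'' the author spells out immediately after the theorem: a construction performed uniformly for all moduli spaces of sheaves on $K3$ surfaces and invariant under derived equivalences is monodromy-invariant, and the monodromy-invariance of the Mukai orbit uniquely characterizes the natural orbit of Theorem \ref{thm-a-natural-orbit-of-embeddings-of-H-2-in-Mukai-lattice}. Your base case at $v_0=(1,0,1-n)$, where $\theta$ recovers Beauville's decomposition $H^2(S^{[n]},\Integers)\cong H^2(S,\Integers)\oplus\Integers\delta$, is also the anchor point used there.

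That said, as written this is a program rather than a proof. The two items you defer --- (a) that $\theta^{-1}$ varies flatly in families of moduli spaces, i.e.\ that the class of the (possibly twisted) universal sheaf is compatible with the Gauss--Manin connection so that parallel transport intertwines the Mukai isometries, and (b) that deformations of $(S,H,v)$ together with derived-equivalence isometries suffice to connect an arbitrary $M_H(v)$ to $S^{[n]}$ --- are not loose ends; they are the entire content of the cited theorem (and of Yoshioka's deformation-equivalence results on which it rests). One further point deserves care: Theorem \ref{thm-a-natural-orbit-of-embeddings-of-H-2-in-Mukai-lattice} as quoted here only asserts invariance of the natural orbit under $Mon^2(X)$ for a fixed $X$, whereas your argument needs its compatibility with parallel transport between \emph{different} fibers; that stronger naturality is true but comes from the construction of the orbit in \cite{markman-constraints}, not from the statement reproduced in this paper. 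So the route is the right one and matches the source, but the proof is complete only modulo importing precisely the results you would be trying to establish.
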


The combination of Theorems 
\ref{thm-a-natural-orbit-of-embeddings-of-H-2-in-Mukai-lattice} and 
\ref{thm-item-orbit-of-inverse-of-Mukai-isom-is-natural}
is an example of the following meta-principle guiding our study of the 
monodromy of holomorphic symplectic varieties of $K3^{[n]}$-type.

\smallskip
{\em Any topological construction, 
which can be performed uniformly and naturally 
for all smooth and compact moduli spaces of sheaves on any $K3$ surface $S$,
and which is invariant under symmetries induced by equivalences of 
derived categories of $K3$-surfaces,
is monodromy-invariant.
}

%**********************************************
%
%**********************************************
\subsection{A representative sequence of examples}
\label{sec-sequence-of-examples}
Let $S$ be a projective $K3$ surface with a cyclic Picard group generated by an
ample line bundle $H$.
Fix integers $r$ and $s$ satisfying $s\geq r\geq 1$, and $\gcd(r,s)=1$.
Let $X$ be the moduli space $M_H(r,0,-s)$.
%of Gieseker-Maruyama $H$-stable sheaves of rank $r$, trivial determinant, and 
%Euler-characteristic $r-s$. 
Then $X$ is a projective irreducible 
holomorphic symplectic manifold of $K3^{[n]}$-type with $n=1+\nolinebreak rs$
\cite{yoshioka-abelian-surface}. 
Set $e:=\theta(r,0,s)$.
The weight two integral Hodge structure
$H^2(M_H(r,0,-s),\Integers)$ is Hodge-isometric
%\footnote{In spite
%of them being pairwise Hodge isometric, for different
%factorizations $n-1=rs$,  
%these moduli spaces are pairwise {\em not} birational, under
%some condition on the degree of $H$ 
%(\cite{markman-constraints}, Proposition 4.10 and Lemma 4.11).
%}
to the orthogonal direct sum $H^2(S,\Integers)\oplus \Integers e$,
and the class 
$e$ is monodromy-reflective of Hodge-type $(1,1)$, $(e,e)=2-2n$, 
$\div(e,\bullet)=2n-2$, and $rs(e)=\{r,s\}$. 
%The class $e$
%corresponds, via Mukai's isometric embedding
%$\iota:H^2(M_H(r,0,-s),\Integers)\rightarrow K(S)$, to 
%the class in $K(S)$ of rank $r$, $c_1=0$, and Euler-characteristic $r+s$
%(see section \ref{sec-Mukai-notation}).

When $r=1$, then $X=S^{[n]}$ is the Hilbert scheme. Let $E\subset S^{[n]}$
be the big diagonal. Then $E$ is a prime divisor, which is the
exceptional locus of the Hilbert-Chow morphism $\pi:S^{[n]}\rightarrow S^{(n)}$
onto the $n$-th symmetric product of $S$. The equality $e=\frac{1}{2}[E]$
was proven in \cite{beauville}.
%Hence $e$ is of type A, in the sense of Definition \ref{def-type-A-or-B}.

When $r=2$, let $E\subset M_H(2,0,-s)$ be the locus
of $H$-stable sheaves which are not locally free.
Then $E$ is a prime divisor, which is the exceptional locus of 
Jun Li's morphism from $M_H(2,0,-s)$ onto the Uhlenbeck-Yau compactification
of the moduli space of $H$-slope-stable vector bundles of that class.
The equality $e=[E]$ holds, by Lemma \ref{lemma-class-of-exceptional-locus}.
%Hence, $e$ is of type A.

When $r\geq 3$, let $Exc\subset M_H(r,0,-s)$ be the locus of 
$H$-stable sheaves, which are not locally free or not
$H$-slope-stable. Then $Exc$ is a closed algebraic subset of 
$M_H(r,0,-s)$ of codimension $\geq 2$, by Lemma
\ref{lem-codimension-of-Exc}. Jun Li's morphism is thus not a 
divisorial contraction. Set $U:=X\setminus Exc$. 
%be the complementary open subset of $X:=M_H(r,0,-s)$.
Let $\iota:U\rightarrow U$ be the regular involution, which sends
a locally free $H$-slope-stable sheaf $F$ to the dual sheaf $F^*$.
The restriction homomorphism from $H^2(X,\Integers)$
to $H^2(U,\Integers)$ is an isomorphism, and the induced involution
$\iota^*$ of $H^2(X,\Integers)$ is the reflection by the 
class $e$, by Proposition \ref{prop-vanishing-in-divisibility-2n-2}.
The class $e$ is thus not $\RationalNumbers$-effective, by Observation
\ref{observation-not-Q-effective}.

%***************************************************************
%
%***************************************************************
\section{The monodromy reflection of a prime exceptional divisor}
\label{sec-monodromy-reflection}
Let $X$ be a projective irreducible holomorphic symplectic manifold
and $E$ a reduced and irreducible divisor with negative Beauville-Bogomolov
degree. The following result is due to S. Bouksom and S. Druel.

\begin{prop}
\label{prop-druel}
(\cite{druel}, Proposition 1.4)
There exists a sequence of flops of $X$,
resulting in a smooth birational model $X'$ of $X$,
such that the strict transform $E'$ of $E$ in $X'$ is contractible
via a projective birational 
morphism $\pi:X' \rightarrow Y$ to a normal projective variety 
$Y$. The exceptional locus of $\pi$ is equal to the support of $E'$.
\end{prop}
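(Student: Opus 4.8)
The plan is to produce the contraction by running a minimal model program for the pair $(X,\epsilon E)$, exploiting that the canonical class of a holomorphic symplectic manifold is trivial. First I would record the two structural inputs. Since $\StructureSheaf{X}$ is the canonical bundle, $K_X\equiv 0$, so for every sufficiently small rational $\epsilon>0$ the pair $(X,\epsilon E)$ is klt, and $K_X+\epsilon E\equiv \epsilon E$ is effective, hence pseudo-effective; in particular the program cannot terminate in a Mori fiber space. The second input is Boucksom's divisorial Zariski decomposition \cite{boucksom}: a prime divisor $E$ of negative Beauville-Bogomolov degree $(E,E)<0$ is rigid, its decomposition is purely negative, so $E$ coincides with the negative part $N(E)$, the positive part vanishes, and the numerical dimension of $E$ is zero.

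Next I would run a $(K_X+\epsilon E)$-MMP with scaling of an ample divisor. Since $K_X\equiv 0$, every contracted extremal ray $R$ satisfies $(K_X+\epsilon E)\cdot R=\epsilon\,E\cdot R<0$, so this is literally an $E$-MMP. Each small modification is a flip for $K_X+\epsilon E$; but $K_X\equiv 0$ forces these flips to be isomorphisms in codimension one preserving the trivial canonical class and the holomorphic two-form, i.e.\ they are flops between smooth holomorphic symplectic manifolds. This produces the sequence of flops and a smooth model $X'$, with $E'$ the strict transform of $E$.

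The heart of the argument is to see that the program contracts exactly $E'$, divisorially, and that this is the final step. By termination the program ends on a model with $K+\epsilon E'$ nef. If the strict transform $E'$ were nonzero there, then, the numerical dimension being preserved along the MMP and equal to zero, $\epsilon E'$ would be a nef class of numerical dimension zero, hence numerically trivial; but flops induce isometries of $H^2(X,\Integers)$, so $(E',E')=(E,E)<0$, contradicting $E'\equiv 0$. Hence the strict transform of $E$ must have been contracted along the way, and the step achieving this is a divisorial contraction $\pi:X'\to Y$; once it is performed the boundary becomes zero and $K\equiv 0$ is already nef, so the program halts and $\pi$ is the last step, preceded only by flops. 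That $E'$ and nothing else is contracted follows because $E$ is the full negative part $N(E)$ of its own Zariski decomposition, hence the unique prime divisor met by the $E$-negative rays. Being a divisorial contraction, $\pi$ is a projective birational morphism onto a normal projective variety $Y$ whose exceptional locus is precisely the support of $E'$.

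The main obstacle is the second paragraph: justifying that the $E$-MMP with scaling runs and terminates although the boundary $\epsilon E$ is \emph{not} big, so that the most basic form of \cite{BCHM} does not apply directly. I expect to handle non-bigness by perturbing with a small ample divisor $A$ and running the $(K_X+\epsilon E+\delta A)$-MMP, for which BCHM applies, and then letting $\delta\to 0$ while controlling the contracted rays; and to pin down that $E'$ is the unique contracted divisor occurring last, using Boucksom's identification of $E$ with $N(E)$ together with the dichotomy that a nef class on $X$ has nonnegative Beauville-Bogomolov degree.
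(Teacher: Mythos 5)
The paper does not prove this Proposition at all: it is imported verbatim from Druel (\cite{druel}, Proposition 1.4), and the only content the paper adds is the observation, via \cite{boucksom}, Theorem 4.5, that for a \emph{prime} divisor the hypothesis ``exceptional'' in Boucksom's sense is equivalent to negative Beauville--Bogomolov degree. So your proposal is not competing with an argument in the paper; it is a reconstruction of Druel's own proof, and in outline it is the right one: run a $(K_X+\epsilon E)$-MMP for the klt pair $(X,\epsilon E)$, note that triviality of $K_X$ makes every contracted ray $E$-negative and every flip a flop, use $E=N(E)$ (equivalently $\nu(\epsilon E)=0$) to force the strict transform to be contracted rather than to survive as a nef class of negative square, and observe that after the divisorial contraction the boundary is empty so the program stops.

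Two steps in your sketch are genuine gaps rather than routine verifications. First, termination. You correctly flag that $\epsilon E$ is not big, but ``perturb by $\delta A$ and let $\delta\to 0$ while controlling the contracted rays'' is not yet an argument: the $(K_X+\epsilon E+\delta A)$-MMP may contract different rays for different $\delta$, and there is no a priori bound on the number of flips as $\delta\to 0$. Establishing that the MMP with scaling terminates for a klt pair whose boundary has numerical dimension zero is precisely the technical contribution of \cite{druel} (using Nakayama's $\sigma$-decomposition together with \cite{BCHM}); it cannot be waved through. Second, smoothness of $X'$. A flop of a smooth variety is in general only $\RationalNumbers$-factorial terminal; the assertion that the flips ``are flops between smooth holomorphic symplectic manifolds'' needs the additional input that a projective $\RationalNumbers$-factorial terminal symplectic variety which is isomorphic in codimension one to a smooth projective symplectic manifold is itself smooth (a theorem of Namikawa), or some equivalent argument. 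With those two points supplied and cited, your outline matches Druel's proof; the remaining claims (the final step is a single divisorial contraction with exceptional locus exactly $\mathrm{Supp}(E')$, since $E'$ is the only divisor negative on the contracted rays, and no further steps occur because $K\equiv 0$ is nef) are correct as you state them.
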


The divisor $E$ is assumed to be {\em exceptional}, rather than to have
negative Beauville-Bogomolov degree, in the 
statement of Proposition 1.4 in \cite{druel}. 
The technical term exceptional is in the sense of \cite{boucksom},
Definition 3.10, and is a precise measure of rigidity.
Boucksom characterized exceptional divisors on irreducible holomorphic
symplectic varieties by the following property, which we will use
as a definition (\cite{boucksom}, Theorem 4.5). 

\begin{defi}
\label{def-rational-exceptional}
A rational divisor $E\in Div(X)\otimes_\Integers\RationalNumbers$ is 
{\em exceptional}, if $E=\sum_{i=1}^k n_i E_i$, 
with positive rational coefficients $n_i$, prime divisors $E_i$, 
and a negative definite Gram-matrix $([E_i],[E_j])$. 
\end{defi}

%We will use this characterization as the definition of exceptional divisors 
%in Definition \ref{def-divizorial-Zariski-decomposition} below.
In particular, a prime divisor is exceptional, if and only if it
has negative Beauville-Bogomolov degree. 
Hence, we may replace in the above Proposition the hypothesis that $E$ is 
exceptional by the hypothesis that $E$ has negative Beauville-Bogomolov 
degree.

\begin{defi}
\label{def-exceptional}
A primitive class $e\in H^2(X,\Integers)$ is {\em (prime) exceptional} 
if some positive multiple of $e$ is the class of a 
(prime) exceptional divisor. A line bundle $L\in Pic(X)$ is 
{\em (prime) exceptional}, if $c_1(L)$ is.
\end{defi}

%Namikawa proved that the Kuranishi deformation spaces
%$Def(X')$ and $Def(Y)$ are both smooth, of the same dimension, 
%and $\pi$ induces a finite branched cover $f:Def(X')\rightarrow Def(Y)$
%\cite{namikawa}.
Let $Def(X')$ and $Def(Y)$ be the Kuranishi deformation spaces of $X'$ and $Y$.
Denote by $\psi:\X\rightarrow Def(X')$ 
the semi-universal deformation of $X'$, by $0\in Def(X')$ the point
with fiber $X'$, and let $X_t$ be the fiber over $t\in Def(X')$. 
Let $\bar{\psi}:\Y\rightarrow Def(Y)$ 
be the semi-universal deformation of $Y$, $\bar{0}\in Def(Y)$
its special point with fiber $Y$, and 
$Y_t$ the fiber over $t\in Def(Y)$.

The variety $Y$ necessarily has rational Gorenstein singularities, by
\cite{beuville-symplectic-singularities}, Proposition 1.3.
The morphism $\pi:X'\rightarrow Y$ deforms as a morphism $\nu$ of the 
semi-universal families, which fits in a commutative diagram 
\begin{equation}
\label{diagram-f-general}
\begin{array}{ccc}
\X & \RightArrowOf{\nu} & \Y
\\
\psi \ \downarrow \hspace{1ex} & & \bar{\psi} \ \downarrow \ \hspace{1ex}
\\
Def(X') & \RightArrowOf{f} & Def(Y),
\end{array}
\end{equation}
by \cite{kollar-mori}, Proposition 11.4.
The following is a fundamental theorem of Namikawa:

\begin{thm}
\label{thm-namikawa}
(\cite{namikawa}, Theorem 2.2)
The Kuranishi deformation spaces
$Def(X')$ and $Def(Y)$ are both smooth of the same dimension. 
They can be replaced by open neighborhoods of $0\in Def(X')$
and $\bar{0}\in Def(Y)$, and denoted again by $Def(X')$ and $Def(Y)$, 
in such a way that 
there exists a natural proper surjective map
$f:Def(X')\rightarrow Def(Y)$ with finite fibers.
Moreover, for a generic point $t\in Def(X')$, 
$Y_{f(t)}$ is isomorphic to $X_t$. 
\end{thm}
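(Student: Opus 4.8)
The statement is Namikawa's, and I would reconstruct its proof in four logically separable steps: unobstructedness of $Def(X')$, unobstructedness of $Def(Y)$, equality of the two dimensions, and the analysis of the comparison map $f$ together with the generic smoothing of $Y$. For the first step, since $X'$ is an irreducible holomorphic symplectic manifold, the Bogomolov--Tian--Todorov theorem yields the unobstructedness of its deformations, so $Def(X')$ is smooth of dimension $h^1(X',T_{X'})$; contracting the everywhere non-degenerate two-form $\sigma$ gives an isomorphism $T_{X'}\cong\Omega^1_{X'}$, whence $\dim Def(X')=h^1(X',\Omega^1_{X'})=h^{1,1}(X')$.

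The smoothness of $Def(Y)$ is the technical heart, and here I would follow Namikawa \cite{namikawa}. Writing $Y_k$ for the restriction of a deformation of $Y$ to $\mathrm{Spec}\,\ComplexNumbers[t]/(t^{k+1})$, the $T^1$-lifting criterion reduces unobstructedness to the surjectivity of the restriction map $T^1(Y_k)\rightarrow T^1(Y_{k-1})$ on first cotangent cohomology, for every $k$. I would deduce this surjectivity from a cohomology-vanishing statement on $Y$: the extension of $\sigma$ across the singular locus trivialises the dualising sheaf and identifies the relevant obstruction group with a graded piece of the cohomology of reflexive differentials, which vanishes by a Hodge-theoretic argument carried out after passing to $X'$. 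The rational Gorenstein singularity hypothesis, through $R\pi_*\StructureSheaf{X'}=\StructureSheaf{Y}$, is exactly what permits transporting these Hodge-theoretic computations between $X'$ and $Y$.

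Both Kuranishi spaces now being smooth, their dimensions equal those of their tangent spaces, so it suffices to prove $\dim T^1(Y)=h^1(X',T_{X'})$. I would establish this by comparing tangent spaces through the Leray spectral sequence for $\pi$, again using $R\pi_*\StructureSheaf{X'}=\StructureSheaf{Y}$ and the symplectic identification $T_{X'}\cong\Omega^1_{X'}$, so that the local contribution of the contracted locus is correctly accounted for. The map $f:Def(X')\rightarrow Def(Y)$ is the base map of the deformed contraction $\nu$ of diagram (\ref{diagram-f-general}), whose existence is \cite{kollar-mori}, Proposition 11.4; it is proper because the families there are proper. For the finiteness of its fibres I would note that two deformations $X_t,X_{t'}$ with $f(t)=f(t')$ are crepant birational models of one and the same space $Y_{f(t)}$, and the finiteness of such models bounds each fibre. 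Since $f$ is then proper with finite fibres between smooth germs of equal dimension, its image is closed and of full dimension, hence $f$ is finite and surjective; note that $f$ is generally ramified, being in the transverse $A_1$ model the squaring map $u\mapsto u^2$, so the differential $df_{0}$ need not be injective even though the dimensions coincide.

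Finally, over the complement of the discriminant in $Def(Y)$ the fibre $Y_s$ is smooth, because the versal deformation of the surface singularity transverse to $\pi(E')$ smooths it out; over such $s$ the contraction $\pi_t$ attached to a point $t\in f^{-1}(s)$ contracts nothing and is therefore an isomorphism $X_t\cong Y_{f(t)}$, which is the last assertion. The hard part is the second step: proving the $T^1$-lifting surjectivity requires the full Hodge theory of projective symplectic varieties, and it is this unobstructedness of $Def(Y)$, rather than the formal construction of $f$ or the transverse singularity analysis, that carries the real weight of the theorem.
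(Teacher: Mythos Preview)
The paper does not prove this theorem. It is quoted as a result of Namikawa (\cite{namikawa}, Theorem~2.2) and used as a black box; immediately after the statement the paper simply records the additional fact, from \cite{markman-galois}, that $f$ is a branched Galois covering. So there is no ``paper's own proof'' to compare your proposal against.

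That said, your reconstruction is a faithful outline of Namikawa's original argument: Bogomolov--Tian--Todorov for $X'$, the $T^1$-lifting criterion combined with Hodge theory on reflexive differentials for the unobstructedness of $Def(Y)$, a Leray/tangent-space comparison for the dimension equality, and a transverse $ADE$ analysis for generic smoothing. One step deserves care: your finiteness-of-fibres argument via ``finitely many crepant birational models of $Y_{f(t)}$'' is not how Namikawa proceeds and, as stated, invokes a nontrivial finiteness result that is not obviously available in this generality. A cleaner route is to first show that $f$ is generically injective (over the smooth locus of $\bar\psi$ the map $\nu_t$ is an isomorphism, so distinct $t$ give distinct $f(t)$), hence $f$ is dominant between smooth germs of equal dimension, hence open and quasi-finite; properness then yields finiteness. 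Your remark that $df_0$ is typically not injective (the transverse $A_1$ model) is correct and is exactly why one cannot simply conclude finiteness from a tangent-space count at the origin.
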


The morphism $f:Def(X')\rightarrow Def(Y)$ is in fact a branched Galois 
covering, by \cite{markman-galois}, Lemma 1.2.
The Galois group $G$ is a product of Weyl groups of finite type,
by \cite{markman-galois}, Theorem 1.4 
(see also \cite{namikawa-galois}).
Furthermore, $G$ acts on $H^*(X',\Integers)$ via monodromy operators 
preserving the Hodge structure. When the exceptional locus of 
$\pi:X'\rightarrow Y$ 
contains a single irreducible component of co-dimension one in $X'$,
then the Galois group $G$ is $\Integers/2\Integers$.

Let $\Sigma\subset Y$ be the singular locus. 
The {\em dissident locus} $\Sigma_0\subset \Sigma$ is the locus along 
which the singularities of $Y$ fail to be of $ADE$ type.  
$\Sigma_0$ is a closed subvariety of $\Sigma$.

\begin{prop} 
\label{prop-dissident-locus}
(\cite{namikawa}, Propositions 1.6, \cite{wierzba})
$Y$ has only canonical singularities. The dissident locus 
$\Sigma_0$ has codimension at least $4$ in $Y$. 
The complement $\Sigma\setminus\Sigma_0$ is either empty, 
or the disjoint union of codimension $2$ smooth and symplectic subvarieties of 
$Y\setminus \Sigma_0$. 
\end{prop}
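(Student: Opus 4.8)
The plan is to recognize $Y$ as a \emph{symplectic variety} in the sense of Beauville and then feed it into the local structure theory of symplectic singularities. First I would observe that $\pi:X'\to Y$ is a resolution of singularities: $X'$ is smooth and holomorphic symplectic, $\pi$ is projective and birational, $Y$ is normal, and by Proposition \ref{prop-druel} the exceptional locus of $\pi$ is the support of the single prime divisor $E'$. Hence $\pi$ restricts to an isomorphism $X'\setminus E'\cong Y\setminus\pi(E')$, so the singular locus $\Sigma$ is contained in $\pi(E')$, which has codimension $\geq 2$ in $Y$ because the divisor $E'$ is contracted. The holomorphic symplectic form $\sigma$ on $X'$ induces a symplectic form $\omega$ on $Y\setminus\pi(E')$, and thus on all of $Y_{reg}$; since $\sigma$ is already a holomorphic two-form on the entire resolution $X'$ and $\pi^*\omega=\sigma$ where $\pi$ is an isomorphism, the pullback of $\omega$ extends holomorphically across the exceptional divisor. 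This is exactly the defining property of symplectic singularities.

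For the first assertion, that $Y$ has only canonical singularities, I would run the discrepancy computation underlying Beauville's theorem that symplectic singularities are canonical. Since $Y$ is Gorenstein (recorded above via \cite{beuville-symplectic-singularities}, Proposition 1.3), the $n$-th power $\omega^n$ is a local generator of $K_Y$ on $Y_{reg}$ and extends, by normality and $\mathrm{codim}\,\Sigma\geq 2$, to a generator of $K_Y$. Writing $K_{X'}=\pi^*K_Y+aE'$ and comparing $\pi^*\omega^n$ with the nowhere-vanishing generator $\sigma^n$ of $K_{X'}=\StructureSheaf{X'}$, the holomorphicity of $\sigma$ on all of $X'$ forbids poles and therefore forces the discrepancy $a$ to be non-negative. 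Hence $Y$ is canonical.

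For the structure of $\Sigma$, the heart of the argument is the local classification of symplectic singularities, due to Kaledin and exploited by Namikawa and Wierzba. The symplectic form induces a Poisson structure on $Y$ whose symplectic leaves stratify $Y$ into locally closed subvarieties; each leaf is smooth and symplectic, the transverse slice to a leaf is again a symplectic variety (the Darboux--Weinstein splitting in the singular setting), and consequently every stratum has even codimension. I would take $\Sigma\setminus\Sigma_0$ to be the union of those leaves of codimension exactly $2$. The decisive point is that a two-dimensional symplectic singularity is precisely a rational double point of $ADE$ type, by the classification of normal Gorenstein surface singularities admitting a symplectic resolution. Thus the analytic local model at a generic point of a codimension-$2$ component of $\Sigma$ splits as (Du Val surface)$\,\times\,$(smooth symplectic factor), so each such component is smooth and symplectic of codimension $2$ and carries the residual $ADE$ configuration contracted by $\pi$.

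Finally, the codimension bound on the dissident locus falls out of the same stratification: $\Sigma_0$ is a union of symplectic leaves, every leaf has even codimension, and any codimension-$2$ leaf necessarily has an $ADE$ transverse slice by the surface classification and hence lies in $\Sigma\setminus\Sigma_0$. Therefore dissident behaviour can occur only in codimension $\geq 4$. The main obstacle in this program is precisely the transverse-slice step together with the surface classification: proving that the formal (or analytic) germ along a generic codimension-$2$ point splits off a Du Val factor is the genuinely non-formal input supplied by the cited work of Kaledin, Namikawa and Wierzba, and it is what simultaneously forces the smoothness and symplecticity of the leaves and the evenness of all codimensions.
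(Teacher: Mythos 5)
Your argument is essentially correct, but note that the paper does not prove this proposition at all: it is quoted verbatim from the literature (Namikawa, Proposition 1.6, and Wierzba), so there is no internal proof to match. What you have written is the modern derivation via the theory of symplectic singularities: $Y$ is a symplectic variety in Beauville's sense because the contraction $\pi:X'\to Y$ is an isomorphism off $\mathrm{supp}(E')$ and $\sigma$ descends and re-extends; the discrepancy computation (in fact showing $\pi$ is crepant) gives canonicity, although this first assertion also follows immediately from the fact, already recorded in the paper, that $Y$ has rational Gorenstein singularities, since rational plus Gorenstein is equivalent to canonical; and the stratification of $\Sigma$ comes from Kaledin's local structure theorem (even codimension of symplectic leaves, smooth symplectic strata, transverse slices that are again symplectic) combined with the classification of two-dimensional symplectic, i.e.\ rational Gorenstein, surface singularities as the $ADE$ points. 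This is a clean and correct route, and you correctly isolate the transverse-slice splitting as the non-formal input; the one caveat is that this input (Kaledin's Poisson-theoretic stratification) postdates and is logically independent of the sources the paper actually cites. Namikawa's original proof instead slices by generic hyperplanes to reduce to rational Gorenstein surface singularities and uses the simultaneous-resolution theory of du Val singularities to get local triviality along the codimension-two stratum, while Wierzba analyzes the fibers of the symplectic contraction directly; those arguments are more elementary but yield the even-codimension and smoothness statements less transparently than the leaf stratification does. Either way the logical content of your sketch is sound, including the identification of $\Sigma_0$ with the union of leaves of codimension at least four and the closedness of the codimension-two leaves inside $Y\setminus\Sigma_0$.
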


Keep the notation of Proposition \ref{prop-druel}.
Let $[E]$ be the class of $E$ in $H^2(X,\Integers)$ and $R$ the reflection of
$H^2(X,\RationalNumbers)$ given by  
\[
R(x) \ \ := \ \ x-\left(\frac{2(x,[E])}{([E],[E])}\right)[E].
\]
Consider the natural isomorphism 
$H^2(X,\Integers)^*\cong H_2(X,\Integers)$, given by the 
Universal Coefficients Theorem and the fact that $X$ is simply connected.
Denote by
\begin{equation}
\label{eq-E-vee}
[E]^\vee \ \ \ \in \ \ \ H_2(X,\RationalNumbers)
\end{equation}
the class corresponding to $\frac{-2([E],\bullet)}{([E],[E])}$,
where both pairings in the fraction are the Beauville-Bogomolov
pairing. 

We identify $H^2(X,\Integers)$ and $H^2(X',\Integers)$ 
via the graph of the birational map. This graph induces a Hodge isometry
%\footnote{It would be nice to know that
%the isometry is induced by a parallel transport operator. 
%We would then be able to state in Corollary
%\ref{cor-1} part \ref{item-monodromy-operator} that $R$ belongs to $Mon^2(X)$.
%}, 
and the isometry maps the class $[E]\in H^2(X,\Integers)$ to the class 
$[E']\in H^2(X',\Integers)$,
by \cite{ogrady-weight-two}, Proposition 1.6.2.  
We get an identification of the dual groups 
$H_2(X,\Integers)$ and $H_2(X',\Integers)$. 
The following Corollary was proven, 
in case $E$ is an irreducible component of a contractible divisor,
in \cite{markman-galois} Lemmas 4.10 and 4.23.
We are now able to extend it to the more general case of a
prime exceptional divisor $E$. 
The following is a Corollary of 
Proposition \ref{prop-druel}, 
Proposition \ref{prop-dissident-locus}, 
and \cite{markman-galois}, Lemmas 4.10 and 4.23.

\begin{cor}\label{cor-1}
\begin{enumerate}
\item
\label{item-integrality}
The class $[E]^\vee\in H_2(X,\Integers)$ corresponds to the class in 
$H_2(X',\Integers)$ of the generic fiber of the contraction $E'\rightarrow Y$
in Proposition \ref{prop-druel}.
The generic fiber is either a smooth rational curve, or the union of two 
homologous smooth rational curves meeting at one point. In particular, 
$[E]^\vee$ is an integral class in $H_2(X,\Integers)$ and 
$R$ is an integral isometry. 
\item
\label{item-monodromy-operator}
The reflection $R$ is a monodromy operator in $Mon^2(X)$ as well as 
$Mon^2(X')$. $R$ preserves the 
Hodge structure. The action of $R$ on $H^{1,1}(X')\cong H^1(X',TX')$ induces
an involution of $Def(X')$, which generates the Galois group of
the double cover of the Kuranishi deformation spaces 
$Def(X')\rightarrow Def(Y)$.
\item
\label{item-divisibility-at-most-two}
Either $[E]$ is a primitive class of $H^2(X,\Integers)$,
or $[E]$ is twice a primitive class. Similarly, 
$[E]^\vee$ is either a primitive class or  twice a primitive class.
Finally, at least one of $[E]$ or $[E]^\vee$ is a primitive class
\end{enumerate}
\end{cor}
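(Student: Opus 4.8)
The plan is to use Druel's Proposition \ref{prop-druel} to reduce every assertion to the situation of a genuine divisorial contraction, where the analysis of \cite{markman-galois}, Lemmas 4.10 and 4.23, applies verbatim, and then to transport the conclusions back to $X$. Concretely, the birational map $X\to X'$ produced by Proposition \ref{prop-druel} is a composition of flops, hence an isomorphism in codimension one; by \cite{ogrady-weight-two}, Proposition 1.6.2, it induces a Hodge isometry $\phi:H^2(X,\Integers)\to H^2(X',\Integers)$ with $\phi([E])=[E']$, and dually an isomorphism $H_2(X,\Integers)\cong H_2(X',\Integers)$ carrying $[E]^\vee$ to $[E']^\vee$ and intertwining the reflection $R$ with the analogous reflection $R'$ on $H^2(X')$. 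Since integrality of $[E]^\vee$, the statement that $R$ is a monodromy operator, and the divisibility of $[E]$ and $[E]^\vee$ are all invariant under this identification, it suffices to prove the three parts for the pair $(X',E')$, where $E'$ is the irreducible exceptional divisor of the contraction $\pi:X'\to Y$.

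For $(X',E')$ I would invoke the structure of $\pi$. By Proposition \ref{prop-dissident-locus} the dissident locus has codimension at least $4$, and over the complement of $\Sigma_0$ the map $\pi$ is, transversally to the codimension-two symplectic stratum $\Sigma\setminus\Sigma_0$, the minimal resolution of a family of $ADE$ surface singularities. Because the exceptional locus of $\pi$ is the single irreducible divisor $E'$, the monodromy of the local system of exceptional curves acts transitively on the simple roots and the Galois group $G$ of $f:Def(X')\to Def(Y)$ is $\Integers/2\Integers$, as recorded after Theorem \ref{thm-namikawa}. The two possibilities are then an everywhere-$A_1$ singularity, whose generic fiber is a single smooth rational curve, or an $A_2$ singularity whose two exceptional curves are exchanged by the monodromy, whose generic fiber is the union of two homologous smooth rational curves meeting at a point. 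In either case Lemmas 4.10 and 4.23 of \cite{markman-galois} identify $[E']^\vee$ with the class of this generic fiber, proving part \ref{item-integrality}, and show that the generator of $G$ acts on $H^2(X',\Integers)$ as $R'$, that $R'$ preserves the Hodge structure and is a monodromy operator, and that the induced involution of $Def(X')$ generates the Galois group of the double cover $Def(X')\to Def(Y)$, proving part \ref{item-monodromy-operator}. Pulling back through $\phi$ yields both parts for $X$.

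Part \ref{item-divisibility-at-most-two} is then a short lattice computation that uses only the integrality established in part \ref{item-integrality}. Under the evaluation pairing $H^2(X,\Integers)\times H_2(X,\Integers)\to\Integers$ one has $\langle [E],[E]^\vee\rangle = \frac{-2([E],[E])}{([E],[E])}=-2$ directly from the definition (\ref{eq-E-vee}) of $[E]^\vee$. Writing $[E]=m\,e_0$ and $[E]^\vee=m'\,f_0$ with $e_0\in H^2(X,\Integers)$ and $f_0\in H_2(X,\Integers)$ primitive and $m,m'$ positive integers, integrality forces $m m'\,\langle e_0,f_0\rangle=-2$ with $\langle e_0,f_0\rangle\in\Integers$ nonzero, whence $mm'\leq 2$. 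Thus each of $m,m'$ lies in $\{1,2\}$ and they cannot both equal $2$, which is exactly the assertion that $[E]$ and $[E]^\vee$ are each primitive or twice a primitive class, with at least one of them primitive.

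The genuinely new step over \cite{markman-galois} is the reduction in the first paragraph: all the hard geometry — the $ADE$ classification of the generic transverse singularity, the identification of $[E']^\vee$ with the fiber class, and the realization of $R'$ as a monodromy operator through Namikawa's deformation $f$ — is already available once a contractible model exists, and Druel's Proposition \ref{prop-druel} supplies such a model for an arbitrary prime exceptional divisor. The main point requiring care is therefore the compatibility of all these structures with the flop identification $\phi$: one must check that $\phi$ not only matches $[E]$ with $[E']$ and the Beauville--Bogomolov forms, but also conjugates $Mon^2(X)$ onto $Mon^2(X')$, so that the conclusion $R'\in Mon^2(X')$ descends to $R\in Mon^2(X)$. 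This rests on the fact that a birational map between irreducible holomorphic symplectic manifolds induces a parallel-transport operator, so that flops contribute monodromy operators; granting this, the transfer is routine and the three parts follow.
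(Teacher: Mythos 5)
Your proposal is correct and follows essentially the same route as the paper: reduce to the contractible model $X'$ supplied by Druel's Proposition, identify $H^2(X,\Integers)$ with $H^2(X',\Integers)$ via the graph of the birational map and verify through Huybrechts' correspondence $\Gamma=Z+\sum_iY_i$ that this identification is a parallel transport operator, apply Lemmas 4.10 and 4.23 of \cite{markman-galois} together with Proposition \ref{prop-dissident-locus} to $(X',E')$, and deduce part (3) from the equality $\int_{[E]^\vee}[E]=-2$. Your spelled-out lattice computation for part (3) and the explicit $A_1$/$A_2$ dichotomy are just elaborations of the steps the paper leaves implicit.
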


\begin{proof} 
%This is the simplest case of the main theorem of \cite{markman-galois}
%(and may have been known earlier being the simplest case). 
\ref{item-integrality})
The singular locus of $Y$ contains a unique 
irreducible component $\Sigma$ of codimension $2$, and $Y$
has singularities of type $A_1$ or $A_2$ along the Zariski dense 
open subset $\Sigma\setminus\Sigma_0$, by Proposition
\ref{prop-dissident-locus} 
(see also the classification of singularities in section 1.8 of
\cite{namikawa}). 
When $X=X'$, the class $[E']^\vee$ is the class of the fiber
of the composite morphism $E'\hookrightarrow X'\rightarrow Y$,
by Lemmas 4.10 and 4.23 in \cite{markman-galois}.
%The integrality of $[E]^\vee$ follows. 

\ref{item-monodromy-operator})
$R$ is a monodromy operator in $Mon^2(X')$, 
by Lemmas 4.10 and 4.23 in \cite{markman-galois}. 
Now the isometry $Z_*:H^2(X,\Integers)\rightarrow H^2(X',\Integers)$,
induced by the graph $Z$ of the birational map, is a
parallel transport operator. This follows from 
the proof of Theorem 2.5 in \cite{huybrechts-kahler-cone}. 
In this proof Huybrechts constructs
a correspondence $\Gamma:=Z+\sum_{i}Y_i$ in $X\times X'$
with the following properties. 
$\Gamma_*:H^2(X,\Integers)\rightarrow H^2(X',\Integers)$
is a parallel transport operator, 
$Z$ is the closure of the graph of the birational map as above, 
and the image of $Y_i$ in each factor $X$ and $X'$ has codimension $\geq 2$.
It follows that the two isometries $Z_*$ and $\Gamma_*$ coincide.

\ref{item-divisibility-at-most-two})
The statements about the divisibility of $[E]$ and $[E]^\vee$
follow from the equality $\int_{[E]^\vee}[E]=-2$.
\end{proof}

We denote by 
\[
e\in H^2(X,\Integers)
\] 
the primitive class, such that 
either $[E]=e$ or $[E]=2e$.
Let $e^\vee$ be the primitive
class in $H_2(X,\Integers)$, such that $[E]^\vee=e^\vee$ or
$[E]^\vee=2e^\vee$. The {\em divisibility factor}
$\div(e,\bullet)$, of the class $(e,\bullet)\in H^2(X,\Integers)^*$, 
is the positive number satisfying the equality 
$(e,\bullet)=\div(e,\bullet)\cdot e^\vee$. 

\begin{lem}
\label{lemma-divisibility}
We have
\[
-\div(e,\bullet) \ \ = \ \ 
\left\{
\begin{array}{ccc}
(e,e)/2 & \mbox{if} & [E]=e \ \mbox{and} \ [E]^\vee=e^\vee,
\\
(e,e) & \mbox{if} & [E]=2e \ \mbox{and} \ [E]^\vee=e^\vee,
\\
(e,e) & \mbox{if} & [E]=e \ \mbox{and} \ [E]^\vee=2e^\vee.
\end{array}
\right.
\]
\end{lem}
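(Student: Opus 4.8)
The plan is to unwind the definitions and reduce everything to the single numerical equation $\int_{[E]^\vee}[E] = -2$ established in the proof of Corollary \ref{cor-1}, together with the defining relation $[E]^\vee = \frac{-2([E],\bullet)}{([E],[E])}$ from (\ref{eq-E-vee}). First I would rewrite the divisibility factor in terms of the primitive classes. By definition $(e,\bullet) = \div(e,\bullet)\cdot e^\vee$ inside $H^2(X,\Integers)^* \cong H_2(X,\Integers)$, so $\div(e,\bullet)$ is precisely the integer measuring how $(e,\bullet)$ sits above the primitive class $e^\vee$. The three cases in the Lemma are exactly the three combinatorial possibilities allowed by part \ref{item-divisibility-at-most-two} of Corollary \ref{cor-1}: at least one of $[E], [E]^\vee$ is primitive, and each is at most twice primitive, so the pairs $([E],[E]^\vee)\in\{(e,e^\vee),(2e,e^\vee),(e,2e^\vee)\}$ exhaust all cases.

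The key computation is to express $(e,\bullet)$ through $[E]^\vee$ in each case and read off $\div(e,\bullet)$. Starting from (\ref{eq-E-vee}), we have the identity of functionals on $H^2(X,\Integers)$
\[
([E],\bullet) \ \ = \ \ \frac{-([E],[E])}{2}\,[E]^\vee.
\]
In the first case $[E]=e$ and $[E]^\vee = e^\vee$, this reads $(e,\bullet) = \frac{-(e,e)}{2}e^\vee$, so by the definition of $\div$ we get $\div(e,\bullet) = \frac{-(e,e)}{2}$, i.e.\ $-\div(e,\bullet) = (e,e)/2$. In the second case $[E]=2e$ and $[E]^\vee = e^\vee$; substituting $([E],[E]) = 4(e,e)$ and $([E],\bullet)=2(e,\bullet)$ gives $2(e,\bullet) = \frac{-4(e,e)}{2}e^\vee = -2(e,e)e^\vee$, hence $(e,\bullet) = -(e,e)e^\vee$ and $-\div(e,\bullet) = (e,e)$. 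In the third case $[E]=e$ and $[E]^\vee = 2e^\vee$, so $(e,\bullet) = \frac{-(e,e)}{2}\cdot 2e^\vee = -(e,e)e^\vee$, again yielding $-\div(e,\bullet) = (e,e)$. This reproduces the three lines of the displayed formula.

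I expect the only real point requiring care is the bookkeeping of which integer $\div(e,\bullet)$ picks out, since $\div(e,\bullet)$ is defined to be \emph{positive} and $e^\vee$ is the primitive generator of the cyclic group spanned by $(e,\bullet)$. Because $(e,e)<0$ for a prime exceptional class, each expression $-(e,e)/2$ or $-(e,e)$ is indeed a positive integer, so the sign conventions match and $e^\vee$ comes out as a genuinely primitive integral class, consistent with Corollary \ref{cor-1}. The main obstacle is therefore not the algebra but verifying that in each case $e^\vee$ as computed is primitive and that no further divisibility can be extracted; this follows directly from the primitivity statements in part \ref{item-divisibility-at-most-two} of Corollary \ref{cor-1}, so once those are invoked the proof is a short substitution in each of the three cases.
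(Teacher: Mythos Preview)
Your proof is correct and follows essentially the same approach as the paper: both unwind the defining relation $([E],\bullet)=\frac{-([E],[E])}{2}[E]^\vee$ and substitute $[E]=k_1 e$, $[E]^\vee=k_2 e^\vee$ to read off $\div(e,\bullet)=-\frac{k_1k_2(e,e)}{2}$. The only difference is that the paper handles all three cases at once via the parameters $k_1,k_2$, whereas you treat each case separately.
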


\begin{proof}
%$\int_{e^\vee}(\bullet)$
%is a primitive class of $H^2(X,\Integers)^*$, by definition.
Let $[E]=k_1e$ and $[E]^\vee=k_2e^\vee$. Then  
\[
-(e,\bullet)=
\frac{-1}{k_1}([E],\bullet)=\frac{([E],[E])}{2k_1}[E]^\vee=
\frac{k_1(e,e)}{2}[E]^\vee=\frac{k_1k_2(e,e)}{2}e^\vee.
%-\frac{(e,e)}{2}\int_{e^\vee}(\bullet).
\]
\end{proof}

\begin{rem}
\label{rem-an-exceptional-linear-system-is-a-singleton}
Let $L$ be the line bundle with $c_1(L)=e$. 
Then $\dim H^0(X,L^n)$ is either $0$ or $1$, for all $n\in\Integers$,
by \cite{boucksom}, Proposition 3.13. 
Hence, there exists at most one non-zero integer $n$, 
such that the linear system $\linsys{L^n}$ contains a prime divisor. 
In particular, for a given pair $(X,e)$, at most one of 
the equalities $([E],[E]^\vee)=(e,2e^\vee)$ or
$([E],[E]^\vee)=(2e,e^\vee)$ can hold, for some prime divisor $E$ with
$[E]\in{\rm span}_\Integers\{e\}$. The same holds for an exceptional divisor,
where the coefficients $n_i$ in Definition \ref{def-rational-exceptional}
are integral and with $\gcd\{n_i \ : \ 1\leq i \leq k\}=1$.
\end{rem}

%\begin{lem}
%\label{item-all-divisors-are-reducible}
%Let $(X,A)$ be a pair, with $X$ smooth and projective and
%$A$ a line bundle. 
%Assume that $A$ has negative Beauville-Bogomolov degree, and
%$A$ is not monodromy-reflective. 
%If $A^k$ is effective, for some positive integer $k$, 
%then all divisors in the linear system $\linsys{A^k}$ are reducible.
%\end{lem}
%
%\begin{proof}
%Let $a$ be the class of $A$. Then $a$ is not monodromy-reflective.
%If $E$ were a prime divisor in $\linsys{A^k}$, for some non-zero integer $k$, 
%we would get a counter example to Corollary \ref{cor-1}.
%\end{proof}

%***************************************************************
%
%***************************************************************
\section{Holomorphic symplectic manifolds of $K3^{[n]}$-type}
\label{sec-degrees-2-and-2-2n}
We prove Proposition 
\ref{prop-reflection-by-a-numerically-prime-exceptional-is-in-Mon}
in this section. This completes
the proof of Theorem \ref{thm-2}.
The lattice $H^2(X,\Integers)$ has signature $(3,20)$ \cite{beauville}.
A $3$-dimensional subspace of $H^2(X,\RealNumbers)$ is said to be 
{\em positive-definite}, if the 
Beauville-Bogomolov pairing restricts to it as a positive definite pairing. 
The unit $2$-sphere, in any positive-definite $3$-dimensional subspace, 
is a deformation retract of 
the positive cone $\C_+\subset H^2(X,\RealNumbers)$, given by
$
\C_+ := \{\lambda\in  H^2(X,\RealNumbers) \ : \ 
(\lambda,\lambda)>0\}.
$
Hence, $H^2(\C_+,\Integers)$ is isomorphic to $\Integers$
and is a natural representation of the isometry group
$OH^2(X,\RealNumbers)$. 
We denote by $O_+H^2(X,\Integers)$ the index two subgroup
of $OH^2(X,\Integers)$, which acts trivially on 
$H^2(\C_+,\Integers)$.

%The following result will be used in the proof of Proposition 
%\ref{prop-reflection-by-a-numerically-prime-exceptional-is-in-Mon}. 
Let $X$ be of $K3^{[n]}$-type, $n\geq 2$. 
Embed the lattice $H^2(X,\Integers)$ in its dual lattice
$H^2(X,\Integers)^*$, via the Beauville-Bogomolov form.

\begin{thm}
\label{thm-monodromy-constraints}
(\cite{markman-constraints}, Theorem 1.2 and Lemma 4.2).
$Mon^2(X)$ is equal to the subgroup of $O_+H^2(X,\Integers)$,
which acts via multiplication by $1$ or $-1$ on the quotient group
$H^2(X,\Integers)^*/H^2(X,\Integers)$.
\end{thm}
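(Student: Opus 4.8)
The plan is to prove the two assertions separately: the cyclicity and order of the discriminant group, and then the equality $Mon^2(X)=G$, where I write $G$ for the subgroup of $O_+H^2(X,\Integers)$ acting by $\pm 1$ on $A:=H^2(X,\Integers)^*/H^2(X,\Integers)$. The discriminant statement is immediate from the isometry $H^2(X,\Integers)\cong\Lambda=E_8(-1)^{\oplus 2}\oplus U^{\oplus 3}\oplus\Integers\delta$ with $(\delta,\delta)=2-2n$: since $E_8(-1)$ and $U$ are unimodular, the inclusion $\Integers\delta\hookrightarrow\Lambda$ induces an isomorphism $(\Integers\delta)^*/\Integers\delta\to A$, and the rank-one lattice $\Integers\delta$ has discriminant group $\Integers/(2n-2)\Integers$. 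So $A$ is cyclic of order $2n-2$. The equality $Mon^2(X)=G$ I would then prove by the two inclusions, of which the second is the substantial one.

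For $Mon^2(X)\subseteq G$: a monodromy operator $g$ is induced by parallel transport in a family, hence preserves the orientation of the positive cone and lies in $O_+H^2(X,\Integers)$. To control its action on $A$ I would invoke the monodromy-invariant orbit of primitive embeddings $\iota\colon H^2(X,\Integers)\hookrightarrow\widetilde{\Lambda}$ of Theorem \ref{thm-a-natural-orbit-of-embeddings-of-H-2-in-Mukai-lattice}. Monodromy-invariance of the orbit yields $\tilde g\in O(\widetilde{\Lambda})$ with $\tilde g\circ\iota=\iota\circ g$; such a $\tilde g$ preserves $\iota(H^2(X,\Integers))$ and therefore its rank-one orthogonal complement $\Integers v$, forcing $\tilde g(v)=\pm v$. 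Since $\widetilde{\Lambda}$ is unimodular, Nikulin's theory of primitive embeddings supplies a canonical isomorphism between $A$ and the discriminant group of $\Integers v$ intertwining the actions of $g$ and of $\tilde g|_{\Integers v}$; the latter is multiplication by $\pm 1$, so $g\in G$.

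For the reverse inclusion $G\subseteq Mon^2(X)$: since $Mon^2(X)$ depends only on the deformation class, I may take $X=M_H(v)$, a smooth projective moduli space of $H$-stable sheaves with $(v,v)=2n-2$, and use Mukai's isometry $\theta\colon v^\perp\to H^2(X,\Integers)$ together with Theorem \ref{thm-item-orbit-of-inverse-of-Mukai-isom-is-natural}, which identifies the natural embedding with $\theta^{-1}$. The heart of the argument is to realize, for every $\phi\in O(\widetilde{\Lambda})$ with $\phi(v)=\pm v$, the operator $\theta\circ(\phi|_{v^\perp})\circ\theta^{-1}$ as a monodromy operator; this is precisely the meta-principle quoted after Theorem \ref{thm-item-orbit-of-inverse-of-Mukai-isom-is-natural}, made effective by producing such $\phi$ from autoequivalences of $D^b(S)$ (shifts, tensoring by line bundles, spherical twists, and Fourier--Mukai kernels), each of which induces a birational correspondence, or a deformation, between moduli spaces of sheaves and hence a parallel-transport operator that is a genuine monodromy operator when $\phi(v)=\pm v$. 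To pin down the action on $A$ I would also record two explicit monodromy operators: the $(-2)$-reflections $R_c$ with $c\in H^2(S,\Integers)\subset v^\perp$, which act trivially on $A$, and the reflection $R_\delta$ by the half-diagonal class on $S^{[n]}$ (whose double is the prime exceptional big diagonal), which is a monodromy operator by Corollary \ref{cor-1} and acts by $-1$ on $A$.

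It then remains to verify the purely lattice-theoretic statement that the restriction map $\{\phi\in O(\widetilde{\Lambda}) : \phi(v)=\pm v\}\to O(v^\perp)$ has image equal to $\{h\in O(v^\perp) : h|_A=\pm\mathrm{id}\}$; this is again Nikulin's extension and gluing criterion, using that $\Integers v$ and $v^\perp$ are complementary primitive sublattices of the unimodular lattice $\widetilde{\Lambda}$. Combining this with the realization step and intersecting with $O_+H^2(X,\Integers)$ gives $G\subseteq Mon^2(X)$. The main obstacle is exactly the realization step in the reverse inclusion: translating abstract isometries of the Mukai lattice fixing $\pm v$ into actual geometric correspondences between moduli spaces, and tracking the orientation signs so that one lands in $O_+$ rather than its nontrivial coset. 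Everything else --- the discriminant computation, the first inclusion, and the lattice extension lemma --- is formal once the monodromy-invariant Mukai embedding of Theorem \ref{thm-a-natural-orbit-of-embeddings-of-H-2-in-Mukai-lattice} is in hand.
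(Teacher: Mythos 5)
This theorem is not proved in the paper at all: it is imported verbatim from \cite{markman-constraints} (Theorem 1.2 and Lemma 4.2), so there is no internal argument to compare yours against. Judged on its own, your outline is a faithful reconstruction of how the cited references actually establish the result. The discriminant computation from $\Lambda=E_8(-1)^{\oplus 2}\oplus U^{\oplus 3}\oplus\Integers\delta$ is correct, and the inclusion $Mon^2(X)\subseteq G$ via the monodromy-invariant orbit of embeddings $\iota:H^2(X,\Integers)\hookrightarrow\widetilde{\Lambda}$, forcing $\tilde g(v)=\pm v$ and hence a $\pm 1$ action on the discriminant group by Nikulin's identification, is exactly the mechanism used in \cite{markman-constraints}. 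Your explicit checks that $(-2)$-reflections in $H^2(S,\Integers)$ act trivially on $A$ while $R_\delta$ acts by $-1$ are also correct. The one place where your proposal is a plan rather than a proof is the one you flag yourself: the realization of every $\phi\in O(\widetilde{\Lambda})$ with $\phi(v)=\pm v$ (equivalently, by Nikulin's gluing lemma, every $h\in O_+(v^\perp)$ with $h|_A=\pm{\rm id}$) by an actual monodromy operator. That step is the entire content of \cite{markman-monodromy-I}, carried out via Fourier--Mukai equivalences and deformations of moduli spaces, and cannot be dispatched in a sentence; but identifying it as the load-bearing step, and proposing the correct toolkit for it, is the right answer here. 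One small caveat: since you lean on Theorem \ref{thm-a-natural-orbit-of-embeddings-of-H-2-in-Mukai-lattice}, which is Theorem 1.10 of the very paper being cited, your argument is not independent of that machinery --- you should check in \cite{markman-constraints} that Theorem 1.10 does not itself presuppose Theorem 1.2, though in fact the logical order there is as you assume.
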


The quotient $H^2(X,\Integers)^*/H^2(X,\Integers)$ is a cyclic
group of order $2n-2$. Indeed, 
we may deform $X$ to the Hilbert scheme $S^{[n]}$ of length $n$ subschemes of a
$K3$ surface $S$. $H^2(S^{[n]},\Integers)$ is
Hodge-isometric to the orthogonal direct sum 
\begin{equation}
\label{eq-orthogonal-direct-sum}
H^2(S,\Integers)\oplus \Integers \delta, 
\end{equation}
where 
$\delta$ is half the class of the big diagonal, 
and $(\delta,\delta)=2-2n$ \cite{beauville}.
Let $\pi:S^{[n]}\rightarrow S^{(n)}$ be the Hilbert-Chow morphism onto
the symmetric product of $S$. 
The isometric embedding 
$H^2(S,\Integers)\hookrightarrow H^2(S^{[n]},\Integers)$ is 
given by the composition
$
H^2(S,\Integers)\cong H^2(S^{(n)},\Integers)\LongRightArrowOf{\pi^*}
H^2(S^{[n]},\Integers). 
$

{\bf Proof\footnote{I thank V. Gritsenko for reference \cite[Corollary 3.4]{GHS-K3}, which drastically shortens the original proof.} 
of Proposition 
\ref{prop-reflection-by-a-numerically-prime-exceptional-is-in-Mon}:}
The lattice $H^2(X,\Integers)$ is isometric to the orthogonal direct sum (\ref{eq-orthogonal-direct-sum}).
%of the unimodular  $K3$ lattice of rank $22$ and a rank $1$ lattice spanned by 
%a class of Beauville-Bogomolov degree $2-2n$ \cite{beauville}. 
Let $e$ be a class in $H^2(X,\Integers)$
of negative Beauville-Bogomolov degree, 
and let $R_e(x):=x-2\frac{(x,e)}{(e,e)}e$ be the reflection by $e$.
Then $R_e$ is an integral isometry of $H^2(X,\Integers)$, which acts by $1$ or $-1$ on 
the quotient $H^2(X,\Integers)^*/H^2(X,\Integers)$, if and only if $e$ has one of the two properties 
in the statement of Proposition 
\ref{prop-reflection-by-a-numerically-prime-exceptional-is-in-Mon}, by 
\cite[Corollary 3.4]{GHS-K3}. The proposition now follows from Theorem \ref{thm-monodromy-constraints}.
%***********************
% Hide
%***********************
\hide{
\underline{Step 1:}
Let $R(x):=x-2\frac{(x,e)}{(e,e)}e$ be the reflection by $e$.
Assume that $R$ belongs to $Mon^2(X)$. 
%then $\ell$ is 
%monodromy-reflective, by the proof of Theorem \ref{thm-2}.
%We will use only the fact that $R$ belongs to $Mon^2(X)$, so
We may deform the pair $(X,e)$, loosing the Hodge type of $e$, so
that $X$ is the Hilbert scheme $S^{[n]}$ of length $n$ subschemes of a
$K3$ surface $S$. $H^2(S^{[n]},\Integers)$ is
Hodge-isometric to the orthogonal direct sum 
\begin{equation}
\label{eq-orthogonal-direct-sum}
H^2(S,\Integers)\oplus \Integers \delta, 
\end{equation}
where 
$\delta$ is half the class of the big diagonal, 
and $(\delta,\delta)=2-2n$ \cite{beauville}.
Let $\pi:S^{[n]}\rightarrow S^{(n)}$ be the Hilbert-Chow morphism onto
the symmetric product of $S$. 
The isometric embedding 
$H^2(S,\Integers)\hookrightarrow H^2(S^{[n]},\Integers)$ is 
given by the composition
$
H^2(S,\Integers)\cong H^2(S^{(n)},\Integers)\LongRightArrowOf{\pi^*}
H^2(S^{[n]},\Integers). 
$
%We see that the quotient 
%$H^2(S^{[n]},\Integers)^*/H^2(S^{[n]},\Integers)$ is a cyclic
%group of order $2n-2$. 
Write 
\[
e  \ \ = \ \ xa+y\delta,
\] 
where the primitive class $a$ belongs 
to $H^2(S,\Integers)$ and $x$, $y$ are integers satisfying:
\begin{equation}
\label{*}    
\gcd(x,y) \ \ \ = \ \ \ 1.
\end{equation}

The condition that the reflection $R$ is integral implies that
\begin{equation}
\label{**} 
\frac{2x}{(e,e)} \ \ \ \mbox{is an integer.}
\end{equation}
(Apply $R$ to a class $b$ in $H^2(S,\Integers)$ with $(a,b)=1$). 
It also implies that

\begin{equation}
\label{***}  
\frac{4y(n-1)}{(e,e)} \ \ \ \mbox{is an integer.}
\end{equation}
(Apply $R$ to $\delta$).

The condition that $R$ is a monodromy operator implies that $R$ maps to
either $1$ or $-1$ in the automorphism group of the
quotient group $H^2(X,\Integers)^*/H^2(X,\Integers)$, 
by Theorem \ref{thm-monodromy-constraints}. 
We state in (\ref{****}) below a more explicit formulation of this condition.
Write $R(\delta)=x'a'+y'\delta$, 
with $x'$, $y'$ integers, and the class $a'$ is in the $K3$ lattice. We get
\[
y' \ \ \ := \ \ \ 1-\left[\frac{4y^2(1-n)}{(e,e)}\right]. 
\]
The above condition on $R$ is then equivalent to the following statement.
\begin{equation}
\label{****}  
y' \ \mbox{is congruent to} \ 1 \ \mbox{or} \  -1 \ \mbox{modulo} \ (2n-2).
\end{equation}

We need to prove that $(e,e)=-2$, or that $(e,e)=2-2n$ and 
$n-1$ divides $\div(e,\bullet)$.
%Proposition
%\ref{prop-reflection-by-a-numerically-prime-exceptional-is-in-Mon}. 
This follows from equations
(\ref{*}), (\ref{**}), (\ref{***}), and (\ref{****}) via the
following elementary argument.
Note first that (\ref{*}), (\ref{**}), and (\ref{***}) imply that 
$(e,e)$ divides $\gcd(2x,4n-4)$. 
On the other hand, $\gcd(2x,2n-2)$ divides $(e,e)=x^2(a,a)+(2-2n)y^2$. 
Write $n-1=t2^k$, with $t$ an odd integer. We get that
\begin{eqnarray*}
(e,e) &=& -\gcd(2x,2n-2) \ \mbox{or}
\\ 
(e,e)&=& -2\gcd(2x,2n-2) \ \mbox{and} \ 2^{(k+1)} \ \mbox{divides} \  x.
\end{eqnarray*}
The rest follows from (\ref{****}) as follows.

If $y'$ is congruent to $1$ modulo $(2n-2)$, then
$\frac{4y^2(n-1)}{(e,e)}$ is congruent to $0$ modulo $(2n-2)$.
Hence $(e,e)$ divides $2y^2$. We conclude that
$(e,e) \ \mbox{divides} \ \gcd(2x,2y^2)$.
Thus $(e,e)=-2.$

Assume next that $y'$ is congruent to $-1$ modulo $(2n-2)$. 
Then $(2n-2)$ divides $1+y'=\frac{2x^2(a,a)}{(e,e)}$. 
So $(2n-2)$ divides $x^2(a,a)$.
But $(e,e)=x^2(a,a)+y^2(2-2n)$. So 
\begin{equation}
\label{6*}
(2n-2) \ \mbox{divides} \  (e,e).
\end{equation}
If $(e,e)= -\gcd(2x,2n-2)$, then $(e,e)=2-2n$.

It remains to exclude the case where $y'$ is congruent to $-1$ modulo 
$(2n-2)$,
$(e,e)= -2\gcd(2x,2n-2)$ and $2^{(k+1)}$ divides $x$. 
Now $(2n-2)$ divides $(e,e)$, by (\ref{6*}), and $(e,e)$ divides $2x$, 
by (\ref{**}).
Thus $t$ divides $x$. So $\gcd(2x,2n-2)=t2^{(k+1)}=2n-2$.
So $(e,e)=-2\gcd(2x,2n-2)=-(4n-4)$.
We get the equality 
$\frac{4y^2(n-1)}{(e,e)}= -y^2.$
Condition (\ref{****}) implies that $\frac{4y^2(n-1)}{(e,e)}$ is congruent to 
$2$ modulo $(2n-2)$. But then $2$ divides $\gcd(x,y)$ contradicting (\ref{*}).

\underline{Step 2:}
Assume next that $(e,e)=-2$, or that $(e,e)=2-2n$ and 
$n-1$ divides $\div(e,\bullet)$.
If $(e,e)=-2$, then $R$ belongs to $Mon^2(X)$ by
\cite{markman-monodromy-I}, Theorem 1.6.
If $(e,e)=2-2n$, $n>2$, then $R$ belongs to $O_+H^2(X,\Integers)$,
by the assumption that $n-1$ divides $\div(e,\bullet)$. 

We may assume that $X=S^{[n]}$, for a $K3$ surface $S$. 
Then $H^2(X,\Integers)$ is the orthogonal direct sum 
$H^2(S,\Integers)\oplus\Integers \delta$, with $(\delta,\delta)=2-2n$, 
as in (\ref{eq-orthogonal-direct-sum}).
Write $e=x+t\delta$, $x\in H^2(S,\Integers)$ and $t\in\Integers$.
Then $x=(n-1)\xi$, for some class $\xi\in H^2(X,\Integers)$, 
by the assumption that $n-1$ divides $\div(e,\bullet)$. 
Set $w:=\frac{\delta}{2n-2}$. Then $w+H^2(X,\Integers)$
generates the residual group $H^2(X,\Integers)^*/H^2(X,\Integers)$, 
and the latter is isomorphic to $\Integers/(2n-2)\Integers$. 
The equality $(e,e)=2-2n$ yields
\begin{equation}
\label{eq-t-square-1-equal-n-1-xi-xi-over-2}
(t^2-1)=\frac{(n-1)(\xi,\xi)}{2}.
\end{equation}

We claim that $R$ acts on $H^2(X,\Integers)^*/H^2(X,\Integers)$
via multiplication by $-1$. 
Once the claim is proven, then $R$ belongs to $Mon^2(X)$, by
Theorem \ref{thm-monodromy-constraints}.
%\cite{markman-monodromy-I}, Theorem 1.6 and Lemma 4.10
%part (5) in \cite{markman-monodromy-I}. 
Compute
\[
R(w)=w-\frac{2(w,e)}{(e,e)}e=
w-\frac{t}{n-1}[(n-1)\xi+t\delta]\equiv (1-2t^2)w,
\]
where the last equivalence is modulo $H^2(X,\Integers)$.
Equation (\ref{eq-t-square-1-equal-n-1-xi-xi-over-2}) yields 
$2t^2-2\equiv 0$ modulo $2n-2$. Hence, $1-2t^2\equiv -1$  modulo $2n-2$,
and the claim is proven. 
%****************
% End Hide
%****************
}
\EndProof

%\noindent
%{\bf Proof of Theorem \ref{thm-2}:}
%Let $X'$ and $E'$ be as in Proposition \ref{prop-druel}.
%Denote by $R'$ the reflection with respect to the class $[E']$
%and let $R$ be the reflection with respect to $[E]$.
%$R'$ belongs to $Mon(X')$, by Corollary \ref{cor-1}. 
%Now $Mon(X')$ is a normal subgroup of the isometry group, by 
%Theorem \ref{thm-monodromy-constraints}.
%Hence, the isometry between $H^2(X,\Integers)$ and
%$H^2(X',\Integers)$, used in Corollary \ref{cor-1}, 
%conjugates $Mon(X')$ to $Mon(X)$ (even if the isometry 
%is not a parallel transport operator). 
%We conclude that $R$ belongs to $Mon^2(X)$.
%Theorem \ref{thm-2} thus follows from Proposition 
%\ref{prop-reflection-by-a-numerically-prime-exceptional-is-in-Mon}.
%\nolinebreak
%\EndProof

\begin{rem}
\label{rem-Euler-characteristic}
Let $X$ be of $K3^{[n]}$-type, $L$ a line bundle on $X$, and set
$\alpha:=c_1(L)$.
Then the sheaf-cohomology Euler characteristic of $L$ is given by the 
binomial coefficient
$
\chi(L) = \Choose{[(\alpha,\alpha)/2]+n+1}{n},
$
by \cite{huybrechts-norway}, section 3.4, Example 7. 
We get the following equalities.\\
$
\chi(L) = \left\{
\begin{array}{ccl}
1 & \mbox{if} & (\alpha,\alpha)=-2
\\
0 & \mbox{if} & (\alpha,\alpha)=2-2n \ \mbox{and} \ n\geq 3,
\end{array}\right.
$
\\
$
\begin{array}{ccccccc}
\chi(L^2) & = & 
0 & \mbox{if} & (\alpha,\alpha)=-2 & \mbox{and} & n\geq 3,
\\
\chi(L^2) & < &
0 & \mbox{if} & (\alpha,\alpha)=2-2n & \mbox{and} & n\geq 2.
\end{array}
$

%When $(\alpha,\alpha)=-2$, we get $\chi(L)=1$. When 
%$(\alpha,\alpha)=2-2n$, we get that $\chi(L)=\Choose{2}{n}=0$, for $n>2$,
%and $\chi(L^2)=\Choose{5-3n}{n}<0$ in that case.
\end{rem}

%************************************************************************
%
%************************************************************************
\section{Deformation equivalence}
\label{sec-deformation-equivalence}
This section is influenced by an early draft of a paper 
of Brendan Hassett and Yuri Tschinkel, which was graciously communicated to 
the author \cite{hassett-tschinkel-monodromy}.
\WithTorelli{The results rely heavily on
Verbitsky's  Torelli Theorem (Theorem \ref{thm-torelli}).} 

%************************************************************************
%
%************************************************************************
\subsection{The prime-exceptional property of pairs $(X,L)$ is open}
Let $X$ be a projective irreducible holomorphic symplectic manifold
and $E$ a prime exceptional divisor.
Set $c:=[E]\in H^2(X,\Integers)$.
Given a point $t\in Def(X)$, let $c_t\in H^2(X_t,\Integers)$
be the class associated to $c$ via the parallel transport
isomorphism\footnote{The local system $R^2\psi_*\Integers$ over $Def(X)$
is trivial, since we may choose $Def(X)$ to be simply connected.} 
$H^2(X,\Integers)\rightarrow H^2(X_t,\Integers)$.
Denote by $R_c$ both the reflection of $H^2(X,\Integers)$ with respect to $c$,
as well as the involution of $Def(X)$. 
Let $D_c\subset Def(X)$ be the fixed locus of $R_c$.
$D_c$ is a smooth divisor in $Def(X)$, which is characterized also as the 
subset
\begin{equation}
\label{eq-wall}
D_c \ \ := \ \ \{t\in Def(X) \ : \ c_t \ \mbox{is of Hodge type} \ (1,1)\}.
\end{equation}

\begin{lem}
\label{lem-effectivity-along-a-wall}
There exists an open subset $D_c^0\subset D_c$, containing $0$, 
such that for every $t\in D_c^0$ the class $c_t$ is Poincare dual to 
a prime exceptional divisor $E_t$. 
\end{lem}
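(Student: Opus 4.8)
The plan is to spread the prime exceptional divisor $E$ out in Namikawa's family $\bar\psi:\Y\to Def(Y)$, exploiting the fact that the wall $D_c$ is exactly the branch locus of the double cover $f:Def(X')\to Def(Y)$, and then to transport the resulting divisors back to the fibres $X_t$ by strict transforms. First I would pass from $X$ to the birational model $X'$ of Proposition \ref{prop-druel} on which $E$ becomes contractible. Since $X$ and $X'$ are isomorphic in codimension one, the graph correspondence identifies $H^2(X,\Integers)$ with $H^2(X',\Integers)$ and, via $H^{1,1}\cong H^1(\bullet,T)$, identifies $Def(X)$ with $Def(X')$, carrying $R_c$ to $R_{c'}$ with $c'=[E']$ and the wall $D_c$ to $D_{c'}$; by Corollary \ref{cor-1}(\ref{item-monodromy-operator}) the latter is the fixed locus of the Galois involution of $f$. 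It then suffices to produce, for $t$ in an open neighborhood of $0$ in $D_{c'}$, a prime exceptional divisor $E'_t\subset X'_t$ with $[E'_t]=c'_t$; its strict transform under the (deformed) birational map $X_t\dashrightarrow X'_t$, again an isomorphism in codimension one, yields the desired $E_t\subset X_t$ with $[E_t]=c_t$ and $(c_t,c_t)=(c,c)<0$.

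The core is the construction of the family of the $E'_t$. I would restrict the morphism $\nu$ of diagram (\ref{diagram-f-general}) over $D_{c'}$. Because $D_{c'}$ is the ramification locus of the double cover $f$, so that $f^{-1}(f(D_{c'}))=D_{c'}$, the fibres $Y_{f(t)}$ are singular exactly for $t\in D_{c'}$; by Proposition \ref{prop-dissident-locus} they carry, near $0$, a codimension-two singular locus of type $A_1$ or $A_2$, so each $\nu_t:X'_t\to Y_{f(t)}$ is a divisorial contraction with exceptional divisor $E'_t$ (a genuine divisor, not of higher codimension, precisely because the singularities stay of $ADE$ type in codimension two, cf. Corollary \ref{cor-1}(\ref{item-integrality})). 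These assemble into the relative exceptional divisor $\E'\subset\X'|_{D_{c'}}$, which is horizontal and therefore flat and proper over $D_{c'}$, with central fibre $\E'_0=E'$; and the class $[E'_t]$ is the parallel transport of $[E']=c'$, namely $c'_t$.

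It remains to see that $E'_t$ is \emph{prime} for $t$ near $0$, and this is the step I expect to be the main obstacle. At $t=0$ the fibre $E'$ is integral, being the strict transform of the prime divisor $E$. By the standard openness of the geometrically-integral-fibre locus for a proper, flat, finitely presented morphism (EGA IV, 12.2.4, in its analytic incarnation), the set $D^0_{c'}:=\{t\in D_{c'}: E'_t\text{ is integral}\}$ is open and contains $0$. For $t\in D^0_{c'}$ the divisor $E'_t$ is reduced and irreducible with $(c'_t,c'_t)<0$, hence prime exceptional; transporting back to $X_t$ and setting $D_c^0$ equal to the image of $D^0_{c'}$ under $Def(X')\cong Def(X)$ then proves the Lemma. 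I note that effectivity could not be obtained by semicontinuity of $h^0$ alone, which runs the wrong way (Remark \ref{rem-an-exceptional-linear-system-is-a-singleton}); the geometric contraction is what actually produces the divisor in a family.

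The delicate points, beyond the openness of the integral-fibre locus, are the two appeals to deformation of birational geometry: that $\nu_t$ remains divisorial for all $t$ in a neighborhood of $0$ in $D_{c'}$, which rests on the persistence of the codimension-two $A_1/A_2$ singularities of $Y_{f(t)}$ via Theorem \ref{thm-namikawa} and Proposition \ref{prop-dissident-locus}; and that the codimension-one identification $X_t\dashrightarrow X'_t$ deforms over $D_c$, so that strict transform carries the prime divisor $E'_t$ to a prime divisor $E_t$ with $[E_t]=c_t$. Both are governed by the same cohomological bookkeeping of the graph correspondence already invoked in the proof of Corollary \ref{cor-1}.
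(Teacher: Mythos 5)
Your strategy---deform the contraction $\pi:X'\to Y$ over the wall and take the relative exceptional divisor---is close in spirit to the paper's, but it has a genuine gap at the final step, and it is precisely the step the paper's proof is engineered to avoid. To pass from a prime divisor $E'_t\subset X'_t$ back to a prime divisor $E_t\subset X_t$ of class $c_t$ you need a bimeromorphic map $X_t\dashrightarrow X'_t$, varying sensibly with $t$, for \emph{every} $t$ in a neighborhood of $0$ in $D_c$. The graph correspondence you invoke exists only at $t=0$; for $t\neq 0$ on the wall you know only that $X_t$ and $X'_t$ have the same period, and concluding that they are bimeromorphic is a Torelli-type statement. Since Lemma \ref{lem-effectivity-along-a-wall} feeds into Proposition \ref{prop-generic-prime-exceptional} and ultimately into the unconditional Theorem \ref{cor-introduction}, an appeal to Torelli here is not available. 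The paper sidesteps this by transporting the \emph{rational curves} rather than the divisor: Druel's observation that the exceptional locus of $X\dashrightarrow X'$ does not dominate $B=\pi(E')$ shows that a dense open subset of the original $E\subset X$ is already covered by smooth rational curves $C$ with $N_{C/X}\cong\omega_C\oplus\StructureSheaf{C}^{\oplus(2n-2)}$; Ran's semi-regularity theorem then deforms the pairs $(X,C)$ over all of $D_c$ (the semi-regularity map being an isomorphism of one-dimensional spaces), and the image of the universal curve of the relative Douady space is the divisor $\E\subset\X$ whose fibers give the $E_t$. No deformation of $X'$, of $Y$, or of the birational map is needed.

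A secondary soft spot: you assert that $Y_{f(t)}$ stays singular, with a codimension-two $A_1$/$A_2$ locus, for all $t\in D_{c'}$, ``because $D_{c'}$ is the ramification locus.'' That the discriminant of $\bar{\psi}$ coincides with the branch locus of $f$ is a theorem (part of what \cite{markman-galois}, Lemmas 4.10 and 4.23, and \cite{namikawa} establish), not a formal consequence of ramification; and Proposition \ref{prop-dissident-locus} as quoted applies to $Y=Y_{\bar 0}$, not to its deformations, so the claim that each $\nu_t$ remains a divisorial (rather than small or trivial) contraction needs its own argument. Finally, note that the paper handles irreducibility of the nearby fibers differently: rather than openness of the integral-fibre locus, it uses cyclicity of $\Pic(X_t)$ for generic $t\in D_c$ together with Boucksom's result that $\linsys{kE}$ consists of the single divisor $kE$, which is what pins down $k=1$ and rules out extra components.
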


\begin{proof}
Let $X'$, $E'$, and $Y$ be as in Proposition \ref{prop-druel}.
Denote the image of $E'\rightarrow Y$ by $B$. 
The generic fiber of $E'\rightarrow B$ is either a smooth rational
curve $C$, whose normal bundle satisfies
\[
N_{C/X'} \ \ \cong \ \ 
\omega_C\oplus \left(\oplus_{i=1}^{2n-2}\StructureSheaf{C}\right),
\]
or the union of two such curves meeting non-tangentially at one point,
by Proposition \ref{prop-dissident-locus}.
Druel shows that the exceptional locus of the birational map
$X \rightarrow X'$ does not dominate $B$
(see the proof of \cite{druel}, Theorem 1.3). We conclude that a Zariski dense 
open subset of the original divisor $E$ in $X$ is covered by such rational 
curves. The proposition now follows from results of Ziv Ran
about the deformations  of such pairs $(X,C)$ (\cite{ziv-ran}, Theorem 1, 
with further comments in \cite{kawamata}).
Our argument is inspired by \cite{hassett-tschinkel-conj}, 
Theorems 4.1 and 4.3. 
Note first that the class of the curve $C$ remains of type $(n-1,n-1)$
over $D_c$, by part \ref{item-integrality} of Corollary \ref{cor-1}.
Let $\psi:\X\rightarrow D_c$ be the semi-universal family,
${\mathcal H}\rightarrow D_c$ the irreducible component
of the relative Douady space containing the point $t_0$ 
representing the pair $(X,C)$, and $\C\subset {\mathcal H}\times_{D_c}\X$ 
the universal subscheme. We get the diagram
\[
\begin{array}{ccccc}
& & \C & \LongRightArrowOf{f} & \X
\\
& & \alpha \ \downarrow \ \hspace{1ex} & & \hspace{1ex} \ \downarrow \ \psi
\\
t_0 & \in & {\mathcal H} & \LongRightArrowOf{\beta} & D_c.
\end{array}
\]
Let $\pi:H^1(C,N_{C/X})\rightarrow H^{2n}(\Omega_X^{2n-2})$
be the semi-regularity map. Then $\pi$
is an isomorphism of these one-dimensional vector spaces
(Observation (a) before Corollary 4 in \cite{ziv-ran}).
Theorem 1 of  \cite{ziv-ran} implies that the morphism $\beta$ is 
surjective, of relative dimension $2n-2$, and it is smooth at
the point $t_0$. It follows that $\C$ has relative dimension
$2n-1$ over $D_c$, and $\C$ is smooth along the rational curve $C$
over $t_0$. Furthermore, the fiber $(\beta\circ \alpha)^{-1}(0)$ 
contains a unique irreducible component $\widetilde{E}$, which 
dominates $E$, 
and $f:\widetilde{E}\rightarrow E$ has degree $1$,
by part \ref{item-integrality} of Corollary \nolinebreak\ref{cor-1}.

We claim that the differential $df:T\C\rightarrow f^*T\X$ is 
injective along $C$. 
$T\C$ comes with a natural filtration
$T_{\alpha}\subset T_{\beta\circ\alpha}\subset T\C$.
$(f^*T\X\restricted{)}{C}$ comes with the filtration
$TC\subset (TX\restricted{)}{C}\subset (T\X\restricted{)}{C}.$
The homomorphism $df$ is compatible with the filtarations and induces
the identity on the first and third graded summands $TC$ and $T_0(D_c)$.
It suffices to prove injectivity on the middle graded summand. 
The above condition on $N_{C/X}$ implies, furthermore,
that the evaluation homomorphism 
$H^0(N_{C/X})\otimes \StructureSheaf{C}\rightarrow N_{C/X}$ is injective.
It follows that the differential $df$ is injective along $C$. 
Consequently, $f(\C)$ determines a divisor $\E$ in $\X$, 
possibly after eliminating embedded components of $f(\C)$, which are
disjoint from the curve $C$. Furthermore, $\E$
intersects the fiber $X$ of $\psi$ along a divisor $E_0$ containing $E$
and $E_0$ is reduced along $E$. 

It remains to prove that $E_0$ is irreducible. Now the
fiber $X_t$ has a cyclic Picard group, for a generic $t \in D_c$.
Hence, the generic fiber $E_t$ of $\E$ is of class $kc_t$, for some positive
integer $k$. Thus $E_0$ is of class $kc$.
But the linear system $\linsys{kE}$ consists of a
single divisor $kE$, by \cite{boucksom}, Proposition 3.13. 
We get that $k=1$, since $E_0$ is reduced along $E$.
\end{proof}

Let $\pi:\X\rightarrow T$ be a smooth family of irreducible holomorphic
symplectic manifolds over a connected complex manifold $T$. 
Assume that there exists a section $e$ of $R^2_{\pi_*}\Integers$, 
everywhere of Hodge type $(1,1)$. Given a point $t\in T$, denote by
$L_t$ the line bundle on the fiber $X_t$ with class $e_t$.

\begin{prop}
\label{prop-generic-prime-exceptional}
Assume given a point $0\in T$, such that the fiber $X_0$ is projective and 
the linear system $\linsys{L_0^k}$, of the $k$-th tensor power,  
consists of a prime exceptional divisor 
$E_0\subset X_0$, for some positive integer $k$. Then $k=1$ or $2$. 
Let $Z\subset T$ be the subset of points $t\in T$, such that
$h^0(X_t,L_t^k)>1$, or there exists a non-prime divisor, which is a
member of the linear system $\linsys{L_t^k}$.
Then $Z$ is a proper and closed analytic subset of $T$. 
Furthermore, there exists 
an irreducible divisor $\E$ in $\X\setminus \pi^{-1}(Z)$, 
such that $\E$ intersects the fiber $\pi^{-1}(t)$ along 
a prime exceptional divisor $E_t$ of class 
$ke_t$, for every $t$ in the complement $T\setminus Z$.
%Finally, given a point $t\in T$, such that $\linsys{L_t^k}$
%consists of a prime exceptional divisor $E_t$, we may choose an 
%analytic subset $Z\subset T$, with the above property, such that $Z$ 
%does not contain $t$.
\end{prop}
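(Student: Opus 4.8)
The plan is to derive the bound on $k$ directly from Corollary~\ref{cor-1}, and then to obtain the global statement by spreading the local construction of Lemma~\ref{lem-effectivity-along-a-wall} over all of $T$, with the analytic structure of the bad locus controlled by semicontinuity of cohomology. For the bound: since $\linsys{L_0^k}$ consists of the prime exceptional divisor $E_0$, its class $[E_0]=ke_0$ is, by part~\ref{item-divisibility-at-most-two} of Corollary~\ref{cor-1}, either primitive or twice a primitive class; as $e_0$ is primitive this forces $k\in\{1,2\}$. I will also record that $h^0(X_0,L_0^k)=1$, the lower bound being the effectivity of $E_0$ and the upper bound being Remark~\ref{rem-an-exceptional-linear-system-is-a-singleton}. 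Throughout I would work with a relative line bundle $\LB$ on $\X$ restricting to $L_t$ on each fiber; it exists because $R^1\pi_*\StructureSheaf{\X}=0$ for a family of irreducible holomorphic symplectic manifolds.

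The first substantive step is to prove that $h^0(X_t,L_t^k)\ge 1$ for \emph{every} $t\in T$, i.e.\ that $ke_t$ is represented by an effective divisor on every fiber; this is forced by the very statement, since a point with $h^0=0$ would lie outside $Z$ yet carry no divisor. Applying Lemma~\ref{lem-effectivity-along-a-wall} to the classifying map of a neighbourhood of $0$ into $Def(X_0)$ (whose image lies in the wall $D_c$, $c=[E_0]$, because $e$ is everywhere of type $(1,1)$), the class $ke_t$ is Poincar\'e dual to a prime exceptional divisor $E_t$ for all $t$ in an open $V\ni 0$. These $E_t$ define a point of the relative Douady space $\D\to T$ of codimension-one cycles on the fibers; let $\D^0$ be the irreducible component through $[E_0]$. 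Since all fibers are compact K\"ahler and the cycles have fixed class $ke_t$, hence locally bounded volume, the restriction $\D^0\to T$ is proper (Fujiki--Lieberman). Its image is then a closed analytic subset of $T$ containing the open set $V$, so by irreducibility of $T$ it is all of $T$; thus $\linsys{L_t^k}\neq\emptyset$ for every $t$.

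Next I would identify $Z$ with an analytic set and build the relative divisor. By upper semicontinuity, $Z_2:=\{t: h^0(X_t,L_t^k)\ge 2\}$ is closed analytic, and proper since $0\notin Z_2$. On $T\setminus Z_2$ one has $h^0=1$ by the previous step, so $\pi_*\LB^k$ is locally free of rank one there (Grauert) and its tautological section cuts out a relative effective Cartier divisor $\E'\subset\pi^{-1}(T\setminus Z_2)$ with $\E'\cap X_t$ the unique member $D_t$ of $\linsys{L_t^k}$. The locus $W\subset T\setminus Z_2$ where $D_t$ fails to be integral is closed analytic, by the analytic analogue of the openness of the integral-fiber locus of a flat proper family, and it is proper because it avoids $V$. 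Setting $Z:=Z_2\cup W$ recovers exactly the set in the statement, a proper closed analytic subset of $T$.

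Finally I would put $\E:=\E'\cap\pi^{-1}(T\setminus Z)$. For $t\in T\setminus Z$ the fiber $E_t:=\E\cap X_t$ is integral of class $ke_t$ with $(ke_t,ke_t)=k^2(e_0,e_0)<0$, hence prime exceptional; and $\E\to T\setminus Z$ is flat and proper with irreducible fibers over the irreducible base $T\setminus Z$, so $\E$ is irreducible. \textbf{The main obstacle} is the spreading-out of effectivity in the second paragraph: one must exclude that $\linsys{L_t^k}$ becomes empty away from $0$, which is precisely where the properness of the relative Douady (or Barlet cycle) space for the fixed class $ke_t$ is indispensable; the analyticity of the non-integral locus $W$ is the secondary technical point.
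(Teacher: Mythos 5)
Your skeleton matches the paper's: $k\in\{1,2\}$ from part \ref{item-divisibility-at-most-two} of Corollary \ref{cor-1}, $h^0(X_0,L_0^k)=1$ from Boucksom, effectivity spread over $T$, a relative divisor cut out by the (fiberwise unique) section, and excision of the bad locus. Where you genuinely diverge is the spreading-out step: you run it through properness of a component of the relative Douady/Barlet space (Fujiki--Lieberman) plus Remmert's image theorem. That works, but it is much heavier than the paper's argument, which is a one-liner: $h^0(X_t,L_t^k)$ is upper semicontinuous, so $\{t: h^0\geq 1\}$ is a closed analytic subset of $T$; it contains the open set supplied by Lemma \ref{lem-effectivity-along-a-wall}; and a closed analytic subset of the connected manifold $T$ with nonempty interior is all of $T$. (Your parenthetical that effectivity is ``forced by the very statement'' is circular and should be cut; the cycle-space argument that follows is what does the work.) Your decision to prove irreducibility of $\E$ only after removing the non-integral locus, where all fibers are irreducible, is a legitimate simplification of the paper's dimension-count over the larger base $T\setminus Z_1$, and both treatments of the analyticity of the non-integral locus are at the same (terse) level of rigor.

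There is one genuine, though repairable, gap: you posit a global line bundle $\LB$ on $\X$ with $\LB|_{X_t}\cong L_t$ ``because $R^1\pi_*\StructureSheaf{\X}=0$.'' That is not the relevant obstruction. The class $e$ determines a section of $R^1_{\pi_*}\StructureSheaf{\X}^*$, and by the edge sequence
\[
H^1(T,\StructureSheaf{T}^*)\rightarrow H^1(\X,\StructureSheaf{\X}^*)
\rightarrow H^0(T,R^1_{\pi_*}\StructureSheaf{\X}^*) \rightarrow
H^2(T,\StructureSheaf{T}^*)
\]
the obstruction to lifting it to an actual line bundle on $\X$ lies in $H^2(T,\StructureSheaf{T}^*)$, which need not vanish for an arbitrary connected base $T$. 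The paper circumvents this by working over open sets $V\subset T$ with $H^i(V,\StructureSheaf{V}^*)=0$ for $i=1,2$, producing over each $\pi^{-1}(V)$ the unique divisor $\E_V$ in $\linsys{\LB_V}$ containing no fiber, and gluing the $\E_V$ — which agree on overlaps because each is canonical, being the unique member of the linear system on every fiber. Your Grauert/tautological-section construction goes through verbatim on each such $V$ and glues in exactly the same way, so the conclusion stands; but as written, the global $\LB$ you start from need not exist for the reason you give.
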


\begin{proof}
The integer $k$ is $1$ or $2$ by Corollary \ref{cor-1}.
The dimension $h^0(X_t,L_t^k)$ is an upper-semi-continuous function
on $T$, and so the locus where it is positive is a closed analytic subset of 
$T$. On the other hand, $h^0(X_t,L_t^k)$ is positive over an open subset,
by Lemma \ref{lem-effectivity-along-a-wall}. Hence, it is 
positive everywhere and $L_t^k$ is effective for all $t$.

Let $Z_1\subset T$ be the closed analytic subset, where 
$h^0(X_t,L_t^k)>1$. We know that $h(X_0,L_0^k)=1$, by \cite{boucksom},
Proposition 3.13.
Hence, we may assume, possibly after replacing $T$ by $T\setminus Z_1$, that 
$h^0(X_t,L_t^k)=1$, for all $t\in T$. 

We prove next that the section $e$ lifts to a line bundle 
$\LB\cong \StructureSheaf{\X}(\E)$, for a divisor
$\E\subset\X$, which does not contain any fiber of $\pi$. 
The following is part of the edge exact sequence of the spectral
sequence of the composite functor $\Gamma\circ\pi_*$ of push-forward
and taking global sections.
\[
H^1(T,\StructureSheaf{T}^*)\rightarrow H^1(\X,\StructureSheaf{\X}^*)
\rightarrow H^0(T,R^1_{\pi_*}\StructureSheaf{\X}^*) \rightarrow 
H^2(T,\StructureSheaf{T}^*).
\]
Let $V$ be an open subset of $T$ satisfying $H^i(V,\StructureSheaf{V}^*)=0$,
for $i=1,2$. Then the restriction of $e$ to $V$ lifts to
a line bundle $\LB_V$ over $\pi^{-1}(V)$.
Now $\pi_*\LB_V$ is a line bundle over $V$, 
which must be trivial, by the vanishing of $H^1(V,\StructureSheaf{V}^*)$.
Hence, $H^0(\pi^{-1}(V),\LB_V)\cong H^0(V,\StructureSheaf{V})$,
and there exists a unique divisor $\E_V\subset \pi^{-1}(V)$,
in the linear system $\linsys{\LB_V}$, which does not contain any 
fiber of $\pi$. If $V_1$ and $V_2$ are two such open subsets of $T$, then
the divisors $\E_{V_i}$ constructed above agree along
$\pi^{-1}(V_1\cap V_2)$, since they are canonical over any subset $V$ of
$V_1\cap V_2$, over which $H^i(V,\StructureSheaf{V}^*)=0$,
for $i=1,2$. Hence, we get a global divisor $\E\subset \X$.
Set $\LB:=\StructureSheaf{\X}(\E)$.

We prove next that $\E$ is irreducible.
Let $p:\E\rightarrow T$ be the restriction of 
$\pi$. Then $p$ is a proper morphism, which is also flat by 
\cite{matsumura} application 2 page 150. 
%If $\Pic(X_t)$ is cyclic, then $E_t$ is necessarily reduced and irreducible. 
%It follows that $E_t$ is prime exceptional. 
All fibers of $p$ are connected, since $T$ is smooth,
and in particular normal, and the fiber over $0$ is connected. 
The morphism $p$ is smooth along the smooth locus of $E_0$,
and $\E$ is a local complete intersection in the smooth complex manifold
$\X$. Hence, there exists precisely one irreducible component of $\E$
which contains $E_0$. 
Assume that there exists another irreducible component $\E'$.
Then $\E'$ maps to a proper closed subset of $T$, 
which does not contain $0$. 
But $T$ is irreducible, and $\E'$ intersects each fiber of $\pi$
along a subset, which is either empty or of codimension at least one. Hence,
the codimension of $\E'$ in $\X$ is larger than one. This contradicts
the fact that $\E$ is a divisor.
We conclude that $\E$ is irreducible. 

The subset $Z\subset T$, consisting of points
$t\in T$, where $E_t$ is reducible or non-reduced, 
is a closed analytic subset of $T$, which does not contain $0$.
%************
% hide
%************
\hide{
The last statement above means that the morphism $p$ is 
{\em geometrically-irreducible} and {\em geometrically-reduced}.
We show the former property, as the proofs are similar.

We need to show that $\E$ remains irreducible after any base change by a 
finite morphism $f:\widetilde{T}\rightarrow T$ from  
a reduced, irreducible, and normal analytic space $\widetilde{T}$.
Set $\widetilde{\E}:=\E\times_T\widetilde{T}$
and let $\tilde{p}:\widetilde{\E}\rightarrow \widetilde{T}$ be the
natural morphism. 

%We may assume that $\widetilde{T}$ is normal, possibly after replacing it
%by its normalization (in the algebraic cetegory normality is automatic
%since $f$ is finite in the algebraic sense).
We may assume that $\widetilde{T}$ is non-singular at some point 
over $0\in T$. This is clear if $\dim(T)=1$. If $\dim(T)>1$, 
let $\beta:\hat{T}\rightarrow T$ be the 
blow-up of the point $0$ in $T$ and denote by $D\subset \hat{T}$ the
exceptional divisor. The geometric irreducibility of $p$ 
is equivalent to that of the base change via $\beta$. 
Now the singular locus of any normal finite covering 
$f:\widetilde{T}\rightarrow \hat{T}$ has codimension $\geq 2$.
Hence, $\widetilde{T}$ is smooth at some point of $D$.

Assume that $\widetilde{T}$ is non-singular at the point
$\tilde{0}$ over $0\in T$. 
Let $\tilde{\pi}:\widetilde{\X}\rightarrow \widetilde{T}$ be the 
base change of $\pi$. Then $\widetilde{\X}$ is smooth along 
the fiber $X_{\tilde{0}}$ over $\tilde{0}$.
The morphism $\tilde{p}:\widetilde{\E}\rightarrow\widetilde{T}$ 
is again flat and proper. The argument used above again yields that
$\widetilde{\E}$ has a unique irreducible component containing
the fiber $E_{\tilde{0}}$  over $\tilde{0}$. 
Again any other component maps to a closed analytic subset of $\widetilde{T}$.

The rest of the proof is by contradiction. 
Assume that there exists an irreducible component of $\widetilde{\E}$,
which does not contain $E_{\tilde{0}}$. Then 
the morphism $\widetilde{\E}\rightarrow \E$ must map it to a subset of
$\E$ of lower dimension, as the image does not dominate $\widetilde{T}$. 
But the morphism $\widetilde{\E}\rightarrow \E$ is finite. A contradiction. 
%************
% End hide
%************
}
%************
% hide
%************
\hide{
Let $\nu:\widetilde{\E}\rightarrow \E$ be the normalization and set
$\tilde{p}:=p\circ\nu:\widetilde{\E}\rightarrow T$. 
The normalization $\nu$ is an isomorphism in a neighborhood of the
smooth locus of $E_0$. Hence, the fiber of $\tilde{p}$ over $0\in T$
is irreducible. Now $\tilde{p}$ factors as the composition $g\circ f$, where
$f:\widetilde{\E}\rightarrow\widetilde{T}$ is proper with connected fibers
and $g:\widetilde{T}\rightarrow T$ is finite, by the
Stein Factorization Theorem. 
The morphism $\tilde{p}$ is smooth in a neighborhood of the 
smooth locus of $E_0$. Hence, $g$ is smooth at the unique point over
$0\in T$. $T$ is smooth, and so $g$ is an isomorphism. 
Thus $\tilde{p}$ has connected fibers. 
%************
% End hide
%************
}
\end{proof}

Proposition \ref{prop-generic-prime-exceptional} shows that the property 
that $L$ is prime exceptional is {\em open} in any smooth
and connected base $T$ of a
deformation of a pair $(X,L)$, as long as it holds for at least one projective
pair. 
One limiting case is that of a pair $(X,L)$, where 
$L$ is exceptional, in the sense of 
Definition \ref{def-rational-exceptional}, but no longer prime. 
%We then say that the line bundle $L$ is {\em exceptional} (see Remark 
%\ref{rem-an-exceptional-linear-system-is-a-singleton}).
However, the exceptional property is unfortunately not closed, 
as the following example shows.

\begin{example}
\label{ex-being-exceptional-is-not-a-closed-property}
Let $Y$ be the intersection of a quadric and a cubic in $\PP^{4}$,
which are tangent at one point $y_0$, such that $Y$ has an 
ordinary double point at $y_0$. Let $H$ be the hyperplane line bundle on $Y$.
$Y$ is a singular $K3$ surface of degree $6$. 
Let $\pi:X\rightarrow Y$ be the blow-up of $Y$ at $y_0$
and $E\subset X$ its exceptional divisor.
$X$ is a smooth $K3$ surface.
Set $L_0:=\pi^*H\otimes \StructureSheaf{X}(2E)$.
Then $L_0$ has degree $-2$, but $L_0$ is not exceptional. 
%Note that $h^0(L_0)=5$, $h^1(L_0)=4$, and $h^2(L_0)=0$. 
Set $c:=c_1(L_0)$, let $D_c\subset Def(X)$ be the divisor
defined in equation (\ref{eq-wall}), and denote by $L_t$ the line bundle
on $X_t$ with class $c_t$, $t\in D_c$.
Then $L_t$ has degree $-2$, and thus precisely one of $L_t$ or
$L_t^{-1}$ is effective (\cite{BHPV}, Ch. VIII Prop. 3.6). 
The semi-continuity theorem implies that $L_t$ is effective, 
since $L_0^{-1}$ isn't and $D_c$ is connected.
For a generic $t\in D_c$, the pair $(X_t,L_t)$
consists of a K\"{a}hler $K3$ surface, whose Picard group is generated by 
$L_t$. Hence, the linear system $\linsys{L_t}$ consists of a 
single smooth rational curve $E_t$. 
The analogue of Lemma \ref{lem-effectivity-along-a-wall} 
is known for such a pair $(X_t,L_t)$,
even if $X_t$ is not projective. Hence, Proposition
\ref{prop-generic-prime-exceptional} applies as well. 
Let $D_c^0\subset D_c$ be the subset
of pairs $(X_t,L_t)$, such that $L_t\cong\StructureSheaf{X_t}(E_t)$,
for a smooth connected rational curve $E_t$. 
We get that $D_c^0$ is non-empty and the complement $Z:=D_c\setminus D_c^0$
is a closed analytic subset containing $0\in D_c$.
Consequently, the property of $L_t$ being exceptional is not closed. 
%
%It follows that 
%the open subset $D_c^0\subset D_c$, given by 
%$\{t\in D_c \ : \ h^1(L_t)=0\}$, is dense in $D_c$. 
%If $L_t$ is effective, and $H^1(L_t)$ vanishes, then 
%$h^0(L_t)=1$. Such a linear system
%$\linsys{L_t}$ consists of a single divisor $E_t$, which must be exceptional,
%as the divisorial Zariski decomposition of a divisor $D$ of degree $-2$
%on a $K3$ surface $S$ is non-trivial, 
%if and only if $h^0(\StructureSheaf{S}(D))>1$ (Definition 
%\ref{def-divizorial-Zariski-decomposition}). 
%We see that the property
%of $L_t$ being exceptional is open, but not closed. 
\end{example}

%************************************************************************
%
%************************************************************************
\subsection{Deformation equivalence and  Torelli}
\label{sec-deformation-equivalence-and-torelli}
We introduce and relate three notions of deformation equivalence of pairs.

\begin{defi}
\label{def-deformation-equivalent-pairs-with-effective-divisors}
Let $(X_i,E_i)$, $i=1,2$, be two pairs of an irreducible holomorphic 
symplectic
manifold $X_i$, and an effective divisor $E_i\in Div(X_i)$. 
The two pairs are said to be {\em deformation equivalent}, 
if there exists a smooth proper family
$\pi:\X\rightarrow T$ of irreducible holomorphic symplectic
manifolds, 
over a connected analytic space $T$ with finitely many irreducible components, 
a holomorphic 
line bundle $\LB$ over $\X$, a nowhere-vanishing section $s$ of $\pi_*\LB$, 
points $t_i\in T$, and isomorphisms $f_i:\X_{t_i}\rightarrow X_i$,
such that $f_i((s_{t_i})_0)=E_i$, $i=1,2$. Above $(s_{t_i})_0$ denotes the zero
divisor of $s_{t_i}$ in $X_{t_i}$.
\end{defi}

The relation is clearly symmetric and reflexive. It is also transitive,
since we allow $T$ to be reducible.

\begin{defi}
\label{def-deformation-equivalent-pairs-with-line-bundle}
Let $(X_i,L_i)$, $i=1,2$, be two pairs of an irreducible holomorphic 
symplectic manifold $X_i$, 
and a line bundle $L_i$. 
The two pairs are said to be {\em deformation equivalent}, 
if there exists a smooth proper family
$\pi:\X\rightarrow T$ of irreducible holomorphic symplectic manifolds, 
over a connected analytic space $T$ with finitely many irreducible components, 
and a section $e$ of $R^2\pi_*\Integers$, 
which is everywhere of Hodge-type $(1,1)$, 
points $t_i\in T$, and isomorphisms 
$f_i:\X_{t_i}\rightarrow X_i$, such that $(f_i)_*(e_{t_i})=c_1(L_i)$.
\end{defi}

\begin{defi}
\label{def-deformation-equivalent-pairs-with-cohomology-class}
Let $(X_i,e_i)$, $i=1,2$, be two pairs of an irreducible holomorphic 
symplectic manifold $X_i$, 
and a class $e_i\in H^2(X_i,\Integers)$. 
The two pairs are said to be {\em deformation equivalent}, 
if there exists a smooth proper family
$\pi:\X\rightarrow T$ of irreducible holomorphic symplectic
manifolds, 
over a connected analytic space $T$ with finitely many irreducible components, 
a section $e$ of
$R^2\pi_*\Integers$, points $t_i\in T$, and isomorphisms 
$f_i:\X_{t_i}\rightarrow X_i$, such that $(f_i)_*(e_{t_i})=e_i$.
\end{defi}

The three 
Definitions, \ref{def-deformation-equivalent-pairs-with-line-bundle},
\ref{def-deformation-equivalent-pairs-with-effective-divisors}, 
and \ref{def-deformation-equivalent-pairs-with-cohomology-class},
fit in a hierarchy. If $L_i=\StructureSheaf{X_i}(E_i)$, 
and $e_i:=c_1(L_i)$, then 
\begin{eqnarray}
\nonumber
%\label{eq-equivalence-of-divisors-implies-that-of-line-bundles}
(X_1,E_1)\equiv (X_2,E_2) & \Rightarrow & (X_1,L_1)\equiv (X_2,L_2),
\\
\label{eq-equivalence-of-line-bundles-implies-that-of-classes}
(X_1,L_1)\equiv (X_2,L_2) & \Rightarrow & (X_1,e_1)\equiv (X_2,e_2).
\end{eqnarray}

Assume that the divisors $E_i$, $i=1,2$, are prime exceptional,
and $X_1$ is projective. 
Then both implications above are 
equivalences,
\WithoutTorelli{assuming an affirmative answer to a 
version of the Torelli Question,}
by Proposition \ref{equivalence-of-deformation-equivalences-relations}.
For a qualified ``converse'' to the second implication 
(\ref{eq-equivalence-of-line-bundles-implies-that-of-classes}),
without assuming that $L_1$ and $L_2$ are effective,
see Lemma \ref{lem-monodromy-invariants-and-deformation-equivalence}.

%***************************************************
%
%***************************************************
\subsubsection{Period maps}
A {\em marking}, for an irreducible holomorphic symplectic manifold $X$, 
is a choice of an isometry
$\eta: H^2(X,\Integers)\rightarrow \Lambda$ with a fixed lattice $\Lambda$.
The {\em period}, of the marked manifold $(X,\eta)$, is the
line $\eta[H^{2,0}(X)]$ considered as a point in the projective space
$\PP[\Lambda\otimes\ComplexNumbers]$. The period lies in the period domain 
\begin{equation}
\label{eq-period-domain}
\Omega \ := \ \{
\ell \ : \ (\ell,\ell)=0 \ \ \ \mbox{and} \ \ \ 
(\ell,\bar{\ell}) > 0
\}.
\end{equation}
$\Omega$ is an open subset, in the classical topology, of the quadric in 
$\PP[\Lambda\otimes\ComplexNumbers]$ of isotropic lines \cite{beauville}. 

There is a (non-Hausdorff) moduli space ${\mathfrak M}_\Lambda$ of marked 
irreducible holomorphic symplectic manifolds, 
with a second integral cohomology 
lattice isometric to $\Lambda$ \cite{huybrects-basic-results}. 
The period map 
\begin{eqnarray}
\label{eq-period-map}
P \ : \ {\mathfrak M}_\Lambda & \longrightarrow & \Omega,
\\
\nonumber
(X,\eta) & \mapsto & \eta[H^{2,0}(X)]
\end{eqnarray}
is a local isomorphism, by the Local Torelli Theorem \cite{beauville}.
The Surjectivity Theorem states that the restriction of the period map
to every connected component of ${\mathfrak M}_\Lambda$ is surjective 
\cite{huybrects-basic-results}. 

Two points $x$ and $y$ of a topological space are {\em inseparable}, 
if every pair of open subsets $U$, $V$, with $x\in U$ and $y\in V$, have a non-empty intersection.
%************
% Without Torelli
%************
\WithoutTorelli{
%The {\em Torelli Question} asks, for a given connected
%component ${\mathfrak M}^0_\Lambda$, if the generic fiber 
%of $P:{\mathfrak M}^0_\Lambda\rightarrow \Omega$ consists of a single point.
%Verbitsky provided very recently an affirmative answer to 
%this question (\cite{verbitsky}, Theorem 4.24). 
%Verbitsky also proves that any two marked pairs, in the same fiber,
%are related by a bimeromorphic map.
Assume given  a bimeromorphic map 
$f:X_1\rightarrow X_2$ and a marking $\eta_1$ for $X_1$. Let 
$f^* : H^2(X_2,\Integers) \rightarrow H^2(X_1,\Integers)$ 
be the isometry induced by the bimeromorphic map $f$ 
(see the proof of Corollary \ref{cor-1}).
Set $\eta_2=\eta_1\circ f^*$. Then $(X_1,\eta_1)$ and $(X_2,\eta_2)$
are inseparable points of ${\mathfrak M}_\Lambda$ 
(\cite{huybrechts-kahler-cone}, Theorem 2.5).
Conversely, Verbitsky  recently posted a proof of an affirmative answer to
the following version of the Torelli Question
(\cite{verbitsky}, Theorem 4.24).
% the current proof of which has a gap in Proposition 5.12). 

\begin{question} 
\label{thm-torelli}
Let ${\mathfrak M}^0_\Lambda$ be a connected component of 
${\mathfrak M}_\Lambda$. Let $(X_1,\eta_1)$ and $(X_2,\eta_2)$
be two pairs in ${\mathfrak M}^0_\Lambda$ such that 
$P(X_1,\eta_1)=P(X_2,\eta_2)$. 
Are $(X_1,\eta_1)$ and $(X_2,\eta_2)$ inseparable points of 
${\mathfrak M}^0_\Lambda$?
%Does there necessarily exist  
%a bimeromorphic map $f:X_1\rightarrow X_2$, such that 
%$\eta_2=\eta_1\circ f^*$?
\end{question}
%************
% End \WithoutTorelli
%************
}
\WithTorelli{
Assume given  a bimeromorphic map 
$f:X_1\rightarrow X_2$ and a marking $\eta_1$ for $X_1$. Let 
$f^* : H^2(X_2,\Integers) \rightarrow H^2(X_1,\Integers)$ 
be the isometry induced by the bimeromorphic map $f$ 
(see the proof of Corollary \ref{cor-1}).
Set $\eta_2=\eta_1\circ f^*$. Then $(X_1,\eta_1)$ and $(X_2,\eta_2)$
are inseparable points of ${\mathfrak M}_\Lambda$ 
(\cite{huybrechts-kahler-cone}, Theorem 2.5).
Conversely, Verbitsky recently proved
the following version of the  Torelli Theorem.

\begin{thm} 
\label{thm-torelli}
(\cite[Theorem 4.24]{verbitsky}, \cite{huybrechts-bourbaki}). 
Let ${\mathfrak M}^0_\Lambda$ be a connected component of 
${\mathfrak M}_\Lambda$. If $(X_1,\eta_1)$ and $(X_2,\eta_2)$
are two pairs in ${\mathfrak M}^0_\Lambda$ and
$P(X_1,\eta_1)=P(X_2,\eta_2)$, then $(X_1,\eta_1)$ and $(X_2,\eta_2)$ 
are inseparable points of ${\mathfrak M}^0_\Lambda$.
%there exists 
%a bimeromorphic map $f:X_1\rightarrow X_2$, such that 
%$\eta_2=\eta_1\circ f^*$. 
\end{thm}
%******************
% End \WithTorelli
%******************
}
A homomorphism $h:H^*(X_1,\Integers)\rightarrow H^*(X_2,\Integers)$ is a
{\em parallel transport operator} if there exists a smooth and proper family $f:\X\rightarrow B$, 
of irreducible holomorphic symplectic manifolds over an analytic base $B$, points $b_1$, $b_2$ in $B$, isomorphisms
$X_i\cong \X_{b_i}$, and a continuous path $\gamma$ from $b_1$ to $b_2$, such 
that parallel transport in the local system $R^*f_*\Integers$ along $\gamma$ induces $h$.
The following is a fundamental result of Huybrechts.
\begin{thm}
\label{thm-inseparable-are-birational}
(\cite{huybrects-basic-results}, Theorem 4.3).
Let $(X_1,\eta_1)$ and $(X_2,\eta_2)$ be two inseparable points of 
${\mathfrak M}_\Lambda$, with $\dim(X_i)=2n$. Then there exists an effective cycle
$\Gamma:=Z+\sum Y_j$ in $X_1\times X_2$, of pure dimension $2n$,
with the following properties.
\begin{enumerate}
\item
$Z$ is the graph of a bimeromorphic map from $X_1$ to $X_2$.
\item
The correspondence $[\Gamma]_*:H^*(X_1,\Integers)\rightarrow H^*(X_2,\Integers)$
is a parallel transport operator. Furthermore, the 
composition 
\[
\eta_2^{-1}\circ\eta_1:H^2(X_1,\Integers)\rightarrow H^2(X_2,\Integers)
\]
is equal to the restriction of $[\Gamma]_*$.
\item
The image of the projection of each $Y_j$ into each $X_i$, $i=1,2$, 
has positive codimension in $X_i$. 
\end{enumerate}

\end{thm}

Assume given two deformation equivalent pairs $(X_i,e_i)$, $i=1,2$,
in the sense of Definition
\ref{def-deformation-equivalent-pairs-with-cohomology-class}.
Then there exist isometries $\eta_i:H^2(X_i,\Integers)\rightarrow \Lambda$,
having the following two properties: 
\begin{enumerate}
\item
$\eta_1(e_1)=\eta_2(e_2)$.
\item
The marked pairs $(X_i,\eta_i)$ belong to the same connected component 
${\mathfrak M}^0_\Lambda$.
\end{enumerate}
Let $\lambda$ be the common value $\eta_i(e_i)$, $i=1,2$.
If both classes $e_i$ belong to $H^{1,1}(X_i,\Integers)$, 
then the periods $P(X_i,\eta_i)$ belong to the hyperplane
$\lambda^\perp\subset \PP[\Lambda\otimes\ComplexNumbers]$ orthogonal to 
$\lambda$.
The intersection $\lambda^\perp\cap\Omega$ is connected.

Fix a primitive non-zero class $\lambda\in \Lambda$
with $(\lambda,\lambda)<0$. Let 
\[
{\mathfrak M}^0_{\Lambda,\lambda} \ \ \ \subset \ \ \ {\mathfrak M}^0_\Lambda
\]
be the subset parametrizing marked pairs $(X,\eta)$, such that 
$\eta^{-1}(\lambda)$ is of Hodge type $(1,1)$, and 
$(\kappa,\eta^{-1}(\lambda))>0$, for some 
K\"{a}hler class $\kappa$ on $X$. 

\begin{claim}
${\mathfrak M}^0_{\Lambda,\lambda}$ is an open subset 
of $P^{-1}(\lambda^\perp\cap\Omega)$.
%This means that $\eta^{-1}(\lambda)$ is in the Pseudo-effective cone,
%in the sense of \cite{boucksom}.
\end{claim}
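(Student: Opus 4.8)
The plan is to reduce the statement to an openness assertion about the K\"{a}hler condition, and then to settle that assertion using the stability of the K\"{a}hler property in smooth families together with the continuity of the Beauville-Bogomolov pairing.

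First I would identify $P^{-1}(\lambda^\perp\cap\Omega)$ intrinsically. For a marked pair $(X,\eta)$ the period $P(X,\eta)=\eta[H^{2,0}(X)]$ lies in $\lambda^\perp$ if and only if $\eta^{-1}(\lambda)$ is orthogonal to $H^{2,0}(X)$; since $\eta^{-1}(\lambda)$ is a real class and the Beauville-Bogomolov form is compatible with the Hodge structure, this is equivalent to $\eta^{-1}(\lambda)$ being of Hodge type $(1,1)$. Hence $P^{-1}(\lambda^\perp\cap\Omega)$ is precisely the locus where the first defining condition of ${\mathfrak M}^0_{\Lambda,\lambda}$ holds, and what remains is to show that the second condition, namely the existence of a K\"{a}hler class $\kappa$ with $(\kappa,\eta^{-1}(\lambda))>0$, is open within $P^{-1}(\lambda^\perp\cap\Omega)$.

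Next I fix a point $(X_0,\eta_0)\in{\mathfrak M}^0_{\Lambda,\lambda}$ and a K\"{a}hler class $\kappa_0$ on $X_0$ with $(\kappa_0,\eta_0^{-1}(\lambda))>0$. By the Local Torelli Theorem the period map is a local isomorphism, so near $(X_0,\eta_0)$ the component ${\mathfrak M}^0_\Lambda$ carries a local representing family $\pi:\X\rightarrow U$, with $U\subset\Omega$ open, with fiber $X_0$ over $p_0:=P(X_0,\eta_0)$, and with the marking realized by a trivialization of $R^2\pi_*\Integers$ over the simply connected $U$; the local character of the construction sidesteps the non-Hausdorffness of ${\mathfrak M}_\Lambda$. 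Applying the stability of the K\"{a}hler property under small deformations, I obtain, after shrinking $U$, a continuous family of K\"{a}hler forms $\omega_p$ on $X_p$ whose classes $[\omega_p]\in H^2(X_p,\RealNumbers)$ vary continuously in $\Lambda\otimes\RealNumbers$ via the trivialization and satisfy $[\omega_{p_0}]=\kappa_0$. Each $[\omega_p]$ is automatically of type $(1,1)$ and K\"{a}hler on $X_p$.

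Finally I would conclude by continuity of the pairing. Since $([\omega_p],\lambda)$ depends continuously on $p$ and equals $(\kappa_0,\eta_0^{-1}(\lambda))>0$ at $p_0$, it stays positive for $p$ in a neighborhood of $p_0$. Therefore every marked pair in $P^{-1}(\lambda^\perp\cap\Omega)$ sufficiently close to $(X_0,\eta_0)$ admits the K\"{a}hler class $[\omega_p]$ with $([\omega_p],\eta^{-1}(\lambda))>0$ and hence lies in ${\mathfrak M}^0_{\Lambda,\lambda}$, which is exactly the claimed openness. The one step carrying genuine content, and which I expect to be the main point to pin down, is the stability input: that a K\"{a}hler class on the central fiber deforms to K\"{a}hler classes on all nearby fibers with continuously varying cohomology classes. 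This is where care is needed, since the K\"{a}hler cone of an irreducible holomorphic symplectic manifold is in general a proper subcone of the positive cone and may vary with the period; the point is that $\kappa_0$ lies in the open K\"{a}hler cone, so a family of classes limiting to it remains K\"{a}hler for small deformations, and this conclusion does not require controlling the walls of the K\"{a}hler cone away from $p_0$.
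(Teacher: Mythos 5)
Your proposal is correct and follows essentially the same route as the paper: reduce to the openness of the condition $(\kappa,\eta^{-1}(\lambda))>0$, invoke the Local Torelli Theorem to work in a local universal family, produce a continuously varying family of K\"{a}hler classes through $\kappa_0$ via the stability of K\"{a}hler manifolds (the paper cites Voisin, Theorem 9.3.3, for exactly this), and conclude by continuity of the Beauville--Bogomolov pairing.
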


\begin{proof}
Let ${\mathfrak M}^0_+$ be the subset of ${\mathfrak M}^0_\Lambda$,
consisting of marked pairs $(X,\eta)$, such that 
$(\kappa,\eta^{-1}(\lambda))>0$, for some K\"{a}hler class $\kappa$ on $X$. 
It suffices to prove that  ${\mathfrak M}^0_+$ is an open subset
of ${\mathfrak M}^0_\Lambda$. 
Let $(X_0,\eta_0)$ be a point of ${\mathfrak M}^0_+$ 
and $\kappa_0$ a K\"{a}hler class on $X_0$ satisfying 
$(\kappa_0,\eta_0^{-1}(\lambda))>0$. 
Let $Def(X_0)$ be the Kuranishi deformation space and
$\psi:\X\rightarrow Def(X_0)$ the semi-universal family with fiber $X_0$ over
$0\in Def(X_0)$. 
There exists an open subset $U$ of $Def(X_0)$, and a differentiable section
$\kappa$ of the real vector bundle 
$(R^2_{\psi_*}\RealNumbers\restricted{)}{U}$, 
over $U$, such that $\kappa_t$ is a K\"{a}hler class of $X_t$, for all
$t\in U$, by the proof of the Stability of
K\"{a}hler manifolds (\cite{voisin-book-vol1}, Theorem 9.3.3). 
We may identify $U$ with an open subset of ${\mathfrak M}^0_\Lambda$
containing $(X_0,\eta_0)$, by the Local Torelli Theorem.
We get the continuous function $(\kappa_t,\eta_t^{-1}(\lambda))$
over $U$, which is positive at $(X_0,\eta_0)$. 
Hence, there is an open subset $W\subset U$, containing $(X_0,\eta_0)$, 
such that $(\kappa_t,\eta_t^{-1}(\lambda))>0$, for all $t\in W$.
\end{proof}

The Local Torelli Theorem 
implies that the period map restricts to a local isomorphism 
\[
P_\lambda \ : \ {\mathfrak M}^0_{\Lambda,\lambda} \ \ \ 
\longrightarrow \ \ \ \lambda^\perp\cap \Omega.
\]
${\mathfrak M}^0_{\Lambda,\lambda}$ is thus a non-Hausdorff
smooth complex manifold of dimension $b_2(X)-3$. 
%***********************************************************************
% 
%***********************************************************************
\subsubsection{${\mathfrak M}^0_{\Lambda,\lambda}$ is path-connected 
\WithoutTorelli{if  Torelli holds}} \hspace{1ex}\\
Given a point $t\in \Omega$, set 
$
\Lambda_t^{1,1} := \{x\in \Lambda \ : \ (x,t)=0\}.
$
\WithTorelli{
The following statement is a Corollary of Theorem \ref{thm-torelli}.
}
\WithTorelli{\begin{cor}}
\WithoutTorelli{\begin{lem}}
\label{cor-torelli-for-M-Lambda-lambda}
Let $t\in \lambda^\perp\cap\Omega$ be a point, such that 
$\Lambda_t^{1,1}={\rm span}_\Integers\{\lambda\}$. 
Then the fiber $P_\lambda^{-1}(t)$
consists of the isomorphism class of
a single marked \WithTorelli{pair.}
\WithoutTorelli{pair, assuming an affirmative answer to Question
\ref{thm-torelli}.}
\WithTorelli{\end{cor}}
\WithoutTorelli{\end{lem}}

\begin{proof}
Let $(X,\eta)$ be a marked pair in $P_\lambda^{-1}(t)$.
Set $\tilde{\lambda}:=\eta^{-1}(\lambda)$.
Then $H^{1,1}(X,\Integers)$ is spanned by $\tilde{\lambda}$, and
there exists a K\"{a}hler class $\kappa_0$, 
such that $(\kappa_0,\tilde{\lambda})>0$. 
Let us first describe the three possibilities for the K\"{a}hler
cone $\K_X$ and the 
{\em birational K\"{a}hler cone} $\BK_X$ of $X$.
Recall that $\BK_X$ is the union of the subsets 
$f^*(\K_Y)\subset H^{1,1}(X,\RealNumbers)$, as $f$ varies over 
all bimeromorphic maps $f:X\rightarrow Y$ from X to another 
irreducible holomorphic symplectic manifold $Y$.
Denote by $\tilde{\lambda}^\vee$ the primitive class in $H^2(X,\Integers)^*$,
which is a positive multiple of $(\tilde{\lambda},\bullet)$.
Let $\C_X$ be the connected component of the cone
$\{\kappa\in H^{1,1}(X,\RealNumbers) \ : \ (\kappa,\kappa)>0\}$,
which contains the K\"{a}hler cone.

\underline{Case 1:} If $d\lambda^\vee$ is not represented by a rational curve,
for any positive integer $d$, then
$
\BK_X=\K_X=\C_X,
$
by \cite{huybrechts-kahler-cone}, Corollary 3.3.

\underline{Case 2:} Assume that $\tilde{\lambda}$ is 
$\RationalNumbers$-effective.
Then $d\tilde{\lambda}$ is represented by a prime 
exceptional divisor $E\subset X$, for some positive integer $d$,
which is uniruled, by \cite{boucksom}, Proposition 4.7.
Then 
\[
\BK_X=\K_X=\{\kappa\in \C_X \ : \ (\kappa,\tilde{\lambda})>0\},
\]
by \cite{boucksom}, Theorem 4.3. 

\underline{Case 3:} Assume that 
$d\tilde{\lambda}$ is not effective, for any
positive integer $d$, but $d\tilde{\lambda}^\vee$ is represented by a rational
curve, for some positive integer $d$. Then
\begin{eqnarray*}
\K_X & = & \{\kappa\in \C_X \ : \  (\kappa,\tilde{\lambda})>0\},
\\
\BK_X & = & \K_X\cup \K'_X, \ \ \mbox{where} 
\\
\K'_X & = & \{\kappa\in \C_X \ : \ (\kappa,\tilde{\lambda})<0\},
\end{eqnarray*} 
by \cite{boucksom}, Theorem 4.3. 

Let $(X_1,\eta_1)$ and $(X_2,\eta_2)$ be two marked pairs in 
$P_\lambda^{-1}(t)$.
Then $(X_1,\eta_1)$ and $(X_2,\eta_2)$ are inseparable points, 
by \WithTorelli{Theorem} 
\WithoutTorelli{the assumed affirmative answer to Question}
\ref{thm-torelli}. Hence, there exists a cycle $\Gamma:=Z+\sum Y_j$
in $X_1\times X_2$, satisfying the properties listed in Theorem
\ref{thm-inseparable-are-birational}. Denote by $g:X_1\rightarrow X_2$
the bimeromorphic map with graph $Z$, and let
$f:H^2(X_1,\Integers)\rightarrow H^2(X_2,\Integers)$ be the 
parallel-transport operator $[\Gamma]_*$, so that 
$f=\eta_2^{-1}\circ\eta_1$, by Theorem \ref{thm-inseparable-are-birational}. 
Set $\tilde{\lambda}_i:=\eta_i^{-1}(\lambda)$.
Let $\kappa_i$ be a K\"{a}hler class on $X_i$, 
such that $(\tilde{\lambda}_i,\kappa_i)>0$.

In cases 1 and 3, $X_i$ does not contain any effective divisor, $i=1,2$.
In particular, the image of each $Y_j$ has codimension $\geq 2$ in each $X_i$, and 
$f=g_*$. 
We have 
$
(\tilde{\lambda}_1,g^*(\kappa_2))=(\eta_1^{-1}(\lambda),g^*(\kappa_2))=
(g_*\eta_1^{-1}(\lambda),\kappa_2)=(\eta_2^{-1}(\lambda),\kappa_2)=
(\tilde{\lambda}_2,\kappa_2),
$
since $g^*$ is an isometry. We conclude the inequality
\begin{equation}
\label{eq-lambda-tilde-1-paired-with-pull-back-of-kappa-2-is-positive}
(\tilde{\lambda}_1,g^*(\kappa_2)) \ \ > \ \ 0.
\end{equation}

If $g^*(\kappa_2)$ is not a K\"{a}hler class, then the birational K\"{a}hler 
cone $\BK_{X_1}$ consists of at least two connected components. Thus we 
must be in case 3. 
So $\kappa_1\in \K_{X_1}$ and $g^*(\kappa_2)$ belongs to $\K'_{X_1}$.
Hence, $(g^*(\kappa_2),\tilde{\lambda}_1)<0$, by the characterization of
$\K'_{X_1}$. This contradicts the inequality 
(\ref{eq-lambda-tilde-1-paired-with-pull-back-of-kappa-2-is-positive}).

We conclude that $g^*(\kappa_2)$ is a K\"{a}hler class. 
Thus $g$ is an isomorphism, 
by \cite{huybrechts-kahler-cone}, Proposition 2.1, and
$(X_1,\eta_1)$ and $(X_2,\eta_2)$ are isomorphic marked pairs.

It remains to treat case 2. In that case $\BK_{X_i}=\K_{X_i}$, and so $g$ 
is an isomorphism. Hence, 
$g_*(\tilde{\lambda}_1)=\tilde{\lambda}_2$, since the classes 
$\tilde{\lambda}_i$ are effective.
On the other hand,
$f(\tilde{\lambda}_1)=\eta_2^{-1}\eta_1(\eta_1^{-1}(\lambda))=\eta_2^{-1}(\lambda)=
\tilde{\lambda}_2$.
Hence, $f(\tilde{\lambda}_1)=g_*(\tilde{\lambda}_1)$.
The subspace $\tilde{\lambda}_1^\perp$, orthogonal to $\tilde{\lambda}_1$,
is necessarily in the kernel of
the correspondence $[\sum Y_j]_*:H^2(X_1,\Integers)\rightarrow H^2(X_2,\Integers)$. 
Hence, $f(\alpha)=g_*(\alpha)$, for all $\alpha\in \tilde{\lambda}_1^\perp$. 
We conclude that $f=g_*$, and the two pairs $(X_1,\eta_1)$ and $(X_2,\eta_2)$
 are isomorphic.
%Assume that $f\neq g_*$. 
%Then at least one $Y_j$
%maps to a divisor in $X_1$, with class which is a multiple of 
%$\tilde{\lambda_1}$, and $f(\tilde{\lambda}_1)\neq g_*(\tilde{\lambda}_1)$.
%Now $\tilde{\lambda}_2$ spans $H^{1,1}(X_2,\Integers)$ and
%both $f$ and $g_*$ are isometries. Hence, $f(\tilde{\lambda}_1)=-g_*(\tilde{\lambda}_1)$.
%Now $g_*(\tilde{\lambda}_1)=\tilde{\lambda}_2$, since the classes $\lambda_i$ are effective
%and $g$ is an isomorphism. Hence,
%$f(\tilde{\lambda}_1)=-\tilde{\lambda}_2$. On the other hand,
%$f(\tilde{\lambda}_1)=\eta_2^{-1}\eta_1(\eta_1^{-1}(\lambda))=\eta_2^{-1}(\lambda)=
%\tilde{\lambda}_2$.
%A contradiction.
\end{proof}

%*****************
%
%*****************
\WithoutTorelli{
\begin{question}
\label{question-connectedness}
Let $(X_0,\eta_0)\in {\mathfrak M}^0_\Lambda$ be a marked pair, 
$L$ a monodromy reflective line bundle on $X$, 
%$E_0\subset X_0$ a prime exceptional divisor, 
and set $\lambda:=\eta_0(c_1(L))$. 
Is ${\mathfrak M}^0_{\Lambda,\lambda}$
a path-connected subset of ${\mathfrak M}^0_\Lambda$?
\end{question}

%Note that precisely one of $(X_0,\eta_0)$ or $(X_0,-\eta_0)$
%belongs to ${\mathfrak M}^0_{\Lambda,\lambda}$, if some non-zero integral
%multiple of $c_1(L)$ is represented by an effective divisor, or
%if some non-zero integral
%multiple of $(c_1(L),\bullet)$ is represented by an effective curve. 
%*****************
% End \WithoutTorelli
%*****************
}

\WithTorelli{\begin{cor}}
\WithoutTorelli{\begin{prop}}
\label{cor-connectedness} 
${\mathfrak M}^0_{\Lambda,\lambda}$
is a path-connected subset of \WithTorelli{${\mathfrak M}^0_\Lambda$.}
\WithoutTorelli{${\mathfrak M}^0_\Lambda$, 
assuming an affirmative answer to Question 
\ref{thm-torelli}.}
\WithTorelli{\end{cor}}
\WithoutTorelli{\end{prop}}

\begin{proof}
Let $(X,\eta)$ be a marked pair in ${\mathfrak M}^0_{\Lambda,\lambda}$.
Then there exists a continuous path from $(X,\eta)$ 
to some $(X_0,\eta_0)$, where $H^{1,1}(X_0,\Integers)$ is spanned by 
$\eta_0^{-1}(\lambda)$, by the Local Torelli Theorem. 
Hence, it suffices to construct a continuous path between any two
pairs $(X_0,\eta_0)$ and $(X_1,\eta_1)$ in
${\mathfrak M}^0_{\Lambda,\lambda}$, such that
$H^{1,1}(X_i,\Integers)$ is cyclic, for $i=0,1$. 
Set $t_i:=P(X_i,\eta_i)$, $i=0,1$.
Let $I$ be the closed interval $[0,1]$. 
Let $\gamma:I\rightarrow \lambda^\perp\cap\Omega$ be a continuous path
from $t_0$ to $t_1$. Let $I_1\subset I$ be the subset of points $t$,
such that $\Lambda_{\gamma(t)}^{1,1}$ is cyclic. 
We may choose $\gamma$ so that $I_1$ is a dense subset of $I$.

For each $t\in I_1$, there exists a unique isomorphism class 
of a marked pair $(X_t,\eta_t)$ in ${\mathfrak M}^0_{\Lambda,\lambda}$ with period $\gamma(t)$,
by \WithTorelli{Corollary} \WithoutTorelli{Lemma} 
\ref{cor-torelli-for-M-Lambda-lambda}.
Choose an open path-connected 
subset $U_t\subset {\mathfrak M}^0_{\Lambda,\lambda}$, 
containing $(X_t,\eta_t)$,
such that $P_\lambda$ restricts to $U_t$ as an open embedding.
This is possible, by the Local Torelli Theorem.
We get the open covering $\gamma(I)\subset \cup_{t\in I_1} P_\lambda(U_{t})$.
Choose a finite sub-covering $\cup_{j=0}^N P_\lambda(U_{t_j})$ of 
$\gamma(I)$,
with $0=t_0<t_1< \ \cdots \ < t_N=1$. 
Choose an increasing subsequence $\tau_j:=t_{i_j}$, $0\leq j\leq k$,
such that $\tau_0=t_0$, $\tau_k=t_N$, and 
$P_\lambda(U_{\tau_j})\cap P_\lambda(U_{\tau_{j+1}})$ is non-empty.
%possibly after omitting some of the open sets and may be loosing
%the property that their union is a covering of $\gamma(I)$. 
Choose points $s_{j,j+1}$ in 
$P_\lambda(U_{\tau_j})\cap P_\lambda(U_{\tau_{j+1}})$, such that 
$\Lambda^{1,1}_{s_{j,j+1}}$ is cyclic, and let 
$\tilde{s}_{j,j+1}$ be the unique point of 
${\mathfrak M}^0_{\Lambda,\lambda}$ over $s_{j,j+1}$.
Then $\tilde{s}_{j,j+1}$ belongs to $U_{\tau_j}\cap U_{\tau_{j+1}}$.
Choose continuous paths $\alpha_j$ in $U_{\tau_j}$ 
from $\tilde{s}_{j-1,j}$ to 
$(X_{\tau_j},\eta_{\tau_j})$, if $j>0$, 
and $\beta_j$  in $U_{\tau_j}$ from $(X_{\tau_j},\eta_{\tau_j})$
to $\tilde{s}_{j,j+1}$, if $j<k$. Then the concatenated path 
$\beta_0\alpha_1\beta_1 \cdots \alpha_{k-1}\beta_{k-1}\alpha_k$
is a continuous path from the isomorphism class
of $(X_0,\eta_0)$ to that of  $(X_1,\eta_1)$. 
\end{proof}

%***************************************************
%
%***************************************************
\subsubsection{Deformation equivalent monodromy-reflective line bundles
are simultaneously stably-$\RationalNumbers$-effective or 
not stably-$\RationalNumbers$-effective}

\begin{prop}
\label{equivalence-of-deformation-equivalences-relations}
Let $(X_1,E_1)$, $(X_2,E_2)$ be two pairs, of irreducible holomorphic
symplectic manifolds $X_i$ and prime exceptional divisors $E_i\subset X_i$. 
Assume that $X_1$ is projective and 
$(X_1,[E_1])$ is deformation equivalent to $(X_2,[E_2])$, in the
sense of Definition
\ref{def-deformation-equivalent-pairs-with-cohomology-class}.
\WithoutTorelli{
Assume that for some, and hence any, marking $\eta_1$ of $X_1$, 
Question \ref{question-connectedness} 
has an affirmative answer for the connected component 
${\mathfrak M}^0_\Lambda$ of $(X_1,\eta_1)$ and for $\lambda=\eta_1([E_1])$.
}
Then $(X_1,E_1)$ and $(X_2,E_2)$ are deformation equivalent 
in the sense of Definition 
\ref{def-deformation-equivalent-pairs-with-effective-divisors}.
\end{prop}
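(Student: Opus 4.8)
The plan is to promote the class-level deformation equivalence to the level of effective divisors in three stages: realize a connecting path in the relevant wall of period space by an explicit (possibly reducible) family, then transport the prime exceptional divisor $E_1$ across that family using the openness of the prime-exceptional property, and finally restrict to the locus where the transported divisor stays prime. First I would fix markings. Since $(X_1,[E_1])$ and $(X_2,[E_2])$ are deformation equivalent as classes, the construction of markings attached to a class-level deformation equivalence (carried out just before the introduction of ${\mathfrak M}^0_{\Lambda,\lambda}$) yields isometries $\eta_i:H^2(X_i,\Integers)\to\Lambda$ with $\eta_1([E_1])=\eta_2([E_2])=:\lambda$ and with the marked pairs $(X_i,\eta_i)$ lying in a common connected component ${\mathfrak M}^0_\Lambda$. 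Here $(\lambda,\lambda)<0$ because $E_i$ is prime exceptional, each $[E_i]$ is of Hodge type $(1,1)$, and $[E_i]$ pairs positively (Beauville--Bogomolov) with a K\"{a}hler class, as $E_i$ is a nonzero effective divisor; hence both $(X_i,\eta_i)$ lie in ${\mathfrak M}^0_{\Lambda,\lambda}$.

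Next I would invoke connectedness. Under the assumed affirmative answer to Question \ref{question-connectedness}, the wall ${\mathfrak M}^0_{\Lambda,\lambda}$ is path-connected (Proposition \ref{cor-connectedness}), so there is a path $\gamma:[0,1]\to{\mathfrak M}^0_{\Lambda,\lambda}$ from $(X_1,\eta_1)$ to $(X_2,\eta_2)$. Mimicking the covering argument in the proof of Proposition \ref{cor-connectedness}, I would cover the compact image $\gamma([0,1])$ by finitely many open subsets $U_{\tau_0},\dots,U_{\tau_k}$, over each of which the Kuranishi family exists and $P_\lambda$ is an open embedding by the Local Torelli Theorem, with consecutive overlaps meeting $\gamma$. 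Gluing these Kuranishi families along chosen points in the overlaps produces a smooth proper family $\pi:\X\to T$ of irreducible holomorphic symplectic manifolds over a connected base $T$ with finitely many irreducible components, carrying a flat section $e$ of $R^2\pi_*\Integers$ that is everywhere of Hodge type $(1,1)$ (the constant section $\lambda$ in the markings), with distinguished fibers $X_1$ and $X_2$ over points $t_1,t_2\in T$. This realizes the class-level deformation equivalence of Definition \ref{def-deformation-equivalent-pairs-with-cohomology-class} over the convenient chain $T$.

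Now I would transport the divisor. Taking $e:=\lambda=[E]$, the line bundle $L_{t_1}=\StructureSheaf{X_1}(E_1)$ satisfies $E_1\in\linsys{L_{t_1}}$, so the relevant power is $k=1$, and the irreducible component $T_0$ of $T$ containing the projective fiber $X_1$ satisfies the hypotheses of Proposition \ref{prop-generic-prime-exceptional}. That proposition produces a line bundle $\LB$ on $\pi^{-1}(T_0)$ and an irreducible divisor $\E$ meeting each fiber over a dense open $T_0\setminus Z_0$ in a prime exceptional divisor of class $e_t$. I would then propagate across the remaining components: at a glue point lying in $(T_0\setminus Z_0)\cap U_{\tau_1}$ the fiber already carries a prime exceptional divisor, which I spread over $U_{\tau_1}$ by the openness of the prime-exceptional property, and so on along the chain. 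Since $\LB$ and $\E$ are canonical on the locus where $h^0=1$ (as in the proof of Proposition \ref{prop-generic-prime-exceptional}), the local pieces glue to a global line bundle and a global relative divisor over $T\setminus Z$, where $Z$ is a proper closed analytic subset meeting no component, so $T\setminus Z$ is connected. Because $E_1$ and $E_2$ are prime, Boucksom's vanishing gives $h^0=1$ with a single reduced irreducible member on each of $X_1,X_2$, whence $t_1,t_2\in T\setminus Z$. The connected base $T\setminus Z$, the line bundle $\LB$, and the nowhere-vanishing section of $\pi_*\LB$ cutting out $\E$ then exhibit $(X_1,E_1)$ and $(X_2,E_2)$ as deformation equivalent in the sense of Definition \ref{def-deformation-equivalent-pairs-with-effective-divisors}.

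The main obstacle is the propagation across the non-projective intermediate fibers. The generic fiber along ${\mathfrak M}^0_{\Lambda,\lambda}$ has cyclic Picard group generated by a class of negative degree and is therefore \emph{not} projective, so Proposition \ref{prop-generic-prime-exceptional} cannot be re-applied verbatim beyond $T_0$. What rescues the argument is that the proof of Lemma \ref{lem-effectivity-along-a-wall}, and hence the openness of the prime-exceptional property, rests on Ziv Ran's deformation theory of the rational curves covering $E$ together with the semiregularity isomorphism, none of which requires projectivity; this is precisely the point flagged in Example \ref{ex-being-exceptional-is-not-a-closed-property}. I would therefore need to record carefully that the prime-exceptional property is open in the K\"{a}hler, not merely projective, category, so that it spreads from each glue-point fiber into the adjacent component, and that the local divisors so obtained agree on overlaps by the canonicity of the unique $h^0=1$ member.
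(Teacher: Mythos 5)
Your skeleton --- connect $(X_1,\eta_1)$ to $(X_2,\eta_2)$ by a path in ${\mathfrak M}^0_{\Lambda,\lambda}$, cover its image by finitely many Kuranishi opens, and transport the prime exceptional divisor across the chain via Proposition \ref{prop-generic-prime-exceptional} --- is the paper's. The gap is in the crossing step, and you have correctly located it but not closed it. Proposition \ref{prop-generic-prime-exceptional} can only be launched from a fiber that is \emph{projective} and carries a prime exceptional divisor: its engine, Lemma \ref{lem-effectivity-along-a-wall}, begins by invoking Druel's contraction (Proposition \ref{prop-druel}), and it is Druel's theorem --- resting on the divisorial Zariski decomposition and on \cite{BCHM}, hence on projectivity --- that produces the birational model $X'$, the uniruling of $E$, and the rational curve $C$ with $N_{C/X'}\cong\omega_C\oplus\StructureSheaf{C}^{\oplus(2n-2)}$ to which Ran's semiregularity argument is then applied. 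Your assertion that ``none of which requires projectivity'' is therefore not justified: Ran's deformation theory is indeed projectivity-free, but the object it is fed is manufactured by a projective MMP argument. A glue-point fiber in $(T_0\setminus Z_0)\cap U_{\tau_1}$ generically has cyclic Picard group generated by a class of negative Beauville--Bogomolov degree and is not projective, so you cannot re-launch the openness lemma from it. (The remark in Example \ref{ex-being-exceptional-is-not-a-closed-property} that the analogue of Lemma \ref{lem-effectivity-along-a-wall} holds for non-projective fibers is made only for $K3$ surfaces, where it is classical; the paper does not claim it in higher dimension.)

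The paper closes this gap differently and more simply: rather than spreading out from the (non-projective) glue point, it uses the density of \emph{projective} marked pairs in every open subset of ${\mathfrak M}^0_{\Lambda,\lambda}$ (\cite{huybrechts-norway}, Proposition 21) to choose, in each overlap $U_i\cap U_{i+1}$ and away from the bad locus $Z_i$ supplied by Proposition \ref{prop-generic-prime-exceptional}, a fresh anchor pair $(Y_i,\varphi_i)$ which is projective and for which $\varphi_i^{-1}(\lambda)$ is already the class of a prime exceptional divisor $D_i$. One then re-applies Proposition \ref{prop-generic-prime-exceptional} over $U_{i+1}$ with $(Y_i,D_i)$ as the new projective starting point; the resulting divisors $\E_i$ over $U_i\setminus Z_i$ chain $(X_1,E_1)$ to $(X_2,E_2)$ through the intermediate pairs $(Y_i,D_i)$, and transitivity of Definition \ref{def-deformation-equivalent-pairs-with-effective-divisors} (the base is allowed to be reducible) finishes the proof. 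If you replace your propagation-from-the-glue-point step by this choice of projective anchors, the rest of your argument goes through; as written, the step crossing from $T_0$ into the next component is unsupported.
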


\begin{proof}
We assume, for simplicity of notation, that the class $[E_i]$
is primitive. The generalization of the proof to the case $[E_i]=2e_i$ is 
straightforward.
As noted above, we can choose a marking $\eta_2$ of $X_2$, such that 
$(X_2,\eta_2)$ belongs to ${\mathfrak M}^0_\Lambda$ and 
$\eta_2([E_2])=\lambda$.
Any K\"{a}hler class $\kappa$ on $X_2$ satisfies
$(\kappa,[E_2])>0$, since $E_2$ is effective \cite{huybrects-basic-results}. 
Hence, $(X_2,\eta_2)$ belongs to 
${\mathfrak M}^0_{\Lambda,\lambda}$.

Choose a continuous path 
$\gamma:[0,1]\rightarrow {\mathfrak M}^0_{\Lambda,\lambda}$
from $(X_1,\eta_1)$ to $(X_2,\eta_2)$.
Further choose a sufficiently fine partition of the unit interval 
\[
0=t_0<t_1 < \ \cdots \ < t_N=1
\]
and open connected subsets
$U_i\subset {\mathfrak M}^0_{\Lambda,\lambda}$, $1\leq i \leq N$, 
such that $\gamma([t_{i-1},t_i])$ is contained in $U_i$,
and the restriction of $P$ to $U_i$ is an open embedding 
$P_i:U_i\hookrightarrow \lambda^\perp\cap\Omega$.
This is possible by the Local Torelli Theorem.

\begin{claim}
For each $1\leq i \leq N-1$, there exists a marked 
pair $(Y_i,\varphi_i)$ in $U_i\cap U_{i+1}$, such that $Y_i$ is 
projective, and $\varphi_i^{-1}(\lambda)$ is the class of a 
prime exceptional divisor on $Y_i$.
\end{claim}

\begin{proof}
Following is an iterative process of constructing the pairs 
$(Y_i,\varphi_i)$.
Set $(Y_0,\varphi_0)=(X_1,\eta_1)$. 
Assume that $i=1$, or that $1<i\leq N-1$ and 
$(Y_j,\varphi_j)$ exists for all $1\leq j<i$. 
%If $i=N$ we are done. If $1<i<N$, we construct $(Y_i,\varphi_i)$ as follows.
The pair $(Y_{i-1},\varphi_{i-1})$ belongs to $U_i$. 
Proposition \ref{prop-generic-prime-exceptional}
implies that there exists a closed analytic subvariety $Z_i\subset U_i$,
not containing $(Y_{i-1},\varphi_{i-1})$, 
such that for every $(X,\eta)$ in $U_i\setminus Z_i$,
$\eta^{-1}(\lambda)$ is the class of a prime exceptional divisor $E\subset X$.
The locus of projective marked pairs is dense in $U_i\cap U_{i+1}$,
by  \cite{huybrechts-norway}, Proposition 21. 
Hence, there exists a projective
pair $(Y_i,\varphi_i)$ in $[U_i\setminus Z_i]\cap U_{i+1}$.
\end{proof}

Set $(Y_N,\eta_N):=(X_2,\eta_2)$.
Let $D_i\subset Y_i$ 
be the prime exceptional divisor with $[D_i]=\eta_i^{-1}(\lambda)$.
It remains to prove that $(Y_{i-1},D_{i-1})$ 
is deformation equivalent to $(Y_i,D_i)$, for $1\leq i \leq N$. 
Both pairs $(Y_{i-1},\varphi_{i-1})$ and $(Y_i,\varphi_i)$
belong to $U_i\setminus Z_i$, by construction, for $i<N$, 
and by the characterization
of $Z_N$ in Proposition \ref{prop-generic-prime-exceptional}, for $i=N$.
Proposition \ref{prop-generic-prime-exceptional}
exhibits a divisor $\E_i$ in the restriction of $\X$ to
$U_i\setminus Z_i$, whose fiber over the pair $(Y_i,\varphi_i)$ is $D_i$,
and whose fiber over the  pair $(Y_{i-1},\varphi_{i-1})$ is $D_{i-1}$. 
This completes the proof of Proposition
\ref{equivalence-of-deformation-equivalences-relations}.
\end{proof}

The following variant of Proposition
\ref{equivalence-of-deformation-equivalences-relations}
will be used in the derivation of
\WithTorelli{Theorem}\WithoutTorelli{Conjecture}
\ref{conj-exceptional-line-bundles}
from Torelli.

\begin{prop}
\label{prop-main-question-on-deformation-equivalence}
Let $X$ and $Y$ be two irreducible holomorphic symplectic manifolds, 
with $X$ projective, 
$E\subset X$ a prime exceptional divisor, and
$L$ a line bundle on $Y$. Set $c:=c_1(L)$. Assume that 
$(X,[E])$ and $(Y,c)$ are deformation equivalent in the sense of Definition
\ref{def-deformation-equivalent-pairs-with-cohomology-class}.
Assume further that there exists a K\"{a}hler class $\kappa$ on $Y$,
such that $(\kappa,c)>0$.
\WithoutTorelli{
Finally assume an affirmative answer to Question \ref{question-connectedness}
with a marking $\eta$ for $X$ and with $\lambda=\eta([E])$.
}
Then $L$ is stably-prime-exceptional (in the sense of 
\WithTorelli{Theorem}\WithoutTorelli{Conjecture}
\ref{conj-exceptional-line-bundles}.)
%Let $D_c\subset Def(Y)$ be defined as in Lemma
%\ref{lem-effectivity-along-a-wall}. Given a point $t\in D_c$, denote by $c_t$
%the corresponding class in $H^{1,1}(Y_t,\Integers)$.
%Then there exist a dense open subset $D_c^0\subset D_c$, 
%such that $c_t$ is the class of a prime exceptional divisor 
%$E_t\subset Y_t$, for each $t\in D_c^0$, and such that 
%the complement $D_c\setminus D_c^0$ is a closed analytic subset of $D_c$.
\end{prop}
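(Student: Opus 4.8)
The plan is to adapt the method of Proposition \ref{equivalence-of-deformation-equivalences-relations}, transporting the prime-exceptional property of $[E]$ on the projective manifold $X$ to a dense open subset of the Kuranishi space $Def(Y)$ along a path in the moduli of marked pairs. First I would fix a marking $\eta$ of $X$ and, using the deformation equivalence of $(X,[E])$ and $(Y,c)$ in the sense of Definition \ref{def-deformation-equivalent-pairs-with-cohomology-class}, choose a marking $\varphi$ of $Y$ so that $(X,\eta)$ and $(Y,\varphi)$ lie in one connected component $\mathfrak{M}^0_\Lambda$ with $\eta([E])=\varphi(c)=:\lambda$, exactly as in the discussion preceding the definition of $\mathfrak{M}^0_{\Lambda,\lambda}$. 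The class $\lambda$ is of Hodge type $(1,1)$ on both sides, and since $E$ is effective every K\"ahler class on $X$ pairs positively with $[E]$, so $(X,\eta)\in\mathfrak{M}^0_{\Lambda,\lambda}$; the hypothesis that some K\"ahler class $\kappa$ on $Y$ satisfies $(\kappa,c)>0$ likewise places $(Y,\varphi)$ in $\mathfrak{M}^0_{\Lambda,\lambda}$. For notational simplicity I would assume $[E]$ primitive, the case $[E]=2e$ being handled by the same argument applied to the primitive class.

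I would then invoke the path-connectedness of $\mathfrak{M}^0_{\Lambda,\lambda}$, supplied by Corollary \ref{cor-connectedness} (equivalently, by the assumed affirmative answer to Question \ref{question-connectedness}), to obtain a continuous path $\gamma$ from $(X,\eta)$ to $(Y,\varphi)$. Covering $\gamma$ by finitely many open connected subsets $U_1,\dots,U_N\subset\mathfrak{M}^0_{\Lambda,\lambda}$ on which the period map is an open embedding, and choosing a partition $0=t_0<\cdots<t_N=1$ with $\gamma([t_{i-1},t_i])\subset U_i$, I reproduce the combinatorial setup from the proof of Proposition \ref{equivalence-of-deformation-equivalences-relations}.

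The heart of the proof is the chart-by-chart propagation of prime-exceptionality, and this is the step I expect to be the main obstacle. Beginning with $(X,\eta)\in U_1$, which is projective and carries the prime exceptional divisor $E$ of class $\lambda$, Proposition \ref{prop-generic-prime-exceptional} yields over the Kuranishi family restricted to $U_1$ a closed analytic subset $Z_1\subset U_1$ of codimension $\geq 1$, not containing $(X,\eta)$, off which the $\lambda$-class is prime exceptional. Using the density of projective marked periods (\cite{huybrechts-norway}, Proposition 21) I select a projective pair in $[U_1\setminus Z_1]\cap U_2$ and iterate, at each overlap $U_i\cap U_{i+1}$ producing a projective pair whose $\lambda$-class is prime exceptional, until I reach the chart $U_N$ containing $(Y,\varphi)$. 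The delicate point is to guarantee at every step the simultaneous availability of a projective pair and of the open prime-exceptional locus; this rests on the interplay between the openness assertion of Proposition \ref{prop-generic-prime-exceptional} and the density of projective periods, and it is precisely here that the Torelli-type path-connectedness is indispensable.

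Finally, applying Proposition \ref{prop-generic-prime-exceptional} in $U_N$ gives a codimension $\geq 1$ analytic subset $Z_N\subset U_N$ off which $\lambda$ is prime exceptional. Since the Local Torelli Theorem identifies $D_c=D_e\subset Def(Y)$ with an open neighborhood of $(Y,\varphi)$ inside $U_N$, the set $Z:=Z_N\cap D_e$ is a closed analytic subset of codimension $\geq 1$ such that $c_t$ is prime exceptional for all $t\in D_e\setminus Z$; thus $L$ is stably-prime-exceptional in the sense of Definition \ref{def-stably-prime-exceptional}, which is the stable effectiveness asserted in \WithTorelli{Theorem}\WithoutTorelli{Conjecture} \ref{conj-exceptional-line-bundles}. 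The nonvanishing of $H^0(X_t,L_t^k)$ for every $t\in D_e$ then follows from upper semi-continuity of $h^0$ on the irreducible base $D_e$, exactly as in the proof of Proposition \ref{prop-generic-prime-exceptional}.
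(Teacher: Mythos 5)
Your proposal is correct and follows essentially the same route as the paper: the paper simply remarks that Proposition \ref{prop-main-question-on-deformation-equivalence} was established in the course of proving Proposition \ref{equivalence-of-deformation-equivalences-relations}, whose proof is exactly the chart-by-chart propagation along a path in ${\mathfrak M}^0_{\Lambda,\lambda}$ that you describe, using Proposition \ref{prop-generic-prime-exceptional} together with the density of projective marked pairs.
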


The above Proposition was proven in the course of proving
Proposition
\ref{equivalence-of-deformation-equivalences-relations}.

\begin{cor}
\label{cor-if-L-1-is-prime-exceptional-L-2-not-Q-effective-then-not-def-equiv}
Let $(X_1,L_1)$ and $(X_2,L_2)$ be two pairs, each of an irreducible 
holomorphic symplectic manifold $X_i$, and a monodromy-reflective
line bundle $L_i$. Set $e_i:=c_1(L_i)$.
Assume that $X_1$ is projective, $ke_1$ is the class of a prime exceptional
divisor $E_1$, for some non-zero integer $k$, and 
$H^0(X_2,L_2^d)$ vanishes, for all non-zero integers $d$. 
\WithoutTorelli{
Finally assume  an affirmative answer to Question \ref{question-connectedness}
with a marking $\eta$ for $X_1$ and with $\lambda=\eta(e_1)$.
}
Then the pairs $(X_1,e_1)$ and $(X_2,e_2)$ are not deformation equivalent,
in the sense of Definition
\ref{def-deformation-equivalent-pairs-with-cohomology-class}.
\end{cor}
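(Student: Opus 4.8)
The plan is to argue by contradiction and reduce everything to Proposition \ref{prop-main-question-on-deformation-equivalence}. Suppose that $(X_1,e_1)$ and $(X_2,e_2)$ were deformation equivalent in the sense of Definition \ref{def-deformation-equivalent-pairs-with-cohomology-class}, witnessed by a smooth proper family $\pi:\X\rightarrow T$ over a connected base, a section $e$ of $R^2\pi_*\Integers$, points $t_i\in T$, and isomorphisms $f_i$ with $(f_i)_*(e_{t_i})=e_i$. Multiplying the section $e$ by $k$ gives again a section of $R^2\pi_*\Integers$ realizing the deformation equivalence $(X_1,ke_1)\equiv(X_2,ke_2)$. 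Since by hypothesis $ke_1=[E_1]$ is the class of the prime exceptional divisor $E_1$, this reads $(X_1,[E_1])\equiv(X_2,ke_2)$.

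The next step is to remove the ambiguity of sign, which is the delicate point. Because $E_1$ is effective, its class pairs positively with every K\"{a}hler class on $X_1$; but the K\"{a}hler cone of $X_2$ may lie on either side of the hyperplane $e_2^\perp$, so a priori $(\kappa,ke_2)$ need not be positive for a K\"{a}hler class $\kappa$ on $X_2$. To remedy this I use that $e_2$ is monodromy-reflective: the reflection $R_{e_2}$ lies in $Mon^2(X_2)$ and satisfies $R_{e_2}(ke_2)=-ke_2$. Realizing $R_{e_2}$ by a family as in Definition \ref{def-Mon-2} (equivalently, invoking Lemma \ref{lem-monodromy-invariants-and-deformation-equivalence}) yields $(X_2,ke_2)\equiv(X_2,-ke_2)$, so by transitivity of the relation one obtains $(X_1,[E_1])\equiv(X_2,\epsilon ke_2)$ for \emph{both} signs $\epsilon=\pm1$. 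Since $e_2\neq0$, the hyperplane $e_2^\perp$ cannot contain the nonempty open K\"{a}hler cone of $X_2$, so I may choose $\epsilon\in\{+1,-1\}$ together with a K\"{a}hler class $\kappa_2$ on $X_2$ for which $(\kappa_2,\epsilon ke_2)>0$.

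With this sign fixed, set $c:=\epsilon ke_2=c_1(L_2^{\epsilon k})$ and apply Proposition \ref{prop-main-question-on-deformation-equivalence} with $X=X_1$ (projective), $E=E_1$ (prime exceptional), $Y=X_2$, $L=L_2^{\epsilon k}$, and K\"{a}hler class $\kappa_2$; the connectedness hypothesis on Question \ref{question-connectedness} carried in the statement of the Corollary is precisely the input that Proposition requires. The conclusion is that $L_2^{\epsilon k}$ is stably-effective in the sense of Conjecture \ref{conj-exceptional-line-bundles}, and hence, by the semicontinuity remark following that Conjecture, some nonzero power $L_2^{d}$ (with $d$ a multiple of $\epsilon k$) has $H^0(X_2,L_2^{d})\neq0$. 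This contradicts the hypothesis that $H^0(X_2,L_2^{d})$ vanishes for every nonzero integer $d$, and therefore $(X_1,e_1)$ and $(X_2,e_2)$ cannot be deformation equivalent.

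The main obstacle is exactly the sign-reduction of the second paragraph: one must be certain that deformation equivalence of classes is insensitive to replacing $e_2$ by $-e_2$, i.e. that $(X_2,ke_2)\equiv(X_2,-ke_2)$. This is where the monodromy-reflectivity of $e_2$ is indispensable, since $R_{e_2}\in Mon^2(X_2)$ is induced by an honest family and so places the two pairs in a common deformation; I would cite Lemma \ref{lem-monodromy-invariants-and-deformation-equivalence} (whose proof rests on the path-connectedness of ${\mathfrak M}^0_{\Lambda,\lambda}$ in Proposition \ref{cor-connectedness}) rather than re-derive it, and recall from Corollary \ref{cor-1} that $|k|\in\{1,2\}$ so there is no issue of unbounded multiplicities. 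Everything else is a formal manipulation of the three deformation-equivalence relations and a single invocation of the already-established Proposition \ref{prop-main-question-on-deformation-equivalence}.
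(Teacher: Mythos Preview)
Your proof is correct and follows the same approach as the paper: argue by contradiction, invoke Proposition~\ref{prop-main-question-on-deformation-equivalence}, and finish with semi-continuity. The paper's two-line proof simply asserts that $H^0(X_2,L_2^d)$ would be nonzero for $d=k$ or $d=-k$; the sign ambiguity you carefully resolve via the monodromy reflection $R_{e_2}$ is precisely what this ``or'' encodes. One small imprecision: after applying Proposition~\ref{prop-main-question-on-deformation-equivalence} and semi-continuity you obtain $H^0(X_2,L_2^{\epsilon k})\neq 0$ directly, so $d=\epsilon k$, not merely a multiple of it.
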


\begin{proof}
If $(X_1,e_1)$ and $(X_2,e_2)$ were deformation equivalent,
in the sense of Definition
\ref{def-deformation-equivalent-pairs-with-cohomology-class},
then $H^0(X_2,L_2^d)$ would not vanish for $d=k$ or $d=-k$,  
by Proposition 
\ref{prop-main-question-on-deformation-equivalence} and
the semi-continuity theorem.
\end{proof}

%****************************************************************
% 
%****************************************************************
\subsection{Deformation equivalence and monodromy-invariants}
\label{sec-deformation-equivalence-and-monodromy-invariants}
Let $Mon^2(X)$ be the monodromy group, introduced in Definition
\ref{def-Mon-2}.
Let $I(X)\subset H^2(X,\Integers)$ be a $Mon^2(X)$-invariant subset 
and let $\Sigma$ be a set. 

\begin{defi}
\label{def-faithful}
A function $f:I(X)\rightarrow \Sigma$ is a 
{\em monodromy-invariant}, if $f(e)=f(g(e))$, for all $g\in Mon^2(X)$. 
%In that case, the value $f(e)$ is said to be a {\em monodromy-invariant}
%of the pair $(X,e)$. The value $f(e)$ 
The function $f$ is said to be a {\em faithful}  
monodromy-invariant, if the function 
$\bar{f}:I(X)/Mon^2(X)\rightarrow \Sigma$,
induced by $f$, is injective. 
\end{defi}

Given an irreducible holomorphic symplectic manifold $X'$, 
deformation equivalent to $X$, 
denote by $I(X')\subset H^2(X',\Integers)$ the set of all classes
$e'$, such that $(X',e')$ is deformation equivalent to $(X,e)$,
for some $e\in I(X)$, in the sense of Definition 
\ref{def-deformation-equivalent-pairs-with-cohomology-class}.

Assume that $f:I(X)\rightarrow \Sigma$ is 
a monodromy-invariant function. Then $f$ admits a natural extension 
to a function $f:I(X')\rightarrow \Sigma$, for every 
irreducible holomorphic symplectic manifold $X'$ deformation equivalent to 
$X$. The extension is uniquely determined by the following condition.
{\em Given any smooth and proper family 
$\pi:\X\rightarrow T$, of irreducible holomorphic symplectic manifolds
deformation equivalent to $X$, 
and any flat section $e$ of the local system $R^2\pi_*\Integers$,
the function $f(e)$ is locally constant, in the classical topology of the
analytic space $T$.} We denote this extension by $f$ as well.
The following statement relates monodromy invariants to deformation
equivalence.

\begin{lem}
\label{lem-monodromy-invariants-and-deformation-equivalence}
Let $f:I(X)\rightarrow \Sigma$ be a faithful monodromy-invariant function.
Assume given two pairs $(X_i,e_i)$, $i=1,2$,
with $X_i$ deformation equivalent to $X$ and $e_i\in I(X_i)$. 
\begin{enumerate}
\item
\label{lemma-item-monodromy-invariants-and-deformation-equivalence-of-classes}
$f(e_1)=f(e_2)$ if and only if $(X_1,e_1)$ and $(X_2,e_2)$ are 
deformation equivalent, in the sense of 
Definition \ref{def-deformation-equivalent-pairs-with-cohomology-class}.
\item
\label{lemma-item-monodromy-invariants-and-deformation-equivalence-of-lb}
Assume that $f(e_1)=f(e_2)$,  $e_i=c_1(L_i)$, 
for holomorphic line bundles $L_i$ on $X_i$, 
and there exist K\"{a}hler classes $\kappa_i$ on $X_i$, satisfying
$(\kappa_i,e_i)>0$, for $i=1,2$. 
\WithoutTorelli{Assume, furthermore, an affirmative answer to Question
\ref{question-connectedness}
with a marking $\eta$ for $X_1$ and with 
$\lambda=\eta(e_1)$.} 
Then $(X_1,L_1)$ is deformation equivalent to $(X_2,L_2)$,
in the sense of Definition 
\ref{def-deformation-equivalent-pairs-with-line-bundle}.
\end{enumerate}
\end{lem}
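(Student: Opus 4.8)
The plan is to handle the two parts in turn, in each case leaning on the defining property of the extension of $f$ to every manifold deformation equivalent to $X$ (it is locally constant along flat sections of $R^2\pi_*\Integers$) together with the faithfulness hypothesis of Definition \ref{def-faithful}.

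For part \ref{lemma-item-monodromy-invariants-and-deformation-equivalence-of-classes}, one implication is immediate from the construction of the extended invariant: if $(X_1,e_1)$ and $(X_2,e_2)$ are deformation equivalent in the sense of Definition \ref{def-deformation-equivalent-pairs-with-cohomology-class}, there is a family $\pi:\X\to T$ over a connected base with a flat section $e$ of $R^2\pi_*\Integers$ restricting to $e_1$ and $e_2$; since $f$ is locally constant along flat sections and $T$ is connected, $f(e_1)=f(e_2)$. Conversely, assume $f(e_1)=f(e_2)$. Because $e_i\in I(X_i)$, the definition of $I(X_i)$ supplies classes $e_i'\in I(X)$ with $(X_i,e_i)$ deformation equivalent to $(X,e_i')$, and then the first implication gives $f(e_i)=f(e_i')$, so $f(e_1')=f(e_2')$. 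Now both $e_1'$ and $e_2'$ lie in $I(X)$, so faithfulness produces $g\in Mon^2(X)$ with $g(e_1')=e_2'$. By Definition \ref{def-Mon-2}, $g$ is realized by a loop $\gamma$ based at $t_0$ in the base of some family through $X$; pulling that family back along $\gamma$ regarded as a path exhibits $(X,e_1')$ as deformation equivalent to $(X,g(e_1'))=(X,e_2')$. Concatenating $(X_1,e_1)\equiv(X,e_1')\equiv(X,e_2')\equiv(X_2,e_2)$, and invoking transitivity (available because reducible bases $T$ are allowed), completes part \ref{lemma-item-monodromy-invariants-and-deformation-equivalence-of-classes}.

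For part \ref{lemma-item-monodromy-invariants-and-deformation-equivalence-of-lb}, I start from the class-level equivalence $(X_1,e_1)\equiv(X_2,e_2)$ just obtained. As recorded after Definition \ref{def-deformation-equivalent-pairs-with-cohomology-class}, this lets me choose markings $\eta_i:H^2(X_i,\Integers)\to\Lambda$ with a common value $\lambda:=\eta_1(e_1)=\eta_2(e_2)$ such that $(X_i,\eta_i)$ lie in a single connected component ${\mathfrak M}^0_\Lambda$. Since each $e_i=c_1(L_i)$ is of Hodge type $(1,1)$ and $(\kappa_i,e_i)>0$ for a K\"{a}hler class $\kappa_i$, both marked pairs in fact lie in the subspace ${\mathfrak M}^0_{\Lambda,\lambda}$. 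The whole point of part \ref{lemma-item-monodromy-invariants-and-deformation-equivalence-of-lb} is to replace the merely flat section of part \ref{lemma-item-monodromy-invariants-and-deformation-equivalence-of-classes} by one of type $(1,1)$ throughout, and this is exactly where Torelli enters: by \ref{cor-connectedness} (in the absence of the Torelli Theorem, via the assumed affirmative answer to Question \ref{question-connectedness}), ${\mathfrak M}^0_{\Lambda,\lambda}$ is path-connected, so there is a continuous path from $(X_1,\eta_1)$ to $(X_2,\eta_2)$ inside it, along which $\eta^{-1}(\lambda)$ stays of type $(1,1)$ by the very definition of ${\mathfrak M}^0_{\Lambda,\lambda}$. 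Finally I would turn this path into a genuine family over a connected base with finitely many irreducible components exactly as in the proof of Proposition \ref{equivalence-of-deformation-equivalences-relations}: cover the compact image of the path by finitely many Local-Torelli charts carrying semi-universal families, and concatenate them, with the flat class $\lambda$ descending to a flat section $e$ of $R^2\pi_*\Integers$ that is everywhere of type $(1,1)$ and restricts to $c_1(L_i)$ at the two ends. This is precisely a deformation equivalence of $(X_1,L_1)$ and $(X_2,L_2)$ in the sense of Definition \ref{def-deformation-equivalent-pairs-with-line-bundle}.

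I expect the main obstacle to be concentrated in two inputs rather than in the bookkeeping above. In part \ref{lemma-item-monodromy-invariants-and-deformation-equivalence-of-classes} the delicate point is that faithfulness only yields an \emph{abstract} monodromy operator $g$, and one must know that every such $g$ is realized by an actual family (Definition \ref{def-Mon-2}) in order to convert $g(e_1')=e_2'$ into a deformation equivalence; this is the bridge from lattice-theory to geometry. In part \ref{lemma-item-monodromy-invariants-and-deformation-equivalence-of-lb} the genuine difficulty is entirely packaged in the path-connectedness of ${\mathfrak M}^0_{\Lambda,\lambda}$, i.e.\ in \ref{cor-connectedness} and hence in the Torelli Theorem (or the assumed Question \ref{question-connectedness}); once that is granted, preserving the Hodge type along the deformation is automatic and the remaining construction of the family is routine.
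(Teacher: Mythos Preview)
Your proof is correct and is precisely the elaboration the paper intends: the paper's own proof reads in full ``Part~\ref{lemma-item-monodromy-invariants-and-deformation-equivalence-of-classes} is evident; Part~\ref{lemma-item-monodromy-invariants-and-deformation-equivalence-of-lb} follows from part~\ref{lemma-item-monodromy-invariants-and-deformation-equivalence-of-classes} and the connectedness of ${\mathfrak M}^0_{\Lambda,\lambda}$.'' On your flagged obstacle in part~\ref{lemma-item-monodromy-invariants-and-deformation-equivalence-of-classes}: a general $g\in Mon^2(X)$ is by Definition~\ref{def-Mon-2} only a \emph{product} $g=g_1\cdots g_k$ of operators each coming from some family, not necessarily realized by a single loop, but iterating your path-pullback step along the chain $(X,e_1')\equiv(X,g_k e_1')\equiv\cdots\equiv(X,g e_1')$ dissolves the issue immediately.
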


\begin{proof}
Part 
\ref{lemma-item-monodromy-invariants-and-deformation-equivalence-of-classes}
is evident. Part
\ref{lemma-item-monodromy-invariants-and-deformation-equivalence-of-lb}
follows from part 
\ref{lemma-item-monodromy-invariants-and-deformation-equivalence-of-classes}
and 
\WithTorelli{Corollary \ref{cor-connectedness}.}
\WithoutTorelli{the assumed connectedness of
${\mathfrak M}^0_{\Lambda,\lambda}$.}
\end{proof}

%\begin{rem}
%Let $(X_i,E_i)$, $i=1,2$, be two pairs, 
%of an irreducible holomorphic symplectic variety $X_i$, and an 
%effective divisor $E_i$ on $X_i$. 
%Assume that the two pairs are deformation-equivalent, 
%in the sense of Definition
%\ref{def-deformation-equivalent-pairs-with-effective-divisors}, and 
%the cohomology class $[E_i]$ belongs to $I(X_i)$.
%Then any monodromy-invariant function $f$ as above 
%satisfies $f([E_1])=f([E_2])$.
%\end{rem}

%***************************************************************
%
%***************************************************************
\section{Monodromy-invariants from Mukai's isomorphism}
\label{sec-Mukai}
Let $S$ be a $K3$ surface and $M$ a smooth and projective moduli space of
stable coherent sheaves on $S$.
In section \ref{sec-Mukai-notation} we recalled Mukai's embedding
$\theta^{-1}:H^2(M,\Integers)\rightarrow K(S)$, 
of the second cohomology  of $M$, 
as a sub-lattice of the Mukai lattice. 
In section \ref{sec-a-rank-two-sub-lattice-of-the-Mukai-lattice}
we use this embedding to define a monodromy invariant of a
class in $H^2(M,\Integers)$. The values of this monodromy invariant, 
for monodromy-reflective classes, are 
calculated in sections \ref{sec-isometry-orbits} and 
\ref{sec-invariant-rs}. 
%****************************************************************
% 
%****************************************************************
\subsection{A rank two sub-lattice of the Mukai lattice}
\label{sec-a-rank-two-sub-lattice-of-the-Mukai-lattice}
Let $\widetilde{\Lambda}$ be the unimodular lattice 
$E_8(-1)^{\oplus 2}\oplus U^{\oplus 4}$, where $U$ is the rank two unimodular 
hyperbolic lattice. 
$\widetilde{\Lambda}$ is isometric 
to the Mukai lattice of a $K3$ surface. 
Let $X$ be an irreducible holomorphic symplectic 
manifold of $K3^{[n]}$-type, $n\geq 2$.
Choose an embedding $\iota:H^2(X,\Integers)\hookrightarrow \widetilde{\Lambda}$
in the canonical $O(\widetilde{\Lambda})$-orbit of $X$ provided
by Theorem \ref{thm-a-natural-orbit-of-embeddings-of-H-2-in-Mukai-lattice}. 
Let $v$ be a generator of the rank $1$ 
sub-lattice of $\widetilde{\Lambda}$ orthogonal to the image of $\iota$. 
Then $(v,v)=2n-2$. Let $e$ be a primitive class in 
$H^2(X,\Integers)$ satisfying $(e,e)=2-2n$. We get the 
sub-lattice 
\[
L \ \ := \ \ {\rm span}_\Integers\{e,v\} \ \ \subset \ \ 
\widetilde{\Lambda},
\]
where we denote by $e$ also the element $\iota(e)$. 
Let 
\begin{equation}
\label{eq-saturation-of-L}
\widetilde{L}
\end{equation}
be the saturation of $L$ in $\widetilde{\Lambda}$.
Note that the pair $(\widetilde{L},e)$ determines the lattice $L$ via the 
equality $L=\Integers e+[e^\perp\cap \widetilde{L}]$.

\begin{defi}
\label{def-isometric-pairs}
Two pairs $(L_i,e_i)$, $i=1,2$, each consisting of a lattice
$L_i$ and a class $e_i\in L_i$, are said to be {\em isometric},
if there exists an isometry $g:L_1\rightarrow L_2$, such that 
$g(e_1)=e_2$.
\end{defi}

\begin{rem}
Let $L_0$ be a lattice. The set of isometry classes of pairs $(L_1,e_1)$, with $L_1$ isometric to $L_0$, 
is in natural bijection with
the orbit set $L_0/O(L_0)$. The bijection sends
the isometry class of $(L_1,e_1)$ to the orbit $O(L_0)g(e_1)$, where $g:L_1\rightarrow L_0$ is some isometry.
The orbit $O(L_0)g(e_1)$ is independent of the choice of $g$.
\end{rem}

Let $U$ be the rank $2$ even unimodular hyperbolic lattice.
Let $U(2)$ be the rank $2$ lattice with Gram-matrix 
$\left(\begin{array}{cc}
0 & -2 \\
-2 & 0
\end{array}
\right)$.
Let $H_{ev}$ be the rank $2$ lattice with Gram-matrix 
$\left(\begin{array}{cc}
2 & 0 \\
0 & -2
\end{array}
\right)$.
Let $I''_n(X)\subset H^2(X,\Integers)$ be the subset of primitive
classes of degree $2-2n$, such that 
$\div(e,\bullet)=n-1$ or $\div(e,\bullet)=2n-2$. 
Let $I_n(\widetilde{L})\subset \widetilde{L}$ be the subset of primitive
classes of degree $2-2n$. Let $\rho$ be the largest positive integer,
such that $(e+v)/\rho$ is an integral class. Define the integer $\sigma$ 
similarly using  $(e-v)$.

\begin{prop}
\label{prop-isometry-class-of-tilde-L-e-is-a-faithful-mon-invariant}
\begin{enumerate}
\item
\label{prop-item-three-possible-rank-2-lattices}
The isometry class of the lattice $\widetilde{L}$ is determined as follows.
\[
\widetilde{L} \ \ \cong \ \ 
\left\{
\begin{array}{ccl}
U & \mbox{if} & \div(e,\bullet)=2n-2,
\\
H_{ev} & \mbox{if} & \div(e,\bullet)=n-1 \ \mbox{and} \ n \ \mbox{is even},
\\
U(2)  & \mbox{if} & \div(e,\bullet)=n-1 \ \mbox{and} \ n \ \mbox{is odd}, \
n\not\equiv 1\ (\mbox{mod} \ 8).
\\
U(2)  & \mbox{if} & \div(e,\bullet)=n-1, 
n\equiv 1\ (\mbox{mod} \ 8) \ \mbox{and} \ \rho\sigma=2n-2.
\\
H_{ev}  & \mbox{if} & \div(e,\bullet)=n-1, 
n\equiv 1\ (\mbox{mod} \ 8) \ \mbox{and} \ \rho\sigma=n-1.
\end{array}
\right.
\]
\item
Consider the function 
\[
f  :  I''_n(X)  \ \ \longrightarrow \ \ 
I_n(U)/O(U) \ \cup \ I_n(U(2))/O(U(2)) \ \cup \ I_n(H_{ev})/O(H_{ev}),
\]
which sends the pair $(X,e)$, $e\in I''_n(X)$, to the
isometry class of the pair $(\widetilde{L},e)$, 
consisting of the primitive sub-lattice 
$\widetilde{L}\subset \widetilde{\Lambda}$, given in equation
(\ref{eq-saturation-of-L}), and the class $e\in I_n(\widetilde{L})$. 
Then $f$ is a faithful monodromy-invariant function
(Definition \ref{def-faithful}). 
\end{enumerate}
\end{prop}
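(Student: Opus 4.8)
\emph{Well-definedness and monodromy-invariance.} First I would record the geometry of the configuration: since $e\in H^2(X,\Integers)$ and $v$ generates the rank one lattice $\iota(H^2)^\perp$, one has $(\iota(e),v)=0$, so $L=\Integers e\oplus\Integers v$ is the \emph{orthogonal} direct sum of $\langle e\rangle$ and $\langle v\rangle$, with $(e,e)=2-2n$ and $(v,v)=2n-2$. To see that $f$ is independent of the choice of $\iota$ in the natural orbit, note that replacing $\iota$ by $\psi\circ\iota$, with $\psi\in O(\widetilde{\Lambda})$, replaces $v$ by $\pm\psi(v)$, hence $L$ by $\psi(L)$ and its saturation by $\psi(\widetilde{L})$, carrying $\iota(e)$ to $\psi(\iota(e))$; so the isometry class of $(\widetilde{L},e)$ is unchanged. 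For monodromy-invariance, given $g\in Mon^2(X)$, Theorem \ref{thm-a-natural-orbit-of-embeddings-of-H-2-in-Mukai-lattice} provides $\phi\in O(\widetilde{\Lambda})$ with $\phi\circ\iota=\iota\circ g$. Then $\phi$ preserves $\iota(H^2)$, hence its rank one orthogonal complement, so $\phi(v)=\pm v$; consequently $\phi$ carries ${\rm span}_\Integers\{\iota(e),v\}$ onto ${\rm span}_\Integers\{\iota(g(e)),v\}$ and therefore maps the saturation $\widetilde{L}_e$ isometrically onto $\widetilde{L}_{g(e)}$, sending $\iota(e)$ to $\iota(g(e))$. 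Thus $f(e)=f(g(e))$.

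\emph{Part (1): the isometry type of $\widetilde{L}$.} Here I would exploit the two isotropic vectors $w_1:=(e+v)/\rho$ and $w_2:=(e-v)/\sigma$ of $\widetilde{L}$. Since $(e,e)+(v,v)=0$, both are isotropic, they span $\widetilde{L}\otimes\RationalNumbers$, and a one-line computation gives $(w_1,w_2)=-4(n-1)/(\rho\sigma)$. The remaining work is purely lattice-theoretic. First determine $\rho$ and $\sigma$: unimodularity of $\widetilde{\Lambda}$ identifies $\rho=\div_{\widetilde{\Lambda}}(e+v,\bullet)$ and $\sigma=\div_{\widetilde{\Lambda}}(e-v,\bullet)$, which I would evaluate in terms of $\div(e,\bullet)\in\{n-1,2n-2\}$ and the parity and residue of $n$, using $H^2(X,\Integers)=v^\perp$, $(v,v)=2n-2$, and the standard relation between the divisibility of $e$ in $v^\perp$ and in the ambient unimodular lattice. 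Then reconstruct $\widetilde{L}$ as the saturation of $\Integers w_1+\Integers w_2$ (equivalently, adjoin $e=(\rho w_1+\sigma w_2)/2$ when the index is $2$), read off a Gram matrix, and match it against $U$, $U(2)$ and $H_{ev}$, separating the last two — both even of signature $(1,1)$ and discriminant $-4$ — by their discriminant quadratic forms on $(\Integers/2\Integers)^2$. I expect this to reproduce the five lines of the table once organised by the split $\div(e,\bullet)=2n-2$ (giving $\rho\sigma=4n-4$ and $\widetilde{L}\cong U$) versus $\div(e,\bullet)=n-1$, the latter subdivided by the parity of $n$ and, for $n\equiv1\pmod 8$, by whether $\rho\sigma$ equals $2n-2$ or $n-1$; this is the content of Lemmas \ref{lem-faithful-Mon-invariant-in-case-divisibility-2n-2} and \ref{lem-non-unimodular-rank-two-lattice}.

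\emph{Part (2): faithfulness.} Suppose $(\widetilde{L}_{e_1},e_1)\cong(\widetilde{L}_{e_2},e_2)$ via an isometry $\tau$ with $\tau(e_1)=e_2$; I must produce $g\in Mon^2(X)$ with $g(e_1)=e_2$. The crucial observation is that $v$ is intrinsic to the pair $(\widetilde{L},e)$: the sublattice $e^\perp\cap\widetilde{L}$ is rank one and primitive in $\widetilde{L}$, and $v$, being primitive in $\widetilde{\Lambda}$, is its generator up to sign. Since the $v$ occurring in $\widetilde{L}_{e_1}$ and in $\widetilde{L}_{e_2}$ is literally the same fixed vector, $\tau$ must satisfy $\tau(v)=\pm v$. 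I would then extend $\tau$ to an isometry $\hat\tau\in O(\widetilde{\Lambda})$; this is legitimate because $\widetilde{\Lambda}$ is even unimodular and indefinite while the orthogonal complements $\widetilde{L}_{e_i}^\perp$ have rank $22$ and discriminant length $\leq 2$, so by Nikulin's theory the relevant genus is a single class and isometries of the primitive rank two sublattices extend. In either sign case $\hat\tau(v)=\pm v$, so $\hat\tau$ preserves $v^\perp=\iota(H^2)$ and descends to an isometry $g:=\iota^{-1}\circ(\hat\tau|_{v^\perp})\circ\iota$ of $H^2(X,\Integers)$ with $g(e_1)=e_2$. Finally I would verify $g\in Mon^2(X)$ via Theorem \ref{thm-monodromy-constraints}: because $H^2(X,\Integers)^*/H^2(X,\Integers)$ is canonically anti-isometric to the discriminant of $\Integers v$, the operator $g$ acts on it by $+1$ or $-1$ according as $\hat\tau(v)=v$ or $-v$; and if $g\notin O_+H^2(X,\Integers)$ I would replace it by $g\circ R_w$ for a class $w\in e_1^\perp$ with $(w,w)=2$ (such $w$ exists since $e_1^\perp$ is even indefinite of rank $22$), which fixes $e_1$, acts by $+1$ on the discriminant, and acts by $-1$ on $H^2(\C_+,\Integers)$, so that $g\circ R_w$ lies in $O_+H^2(X,\Integers)$. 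By Theorem \ref{thm-monodromy-constraints} the resulting operator lies in $Mon^2(X)$ and carries $e_1$ to $e_2$.

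\emph{Main obstacle.} The formal invariance is immediate from Theorem \ref{thm-a-natural-orbit-of-embeddings-of-H-2-in-Mukai-lattice}; the genuine content, and the step I expect to be most delicate, is faithfulness — the passage from an abstract isometry of the rank two pairs to an element of $Mon^2(X)$. This requires both the recognition that $v$ is recoverable from $(\widetilde{L},e)$ (so that $\hat\tau$ can be forced to preserve $v^\perp$) and careful bookkeeping of the two defining conditions of $Mon^2(X)$ in Theorem \ref{thm-monodromy-constraints}, namely membership in $O_+H^2(X,\Integers)$ and the $\pm1$ action on the discriminant group. The explicit table in part (1) is routine but lengthy, and is best deferred to the case analysis in the Lemmas cited above.
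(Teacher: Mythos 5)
Your proposal is correct and follows essentially the same route as the paper: part (1) comes down to computing $\rho$, $\sigma$ and exhibiting a Gram-matrix basis for the saturation of ${\rm span}\{e,v\}$ (this is exactly the content of Lemmas \ref{lem-faithful-Mon-invariant-in-case-divisibility-2n-2} and \ref{lem-non-unimodular-rank-two-lattice}, to which you rightly defer the case analysis), and faithfulness is proved just as in Lemma \ref{lem-faithfulness-of-the-isometry-class-function-of-tilde-L-e}, by observing that $v$ is recovered as the generator of $e^\perp\cap\widetilde{L}$, extending the rank-two isometry to $O(\widetilde{\Lambda})$ via Nikulin's Theorem 1.14.4, and descending to $v^\perp$. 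The only cosmetic difference is in the bookkeeping of signs and orientations: the paper first normalizes $g(v)=v$ by replacing $g$ with $-(R_{e_2}\circ g)$ and then uses the extension criterion for membership in $Mon^2(X)$, whereas you allow $\hat\tau(v)=-v$, correct the orientation afterwards with a reflection in a $+2$-class of $e_1^\perp$, and invoke Theorem \ref{thm-monodromy-constraints} directly; both are valid.
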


%Part \ref{prop-item-three-possible-rank-2-lattices} of 
The proposition is proven below in Lemmas
\ref{lem-faithful-Mon-invariant-in-case-divisibility-2n-2} and
\ref{lem-non-unimodular-rank-two-lattice}. 
We provide an explicit and easily computable classification 
of the isometry classes of the pairs $(\widetilde{L},e)$
in Lemma \ref{lemma-isometry-orbits-in-rank-2}. 
%consisting of the primitive 
%sub-lattice $\widetilde{L}\subset \widetilde{\Lambda}$ and the class 
%$e\in \widetilde{L}$, is a monodromy-invariant, 
%which will be classified in Lemmas
%\ref{lem-faithful-Mon-invariant-in-case-divisibility-2n-2} and
%\ref{lem-non-unimodular-rank-two-lattice}. 

Let $L_0$ be a rank $2$ even lattice of signature $(1,1)$.
Let $I_n(L_0)\subset L_0$ be the subset of primitive classes $e$
with $(e,e)=2-2n$. 
Let $I_{L_0,n}(X)\subset I''_n(X)$ be the subset consisting
of classes $e$, such that the lattice $\widetilde{L}$ in
equation (\ref{eq-saturation-of-L}) is isometric to $L_0$.
Consider the function
\begin{equation}
\label{eq-f-from-I-L-0-n}
f:I_{L_0,n}(X)\rightarrow I_n(L_0)/O(L_0),
\end{equation} 
which sends the pair $(X,e)$ to the isometry class of the pair
$(\widetilde{L},e)$. The faithfulness statement in Proposition
\ref{prop-isometry-class-of-tilde-L-e-is-a-faithful-mon-invariant}
follows from the following general statement.

\begin{lem}
\label{lem-faithfulness-of-the-isometry-class-function-of-tilde-L-e}
The function $f$, given in (\ref{eq-f-from-I-L-0-n}),
is a faithful monodromy invariant.
\end{lem}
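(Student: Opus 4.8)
The plan is to treat the two assertions---monodromy invariance and faithfulness---separately, reducing both to lattice theory inside $\widetilde{\Lambda}$ by means of the canonical $O(\widetilde{\Lambda})$-orbit of embeddings of Theorem \ref{thm-a-natural-orbit-of-embeddings-of-H-2-in-Mukai-lattice}. Throughout I identify $H^2(X,\Integers)$ with $v^\perp\subset\widetilde{\Lambda}$ via a chosen embedding $\iota$ in this orbit, so that $e_i$ and $v$ are orthogonal, $(e_i,e_i)=2-2n$, and $(v,v)=2n-2$. I first record that $f$ is well defined independently of the choice of $\iota$ in the orbit: replacing $\iota$ by $\psi\circ\iota$ with $\psi\in O(\widetilde{\Lambda})$ carries $v$ to a generator of the orthogonal complement of the new image and $\widetilde{L}$ to $\psi(\widetilde{L})$, inducing an isometry of the resulting pairs that respects the marked classes.

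For monodromy invariance, fix $\iota$ and $g\in Mon^2(X)$. By Theorem \ref{thm-a-natural-orbit-of-embeddings-of-H-2-in-Mukai-lattice} the embedding $\iota\circ g$ again lies in the canonical orbit, and since $g$ is an automorphism of $H^2(X,\Integers)$ the image $(\iota\circ g)(H^2(X,\Integers))$ equals $\iota(H^2(X,\Integers))$, hence has the same orthogonal complement $\Integers v$. Computing $f(e)$ with the embedding $\iota\circ g$ therefore produces the saturation of $\Integers\iota(g(e))+\Integers v$ together with the class $\iota(g(e))$, which is exactly the pair computing $f(g(e))$ with $\iota$. By the well-definedness just noted, $f(e)=f(g(e))$.

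For faithfulness, suppose $f(e_1)=f(e_2)$, so that there is an isometry $\phi\colon\widetilde{L}_1\to\widetilde{L}_2$ with $\phi(e_1)=e_2$, where $\widetilde{L}_i$ is the saturation of $\Integers e_i+\Integers v$. Since $\phi(v)$ lies in the rank-one lattice $e_2^\perp\cap\widetilde{L}_2$ and has square $(v,v)$, while $v$ itself lies in this same rank-one lattice with the same square, comparison of norms forces $\phi(v)=\pm v$. I then extend $\phi$ to $\Phi\in O(\widetilde{\Lambda})$: the orthogonal complement $\widetilde{L}_i^\perp$ is indefinite of rank $22$, far larger than the length of its discriminant group, so Nikulin's theory guarantees that $\phi$ glues with a suitable isometry of the complements to an isometry of $\widetilde{\Lambda}$. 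Because $\Phi(v)=\pm v$, it preserves $v^\perp$ and restricts to $g_M\in O(v^\perp)$ with $g_M(e_1)=e_2$; and since $\widetilde{\Lambda}$ is unimodular, the induced actions of $\Phi$ on the (anti-isometric) discriminant groups of $\Integers v$ and $v^\perp$ agree, whence $g_M$ acts by $\pm 1$ on $H^2(X,\Integers)^*/H^2(X,\Integers)$.

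It remains to force $g_M$ into the orientation-preserving subgroup. If $g_M$ is orientation-reversing I replace it by $R_{e_2}\circ(-{\rm id}_{v^\perp})\circ g_M$. This composite still sends $e_1$ to $e_2$, because $R_{e_2}(-e_2)=e_2$; it is orientation-preserving, since $-{\rm id}$ reverses the orientation of the three-dimensional positive part of $v^\perp$ while $R_{e_2}$, being a reflection in a negative class, preserves it; and it still acts by $\pm 1$ on the discriminant, because $R_{e_2}\in Mon^2(X)$. The last point uses Proposition \ref{prop-reflection-by-a-numerically-prime-exceptional-is-in-Mon}: as $e_2\in I''_n(X)$ we have $\div(e_2,\bullet)\in\{n-1,2n-2\}$, which is divisible by $n-1$. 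Transporting the resulting isometry back through $\iota$ and applying Theorem \ref{thm-monodromy-constraints} exhibits it as an element $g\in Mon^2(X)$ with $g(e_1)=e_2$, completing the proof. I expect the extension step to be the main obstacle: one must verify carefully, via Nikulin's criteria, that the rank-two isometry $\phi$ lifts to $\widetilde{\Lambda}$ and can be arranged to respect both orientation and the discriminant action so as to land inside $Mon^2(X)$; the monodromy invariance, by contrast, is essentially formal once Theorem \ref{thm-a-natural-orbit-of-embeddings-of-H-2-in-Mukai-lattice} is available.
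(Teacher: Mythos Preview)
Your proof is correct and follows essentially the same approach as the paper: extend the rank-two isometry to $\widetilde{\Lambda}$ via Nikulin's theorem, then recognize the restriction to $v^\perp$ as a monodromy operator. The paper organizes the argument slightly differently---it normalizes to $g(v)=v$ already at the level of the rank-two lattice, by replacing $g$ with $-(R_{e_2}\circ g)$ when $g(v)=-v$, and then extends directly to $\gamma\in O_+(\widetilde{\Lambda})$, invoking \cite{markman-monodromy-I}, Theorem~1.6, rather than Theorem~\ref{thm-monodromy-constraints}, to conclude that the restriction lies in $Mon^2(X)$; this avoids your separate orientation-and-discriminant correction at the end, but the two arguments are equivalent in substance.
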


\begin{proof}
Let $e_1$, $e_2$ be two classes in $I_{L_0,n}(X)$.
Denote by $\widetilde{L}_j$ the primitive rank $2$
sub-lattice of $\widetilde{\Lambda}$ associated to $e_j$
in equation (\ref{eq-saturation-of-L}), via a primitive embedding
$\iota:H^2(X,\Integers)\rightarrow \widetilde{\Lambda}$
in the canonical $O(\widetilde{\Lambda})$-orbit, $j=1,2$.
Denote $\iota(e_j)$ by $e_j$ as well.

Assume that $f(e_1)=f(e_2)$. Then there exists 
an isometry $g:\widetilde{L}_1\rightarrow \widetilde{L}_2$,
such that $g(e_1)=e_2$. 
Let $v\in\widetilde{\Lambda}$ be a generator of 
$\iota[H^2(X,\Integers)]^\perp$. 
Then $v$ is orthogonal to $e_j$.
Hence, $g(v)=v$ or $g(v)=-v$.
If $g(v)=-v$, set $g':=-(R_{e_2}\circ g)$. 
Then $g':\widetilde{L}_1\rightarrow \widetilde{L}_2$ is an isometry
satisfying $g'(e_1)=e_2$ and $g'(v)=v$. 
Hence, we may assume that $g(v)=v$.

There exists an isometry $\gamma\in O_+(\widetilde{\Lambda})$,
such that $\gamma(\widetilde{L}_1)=\widetilde{L}_2$ and 
$\gamma$ restricts to $\widetilde{L}_1$ as $g$, 
by \cite{nikulin}, Theorem 1.14.4. 
Then $\gamma(v)=v$ and so 
$\gamma\circ\iota=\iota\circ \mu$, for some isometry
$\mu\in O_+H^2(X,\Integers)$. The fact that the isometry $\mu$
extends to $\widetilde{\Lambda}$ implies that $\mu$ belongs to $Mon^2(X)$,
by \cite{markman-monodromy-I}, Theorem 1.6 (see also Lemma 4.10
part (3) in \cite{markman-monodromy-I}).
Now $\iota(\mu(e_1))=\gamma(\iota(e_1))=\iota(e_2)$.
So $\mu(e_1)=e_2$.
\end{proof}

%***************************************************************
%
%***************************************************************
\subsection{Isometry orbits in three rank two lattices}
\label{sec-isometry-orbits}
Set 
\[
M_U:=\left(
\begin{array}{cc}
0 & -1 \\
-1 & 0
\end{array}
\right), \ \ \ 
M_{H_{ev}}:=\left(
\begin{array}{cc}
2 & 0 \\
0 & -2
\end{array}
\right), \ \ \ 
M_{U(2)}:=\left(
\begin{array}{cc}
0 & -2 \\
-2 & 0
\end{array}
\right). 
\]
Given an integer $m$, let $\F(m)$ be  the set of unordered pairs $\{r,s\}$ of
positive integers, such that $rs=m$ and $\gcd(r,s)=1$.
Set 
\begin{eqnarray*}
\Sigma_n(U) & := & \F(n-1),
\\ 
\Sigma_n(U(2)) & := & 
%\left\{\begin{array}{ccl}
%\F(n-1) & \mbox{if} & n \ \mbox{is even,}
%\\
\F([n-1]/2), 
\ \mbox{if}  \ n \ \mbox{is odd,}
%\end{array}
%\right.
\\
\Sigma_n(H_{ev}) & := & 
\left\{\begin{array}{ccl}
\F(n-1) & \mbox{if} & n\not\equiv 1 \ \mbox{(modulo)} \ 4,
\\
\F([n-1]/4) & \mbox{if} & n\equiv 1 \ \mbox{(modulo)} \ 4.
\end{array}
\right.
\end{eqnarray*} 
%If $n\not\equiv 1$ (modulo $4$), 
%set $\Sigma_n(H_{ev}):=\F(n-1)$.
%If $n\equiv 1$ (modulo $4$), set 
%$\Sigma_n(H_{ev}):=\F([n-1]/2)\cup \F([n-1]/4)$. 

\begin{lem}
\label{lemma-isometry-orbits-in-rank-2}
Let $\widetilde{L}$ be $U$, $H_{ev}$, or $U(2)$, and $e\in I_n(\widetilde{L})$,
$n\geq 2$. 
%If $\widetilde{L}=H$, assume further that $n$ is even. 
Choose a generator $v$ of the sub-lattice of 
$\widetilde{L}$ orthogonal to $e$.
\begin{enumerate}
\item
Let $\rho$ be the largest positive integer,
such that $(e+v)/\rho$ is an integral class of $\widetilde{L}$. 
Define the integer $\sigma$ similarly using  $(e-v)$. 
Then $\gcd(\rho,\sigma)$ is $1$ or $2$. 
\item
\label{lemma-item-existence-and-uniqueness-of-r-s}
%There exists a unique ordered pair $(r,s)$ of positive integers,
%with $\gcd(r,s)=1$, satisfying the following conditions.
The integers $r:=\rho/\gcd(\rho,\sigma)$ and  $s:=\sigma/\gcd(\rho,\sigma)$ 
have the following properties.
\begin{enumerate}
\item
If $\widetilde{L}=U$, then $rs=n-1$, and the classes 
$\alpha:=\frac{e+v}{2r}$ and $\beta:=\frac{e-v}{2s}$
form a basis of $\widetilde{L}$ with Gram-matrix
$M_U$.
\item
If $\widetilde{L}=U(2)$, then $n$ is odd, $rs=(n-1)/2$, and the classes 
$\alpha:=\frac{e+v}{2r}$ and $\beta:=\frac{e-v}{2s}$
form a basis of $\widetilde{L}$ with Gram-matrix
$M_{U(2)}$.
\item
If $\widetilde{L}=H_{ev}$ and $n$ is even, then
$rs=n-1$ and the classes 
$
\alpha:=\frac{1}{2}\left[\frac{e+v}{r}-\frac{e-v}{s}\right]
\ \mbox{and} \ 
\beta:=\frac{1}{2}\left[\frac{e+v}{r}+\frac{e-v}{s}\right]
$
form a basis of $\widetilde{L}$ with Gram-matrix
$M_{H_{ev}}$.
\item
If $\widetilde{L}=H_{ev}$ and $n$ is odd, then $n\equiv 1$ modulo $4$,   
$rs\nolinebreak=\nolinebreak(n-\nolinebreak 1)/4$, and the classes 
$
\alpha:=\frac{1}{2}\left[\frac{e+v}{2r}-\frac{e-v}{2s}\right]
\ \mbox{and} \ 
\beta:=\frac{1}{2}\left[\frac{e+v}{2r}+\frac{e-v}{2s}\right]
$
form a basis of $\widetilde{L}$ with Gram-matrix
$M_{H_{ev}}$.
\end{enumerate}
\item
\label{lemma-item-the-affect-of-changing-the-sign-of-v}
If we replace $v$ by $-v$, then $(r,s)$ gets replaced by $(s,r)$. 
\item
\label{lemma-item-rs-induces-a-bijection}
Let $rs:I_n(\widetilde{L})\rightarrow \Sigma_n(\widetilde{L})$
be the function, which assigns to a class $e\in I_n(\widetilde{L})$ 
the unordered pair $\{r,s\}$ occurring in the above factorization. 
Then $rs$ factors through a one-to-one 
correspondence
\[
\overline{rs} \ : \ 
I_n(\widetilde{L})/O(\widetilde{L}) \ \ \ \longrightarrow \ \ \ 
\Sigma_n(\widetilde{L}).
\]
\end{enumerate}
\end{lem}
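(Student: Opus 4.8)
The plan is to realize $e+v$ and $e-v$ as the two primitive isotropic directions of $\widetilde{L}$ and to read off every assertion from them. The explicit parametrization of $e$ in a fixed basis of each lattice (carried out in the computation below) shows $(v,v)=2n-2$, or equivalently that $e\pm v$ are isotropic. Granting this, I would record $(e+v,e+v)=(e-v,e-v)=(e,e)+(v,v)=0$ and $(e+v,e-v)=(e,e)-(v,v)=-4(n-1)$. Since $e$ and $v$ are linearly independent, $e+v$ and $e-v$ are non-proportional isotropic vectors, hence span the two distinct isotropic rays of the signature $(1,1)$ lattice $\widetilde{L}$; by definition of $\rho,\sigma$ the classes $\alpha:=(e+v)/\rho$ and $\beta:=(e-v)/\sigma$ are then precisely the two primitive isotropic vectors of $\widetilde{L}$, up to sign. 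In particular $(\alpha,\beta)=-4(n-1)/(\rho\sigma)$.

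For part (1), set $d:=\gcd(\rho,\sigma)$. The identity $\rho\alpha+\sigma\beta=(e+v)+(e-v)=2e$ shows that $(2/d)e=(\rho/d)\alpha+(\sigma/d)\beta$ lies in $\widetilde{L}$; since $e$ is primitive this forces $d\mid 2$, so $d\in\{1,2\}$. This is the one genuinely uniform step.

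The computational heart is part (2), which I would organize around the dichotomy of whether the two primitive isotropic vectors $\alpha,\beta$ form a basis of $\widetilde{L}$ or only an index $2$ sublattice. Parametrizing $e=a\alpha_0+b\beta_0$ (resp.\ $e=af_1+bf_2$) with $\gcd(a,b)=1$ in the standard basis of each lattice, one computes $(v,v)=2n-2$ directly, identifies $\rho$ and $\sigma$, and pins down the parity of $a,b$ from $(e,e)=2-2n$. For $\widetilde{L}=U$ and $\widetilde{L}=U(2)$ the two primitive isotropic vectors already form a basis, one finds $d=2$, so $r=\rho/2$, $s=\sigma/2$, and $(e,e)=2rs(\alpha,\beta)$ gives $rs=n-1$ resp.\ $rs=(n-1)/2$ (the latter forcing $n$ odd), with $\{\alpha,\beta\}$ realizing $M_U$ resp.\ $M_{U(2)}$. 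For $\widetilde{L}=H_{ev}$ the primitive isotropic vectors $f_1\pm f_2$ span an index $2$ sublattice, so a basis is recovered only from the half-sum and half-difference displayed in the statement; here a parity check separates $n$ even (then $a,b$ have opposite parity, $d=1$, $rs=n-1$) from $n$ odd (then $a,b$ are both odd, forcing $n\equiv 1$ modulo $8$ and hence modulo $4$, $d=2$, $rs=(n-1)/4$), and in each case the stated vectors $\alpha,\beta$ have Gram matrix $M_{H_{ev}}$. Part (3) is then immediate: replacing $v$ by $-v$ interchanges $e+v$ and $e-v$, hence $\rho\leftrightarrow\sigma$ and $r\leftrightarrow s$.

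For part (4) I would argue in three steps. First, $rs$ is $O(\widetilde{L})$-invariant: an isometry $g$ preserves divisibilities and sends $e^\perp$ to $g(e)^\perp$, so it sends $\{e+v,e-v\}$ to $\{g(e)\pm g(v)\}$ with $g(v)=\pm$(generator of $g(e)^\perp$); combined with part (3) this preserves the unordered pair $\{r,s\}$, so $rs$ descends to $\overline{rs}$ on $I_n(\widetilde{L})/O(\widetilde{L})$. For injectivity, part (2) lets me reconstruct $e$ from its basis $\{\alpha,\beta\}$ and the pair $\{r,s\}$ (e.g.\ $e=r\alpha+s\beta$ in the $U$ case); if $rs(e_1)=rs(e_2)$ then the two associated bases realize the \emph{same} Gram matrix, so the linear map matching them is an isometry of $\widetilde{L}$ carrying $e_1$ to $e_2$ (using the swap $\alpha\leftrightarrow\beta\in O(\widetilde{L})$ to align orderings). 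For surjectivity, given a coprime pair $\{r,s\}\in\Sigma_n(\widetilde{L})$ I would build $e$ directly from the standard basis and verify $(e,e)=2-2n$ and primitivity. The main obstacle is not conceptual but the bookkeeping in part (2): keeping the sign choices in $\alpha,\beta$ consistent so that the exhibited bases genuinely produce $M_U$, $M_{U(2)}$, $M_{H_{ev}}$, and correctly extracting $\gcd(\rho,\sigma)$ and the congruence conditions on $n$ from the parities of $a,b$.
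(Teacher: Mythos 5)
Your proposal is correct and follows essentially the same route as the paper: both arguments write $e=au_1+bu_2$ in a standard basis of each of the three lattices, identify $v$, $\rho$, $\sigma$ explicitly from the parities of $a$ and $b$, and deduce parts (3) and (4) from the identification $O(\widetilde{L})\cong\Integers/2\times\Integers/2$. Your uniform treatment of part (1) — from $2e=\rho\alpha+\sigma\beta$ with $\alpha,\beta$ the primitive isotropic vectors, primitivity of $e$ forces $\gcd(\rho,\sigma)\mid 2$ — is actually cleaner than the paper's case-by-case verification. One small caution in part (4): for $\widetilde{L}=H_{ev}$ the interchange $\alpha\leftrightarrow\beta$ is \emph{not} an isometry (it would swap vectors of square $2$ and $-2$); the element of $O(H_{ev})$ realizing $\{r,s\}\mapsto\{s,r\}$ is the reflection $\alpha\mapsto-\alpha$, equivalently the sign change $v\mapsto -v$ from your part (3), so the sign bookkeeping you flag does need that correction.
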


\begin{proof}
Let $\{u_1, u_2\}$ be a basis of $\widetilde{L}$ with Gram-matrix 
$M_{\widetilde{L}}$.
Observe first that $O(\widetilde{L})$ is isomorphic to 
$\Integers/2\Integers\times \Integers/2\Integers$.
Indeed, each of $O(U)$ and $O(U(2))$ is generated by
$-id$ and the isometry, which interchanges $u_1$ and $u_2$.
$O(H_{ev})$ is generated by the two commuting reflections with respect to 
$u_1$ and $u_2$. Write 
\[
e=au_1+bu_2.
\]
\underline{Case $\widetilde{L}=U$.}
We have $n-1=-(e,e)/2=ab$ and $\gcd(a,b)=1$, since $e$ is primitive.
Note also that $a$ and $b$ have the same sign. 
Set $v:=au_1-bu_2$. Then 
$\frac{e+v}{2a}=u_1$ and $\frac{e-v}{2b}=u_2$.
Thus $r=\Abs{a}$ and $s=\Abs{b}$, and part
\ref{lemma-item-existence-and-uniqueness-of-r-s} holds.
Part \ref{lemma-item-the-affect-of-changing-the-sign-of-v} is clear.
Part \ref{lemma-item-rs-induces-a-bijection} follows from
part \ref{lemma-item-the-affect-of-changing-the-sign-of-v} 
and the identification of $O(U)$ above.

\medskip
\noindent
\underline{Case $\widetilde{L}=U(2)$.}
We may identify the free abelian groups underlying $U$ and $U(2)$,
so that the bilinear form on $U(2)$ is $2$ times that of $U$.
The statement of the Lemma follows immediately from the case 
$\widetilde{L}=U$.

\medskip
\noindent
\underline{Case $\widetilde{L}=H_{ev}$.}
We have $2-2n=(e,e)=2(a-b)(a+b)$.  
So $b-a$ and $b+a$ have the same sign, since $n\geq 2$.  
If $n$ is odd, then both $a$ and $b$ are odd, since 
$\gcd(a,b)=1$ and $(a-b)(a+b)$ is even.
If $n$ is even, then $\{a,b\}$ consists of one odd and one even integer.
Furthermore, 
\[
\gcd(b-a,b+a)=\gcd(b-a,2a)=\left\{
\begin{array}{ccl}
1, & \mbox{if} & n \ \mbox{is even,}
\\
2, & \mbox{if} & n \ \mbox{is odd.}
\end{array}
\right.
\]
Choose $v=bu_1+au_2$. We have
\[
u_1=\frac{1}{2}\left[\frac{e+v}{a+b}-\frac{e-v}{b-a}\right], \ \ \ 
u_2=\frac{1}{2}\left[\frac{e+v}{a+b}+\frac{e-v}{b-a}\right].
\]
Hence, $r=\Abs{a+b}$ and $s=\Abs{b-a}$, if $n$ is even, and
$r=\Abs{a+b}/2$ and $s=\Abs{b-a}/2$, if $n$ is odd.
%Both $a+b$ and $b-a$ are odd, since their product $n-1$ is assumed
%to be odd.
%Thus, $\gcd(a+b,b-a)=\gcd(a+b,2b)=\gcd(a+b,b)=\gcd(a,b)=1$. 
The rest is similar to the case $\widetilde{L}=U$.
\end{proof}

The following table summarizes how the statements of Proposition 
\ref{prop-isometry-class-of-tilde-L-e-is-a-faithful-mon-invariant} 
and Lemma \ref{lemma-isometry-orbits-in-rank-2} determine the lattice 
$\widetilde{L}$ and the pair $\{r,s\}$ in terms of $(e,e)$, $\div(e,\bullet)$,
$n$, and $\{\rho,\sigma\}$.

\smallskip
\noindent
\begin{tabular}{|c|c|c|c|c|c|c|c|}  \hline 
\hspace{1ex}
& $(e,e)$ & $\div(e,\bullet)$ & $n$ & $\rho\sigma$ & $\widetilde{L}$ & 
$\{r,s\}$ & $r\cdot s$ 
\\
\hline
1) &$2-2n$ & $2n-2$ & $\geq 2$ & $4n-4$ & $U$ & 
$\{\frac{\rho}{2},\frac{\sigma}{2}\}$ & $n-1$
\\
\hline
2) & $2-2n$ & $n-1$ & even & $n-1$ & $H_{ev}$ & $\{\rho,\sigma\}$ & $n-1$
\\
\hline
3) & $2-2n$ & $n-1$ & odd & $2n-2$ & $U(2)$ & 
$\{\frac{\rho}{2},\frac{\sigma}{2}\}$ & $(n-1)/2$
%\\
%\hline
%4) & $2-2n$ & $n-1$ & $\equiv 1$ modulo $8$ & $2n-2$ & $U(2)$ & 
%$\{\frac{\rho}{2},\frac{\sigma}{2}\}$ & $(n-1)/2$
\\
\hline
4) & $2-2n$ & $n-1$ & $\equiv 1$ modulo $8$ & $n-1$ & $H_{ev}$ & 
$\{\frac{\rho}{2},\frac{\sigma}{2}\}$ & $(n-1)/4$
\\
\hline
\end{tabular}\\
In line $3$ cases where $n\equiv 1$ modulo $8$ occur as well.
%****************************************************************
% 
%****************************************************************
\section{Monodromy-invariants of monodromy-reflective classes}
\label{sec-invariant-rs}
\hspace{0ex}\\
Fix $n\geq 2$. Let $X$ be a (K\"{a}hler) irreducible holomorphic symplectic 
manifold of $K3^{[n]}$-type. 
We define in this section the monodromy invariant function $rs$ of
Proposition 
\ref{prop-introduction-Mon-2-orbit-is-determined-by-three-invariants}
and prove that proposition. Part \ref{prop-item-Mon-orbit-in-degree-minus-2} 
of the Proposition was treated in \cite{markman-monodromy-I}, Lemma 8.9. 
We thus consider only part \ref{prop-item-Mon-orbit-in-degree-2-minus-2n}.
We will relate this latter part to Proposition
\ref{prop-isometry-class-of-tilde-L-e-is-a-faithful-mon-invariant}
and prove Proposition
\ref{prop-isometry-class-of-tilde-L-e-is-a-faithful-mon-invariant}.

It will be convenient to use the following normalization.
Fix an isometry $\widetilde{\Lambda}\cong K(S)$, for some
$K3$ surface $S$, 
and use Mukai's notation for classes in the Mukai lattice $K(S)$.
The isometry group $O(\widetilde{\Lambda})$ acts transitively on the
set of primitive classes in $\widetilde{\Lambda}$
of degree $2n-2$. Hence, we may choose the embedding 
$\iota:H^2(X,\Integers)\rightarrow \widetilde{\Lambda}$, 
so that $v=(1,0,1-\nolinebreak n)$ is orthogonal to the image of $\iota$. 
Then $v^\perp=H^2(S,\Integers)\oplus\Integers\delta$,
where $\delta:=(1,0,n-1)$. 
Thus 
\begin{equation}
\label{eq-normalization-of-e-as-x-plus-t-delta}
e \ \ = \ \ x+t\delta, 
\end{equation}
for some integer $t$ and a class $x\in H^2(S,\Integers)$.
%where $x=\div(e,\bullet)\xi$, for some class $\xi\in H^2(S,\Integers)$. 

%****************************************************************
% 
%****************************************************************
\subsection{The divisibility case $\div(e,\bullet)=(e,e)$}\hspace{0ex}\\
Let $I_n(X)\subset H^2(X,\Integers)$ be the subset of all primitive
classes $e$, satisfying $(e,e)=2-2n$ and $\div(e,\bullet)=2n-2$.
Recall that $\Sigma_n(U)$ is the set of unordered pairs $\{r,s\}$ of
positive integers, such that $rs=n-1$ and $\gcd(r,s)=1$.

\begin{lem}
\label{lem-faithful-Mon-invariant-in-case-divisibility-2n-2}
If $e$ belongs to $I_n(X)$, 
then $\widetilde{L}$ is isometric to the unimodular 
hyperbolic plane $U$. 
Denote by 
\[
rs \ : \ I_n(X) \ \ \ \longrightarrow \ \ \ \Sigma_n(U)
\]
the composition of the function
$f:I_n(X)\rightarrow I_n(U)/O(U)$,
defined in equation (\ref{eq-f-from-I-L-0-n}), with 
the bijection $\overline{rs}:I_n(U)/O(U)\rightarrow \Sigma_n(U)$
constructed in Lemma \ref{lemma-isometry-orbits-in-rank-2}.
Then the function $rs:I_n(X)\rightarrow \Sigma_n(U)$ 
is surjective and a faithful monodromy-invariant
(Definition \ref{def-faithful}). 
%there exists a unique factorization 
%$n-1 = rs$, with $\gcd(r,s)=1$, such that 
%$\alpha:=\frac{e+v}{2r}$ and $\beta:=\frac{e-v}{2s}$
%form an integral basis of $\widetilde{L}$ with Gram-matrix
%$\left(\begin{array}{cc}
%0 & -1\\-1&0
%\end{array}\right)$.
%Denote by 
%\[
%rs \ : \ I_n(X) \ \ \ \longrightarrow \ \ \ \Sigma_n
%\]
%the function, which associates to $e\in I_n(X)$ 
%the unordered pair $\{r,s\}$ in the above factorization. 
%Then $rs$ is a faithful and surjective monodromy-invariant
%(Definition \ref{def-faithful}).
\end{lem}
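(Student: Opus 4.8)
The plan is to reduce the statement to three assertions: that $\widetilde{L}\cong U$ for every $e\in I_n(X)$, that $rs$ is a faithful monodromy invariant, and that $rs$ is surjective. Granting the first assertion, we have $I_{U,n}(X)=I_n(X)$, so the function $f$ of (\ref{eq-f-from-I-L-0-n}) is defined on all of $I_n(X)$, and the second assertion is then immediate: by definition $rs=\overline{rs}\circ f$, where $f:I_n(X)\to I_n(U)/O(U)$ is a faithful monodromy invariant by Lemma \ref{lem-faithfulness-of-the-isometry-class-function-of-tilde-L-e} and $\overline{rs}:I_n(U)/O(U)\to\Sigma_n(U)$ is the bijection of Lemma \ref{lemma-isometry-orbits-in-rank-2}; the composition of a faithful monodromy invariant with a bijection is again a faithful monodromy invariant.

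The heart of the matter is the first assertion. I would work in the normalization (\ref{eq-normalization-of-e-as-x-plus-t-delta}), writing $e=x+t\delta$ with $x=d\,x_0\in H^2(S,\Integers)$, $x_0$ primitive, $v=(1,0,1-n)$ and $\delta=(1,0,n-1)$. Since $x_0\in v^\perp$ and $\delta\perp v$, one has $(e,v)=0$, so $L=\Integers e+\Integers v$ is the orthogonal lattice of Gram matrix $\mathrm{diag}(2-2n,\,2n-2)$ and discriminant $-(2n-2)^2$. Set $\alpha:=(e+v)/\rho$ and $\beta:=(e-v)/\sigma$; these are primitive integral vectors of $\widetilde{\Lambda}$ lying in the rational span of $\{e,v\}$, hence in $\widetilde{L}$. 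A direct computation from $(e,e)=2-2n$, $(v,v)=2n-2$, $(e,v)=0$ gives $(\alpha,\alpha)=(\beta,\beta)=0$ and $(\alpha,\beta)=-(4n-4)/(\rho\sigma)$. Thus once I show $\rho\sigma=4n-4$, the pair $\{\alpha,\beta\}$ spans a sublattice of $\widetilde{L}$ with Gram matrix $M_U$; this sublattice is unimodular, so comparing discriminants with $\widetilde{L}$ forces $\widetilde{L}=\Integers\alpha+\Integers\beta\cong U$.

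The main obstacle is therefore the identity $\rho\sigma=4n-4$, which I would establish by an elementary prime-by-prime valuation count. The inputs are: primitivity of $e$, giving $\gcd(d,t)=1$; the hypothesis $\div(e,\bullet)=2n-2$, which forces $(2n-2)\mid d$, say $d=(2n-2)d'$, together with $\gcd(d',t)=1$ and $\gcd(t,2n-2)=1$ (so $t$ is odd); and the degree relation $d^2(x_0,x_0)=(2n-2)(t^2-1)$. Computing $e\pm v$ in Mukai coordinates yields $\rho=\gcd\!\left(t+1,\,2(n-1)d',\,(n-1)(t-1)\right)$ and $\sigma=\gcd\!\left(t-1,\,2(n-1)d',\,(n-1)(t+1)\right)$. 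For each odd prime $p$ one checks $v_p(\rho)+v_p(\sigma)=v_p(n-1)$ and $\min(v_p(\rho),v_p(\sigma))=0$, while for $p=2$ one checks $v_2(\rho)+v_2(\sigma)=2+v_2(n-1)$ and $\min(v_2(\rho),v_2(\sigma))=1$; in each case the degree relation is exactly what pins down $v_p(t\pm1)$ in terms of $v_p(n-1)$, $v_p(d')$ and $v_p((x_0,x_0))$ so that these balances hold. Summing over all $p$ gives $\rho\sigma=4(n-1)$, and incidentally $\gcd(\rho,\sigma)=2$, recovering the ``$U$'' line of the table.

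Finally, for surjectivity of $rs:I_n(X)\to\Sigma_n(U)=\F(n-1)$ I would realize each coprime factorization $n-1=rs$ explicitly. In the Mukai lattice set $v':=(r,0,-s)$ and $e':=(r,0,s)$; then $(v',v')=2n-2$, $(e',e')=2-2n$, $(e',v')=0$, and $e'+v'=(2r,0,0)$, $e'-v'=(0,0,2s)$ have divisibilities $2r$ and $2s$, so by Lemma \ref{lemma-isometry-orbits-in-rank-2} the saturation of $\Integers e'+\Integers v'$ realizes the pair $\{r,s\}$ and $\div(e',\bullet)=2n-2$. Since $O(\widetilde{\Lambda})$ acts transitively on primitive classes of degree $2n-2$, there is $g\in O(\widetilde{\Lambda})$ with $g(v')=v$; then $g(e')\in v^\perp=H^2(X,\Integers)$ is a class in $I_n(X)$ with $rs(g(e'))=\{r,s\}$. (Alternatively, surjectivity follows from the representative moduli spaces $M_H(r,0,-s)$ of section \ref{sec-sequence-of-examples} together with the deformation-invariance of $rs$.) This completes the proof.
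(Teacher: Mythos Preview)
Your proof is correct and structurally the same as the paper's: both build $\alpha=(e+v)/\rho$ and $\beta=(e-v)/\sigma$, show their Gram matrix is $M_U$, conclude $\widetilde{L}\cong U$ by unimodularity, and then cite Lemma~\ref{lem-faithfulness-of-the-isometry-class-function-of-tilde-L-e} for faithfulness and the explicit Mukai classes $v'=(r,0,-s)$, $e'=(r,0,s)$ (Example~\ref{example-any-factorization-rs-is-possible}) for surjectivity.

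The only real difference is how the key divisibility is established. The paper observes from $t^2-1=(2n-2)(\xi,\xi)$ that $(n-1)\mid\frac{t-1}{2}\cdot\frac{t+1}{2}$ with the two factors coprime, reads off the factorization $n-1=rs$ with $r\mid(t+1)/2$ and $s\mid(t-1)/2$, and then simply writes down $\alpha=\bigl(\tfrac{t+1}{2r},\,s\xi,\,\tfrac{(t-1)s}{2}\bigr)$ and $\beta=\bigl(\tfrac{t-1}{2s},\,r\xi,\,\tfrac{(t+1)r}{2}\bigr)$ and checks integrality by inspection. You instead take $\rho,\sigma$ as the maximal divisors and run a prime-by-prime valuation count to prove $\rho\sigma=4(n-1)$; this works (your $p=2$ step silently uses that the $K3$ lattice is even, so $v_2((x_0,x_0))\geq 1$), but it is more laborious than the paper's two-line factorization. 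Your formulation has the mild advantage that $\alpha,\beta$ are primitive by construction, so $\widetilde{L}=\Integers\alpha+\Integers\beta$ is immediate; in the paper this follows from unimodularity of the span. One small omission: your gcd formulas for $\rho,\sigma$ presuppose $x\neq 0$; the case $x=0$ (i.e.\ $e=\pm\delta$, so $t=\pm1$) should be recorded separately, though it is trivial.
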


\begin{proof}
Write $e=x+t\delta$ as in equation
(\ref{eq-normalization-of-e-as-x-plus-t-delta}).
The assumption that $\div(e,\bullet)=2n-2$ implies that $x=(2n-2)\xi$,
for a class $\xi\in H^2(S,\Integers)$. We clearly have the equality
\[
2-2n=(e,e)=(x,x)+t^2(\delta,\delta)=(2-2n)^2(\xi,\xi)+(2-2n)t^2.
\]
Hence, we get the equality 
\[
t^2-1 \ \ \ = \ \ \ (2n-2)(\xi,\xi).
\]
Consequently, $4n-4$ divides $(t-1)(t+1)$. 
Thus $n-1$ divides $\frac{t-1}{2}\frac{t+1}{2}$.
Now $\gcd\left(\frac{t-1}{2},\frac{t+1}{2}\right)=1$.
We get a unique factorization $n-1=rs$, where $s$
divides $(t-1)/2$, $r$ divides $(t+1)/2$, and $\gcd(r,s)=1$.
We may assume that $s$ is odd, possibly after 
replacing the embedding $\iota$ by $-\iota$, which replaces $t$ by $-t$. 

%Let $d\geq 2$ be the largest integer, such that $2^d$ divides $4n-4$.
%Then $2^{d-1}$ must divide $t+1$ or $t-1$. 
%We may assume that $2^{d-1}$ divides $t+1$, possibly after 
%a change of the embedding $\iota$, which replaces $t$ by $-t$. 
%We get a unique factorization $4n-4=4rs$, where $s$
%divides $(t-1)/2$ and $r$ divides $(t+1)/2$, and $s$ is odd. 
%Note that $\gcd(r,s)$
%divides $\gcd([t-1]/2,[t+1]/2)$, and the latter is $1$.
%Hence $\gcd(r,s)=1$. 

Using the above factorization $n-1=rs$, we get
\begin{eqnarray*}
e+v=2r\alpha, & \mbox{where}, &
\alpha:=\left(\frac{t+1}{2r},s\xi,\frac{(t-1)s}{2}\right)
\\
e-v=2s\beta, & \mbox{where}, &
\beta:=\left(\frac{t-1}{2s},r\xi,\frac{(t+1)r}{2}\right)
\end{eqnarray*}
and the classes $\alpha$ and $\beta$ belongs to $\widetilde{L}$.
The Gram-matrix of $\{\alpha,\beta\}$ is\\
$
\left(\begin{array}{cc}
(\alpha,\alpha) & (\alpha,\beta)
\\
(\alpha,\beta) & (\beta,\beta)
\end{array}
\right)=
\left(\begin{array}{cc}
(e+v,e+v)/4r^2 & (e+v,e-v)/4rs
\\
(e+v,e-v)//4rs & (e-v,e-v)/4s^2
\end{array}
\right)=$
\\
$
\left(\begin{array}{cc}0 & -1\\-1&0
\end{array}\right).
$
%$
%\frac{1}{(4rs)^2}
%\left|\begin{array}{cc}
%(e+v,e+v) & (e+v,e-v)
%\\
%(e+v,2e) & (2e,e-v)
%\end{array}
%\right|=
%\frac{2}{(4rs)^2}
%\left|\begin{array}{cc}
%(v,v) & -(v,v)
%\\
%(e,e) & (e,e)
%\end{array}
%\right|=-1.
%$
\\
We conclude that ${\rm span}\{\alpha,\beta\}$
is a unimodular sub-lattice of $\widetilde{\Lambda}$. Hence,
$\widetilde{L}={\rm span}\{\alpha,\beta\}$ and $\widetilde{L}\cong U$. 
%$\widetilde{L}$ is unimodular. The isometry 
%$\widetilde{L}\cong U$. 
%follows from the uniqueness of the
%isometry class of the rank two even unimodular lattice with
%signature $(1,-1)$. 

The function $rs$ is shown to be surjective in 
Example \ref{example-any-factorization-rs-is-possible}.
The faithfulness of the monodromy-invariant
$rs$ was proven in Lemma
\ref{lem-faithfulness-of-the-isometry-class-function-of-tilde-L-e}.
\end{proof}

\begin{example}
\label{example-any-factorization-rs-is-possible}
(Compare with section \ref{sec-sequence-of-examples} above).
Choose a factorization $n-1=rs$, with $s\geq r>0$,
and $\gcd(r,s)=1$. 
Let $S$ be a projective $K3$ surface, $v=(r,0,-s)\in K(S)$,
$H$ a $v$-generic polarization, and $X=M_H(r,0,-s)$. 
Let $\iota:H^2(M_H(r,0,-s),\Integers)\hookrightarrow K(S)$ be the 
embedding given in (\ref{eq-iota-for-a-moduli-space}).
Set $e:=\theta(r,0,s)$, where $\theta$ is Mukai's isometry given in equation
(\ref{eq-Mukai-isomorphism}). 
The class $e$ is monodromy-reflective
and $\div(e,\bullet)=2n-2$.
Now $(v+e)/2r=(1,0,0),$ $(e-v)/2s=(0,0,1),$ and $\widetilde{L}\cong U$. 
We get that $rs(e)=\{r,s\}$, by
Lemma \ref{lemma-isometry-orbits-in-rank-2}.
%$\widetilde{L}$
%is the unimodular rank two hyperbolic lattice, and 
%\[
%\widetilde{L}/L \ \ \cong \ \ 
%\left\{\begin{array}{lcl}
%(\Integers/2r\Integers)\oplus (\Integers/s\Integers), & 
%\mbox{if}&s\ \mbox{is} \ odd
%\\
%(\Integers/r\Integers)\oplus (\Integers/2s\Integers), & 
%\mbox{if}&s\ \mbox{is} \ even
%\end{array}\right.
%\ \ \cong \ \ \Integers/(2n-2)\Integers,
%\]
%where the right isomorphism follows from the Chinese Remainder Theorem.
%************
% Hide
%************
\hide{
The class $e$ arises from a prime exceptional divisor precisely in
the following two cases:
\begin{enumerate}
\item
When $v=(1,0,1-n)$, then $2e=[E]$, for the prime exceptional divisor
$E$ in Example \ref{example-diagonal-of-hilbert-scheme}.
\item
When $v=(2,0,[1-n]/2)$ is primitive (i.e., $n\equiv 3$ modulo $4$), 
then $e=[E]$, for a prime exceptional divisor $E$,
by Lemma \ref{lemma-class-of-exceptional-locus} part 
\ref{lemma-item-L-divisible-by-2}.
\end{enumerate} 
If $s>r>2$, and $k$ is a non-zero integer, then the class
$ke$ is not effective, by Proposition 
\ref{prop-vanishing-in-divisibility-2n-2}.
%************
% End \hide
%************
}
\end{example}

%****************************************************************
% 
%****************************************************************
\subsection{The divisibility case $\div(e,\bullet)=(e,e)/2$}
\hspace{0ex}\\
Let $n$ be an integer $\geq 2$.
Let $I'_n(X)\subset H^2(X,\Integers)$ be the subset of all 
primitive classes $e$ satisfying $(e,e)=2-2n$, and $\div(e,\bullet)=n-1$.
%If $n$ is even, set $H_n:=H_{ev}$. 
%If $n$ is odd, set $H_n:=U(2)$.
Set \\
$\Sigma'_n:=
\left\{\begin{array}{ccl}
\Sigma_n(H_{ev})& \mbox{if} & n \ \mbox{is even,}
\\
\Sigma_n(U(2)) & \mbox{if} &  n  \ \mbox{is odd, but} \
n\not\equiv 1 \ \mbox{modulo} \ 8.
\\
\Sigma_n(U(2))\cup \Sigma_n(H_{ev}) & \mbox{if} & n\equiv 1 \  
\mbox{modulo} \ 8.
\end{array}\right.$ \\
%Set $\Sigma'_n:=\Sigma_n(H_{ev})$, if $n$ is even. 
%Set $\Sigma'_n:=\Sigma_n(U(2))$, if $n$ is odd, but 
%$n\not\equiv 1$ modulo $8$.
%Set $\Sigma'_n:=\Sigma_n(U(2))\cup \Sigma_n(H_{ev})$, if $n\equiv 1$
%modulo $8$.
In each of the above three cases, let 
$\IC_n$ be the union of the sets $I_n(\widetilde{L})/O(\widetilde{L})$ 
as $\widetilde{L}$ ranges through the one or two lattices appearing.

\begin{lem}
\label{lem-non-unimodular-rank-two-lattice}
Let $e$ be a class in $I'_n(X)$.
\begin{enumerate}
\item
\label{lemma-item-case-n-is-even}
If $n$ is even, then $\widetilde{L}$ is isometric to $H_{ev}$.
\item
\label{lemma-item-case-n-is-odd}
If $n$ is odd, then $\widetilde{L}$ is isometric to $U(2)$ or to
$H_{ev}$. The latter occurs only if $n\equiv 1$ modulo $8$ and $\rho\sigma=n-1$.
\end{enumerate}
In both cases, let 
\[
rs \ : \ I'_n(X) \ \ \ \longrightarrow \ \ \ \Sigma'_n
\]
be the composition 
%$\overline{rs}\circ f$, 
of the function
$f:I'_n(X)\rightarrow \IC_n$,
defined in equation (\ref{eq-f-from-I-L-0-n}), with 
the injection $\overline{rs}:\IC_n\rightarrow \Sigma'_n$,
constructed in Lemma \ref{lemma-isometry-orbits-in-rank-2}.
Then the function $rs:I'_n(X)\rightarrow \Sigma'_n$ 
is surjective and a faithful monodromy-invariant
(Definition \ref{def-faithful}). 
%\begin{enumerate}
%\item
%\label{lemma-item-case-n-is-even}
%If $n$ is even, then there exists a unique factorization $n-1=rs$,
%with $\gcd(r,s)=1$, such that 
%the following statement holds,
%possibly after replacing $v$ by $-v$.
%The classes
%\[
%\alpha:=\frac{1}{2}\left[\frac{e+v}{r}-\frac{e-v}{s}\right],
%\beta:=\frac{1}{2}\left[\frac{e+v}{r}+\frac{e-v}{s}\right],
%\]
%form a basis for $\widetilde{L}$ with Gram matrix
%$\left(\begin{array}{cc}2 & 0\\0&-2\end{array}\right)$. 
%\item
%\label{lemma-item-case-n-is-odd}
%If $n$ is odd, then there exists a unique factorization $(n-1)/2=rs$,
%with $\gcd(r,s)=1$, such that 
%the following statement holds,
%possibly after replacing $v$ by $-v$. 
%The classes
%\[
%\alpha:=\frac{e+v}{2r}, \ \ \ \beta:=\frac{e-v}{2s}
%\]
%form a basis for $\widetilde{L}$ with Gram matrix 
%$\left(\begin{array}{cc}0 & -2\\-2&0\end{array}\right)$. 
%%\end{enumerate}
%\end{enumerate}
%Denote by 
%\[
%rs \ : \ I'_n(X) \ \ \ \longrightarrow \ \ \ \Sigma'_n
%\]
%the function which assigns to a class $e\in I'_n(X)$ the
%unordered pair $\{r,s\}$, occurring in the above
%factorization $n-1=rs$, if $n$ is even, 
%and $(n-1)/2=rs$, if $n$ is odd. 
%Then $rs$ is a faithful and surjective monodromy-invariant.
\end{lem}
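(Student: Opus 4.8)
The plan is to mirror the proof of the companion Lemma~\ref{lem-faithful-Mon-invariant-in-case-divisibility-2n-2} (the divisibility-$(2n-2)$ case), replacing its unimodular conclusion $\widetilde L\cong U$ by the two non-unimodular alternatives $H_{ev}$ and $U(2)$, and then to read off the function $rs$ from Lemma~\ref{lemma-isometry-orbits-in-rank-2}. First I would normalize the embedding $\iota$ as at the start of this section, so that $v=(1,0,1-n)$ is orthogonal to $\iota(H^2(X,\Integers))$, $\delta=(1,0,n-1)$, and $e=x+t\delta$ with $x\in H^2(S,\Integers)$, $t\in\Integers$. Using $v^\perp=H^2(S,\Integers)\oplus\Integers\delta$ and the primitivity of $e$ (which gives $\gcd(\mathrm{content}(x),t)=1$), the hypothesis $\div(e,\bullet)=n-1$ becomes $\gcd(\mathrm{content}(x),2n-2)=n-1$; hence $(n-1)\mid x$ and $\mathrm{content}(x)=(n-1)d'$ with $d'$ odd. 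Writing $x=(n-1)\xi$ and expanding $(e,e)=2-2n$ yields the master identity $(n-1)(\xi,\xi)=2(t^2-1)$. The parity of $t$ is then forced: if $n$ is even, $n-1$ is odd; if $n$ is odd, then $2\mid n-1\mid\mathrm{content}(x)$ forces $t$ to be odd.

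Next I would identify $\widetilde L$ by computing the divisibilities $\rho,\sigma$ of $e+v=(t+1,(n-1)\xi,(n-1)(t-1))$ and $e-v=(t-1,(n-1)\xi,(n-1)(t+1))$ in $\widetilde\Lambda\cong K(S)$, and then invoking Lemma~\ref{lemma-isometry-orbits-in-rank-2}. The decisive datum is the $2$-adic valuation $v_2$; since $d'$ is odd it does not affect $v_2(\rho),v_2(\sigma)$, so $v_2(\rho)=\min(v_2(t+1),v_2(n-1))$ and $v_2(\sigma)=\min(v_2(t-1),v_2(n-1))$, and a prime-by-prime check shows the odd part of $\rho\sigma$ equals the odd part of $n-1$ as well, independently of $d'$. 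When $n$ is even, $n-1$ is odd, whence $\gcd(\rho,\sigma)=1$ and $\rho\sigma=n-1$; exhibiting the explicit classes of part (2) of Lemma~\ref{lemma-isometry-orbits-in-rank-2} (the $H_{ev}$, even-$n$ formulas), checking that their Gram matrix is $M_{H_{ev}}$, and noting that they already span a saturated sublattice containing $e,v$, gives $\widetilde L\cong H_{ev}$. When $n$ is odd, $t$ is odd, both $e\pm v$ are even, $\gcd(\rho,\sigma)=2$, and the master identity gives $v_2((\xi,\xi))=v_2(t-1)+v_2(t+1)+1-v_2(n-1)\ge 1$; writing $\{v_2(t-1),v_2(t+1)\}=\{1,b\}$ with $b\ge 2$, this integrality bound forces $b\ge v_2(n-1)-1$ and splits into exactly two subcases: generically $b\ge v_2(n-1)$, so $\rho\sigma=2n-2$ and $\widetilde L\cong U(2)$; and only when $b=v_2(n-1)-1$, which requires $v_2(n-1)\ge 3$, i.e.\ $n\equiv 1$ modulo $8$, one gets $\rho\sigma=n-1$ and $\widetilde L\cong H_{ev}$. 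This establishes parts~\ref{lemma-item-case-n-is-even} and~\ref{lemma-item-case-n-is-odd} and matches the table following Lemma~\ref{lemma-isometry-orbits-in-rank-2}.

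With $\widetilde L$ identified, the map $rs$ is the composite of the function $f$ of (\ref{eq-f-from-I-L-0-n}) with the bijection $\overline{rs}$ furnished by part (4) of Lemma~\ref{lemma-isometry-orbits-in-rank-2}, so it lands in $\Sigma'_n$; its faithfulness as a monodromy-invariant is immediate from Lemma~\ref{lem-faithfulness-of-the-isometry-class-function-of-tilde-L-e}. For surjectivity I would produce, for each target pair $\{r,s\}\in\Sigma'_n$, a moduli space $M_H(v)$ of stable sheaves on a $K3$ surface carrying a class $e$ with $\div(e,\bullet)=n-1$ and the prescribed pair $rs(e)=\{r,s\}$, exactly in the spirit of Example~\ref{example-any-factorization-rs-is-possible} (now choosing Mukai vectors realizing the divisibility $n-1$ rather than $2n-2$); the concrete constructions covering every value are the examples of sections~\ref{sec-examples} and~\ref{sec-non-effective}.

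The hard part will be the $2$-adic bookkeeping in the odd case, and in particular the clean separation of the locus $n\equiv 1$ modulo $8$ into the $U(2)$ and $H_{ev}$ strata according to whether $b\ge v_2(n-1)$ or $b=v_2(n-1)-1$, since here both lattice types genuinely occur and one must argue that the integrality bound leaves no further possibilities. A secondary technical point, present in every case, is verifying that the explicit rank-two lattice $\mathrm{span}\{\alpha,\beta\}$ produced by Lemma~\ref{lemma-isometry-orbits-in-rank-2} is the full saturation $\widetilde L$ and not a proper finite-index sublattice; unlike the unimodular case of Lemma~\ref{lem-faithful-Mon-invariant-in-case-divisibility-2n-2}, one cannot invoke unimodularity here, so I would instead check directly that $\alpha,\beta$ are as divisible as possible (by maximality of $\rho,\sigma$) and that $e,v\in\mathrm{span}\{\alpha,\beta\}$ with the correct discriminant $-4$.
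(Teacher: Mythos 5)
Your proposal follows essentially the same route as the paper: the same normalization $e=x+t\delta$ with $x=(n-1)\xi$, the same master identity $(t^2-1)=(n-1)(\xi,\xi)/2$, the same case split (your dichotomy $b\ge v_2(n-1)$ versus $b=v_2(n-1)-1$ is exactly the paper's dichotomy on the parity of $(\xi,\xi)/2$), the same explicit bases from Lemma \ref{lemma-isometry-orbits-in-rank-2}, faithfulness via Lemma \ref{lem-faithfulness-of-the-isometry-class-function-of-tilde-L-e}, and surjectivity via the moduli-space examples. The $2$-adic bookkeeping you outline checks out and correctly isolates $n\equiv 1\ (\mathrm{mod}\ 8)$ as the only locus where $H_{ev}$ can occur for odd $n$.

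The one step that does not close as you describe it is the saturation of ${\rm span}\{\alpha,\beta\}$ in $\widetilde{L}$. Knowing that $\alpha$ and $\beta$ are individually primitive (maximality of $\rho,\sigma$) and that the Gram determinant is $-4$ only tells you that the index $d$ of ${\rm span}\{\alpha,\beta\}$ in $\widetilde{L}$ satisfies $d^2\det(\widetilde{L})=-4$, i.e.\ $d\in\{1,2\}$; the case $d=2$, in which $\widetilde{L}$ would be unimodular, is not excluded by primitivity of the two generators. The paper closes this with a preliminary observation you never state: $\widetilde{L}$ cannot be unimodular, because a unimodular $\widetilde{L}$ would split off as an orthogonal direct summand of $\widetilde{\Lambda}$, forcing $v^\perp=\widetilde{L}^\perp\oplus\Integers e$ and hence $\div(e,\bullet)=2n-2$, contradicting $e\in I'_n(X)$. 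With that observation in hand, $d=2$ is impossible and Claim \ref{claim-non-unimodular-sub-lattice-is-saturated} follows. Add this one sentence and your argument is complete. (A minor caution for the same reason: you cannot identify $\widetilde{L}$ by quoting the table after Lemma \ref{lemma-isometry-orbits-in-rank-2} or Proposition \ref{prop-isometry-class-of-tilde-L-e-is-a-faithful-mon-invariant}, since those are consequences of this lemma; the identification must come, as you in fact do in the body of your argument, from exhibiting the explicit saturated basis.)
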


%\begin{rem}
%The class $e$ in the above Lemma is equal to the class
%of a prime exceptional divisor $E$ in the example treated in Lemma 
%\ref{lemma-class-of-exceptional-locus} part 
%\ref{lemma-item-L-not-divisible-by-2}
%in the following two cases:
%\begin{enumerate}
%\item 
%$n$ is even and $rs(e)=\{1,n-1\}$.
%\item
%$n$ is odd and $rs(e)=\{1,[n-1]/2\}$. 
%\end{enumerate}
%\end{rem}

\begin{proof} 
%(Of Lemma \ref{lem-non-unimodular-rank-two-lattice}).
Let us first observe that $\widetilde{L}$ cannot be unimodular.
Assume otherwise. Then
$\widetilde{\Lambda}$ decomposes as an orthogonal direct sum
$\widetilde{L}\oplus\widetilde{L}^\perp$. Consequently, 
$v^\perp$ decomposes as the orthogonal direct sum
$\widetilde{L}^\perp\oplus\Integers\{e\}$.
But then $\div(e,\bullet)=2n-2$. 

We keep the normalization $e=x+t\delta$ of equation
(\ref{eq-normalization-of-e-as-x-plus-t-delta}). 
The assumption that $\div(e,\bullet)=n-1$ implies that $x=(n-1)\xi$,
for a class $\xi\in H^2(X,\Integers)$. We have the equality
\begin{equation}
\label{eq-t-square-1-equal-n-1-xi-xi-over-2}
(t^2-1)=\frac{(n-1)(\xi,\xi)}{2}.
\end{equation}
%deduced in (\ref{eq-t-square-1-equal-n-1-xi-xi-over-2}).
Hence, $n-1$ divides $t^2-1$.
%Write $\xi=\lambda\bar{\xi}$, where $\lambda$ is an integer and
%$\bar{\xi}$ is a primitive class in $H^2(X,\Integers)$. 

\underline{Case $n$ is even:}
Then $n-1$ is odd. Set
\[
r:=\gcd(t+1,n-1), \ \ \ s:=\gcd(t-1,n-1).
\]
Then both $r$ and $s$ are odd and $\gcd(r,s)$ divides
$\gcd(t-1,t+1)$. We conclude that $\gcd(r,s)=1$ and $rs$ divides $n-1$.
Now $n-1$ divides $(t-1)(t+1)$. Thus, $n-1$ divides $rs$ and so 
$rs=n-1$.  
%
%We show next that $\lambda$ is odd. Assume otherwise that 
%$\lambda$ is even. Then $t$ is odd, by equation 
%(\ref{eq-t-square-1-equal-n-1-xi-xi-over-2}), and 
%\begin{eqnarray*}
%\alpha & := & \frac{e+v}{2r}=\left(\frac{t+1}{2r},\frac{s\lambda}{2}\bar{\xi},
%s\frac{t-1}{2}\right),
%\\
%\beta & := & \frac{e-v}{2s}=\left(\frac{t-1}{2s},\frac{r\lambda}{2}\bar{\xi},
%r\frac{t+1}{2}\right),
%\end{eqnarray*}
%are integral and
%$\Gram{\alpha}{\beta}=\left(\begin{array}{cc}0&-1\\-1&0
%\end{array}\right)$.
%It follows that $\widetilde{L}={\rm span}\{\alpha,\beta\}$
%and so $\widetilde{L}$ is unimodular. A contradiction.
%
%We conclude that $\lambda$ is odd. Then
%\begin{eqnarray*}
%\frac{(e+v)}{r} & = & \left(\frac{t+1}{r},s\lambda\bar{\xi},s(t-1)\right),
%\\
%\frac{(e-v)}{s} & = & \left(\frac{t-1}{s},r\lambda\bar{\xi},r(t+1)\right),
%\end{eqnarray*}
%are primitive classes of $\widetilde{\Lambda}$. 
Set
\begin{eqnarray*}
\alpha & := & \frac{1}{2}\left[\frac{e+v}{r}-\frac{e-v}{s}\right]
=
\frac{1}{2}\left(\frac{t+1}{r}-\frac{t-1}{s},(s-r)\xi,
(s-r)t-s-r\right)
\\
\beta & := & \frac{1}{2}\left[\frac{e+v}{r}+\frac{e-v}{s}\right]
=
\frac{1}{2}\left(\frac{t+1}{r}+\frac{t-1}{s},(s+r)\xi,
(s+r)t+r-s\right).
\end{eqnarray*}
Note the equality $\frac{t+1}{r}-\frac{t-1}{s}=\frac{(s-r)t+s+r}{rs}$
and the fact that the denominator is odd, while the numerator is even.
Hence, $\alpha$, $\beta$ are integral classes of $\widetilde{\Lambda}$ and 
$\Gram{\alpha}{\beta}=
\left(\begin{array}{cc}2&0\\0&-2\end{array}\right)$.

\begin{claim}
\label{claim-non-unimodular-sub-lattice-is-saturated}
$\widetilde{L}={\rm span}\{\alpha,\beta\}$.
\end{claim}

\begin{proof}
Suppose otherwise. Then $\widetilde{L}$ strictly contains 
$L':={\rm span}\{\alpha,\beta\}$. 
Let $d$ be the index of $L'$ in $\widetilde{L}$. Then 
the determinant of the Gram-matrix of $\widetilde{L}$ is
$d^2$ times the determinant of the Gram-matrix of $L'$. 
The latter determinant is $-4$.
It follows that $\widetilde{L}$ is unimodular, a contradiction.
\end{proof}

\underline{Case $n$ is odd:}
Then $t$ is odd, by equation (\ref{eq-t-square-1-equal-n-1-xi-xi-over-2}).
Set
\[
r :=  \gcd\left(\frac{n-1}{2},\frac{t+1}{2}\right),
\ \ \ 
s := \gcd\left(\frac{n-1}{2},\frac{t-1}{2}\right).
\]
Then $rs$ divides $(n-1)/2$, 
since $\gcd\left(\frac{t+1}{2},\frac{t-1}{2}\right)=1$. 
%Then $s$ is odd.
%
%We show next that  $\frac{t+1}{2r}$ is odd.
%Assume otherwise that $\frac{t+1}{2r}$ is even.
%Set $\alpha:=\frac{e+v}{4r}=\left(\frac{t+1}{4r},s\xi,s(t-1)\right)$
%and $\beta:=\frac{e-v}{2s}$.
%Then $\Gram{\alpha}{\beta}=\left(\begin{array}{cc}0&-1\\-1&0
%\end{array}\right)$. We get that $\widetilde{L}$ is unimodular.
%A contradiction. 
%
%We conclude that $\frac{t+1}{2r}$ is odd. 

\underline{Case $n$ is odd and $(\xi,\xi)/2$ is even:}\\
Then $(n-1)/2$ divides $(t+1)(t-1)/4$, 
by equation (\ref{eq-t-square-1-equal-n-1-xi-xi-over-2}).
Hence, $rs=(n-1)/2$. Set
\begin{eqnarray*}
\alpha & := & \frac{e+v}{2r} = 
\left(\frac{t+1}{2r},s\xi,s(t-1)\right),
\\
\beta & := & \frac{e-v}{2s} = 
\left(\frac{t-1}{2s},r\xi,r(t+1)\right).
\end{eqnarray*}
Then $\alpha$ and $\beta$ are 
%primitive isotropic 
integral classes of $\widetilde{\Lambda}$
and $\Gram{\alpha}{\beta}=\left(\begin{array}{cc}0&-2\\-2&0
\end{array}\right)$. 
We conclude the equality $\widetilde{L}={\rm span}\{\alpha,\beta\}$,
by the argument used in Claim 
\ref{claim-non-unimodular-sub-lattice-is-saturated}.

\underline{Case $n$ is odd and $(\xi,\xi)/2$ is odd:}
Let $2^k$ be the largest power of $2$ which divides $t^2-1$.
Then $k\geq 3$. Furthermore, $2^k$ is also the largest power of $2$ which
divides $n-1$, by equation 
(\ref{eq-t-square-1-equal-n-1-xi-xi-over-2}). 
Thus $n\equiv 1$ (modulo $8$). 
%We may assume that $(t-1)/2$ is odd, possibly after 
%replacing the embedding $\iota$ by $-\iota$, which replaces $t$ by $-t$. 
The set $\{r,s\}$ consists of one odd and one even integer. 
Say $s$ is odd. Then 
$2^{k-2}$ is the largest power of $2$, which divides $r$.
We conclude that $rs=(n-1)/4$. 
Furthermore, both $(t+1)/2r$ and $(t-1)/2s$ are odd. 
Set
\begin{eqnarray*}
\alpha & := & \frac{1}{2}\left[\frac{e+v}{2r}+\frac{e-v}{2s}\right]=
\frac{1}{2}\left(\frac{t+1}{2r}+\frac{t-1}{2s},(2s+2r)\xi,2s(t-1)+2r(t+1)
\right)
\\
\beta & := & \frac{1}{2}\left[\frac{e+v}{2r}-\frac{e-v}{2s}\right]=
\frac{1}{2}\left(\frac{t+1}{2r}-\frac{t-1}{2s},(2s-2r)\xi,2s(t-1)-2r(t+1)
\right).
\end{eqnarray*}
Then $\alpha$ and $\beta$ are integral classes of $\widetilde{\Lambda}$
and $\Gram{\alpha}{\beta}=\left(\begin{array}{cc}-2&0\\0&2
\end{array}\right)$. 
We conclude the equality $\widetilde{L}={\rm span}\{\alpha,\beta\}$,
by the argument used in Claim 
\ref{claim-non-unimodular-sub-lattice-is-saturated}.

The function $rs$ is shown to be surjective in Examples
\ref{example-rs-is-surjective-divisibility-n-1} and \ref{example-yet-another}.
The faithfulness of the monodromy invariant $rs$ was   
proven in Lemma 
\ref{lem-faithfulness-of-the-isometry-class-function-of-tilde-L-e}.
\end{proof}

%************
% Hide
%************
\hide{
\begin{lem}
\label{lem-faithfulness}
The monodromy invariant functions $rs$, in Lemmas
\ref{lem-faithful-Mon-invariant-in-case-divisibility-2n-2} 
and \ref{lem-non-unimodular-rank-two-lattice}, are both faithful.
\end{lem}

\begin{proof}
If $div(e,\bullet)=2n-2$ we are in the case of Lemma 
\ref{lem-faithful-Mon-invariant-in-case-divisibility-2n-2}.
If $\div(e,\bullet)=n-1$, then the parity of $n$ 
%and the value $g(e)$
determines which of the two cases of Lemma 
\ref{lem-non-unimodular-rank-two-lattice} we are in.
In each of these three cases, the value $rs(e)=\{r,s\}$
determines an unordered pair of invertible $2\times 2$ matrices 
$C_e=\left(\begin{array}{cc}c_{11}&c_{12}\\c_{21}&c_{22}\end{array}\right)$, 
and $C_{e'}$ satisfying
\begin{equation}
\label{eq-C-e-prime-in-terms-of-C-e}
C_e'=AC_eA,
\end{equation} 
where $A:=\left(\begin{array}{cc}0&-1\\1&0\end{array}\right)$.
$C_e=\left(\begin{array}{cc}
1/2r&1/2s\\1/2r&-1/2s\end{array}\right)$
in Lemma \ref{lem-faithful-Mon-invariant-in-case-divisibility-2n-2}, as well as
in Lemma \ref{lem-non-unimodular-rank-two-lattice} when $n$ is odd.
$C_e=\frac{1}{2}\left(\begin{array}{cc}
\frac{1}{r}-\frac{1}{s}&\frac{1}{r}+\frac{1}{s}
\\
\frac{1}{r}+\frac{1}{s}&\frac{1}{r}-\frac{1}{s}
\end{array}\right)$
in Lemma \ref{lem-non-unimodular-rank-two-lattice}, when $n$ is even.
Interchanging $r$ and $s$ interchanges $C_e$ and $C'_e$.

For every embedding 
$\iota:H^2(X,\Integers)\hookrightarrow \widetilde{\Lambda}$, 
in the canonical $O(\widetilde{\Lambda})$-orbit, 
there is a choice of a sign for the generator $v$ of the orthogonal 
complement in
$\widetilde{\Lambda}$ of the image of $\iota$, 
such that the matrix $C_e$ has
the following property. If we set
\[
\alpha:=c_{11}e+c_{21}v, \ \ \ 
\beta:=c_{12}e+c_{22}v,
\]
then $\{\alpha,\beta\}$ is an integral basis of a primitive rank $2$ 
sub-lattice $\widetilde{L}$ of $\widetilde{\Lambda}$.
For the other choice of sign for $v$, the matrix $C'_e$ will have this 
property. Assume $C_e$ has this property.
Then the Gram matrix $M_e:=\Gram{\alpha}{\beta}$ is given by
\begin{equation}
\label{eq-M-in-terms-of-C}
M_e \ \ \ = \ \ \ C_e^t\left(\begin{array}{cc}
2-2n&0\\0&2n-2
\end{array}\right)C_e.
\end{equation}
We furthermore have the equality
\begin{equation}
\label{eq-Gram-matrix-is-independent-of-the-order-of-r-s}
AM_eA=M_e,
\end{equation}
by a direct calculation, for the three Gram matrices appearing in 
Lemmas \ref{lem-faithful-Mon-invariant-in-case-divisibility-2n-2} 
and \ref{lem-non-unimodular-rank-two-lattice}.

If instead $C'_e$ has the above property, we get the same Gram matrix,
by the following calculation.\\
\begin{eqnarray}
\label{eq-M-e-prime-equal-M-e}
M'_e&:=&(C'_e)^t\left(\begin{array}{cc}2-2n&0\\0&2n-2\end{array}\right)C'_e
\\
\nonumber
&\stackrel{(\ref{eq-C-e-prime-in-terms-of-C-e})}{=}&
A^tC^t_eA^t\left(\begin{array}{cc}2-2n&0\\0&2n-2\end{array}\right)AC_eA
\\
\nonumber
&=&
AC^t_e\left(\begin{array}{cc}2-2n&0\\0&2n-2\end{array}\right)C_eA
\stackrel{(\ref{eq-M-in-terms-of-C})}{=}
AM_eA
\stackrel{(\ref{eq-Gram-matrix-is-independent-of-the-order-of-r-s})}{=}M_e.
\end{eqnarray}

Assume that $\tilde{e}$ is another class with $(\tilde{e},\tilde{e})=(e,e)$, 
$\div(\tilde{e},\bullet)=\div(e,\bullet)$, and $rs(e)=rs(\tilde{e})$. 
%and if $\div(e,\bullet)=n-1$, then also $g(e)=g(\tilde{e})$.
Then $C_e=C_{\tilde{e}}$ or $C_e=C'_{\tilde{e}}$. 
We need to show that there exists a monodromy operator $\mu\in Mon^2(X)$,
such that $\mu(e)=\tilde{e}$. 

%Set $\{r,s\}:=rs(e)$ and $\{r',s'\}:=rs(e')$.
%We may assume that $r=r'$ and $s=s'$.
%Set $\alpha':=\frac{e'+v}{2r}$ and 
%$\beta':=\frac{e'-v}{2s}$.
Assume first that $C_e=C_{\tilde{e}}$.
Set $\tilde{\alpha}:=c_{11}\tilde{e}+c_{21}v$ and 
$\tilde{\beta}:=c_{12}\tilde{e}+c_{22}v$.
Let $M_{\tilde{e}}$ be the Gram matrix $\Gram{\tilde{\alpha}}{\tilde{\beta}}$.
Then $M_{\tilde{e}}=M_e$, by equation (\ref{eq-M-in-terms-of-C}).
%Set $\widetilde{L'}:={\rm span}\{\alpha',\beta'\}$.
%Then $\widetilde{L'}\cong U$, by the results already proven.
Hence, there exists an isometry $\gamma\in O_+(\widetilde{\Lambda})$,
such that $\gamma(\alpha)=\tilde{\alpha}$ and 
$\gamma(\beta)=\tilde{\beta}$, by \cite{nikulin}, Theorem 1.14.4. 
Then $\gamma(v)=v$ and so 
$\gamma\circ\iota=\iota\circ \mu$, for some isometry
$\mu\in O_+H^2(X,\Integers)$. The fact that the isometry $\mu$
extends to $\widetilde{\Lambda}$ implies that $\mu$ belongs to $Mon^2(X)$,
by \cite{markman-monodromy-I}, Theorem 1.6 (see also Lemma 4.10
part (3) in \cite{markman-monodromy-I}).
Now $\iota(\mu(e))=\gamma(\iota(e))=\iota(\tilde{e})$.
So $\mu(e)=\tilde{e}$.

Assume next that $C_e=C'_{\tilde{e}}$.
Set $\tilde{\alpha}:=c'_{11}\tilde{e}+c'_{21}v$ and 
$\tilde{\beta}:=c'_{12}\tilde{e}+c'_{22}v$.
We still have the equality $M_{\tilde{e}}=M_e$, by equation
(\ref{eq-M-e-prime-equal-M-e}).
Hence, there exists an isometry $\gamma'\in O_+(\widetilde{\Lambda})$,
such that $\gamma'(\alpha)=\tilde{\alpha}$ and 
$\gamma'(\beta)=\tilde{\beta}$, by \cite{nikulin}, Theorem 1.14.4. 
Then $\gamma'(v)=-v$. Set $\gamma:=-\gamma'$. 
Then $\gamma(v)=v$ and $\gamma$ belongs to $O_+(\widetilde{\Lambda})$.
We get that $\gamma\circ\iota=\iota\circ \mu$, for some isometry
$\mu\in O_+H^2(X,\Integers)$. The fact that the isometry $\mu$
extends to $\widetilde{\Lambda}$,  
implies that $\mu$ belongs to $Mon^2(X)$,
by \cite{markman-monodromy-I}, Theorem 1.6.
Now $\iota(\mu(e))=\gamma(\iota(e))=\pm\iota(\tilde{e})$.
So $\mu(e)=\pm\tilde{e}$.
If $\mu(e)=\tilde{e}$, we are done. Otherwise, compose $\mu$ with the
reflection with respect to $\tilde{e}$, which belongs to $Mon^2(X)$,
by Proposition 
\ref{prop-reflection-by-a-numerically-prime-exceptional-is-in-Mon}.
\end{proof}
%***********
% End \hide
%***********
}

\begin{example}
\label{example-rs-is-surjective-divisibility-n-1}
Let $s>r\geq 1$ be positive integers with $\gcd(r,s)=1$.
Set $n:=rs+1$. Let $S$ be a projective $K3$ surface, set $v:=(r,0,-s)$, 
and let $H$ be a $v$-generic polarization of $S$. 
Set $M:=M_H(v)$. Let $A$ be a primitive isotropic class in $H^2(S,\Integers)$.
Set $e:=\theta(r,(n-1)A,s)$.
Then $e$ is monodromy-reflective and $\div(e,\bullet)=n-1$.
If $n$ is even, then $(M,e)$ is an example
of case \ref{lemma-item-case-n-is-even} of Lemma 
\ref{lem-non-unimodular-rank-two-lattice}, with $rs(e)=\{r,s\}$.
If $n$ is odd, then $n-1=rs$ is even and precisely one of $r$ or $s$ is even. 
If $r$ is even, then $\rho=r$ and $\sigma=2s$.
If $s$ is even, then $\rho=2r$ and $\sigma=s$.
$(M,e)$ is an example
of case \ref{lemma-item-case-n-is-odd} of Lemma 
\ref{lem-non-unimodular-rank-two-lattice}, with $\widetilde{L}\cong U(2)$ and
$rs(e)=\{r/2,s\}$,
if $r$ is even, and $rs(e)=\{r,s/2\}$, if $s$ is even. 
\end{example}

\begin{example}
\label{example-yet-another}
We exhibit next examples of the case of Lemma \ref{lem-non-unimodular-rank-two-lattice}, where  $X=S^{[n]}$, 
$n\equiv 1$ modulo $8$, and
$\widetilde{L}\cong H_{ev}$. Set $n=8k+1$, $k$ an integer $\geq 1$.
Choose a factorization $2k=rs$, with $r$ even, $s$ odd, and $\gcd(r,s)=1$. 
There exists an integer $\lambda$, such that 
$\lambda r\equiv -1$ modulo $s$, since $\gcd(s,r)=1$. 
If $\lambda$ is a solution, so is $\lambda+s$. Hence, we may assume that
$\lambda$ is an odd and positive integral solution. 
Set $g:=[r\lambda+1]/s$. Then $g$ is a positive odd integer.

Let $S$ be a $K3$ surface with a primitive class $\xi\in \Pic(S)$ of degree
$(\xi,\xi)=2\lambda g$. 
Set 
\[
v:=(1,0,1-n) \ \ \ \mbox{and} \ \ \ 
e := (2\lambda r+1,(n-1)\xi,[2\lambda r+1](n-1)).
\]
Then $(e,e)=2-2n$, by following two equalities.
\[
(e,e)=(n-1)[2\lambda g(n-1)-2(4\{r^2\lambda^2+r\lambda\}+1)]
\]
and $2\lambda g(n-1)=8r\lambda sg=8r\lambda(r\lambda+1)$. 
The class $e$ is primitive, since 
$\gcd(2\lambda r+1,n-1)=\gcd(2\lambda r+1,4rs)=\gcd(2\lambda r+1,s)=
\gcd(-1,s)=1$.
%Let $t:=2\lambda r+1$ be the rank of $e$. Then
%\begin{eqnarray*}
%\gcd\left(\frac{n-1}{2},\frac{t-1}{2}\right) & = & \gcd(2rs,r\lambda)=
%r\gcd(2s,\lambda)=r.
%\\
%\gcd\left(\frac{n-1}{2},\frac{t+1}{2}\right) & = & \gcd(2rs,r\lambda+1)=
%\gcd(s,r\lambda+1)=\gcd(s,gs)=s.
%\end{eqnarray*}
The classes
$\frac{e+v}{2s}=(g,2r\xi,4\lambda r^2)$ and
$\frac{e-v}{2r}=(\lambda,2s\xi,4gs^2)$ are integral and primitive.
We conclude that $\widetilde{L}\cong H_{ev}$, by Proposition
\ref{prop-isometry-class-of-tilde-L-e-is-a-faithful-mon-invariant} part 
\ref{prop-item-three-possible-rank-2-lattices}, and 
$rs(e)=\{r,s\}$, by Lemma 
\ref{lemma-isometry-orbits-in-rank-2}. 
%(or more directly by the proof of 
%Lemma \ref{lem-non-unimodular-rank-two-lattice}).
\end{example}

%\begin{question}
%Refine the examples in section \ref{sec-examples}, for the finer 
%classification of exceptional classes, introduced in this section.
%\end{question}

%***************************************************************
%
%***************************************************************
\section{Numerical characterization of exceptional classes via Torelli}
\label{sec-numerical-characterization-via-torelli}
%*******************************************************************
% TABLE
%*******************************************************************
The following table points to an example provided in this paper,
for each possible value of the quadruple
$\{n, \ (e,e), \ \div(e,\bullet), \ rs(e)\}$, for a monodromy
reflective class $e$.

\smallskip
\noindent
\begin{tabular}{|c|c|c|c|c|c|l|}  \hline 
\hspace{1ex}
$(e,e)$ & $\div(e,\bullet)$ & $\widetilde{L}$ & $\{r,s\}$ & $n$ & Reference 
\\
\hline
$-2$ & $1$ & NA & NA & $\geq 2$ & 
Examples \ref{example-case-degree-e-minus-2-div-1}, 
\ref{example-case-degree-e-minus-2-div-1-brill-noether} 
\\
\hline
$-2$ & $2$ & NA & NA & $\geq 6$ and  & 
Example \ref{example-degree-e-minus-2-div-2}
\\
     &     &  & & $\equiv 2$ mod $4$ & 
\\
\hline
$-2$ & $2$ & NA & NA & $=2$       & 
Example \ref{example-diagonal-of-hilbert-scheme}
\\
\hline
$2-2n$ & $2n-2$ & $U$ & $\{1,n-1\}$ & $\geq 2$ & 
Example \ref{example-diagonal-of-hilbert-scheme}
\\
\hline
$2-2n$ & $2n-2$ & $U$ & $\{2,(n-1)/2\}$ & $\geq 7$ and & 
Lemma \ref{lemma-class-of-exceptional-locus} 
part \ref{lemma-item-L-divisible-by-2}
\\
     &     &  & & $\equiv 3$ mod $4$ & 
\\
\hline
$2-2n$ & $2n-2$ & $U$ & $s>r>2$ & $=rs+1$ &  Proposition
\ref{prop-vanishing-in-divisibility-2n-2}
\\
       &       & & $\gcd(r,s)=1$ &  &   
\\
\hline
$2-2n$ & $n-1$ & $H_{ev}$ & $\{1,n-1\}$ & $\geq 4$, even &  
Lemma \ref{lemma-class-of-exceptional-locus} 
part \ref{lemma-item-L-not-divisible-by-2}
\\
\hline
$2-2n$ & $n-1$ & $U(2)$ & $\{1,(n-1)/2\}$ & $\geq 3$, odd &  
Lemma \ref{lemma-class-of-exceptional-locus} 
part \ref{lemma-item-L-not-divisible-by-2}
\\
\hline
$2-2n$ & $n-1$ & $H_{ev}$ & $r\geq 3$, $s\geq 3$ & $=rs+1$ &  
Lemma \ref{lemma-Type-B-divisibility-n-1}
\\
       &       & & $\gcd(r,s)=1$        &  even &   
\\
\hline
$2-2n$ & $n-1$ & $U(2)$ & $r\geq 3$, $s\geq 2$ &  $=2rs+1$ &  
Lemma \ref{lemma-Type-B-divisibility-n-1}
\\
       &       & & $\gcd(r,s)=1$        &  &   
\\ 
\hline
$2-2n$ & $n-1$ & $H_{ev}$ & $r$ even, $s$ odd & $=4rs+1$ &  
Example \ref{example-non-effective-divisibility-n-1-and-n-is-cong-1-mod-8}
\\
       &       & & $\gcd(r,s)=1$        &  &   
\\ 
\hline
\end{tabular}
\hspace{1ex}

\smallskip
\noindent
%The fifth column refers to the type in Definition \ref{def-type-A-or-B}. 
The congruence constraints on $n$
are necessary. If $(e,e)=-2$ and $\div(e,\bullet)=2$, 
then $n\equiv 2$ (modulo $4$), by 
\cite{markman-monodromy-I}, Lemma 8.9.
If $rs(e)=\{2,(n-1)/2\}$, then $n\equiv 3$  (modulo $4$), 
in order for $\{2,(n-1)/2\}$ to be a pair of relatively prime integers.
If $(e,e)=2-2n$, $\div(e,\bullet)=n-1$, $n>2$, and $rs(e)=\{1,n-1\}$, then 
$n$ must be even, since for odd $n$ the product of $r$ and $s$ 
is equal to $(n-1)/2$ or $(n-1)/4$, 
by Lemmas \ref{lem-non-unimodular-rank-two-lattice} 
and \ref{lemma-isometry-orbits-in-rank-2}. 
This explains also the value of $n$ in the last three rows.

\begin{proof} 
\WithoutTorelli{(Of Theorem 
\ref{thm-main-conjecture-follows-from-torelli}).}
\WithTorelli{(Of Theorem \ref{conj-exceptional-line-bundles}).}
Set $e:=c_1(L)$. 
The pair $(X,e)$ is deformation equivalent, in the sense of Definition
\ref{def-deformation-equivalent-pairs-with-cohomology-class}, 
to a pair $(M,c)$ appearing in the above table of examples, by 
Proposition 
\ref{prop-introduction-Mon-2-orbit-is-determined-by-three-invariants} and 
Lemma \ref{lem-monodromy-invariants-and-deformation-equivalence} part 
\ref{lemma-item-monodromy-invariants-and-deformation-equivalence-of-classes}.
$M$ is projective and \WithoutTorelli{Conjecture}
\WithTorelli{Theorem} \ref{conj-exceptional-line-bundles} holds
for $(M,c)$, by the example referred to in the table.
Suppose that $L$ is numerically effective. Then
Part \ref{conj-item-effective} of 
\WithoutTorelli{Conjecture}
\WithTorelli{Theorem}
\ref{conj-exceptional-line-bundles}
follows for $(X,L)$, by Proposition 
\ref{prop-main-question-on-deformation-equivalence}.

Suppose next that $L$ is not numerically effective. We prove part
\ref{conj-item-vanishing} of 
\WithoutTorelli{Conjecture}
\WithTorelli{Theorem}
\ref{conj-exceptional-line-bundles} by contradiction. 
Assume that part \ref{conj-item-vanishing} fails.
Then there exists a non-zero integer $k$, such that 
$h^0(X_t,L_t^k)>0$, for all $t\in D_e$.
%We may assume that $k>0$, possibly after replacing $(X,L)$ by $(X,L^{-1})$. 
We may assume that the absolute value $\Abs{k}$ is minimal with the above 
property. Now $\Pic(X_t)$ is cyclic, for a generic $t\in D_e$.
Hence, the linear system $\linsys{L_t^k}$ must have a member $E_t$,
which is a prime divisor, by the minimality of $\Abs{k}$.
It follows that $E_t$ is the unique member of the linear system, by
\cite{boucksom}, Proposition 3.13. 
Hence, $h^0(X_t,L_t^k)=1$, away from a closed analytic 
proper subset $Z\subset D_e$.

Set $U:=D_e\setminus Z$ and let $\X_U$ be the restriction of the semi-universal
family from $Def(X)$ to $U$. 
There exists an irreducible divisor $\E\subset \X_U$, which does 
not contain the fiber $X_t$, for any $t\in U$, and which intersects
$X_t$ along a divisor in $\linsys{L_t^k}$, by 
the argument used in the proof of Proposition 
\ref{prop-generic-prime-exceptional}. 
The argument furthermore shows, that there exists a closed analytic proper
subset $Z_1\subset U$, such that the fiber $E_t$ of $\E$
is a prime divisor, over all points $t\in U\setminus Z_1$. 
We do not need the projectivity assumption, as it was used in the proof of 
Proposition \ref{prop-generic-prime-exceptional}
only to establish that the generic dimension of $h^0(X_t,L_t)$ is $1$,
a fact which was already established above. 

We conclude the existence of a pair $(X_1,e_1)$, parametrized by a point in
$U\setminus Z_1$, such that $X_1$ is projective, 
by \cite{huybrechts-norway}, Proposition 21. 
Let $L_2$ be the line bundle on $M$ with $c_1(L_2)=c$.
Then $H^0(M,L_2^d)$ vanishes, for all non-zero integers $d$, since
$L_2$ is not numerically exceptional, and the examples mentioned in the
above table have this property, whenever $c$ is not numerically exceptional.
Hence, $(X_1,e_1)$ and $(M,c)$ 
are not deformation equivalent, by Corollary 
\ref{cor-if-L-1-is-prime-exceptional-L-2-not-Q-effective-then-not-def-equiv}.
On the other hand, $(X_1,e_1)$ is deformation equivalent to $(X,e)$,
and hence to $(M,c)$, a contradiction.
\end{proof}

%Section \ref{sec-examples} consists of examples supporting 
%part \ref{conj-item-effective} of the conjecture.
%All numerically-exceptional values of the quadruple
%$\{n, \ (e,e), \ \div(e,\bullet), \ rs\}$ 
%are represented in these examples. 
%Hence, each deformation class
%(Definition \ref{def-deformation-equivalent-pairs-with-cohomology-class})
%of pairs $(X,e)$, with a numerically-exceptional class $e$,
%contains at least one pair $(X,e)$, for which part 
%\ref{conj-item-effective} of 
%Conjecture \ref{conj-exceptional-line-bundles} holds.
%Part \ref{conj-item-effective} of 
%Conjecture \ref{conj-exceptional-line-bundles}
%would thus follow from an affirmative answer to Question
%\ref{question-connectedness}, by Proposition
%\ref{prop-main-question-on-deformation-equivalence}.
%
%Section \ref{sec-non-effective} consists of examples supporting 
%part \ref{conj-item-vanishing} of the conjecture. 
%Again each deformation class of pairs $(X,e)$, where the class $e$
%is monodromy-reflective but numerically non-exceptional,
%contains at least one pair $(X,e)$, for which 
%$e$ is not $\RationalNumbers$-effective (and so for which
%part \ref{conj-item-vanishing} of 
%Conjecture \ref{conj-exceptional-line-bundles} holds).
%Furthermore, the pair $(X,e)$ in each of these examples 
%admits a birational involution $\iota:X\rightarrow X$,  
%inducing the reflection $R_e$ on $H^2(X,\Integers)$.  

%************************************************************************
%
%************************************************************************
\section{Conditions for the existence of slope-stable vector bundles}
\label{sec-conditions-for-existence-ofslope-stable-vector-bundles}
Let $S$ be a projective $K3$ surface with a cyclic Picard group generated by
an ample line bundle $H$.  We assemble in section 
\ref{sec-necessary-conditions}
necessary conditions for the existence 
of locally-free $H$-slope-stable sheaves (Lemmas 
\ref{lem-H-slope-stability-implies-s-geq-r},
\ref{lemma-non-existence-of-H-slope-stable-sheaves-with-slope-half}, and 
\ref{lemma-condition-for-existence-of-H-slope-stable-sheaves-isotropic-u-case}). 

In section \ref{sec-sufficient-conditions} we bound the dimension of the locus
$Exc$ of $H$-stable sheaves, which are not locally free or not
$H$-slope-stable. 
The sheaves $F$ considered all have the following involutive property:
there exists an integer $t$, such that the classes in $K(S)$ 
of $F$ and $F^*\otimes H^t$ are equal. Equivalently, 
$c_1(F)=\frac{t\cdot\rank(F)}{2}h$, for some integer $t$, where $h:=c_1(H)$.
I thank Kota Yoshioka for pointing out that much of the content of  section 
\ref{sec-sufficient-conditions} is essentially proven in sections
2 and 3 of his paper \cite{yoshioka-irreducibility}. Section \ref{sec-sufficient-conditions}
was not replaced by a citation, since the precise statements 
we need are not easily recovered from those of Yoshioka, as he was mainly concerned with 
proving that the locus $Exc$ has codimension $\geq 1$, while we need that in the subset of cases considered\footnote{
For some cases Yoshioka does state that the codimension is $\geq 2$ \cite[Lemma 3.1]{yoshioka-irreducibility},
but under an assumption that excludes some cases which we need.}
$Exc$ has codimension $\geq 2$.

The results of this section are only lightly used  in section 
\ref{sec-examples},
but are essential to the examples in section \ref{sec-non-effective}.

%************************************************************************
%
%************************************************************************
\subsection{Necessary conditions}
\label{sec-necessary-conditions}
Set $h:=c_1(H)\in H^2(S,\Integers)$ and 
$d:=\deg(H)/2$.
\begin{lem}
\label{lemma-locally-free-H-stable-of-rank-2-is-slope-stable}
Let $F$ be a locally free $H$-stable sheaf of rank $r$
satisfying $c_1(F)=\frac{tr}{2}h$, for some integer $t$.
%with $c_1(F)=0$. 
Then $F^*$ is $H$-stable, if and only if $F$ is $H$-slope-stable.
In particular, if $r=2$, then $F$ is $H$-slope-stable.
\end{lem}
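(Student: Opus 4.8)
The plan is to exploit the standard relations between $H$-slope-stability and Gieseker $H$-stability on the smooth surface $S$, supplemented by two numerical facts special to a $K3$. Write $\mu(E):=(c_1(E)\cdot h)/\rank(E)$ for the slope and $p(E)$ for the reduced Hilbert polynomial, and recall the chain $\mu$-stable $\Rightarrow$ $H$-stable $\Rightarrow$ $H$-semistable $\Rightarrow$ $\mu$-semistable. Since $\Pic(S)=\Integers H$, the first Chern class of every subsheaf is an integral multiple of $h$, and the hypothesis $c_1(F)=\frac{tr}{2}h$ gives $\mu(F)=td$. The first numerical fact is that for torsion-free sheaves $E_1,E_2$ of the same slope one has $p(E_1)<p(E_2)$ (for $m\gg 0$) if and only if $\chi(E_1)/\rank(E_1)<\chi(E_2)/\rank(E_2)$; the second is that $\chi$ is invariant under dualizing of locally free sheaves on $S$, while passing from a torsion-free sheaf $G$ to its reflexive hull $G^{**}$ raises $\chi$ by $\operatorname{length}(G^{**}/G)\ge 0$ (recall that on a smooth surface reflexive means locally free). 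These two facts drive both implications.

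\textbf{The direction $F$ slope-stable $\Rightarrow F^*$ stable.} If $F$ is locally free and $H$-slope-stable, then $F^*$ is again $H$-slope-stable, since dualizing reverses inclusions of saturated subsheaves and preserves slopes on a smooth surface (see \cite{huybrechts-lehn-book}). As $\mu$-stability implies Gieseker $H$-stability, $F^*$ is $H$-stable.

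\textbf{The direction $F^*$ stable $\Rightarrow F$ slope-stable.} This is the main step. Assume $F^*$ is $H$-stable and, for contradiction, that the $H$-stable sheaf $F$ fails to be $\mu$-stable. Being $H$-stable it is $\mu$-semistable, so there is a saturated subsheaf $F'\subset F$ with $0<\rank(F')<r$ and $\mu(F')=\mu(F)$; the quotient $F'':=F/F'$ is then torsion-free with $\mu(F'')=\mu(F)$, and the slope condition forces $c_1(F'')=\frac{t\,\rank(F'')}{2}h$. Dualizing $0\to F'\to F\to F''\to 0$ produces a saturated subsheaf $(F'')^*\hookrightarrow F^*$ of rank $\rank(F'')$ and slope $-\mu(F'')=\mu(F^*)$. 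Since $F''$ is a proper torsion-free quotient of the Gieseker-stable $F$ of the same slope, $\chi(F'')/\rank(F'')>\chi(F)/\rank(F)$. On the other hand $(F'')^*\cong (F''^{**})^*$ with $F''^{**}$ locally free, so $\chi((F'')^*)=\chi(F''^{**})=\chi(F'')+\operatorname{length}(F''^{**}/F'')\ge \chi(F'')$, using dual-invariance of $\chi$. Combining these with $\chi(F^*)=\chi(F)$ yields $\chi((F'')^*)/\rank(F'')>\chi(F^*)/\rank(F)$, that is $p((F'')^*)>p(F^*)$, contradicting the $H$-stability of $F^*$. Hence $F$ is $H$-slope-stable.

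For the final clause, when $r=2$ one has the canonical isomorphism $F^*\cong F\otimes(\det F)^{-1}=F\otimes H^{-t}$, so $F^*$ is a line-bundle twist of the $H$-stable sheaf $F$ and is therefore itself $H$-stable; the equivalence just proved then shows $F$ is $H$-slope-stable. I expect the main obstacle to lie in the bookkeeping of the reverse direction: one must check that dualizing and then taking the reflexive hull move $\chi/\rank$ in the correct direction, and that the destabilizing quotient $F''$ is genuinely proper and torsion-free so that Gieseker-stability of $F$ supplies a \emph{strict} inequality. The hypothesis $c_1(F)=\frac{tr}{2}h$ is precisely what keeps $F'$, $F''$, and their duals at one common slope, so that all these comparisons reduce to comparisons of $\chi/\rank$.
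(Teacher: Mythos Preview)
Your proof is correct and follows essentially the same approach as the paper's: both directions are handled identically, using that dualizing preserves slope-stability for the easy direction, and for the converse taking a saturated same-slope subsheaf $F'\subset F$, passing to the torsion-free quotient $F''$, dualizing to get $(F'')^*\hookrightarrow F^*$, and comparing $\chi/\mathrm{rank}$ via $\chi((F'')^*)=\chi(F''^{**})\ge\chi(F'')$ to destabilize $F^*$. The only cosmetic difference is that the paper first tensors by a power of $H$ to reduce to $c_1(F)=0$ or $c_1(F)=\rho h$ and packages the dual-invariance of $\chi$ as $[F]=[F^*\otimes L]$, whereas you invoke $\chi(E^*)=\chi(E)$ for locally free $E$ on a $K3$ directly; the rank-$2$ clause is also proved the same way, via $F^*\cong F\otimes(\det F)^{-1}$.
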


\begin{proof}
After tensorization by a power of $H$, we may reduce to the case where
either $c_1(F)=0$, or $r=2\rho$ is even and $c_1(F)=\rho h$.
Assume that we are in one of these cases. 
If $c_1(F)=0$, set $L:=\StructureSheaf{S}$.
If $c_1(F)=\rho h$, set $L:=H$. 
In either case, we have the equality $[F]=[F^*\otimes L]$ of
classes in $K(S)$. Furthermore, a sheaf $G$ is $H$-stable if and only if
$G\otimes L$ is $H$-stable.

If $F$ is $H$-slope-stable, then so is $F^*$. 
Hence $F^*$ is $H$-stable as well. 
$F$ is $H$-slope-semi-stable, since it is $H$-stable.
Suppose that $F$ is not $H$-slope-stable.
then there exists 
a saturated subsheaf $F_1\subset F$, of rank (say) $r_1$, 
with $c_1(F_1)=(r_1/r)c_1(F)$ and $0<r_1<r$. 
Set $F_2:=F/F_1$, and $r_2:=r-r_1$. 
$H$-stability of $F$ yields the inequality 
$
\frac{\chi(F_2)}{r_2}>\frac{\chi(F)}{r}.
$
We get the injective homomorphism $F_2^*\rightarrow F^*$, and the
inequalities:
\[
\frac{\chi(F_2^*\otimes L)}{r_2}=\frac{\chi(F_2^{**})}{r_2}
\geq \frac{\chi(F_2)}{r_2}>\frac{\chi(F)}{r}=\frac{\chi(F^*\otimes L)}{r}.
\]
Hence, $F^*\otimes L$ is $H$-unstable. 
Consequently, $F^*$ is $H$-unstable. 

If $r=2$, then $F^*\otimes L$ is isomorphic to $F$
and thus $F^*$ is $H$-stable.
\end{proof}

\begin{lem}
\label{lem-H-slope-stability-implies-s-geq-r}
Let $F$ be a locally free $H$-slope-stable sheaf of class $v=(r,0,-s)$. 
Then $v=(1,0,1)$ or $s\geq r\geq 2$.
\end{lem}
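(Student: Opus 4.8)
The plan is to split into the two cases $r=1$ and $r\geq 2$, and in the second case to show that $\chi(F)\leq 0$. This is exactly the desired inequality $s\geq r$, because by the definition of the Mukai vector $(r,0,-s)=(\rank(F),c_1(F),\chi(F)-\rank(F))$ we have $\chi(F)=r-s$.

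First I would dispose of the case $r=1$. A locally free sheaf of rank one is a line bundle, and $c_1(F)=0$ together with the hypothesis of this section that $\Pic(S)$ is cyclic generated by the ample class $H$ (so that $\Pic(S)\cong\Integers$ and $c_1$ is injective) forces $F\cong\StructureSheaf{S}$. Then $\chi(F)=\chi(\StructureSheaf{S})=2$, so the third Mukai coordinate is $\chi(F)-\rank(F)=1$, i.e. $v=(1,0,1)$, which is the exceptional case allowed in the statement.

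Now assume $r\geq 2$. The key observation is that slope-stability together with $c_1(F)=0$, hence $\mu(F)=0$, forces the vanishing of both $H^0(S,F)$ and $H^2(S,F)$. For $H^0$: a nonzero section is a nonzero map $\StructureSheaf{S}\to F$, whose image is a rank-one subsheaf of $F$ with vanishing first Chern class, hence of slope $0=\mu(F)$; since $0<1<r$ this contradicts slope-stability. For $H^2$: by Serre duality on the $K3$ surface, where $\omega_S\cong\StructureSheaf{S}$, one has $h^2(F)=h^0(F^*)=\dim\Hom(F,\StructureSheaf{S})$, using that $F$ is locally free so that $F^{**}\cong F$. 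A nonzero map $\phi:F\to\StructureSheaf{S}$ has kernel of rank $r-1$ with $0<r-1<r$; since $c_1(F)=0$ and the image sits inside $\StructureSheaf{S}$ and so has $c_1=0$, the kernel also has $c_1=0$ and hence slope $0=\mu(F)$, again contradicting slope-stability. Thus $\Hom(F,\StructureSheaf{S})=0$ and $h^2(F)=0$. Combining these, $\chi(F)=h^0(F)-h^1(F)+h^2(F)=-h^1(F)\leq 0$, so $s\geq r$, and with $r\geq 2$ this gives $s\geq r\geq 2$.

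I do not expect a serious obstacle here; the argument is elementary once the two cohomology vanishings are in hand. The only points requiring a little care are the bookkeeping that the image of a map into or out of $\StructureSheaf{S}$ has vanishing first Chern class, so that the relevant sub- or quotient sheaf genuinely has slope $0$, and the use of $F^{**}\cong F$ in the Serre-duality step to identify $H^0(F^*)$ with $\Hom(F,\StructureSheaf{S})$. As an alternative to this direct dualization one could instead invoke Lemma \ref{lemma-locally-free-H-stable-of-rank-2-is-slope-stable} to deduce that $F^*$ is $H$-stable and run the $H^0$-vanishing argument directly on $F^*$, but the self-contained computation above seems cleaner.
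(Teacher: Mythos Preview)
Your approach is exactly the paper's: dispose of the rank-one case by identifying $F\cong\StructureSheaf{S}$, and for $r\geq 2$ show $H^0(F)=H^2(F)=0$ to conclude $\chi(F)=-h^1(F)\leq 0$.

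One inaccuracy to fix in the $H^2$-vanishing step: it is not true that the image of a nonzero $\phi:F\to\StructureSheaf{S}$ automatically has $c_1=0$. The image is an ideal sheaf $I_Z\subset\StructureSheaf{S}$, and if $Z$ has a curve component $D$ then $c_1(I_Z)=-[D]\neq 0$. This does not break your argument: since $\Pic(S)=\Integers H$, such a $D$ lies in $|kH|$ with $k\geq 1$, whence $c_1(\ker\phi)=[D]=kh$ and $\mu(\ker\phi)\geq 0=\mu(F)$, still contradicting slope-stability. The paper avoids this bookkeeping entirely by the route you list as an alternative: $F^*$ is again locally free and $H$-slope-stable, so $H^2(F)^*\cong H^0(F^*)=0$ by the same argument used for $H^0(F)$. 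That version is in fact the cleaner one here.
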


\begin{proof}
If $\rank(F)=1$, then $F$ is isomorphic to $\StructureSheaf{S}$, since
$F$ is locally free, and so $s=-1$.
Assume that $v\neq (1,0,1)$. 
Then $H^0(F)$ vanishes, by the $H$-stability of $F$.
Similarly, 
$
H^2(F)^*\cong H^0(F^*)= (0),
$
by the $H$-slope-stability of $F^*$. 
Thus, $r-s=\chi(F)=-\dim H^1(F)\leq 0$.
\end{proof}

Lemma \ref{lem-H-slope-stability-implies-s-geq-r} states a
necessary cohomological condition for the existence of a locally free 
$H$-slope-stable sheaf of class $v$ with $c_1(v)=0$ (slope $0$). 
The condition states that $\chi(v)\leq 0$,
unless $v$ is the class $u=(1,0,1)$ of the trivial line bundle. 
If $\chi(v)\leq 0$, then the locus of sheaves with non-zero global sections
is expected to have positive co-dimension. 
The condition $\chi(v)\leq 0$ translates to $(u,v)\geq 0$.
The following Lemma
states a similar cohomological condition, for a class $v$ with a non-zero slope.
The role of the trivial line bundle is replaced next by a
simple and rigid sheaf $E$ of the same slope as $v$.

\begin{lem}
\label{lemma-non-existence-of-H-slope-stable-sheaves-with-slope-half}
Let $F$ be a locally free $H$-slope-stable sheaf of class $v=(2r,rh,-b)$,
where $r>0$, $\gcd(r,b)=1$, $(h,h)=2d$, and $d$ is an odd integer.
Set $u:=(2,h,(d+1)/2)$. If $(v,v)=-2$, then $v=u$.
Otherwise, $(v,u)\geq 0$ and $(v,u)$ is even. 
Furthermore, $(v,u)=0$, if and only if $v=(2,h,(d-1)/2)$.
\end{lem}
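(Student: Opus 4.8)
The plan is to reduce everything to three Mukai--pairing computations together with a single Brill--Noether vanishing. Using the pairing $((r',c',s'),(r'',c'',s''))=\int_S c'\cup c''-r's''-s'r''$ and $(h,h)=2d$, one computes
\begin{equation}
(u,u)=2d-2(d+1)=-2,\qquad (v,v)=2dr^2+4rb,\qquad (v,u)=r(d-1)+2b,
\end{equation}
and in particular the arithmetic identity $2r\,(v,u)=(v,v)-2r^2$. Since $d$ is odd, $r(d-1)+2b$ is even, giving the parity assertion. If $(v,v)=-2$ then $r(dr+2b)=-1$ with $r\geq 1$ forces $r=1$ and $b=-(d+1)/2$, i.e. $v=(2,h,(d+1)/2)=u$; conversely $v=u$ gives $(v,v)=-2$, so in all that follows ``$(v,v)\neq -2$'' is the same as ``$v\neq u$''. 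The equivalence $(v,u)=0\iff v=(2,h,(d-1)/2)$ is then purely arithmetic and needs no geometry: the reverse implication is a direct substitution, while $(v,u)=0$ reads $2b=-r(d-1)$, so $b=r(1-d)/2$ is an integer multiple of $r$ (as $d$ is odd), and $\gcd(r,b)=1$ forces $r=1$, hence $b=(1-d)/2$.

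It remains to prove $(v,u)\geq 0$, and here I would split on the rank. When $r=1$ the argument is numerical. Since $F$ is stable one has $\dim\Hom(F,F)=\dim\Ext^2(F,F)=1$, so $\dim\Ext^1(F,F)=(v,v)+2\geq 0$, giving $(v,v)\geq -2$; moreover $(v,v)=2d+4b\equiv 2$ modulo $4$ because $d$ is odd. As $(v,v)\neq -2$ this forces $(v,v)\geq 2$, and the identity above yields $(v,u)=\tfrac12\bigl((v,v)-2\bigr)\geq 0$. When $r\geq 2$ the bound $(v,v)\geq -2$ is too weak (one would need $(v,v)\geq 2r^2$), and recovering the inequality is exactly the Brill--Noether content of the lemma relative to the rigid class $u$.

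For $r\geq 2$ I would proceed as follows. Because $(u,u)=-2$ and $u$ is primitive (its $c_1$-component is the primitive class $h$), there is a unique rigid $H$-stable sheaf $E$ with Mukai vector $u$, by \cite{yoshioka-abelian-surface}. Since $\Pic(S)=\Integers h$, every rank-one subsheaf of $E$ has first Chern class $kh$ with $\mu=2dk\leq d=\mu(E)$, hence $k\leq 0$ and $\mu\leq 0<d$; thus $E$ is $H$-slope-stable. Now $E$ and $F$ are both slope-stable of the same slope $d$ but of ranks $2<2r$, and comparing slopes of images forces $\Hom(E,F)=\Hom(F,E)=0$: for $\phi\colon E\to F$ the image is a proper subsheaf of $F$ (rank $\leq 2<2r$), so of slope $<d$, yet as a quotient of $E$ it has slope $\geq d$; for $\psi\colon F\to E$ the image is a proper quotient of $F$, so of slope $>d$, yet as a subsheaf of $E$ it has slope $\leq d$. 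Serre duality on the K3 surface gives $\Ext^2(E,F)\cong\Hom(F,E)^\ast=0$, so from $(v,u)=-\chi(E,F)=\dim\Ext^1(E,F)-\dim\Hom(E,F)-\dim\Ext^2(E,F)$ we conclude $(v,u)=\dim\Ext^1(E,F)\geq 0$. The main obstacle is precisely this last step: producing the rigid sheaf $E$, verifying its slope-stability, and establishing both $\Hom$-vanishings; and one must keep the rank-one case separate, since there the map $F\hookrightarrow E$ with torsion cokernel shows $\Hom(F,E)$ can be nonzero and only the numerical argument survives.
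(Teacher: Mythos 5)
Your proof is correct, and it reaches the key inequality $(v,u)\geq 0$ by a route that differs from the paper's in two respects. For the $\Hom$-vanishings the paper does not split on the rank: it compares the normalized Euler characteristics $\chi(u)/2>\chi(v)/2r$ (valid as soon as $v\neq u$), so that Gieseker $H$-stability at equal slope kills $\Hom(E,F)$ for every $r\geq 1$, and it then obtains $\Hom(F,E)=0$ by the dualization trick $\Hom(F,E)\cong\Hom(E^*\otimes H,F^*\otimes H)=\Hom(E,F^*\otimes H)$, using $E^*\otimes H\cong E$ and the fact that $F^*\otimes H$ is again $H$-slope-stable of class $v$ (this is where local freeness of $F$ enters). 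Your slope-only argument for $r\geq 2$ is more elementary but genuinely fails at $r=1$, exactly as you note, since an elementary modification $F\hookrightarrow E$ at points is slope-stable and gives a nonzero $\Hom(F,E)$; your replacement for that case — $(v,v)\geq -2$ from simplicity of $F$ together with the congruence $(v,v)\equiv 2\pmod 4$ and the identity $2r(v,u)=(v,v)-2r^2$ — is a nice purely numerical shortcut that bypasses the Brill--Noether input entirely when $r=1$. The trade-off is that the paper's argument is uniform and shorter, while yours isolates precisely how much geometry each rank actually requires; both correctly reduce the remaining assertions (parity, the case $(v,v)=-2$, and the characterization of $(v,u)=0$ via $\gcd(r,b)=1$) to arithmetic.
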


\begin{proof}
$M_H(u)$ consists of a single isomorphism class. 
Let $E$ be an $H$-stable sheaf of class $u$. Then $E$ is necessarily $H$-slope-stable and locally free \cite{mukai-hodge}. 
$M_H(v)$ is non-empty, by assumption.
Let $2n$ be its dimension. Then $2n-2=(v,v)=2dr^2+4rb$, and
$b=\frac{n-1-dr^2}{2r}$. If $n=0$, then $r=1$, 
%since $b$ is an integer and $\gcd(r,b)=1$. 
since $r$ divides $(v,v)/2$.
We conclude that $v=u$, if $(v,v)=-2$.

Assume that $v\neq u$. Then $n\geq 1$. We get the inequality
\[
\frac{\chi(u)}{2}=\frac{5+d}{4}>
\frac{(4+d)r^2+1-n}{4r^2}=\frac{\chi(v)}{2r}.
\]
Thus, $\Hom(E,F)=0$. Similarly, 
\[
\Ext^2(E,F)^*\cong \Hom(F,E)\cong \Hom(E^*,F^*)\cong
\Hom(E^*\otimes H,F^*\otimes H)=0,
\]
since $E^*\otimes H\cong E$ and $F^*\otimes H$ is an $H$-slope-stable sheaf
of class $v$.
Thus, $(v,u)=-\chi(v,u)\geq 0$. Furthermore, 
$(v,u)=2\left[r(d-1)/2+b\right]$.
If $(v,u)=0$, then $r$ divides $b$. Hence, $r=1$, since $\gcd(r,b)=1$.
If $r=1$ and $(v,u)=0$, then $b=(1-d)/2$, as claimed.
\end{proof}

\begin{lem}
\label{lemma-condition-for-existence-of-H-slope-stable-sheaves-isotropic-u-case}
Let $F$ be a locally free $H$-slope-stable sheaf of class $v=(2r,rh,-b)$,
where $r>0$, $\gcd(r,b)=1$, $(h,h)=2d$, and $d$ is an even integer.
Set $u:=(2,h,d/2)$. If $(v,v)=0$, then $v=u$. Otherwise, 
$(u,v)$ is a positive even number.
\end{lem}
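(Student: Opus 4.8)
The plan is to reduce everything to two Mukai-pairing computations together with the single structural observation that, for even $d$, the vector $u=(2,h,d/2)$ is isotropic. Using the pairing formula recalled in Section \ref{sec-Mukai-notation}, namely $((r',c',s'),(r'',c'',s''))=\int_S c'\cup c''-r's''-s'r''$, I would first record
\[
(u,v)=dr+2b,\qquad (v,v)=2dr^2+4rb,
\]
and then note the identity $(v,v)=2r\,(u,v)$. The conceptual reason for this identity is that $v=ru+w$ with $w:=(0,0,-b-rd/2)$, where both $u$ and $w$ are isotropic when $d$ is even, so $(v,v)=r^2(u,u)+2r(u,w)+(w,w)=2r(u,w)=2r(u,v)$. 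This identity is the engine of the whole argument.

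Evenness of $(u,v)$ is immediate: in $(u,v)=dr+2b$ the term $dr$ is even since $d$ is, and $2b$ is even. For the case $(v,v)=0$, the identity gives $(u,v)=0$, i.e.\ $dr+2b=0$; writing $d=2d'$ yields $b=-d'r$, so $r\mid b$, and since $\gcd(r,b)=1$ we conclude $r=1$ and $b=-d/2$. Hence $v=(2,h,d/2)=u$, as claimed.

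It remains to prove positivity when $(v,v)\neq 0$. Here I would invoke stability of $F$: an $H$-stable sheaf on a $K3$ surface satisfies $\dim\Hom(F,F)=\dim\Ext^2(F,F)=1$ by Serre duality, so $-(v,v)=\chi(F,F)=2-\dim\Ext^1(F,F)$, which gives $(v,v)\geq -2$. The parity of $d$ now excludes the borderline value: if $(v,v)=-2$ then $2r(u,v)=-2$ forces $r=1$ and $(u,v)=-1$, contradicting that $(u,v)$ is even. Therefore $(v,v)\geq 0$, and combined with $(v,v)\neq 0$ we obtain $(v,v)>0$; dividing the identity by $2r>0$ then yields $(u,v)=(v,v)/(2r)>0$, a positive even integer.

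The main point to get right — and essentially the only place the hypotheses are genuinely used — is the exclusion of $(v,v)=-2$, which rests entirely on $d$ being even. This is precisely what distinguishes the present statement from Lemma \ref{lemma-non-existence-of-H-slope-stable-sheaves-with-slope-half}: there $u$ has square $-2$, so $(v,v)=2r(u,v)+2r^2$ and positivity of $(u,v)$ truly requires the Brill--Noether vanishing $\Hom(E,F)=\Ext^2(E,F)=0$ for the unique sheaf $E$ of class $u$. In the present isotropic case no such sheaf-theoretic input is needed beyond the existence bound $(v,v)\geq -2$, and the arithmetic of the isotropic vector $u$ does the rest. The only verification I would flag is that $H$-slope-stability (rather than mere Gieseker stability) is not secretly required for the bound $(v,v)\geq -2$; it is not, since that bound already follows from $H$-stability of $F$.
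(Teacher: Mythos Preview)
Your proof is correct and genuinely different from the paper's. The paper argues in the Brill--Noether style of Lemma~\ref{lemma-non-existence-of-H-slope-stable-sheaves-with-slope-half}: it picks an $H$-slope-stable locally free sheaf $E$ of class $u$ (which exists since $M_H(u)$ is a $K3$ surface), compares reduced Hilbert polynomials to get $\Hom(E,F)=0$, and then uses the involution $E\mapsto E^*\otimes H$, $F\mapsto F^*\otimes H$ together with Serre duality to kill $\Ext^2(E,F)$; this yields $(u,v)=-\chi(E,F)\geq 0$, and the identity $(v,v)=2r(u,v)$ (which the paper also uses implicitly) rules out $(u,v)=0$. The $(v,v)=0$ case is handled via the structure of $M_H(ku)$ for $k>1$.

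Your route bypasses the auxiliary sheaf $E$ entirely: the identity $(v,v)=2r(u,v)$, the parity of $(u,v)$, and the simplicity bound $(v,v)\geq -2$ already force $(v,v)\geq 0$, hence $(u,v)>0$ once $(v,v)\neq 0$; and the $(v,v)=0$ case falls out of $\gcd(r,b)=1$ directly. This is strictly more elementary and, as a bonus, never uses that $F$ is locally free (the paper needs this for the dualization $F\mapsto F^*\otimes H$). The cost is only uniformity: the paper's argument runs in parallel with the odd-$d$ case of Lemma~\ref{lemma-non-existence-of-H-slope-stable-sheaves-with-slope-half}, where, as you note, the isotropy of $u$ fails and the Brill--Noether vanishing is genuinely required.
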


This lemma has a cohomological interpretation as well.
$M_H(u)$ is two dimensional and it parametrizes locally-free
$H$-slope-stable sheaves \cite{mukai-hodge}.
Let $B\subset M_H(u)\times M_H(v)$ be the correspondence
consisting of pairs $(E,F)$, with non-vanishing $\Hom(E,F)$.
The lemma states that if $v\neq u$, then the expected co-dimension 
$(u,v)+1$ of $B$ is larger than $2$,
and so $B$ is not expected to surject onto $M_H(v)$.

\begin{proof}
Set $n:=(1/2)\dim_\ComplexNumbers M_H(v)=1+(v,v)/2$.
If $n=1$, then $(v,v)=0$, and so $v=ku$, for some positive integer $k$.
If $k>1$, then the moduli space $M_H(ku)$ is 
the $k$-th symmetric product of $M_H(u)$ and it consists entirely of 
$H$-unstable but $H$-semistable sheaves. We are assuming however 
the existence of an $H$-slope-stable sheaf $F$ of class $v$.
Hence, $k=1$ and $v=u$.

Assume that $n>1$. We get
\[
\frac{\chi(v)}{2r}=\frac{(4+d)r^2-n+1}{4r^2}<
\frac{4+d}{4}=\frac{\chi(u)}{2}.
\]
The normalized Hilbert polynomial $p$ of a sheaf $G$ of positive rank is 
the Hilbert polynomial divided by the rank $p(n):=\chi(G\otimes H^n)/\rank(G)$.
The first two leading terms in the normalized Hilbert polynomials of $u$ and $v$ are equal, 
and the constant terms are related by the above inequality.
Hence, $\Hom(E,F)=0$, for every $H$-slope-stable sheaf $E$
of class $u$. Such a sheaf $E$ is necessarily locally free, and
so $E^*\otimes H$ is $H$-slope-stable of class $u$.
We get also the vanishing of $\Ext^2(E,F)$, by the argument
used in the proof of Lemma 
\ref{lemma-non-existence-of-H-slope-stable-sheaves-with-slope-half}.
We conclude the inequality $(u,v)=-\chi(E^*\otimes F)\geq 0$.
Furthermore, $(u,v)=dr+2b$ is even. If $(u,v)=0$, then $(v,v)=0$,
and so $v=u$. The lemma follows.
\end{proof}
%************************************************************************
%
%************************************************************************
\subsection{Sufficient conditions}
\label{sec-sufficient-conditions}
%************************************************************************
%
%************************************************************************
\subsubsection{The case $c_1(v)=0$.}
Let $r$, $s$ be integers satisfying $s>r>2$ and $\gcd(r,s)=1$.
Set $n:=rs+1$ and $v:=(r,0,-s)$. 
A sheaf $F$ of class $v$ is $H$-stable, if and only if it is $H$-semi-stable.
%(even if $H$ does not generate $\Pic(S)$, see 
%\cite{markman-constraints}, Proposition 4.10 part 1). 
Hence, $M_H(v)$ is smooth and projective of dimension $2n$.
Let $Exc\subset M$ be the locus of $H$-stable sheaves
of class $v$ that are not locally free or not
$H$-slope-stable. $Exc$ is clearly a closed subset of $M_H(v)$.

\begin{lem}
\label{lem-codimension-of-Exc}
$Exc$ has codimension at least $2$ in $M$. 
\end{lem}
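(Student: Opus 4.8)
The plan is to stratify $Exc$ according to the reflexive hull $F^{\ast\ast}$ together with a destabilizing subsheaf, and to bound the dimension of each stratum by $2rs=\dim M-2$. Since $S$ is a smooth surface, $F^{\ast\ast}$ is locally free, and there is a short exact sequence
\[
0 \longrightarrow F \longrightarrow F^{\ast\ast} \longrightarrow Q \longrightarrow 0,
\]
with $Q$ of finite length $\ell\ge 0$, where $\ell=0$ precisely when $F$ is locally free. In the Mukai lattice $v(Q)=(0,0,\ell)$, so $w_\ell:=v(F^{\ast\ast})=(r,0,-s+\ell)$ and $(w_\ell,w_\ell)=2r(s-\ell)$. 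Moreover $F$ is $H$-slope-stable if and only if $F^{\ast\ast}$ is, because the two sheaves have the same rank and first Chern class and the slope of any subsheaf is unaffected by the finite-length modification. Hence a point of $Exc$ is one for which either $\ell\ge 1$ or $F^{\ast\ast}$ fails to be $H$-slope-stable. Because $\Pic(S)=\Integers H$ is cyclic, any saturated subsheaf $A$ of slope $0$ has $c_1(A)\cdot h=0$, hence $c_1(A)=0$; this forces every Mukai vector occurring below to have vanishing middle component.

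First I would treat the locus on which $F^{\ast\ast}$ is $H$-slope-stable, so necessarily $\ell\ge 1$. Here $F$ is recovered from the pair consisting of $F^{\ast\ast}$ and a length-$\ell$ quotient, i.e.\ a point of the Quot scheme $\mathrm{Quot}^\ell(F^{\ast\ast})$, whose dimension is at most $(r+1)\ell$ (generically $\ell$ reduced points, each contributing $2$ for its position and $r-1$ for a one-dimensional quotient of the fibre). Since $F^{\ast\ast}$ varies in $M_H(w_\ell)$, of dimension $(w_\ell,w_\ell)+2$ when non-empty, this stratum has dimension at most
\[
\big[(w_\ell,w_\ell)+2\big]+(r+1)\ell \;=\; 2r(s-\ell)+2+(r+1)\ell \;=\; 2rs+2-(r-1)\ell .
\]
As $\ell\ge 1$ and $r>2$ gives $r-1\ge 2$, the right-hand side is at most $2rs$, so this stratum has codimension at least $2$. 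This is exactly where the hypothesis $r>2$ is used.

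The remaining, and main, difficulty is the locus on which $F^{\ast\ast}$ is only $H$-slope-semistable but not $H$-slope-stable (whether or not $F$ itself is locally free). Here I would fix a destabilizing sequence $0\to A\to F^{\ast\ast}\to B\to 0$ with $A$ saturated and $H$-slope-stable of slope $0$, hence locally free, and with $B$ torsion-free $H$-slope-semistable of slope $0$; by the remark above $v(A)=a=(r_A,0,s_A)$ and $v(B)=b=(r_B,0,s_B)$ with $0<r_A<r$. The sheaves $F^{\ast\ast}$ of this type are governed by the moduli of $A$, the moduli of $B$, and $\Ext^1(B,A)$, whose dimension I would compute from the Mukai pairing together with Serre duality $\Ext^2(B,A)\cong\Hom(A,B)^\ast$; combining with the Quot factor $(r+1)\ell$ gives, up to a bounded correction from the $\Hom$ terms, a bound of the shape $(w_\ell,w_\ell)+2-(a,b)+(r+1)\ell$. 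The hard part is to show that, over all admissible decompositions $a+b=w_\ell$ and all $\ell\ge 0$, this never exceeds $2rs$; this is where the numerical hypotheses $s>r>2$ and especially $\gcd(r,s)=1$ enter, the coprimality excluding the most balanced (hence largest-dimensional) strictly semistable decompositions, and where the case of a decomposable reflexive hull $F^{\ast\ast}=A\oplus B$ must be controlled separately. An induction on the rank, replacing $B$ by its Jordan--H\"older factors, then reduces everything to the case in which both $A$ and $B$ are $H$-slope-stable, for which the extension count is explicit and the inequality is immediate.
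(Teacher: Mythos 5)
Your treatment of the first stratum (where $F^{\ast\ast}$ is $H$-slope-stable and $\ell\geq 1$) is correct and clean: the injective parametrization by pairs (locally free slope-stable sheaf of class $w_\ell$, length-$\ell$ quotient), the bound $(r+1)\ell$ for the punctual Quot scheme, and the resulting codimension $(r-1)\ell\geq 2$ using $r>2$ all check out. But the proof as a whole has a genuine gap: the strictly slope-semistable locus, which you yourself identify as ``the main difficulty,'' is never actually handled. You write down the shape of the bound and then state that ``the hard part is to show that, over all admissible decompositions $a+b=w_\ell$ and all $\ell\geq 0$, this never exceeds $2rs$,'' deferring it to an unexecuted induction. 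That inequality is the entire content of the lemma in this case; it is a genuinely delicate multi-case estimate (the paper's Step 2 runs through Cases 1.1.1, 1.1.2, 1.2, 2.1, 2.2), and nothing in your sketch guarantees it closes. Worse, you have misidentified the numerical input that makes it close: it is not $\gcd(r,s)=1$ ``excluding balanced decompositions'' --- strictly slope-semistable bundles of slope $0$ exist for any $r,s$ --- but rather the Brill--Noether-type constraint of Lemma \ref{lem-H-slope-stability-implies-s-geq-r}, namely that a locally free $H$-slope-stable sheaf of class $(r_i,0,-s_i)$ satisfies either $(r_i,-s_i)=(1,-1)$ (i.e.\ it is $\StructureSheaf{S}$) or $s_i\geq r_i\geq 2$. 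Without the inequality $s_i\geq r_i\geq 2$ the cross-terms $r_is_j+r_js_i$ in the dimension count are not large enough, and the Jordan--H\"older factors isomorphic to $\StructureSheaf{S}$ (which violate $s_i\geq r_i$) force a separate and nontrivial branch of the estimate that your sketch does not anticipate. The coprimality $\gcd(r,s)=1$ enters only to guarantee that $H$-stability coincides with $H$-semistability for class $v$, so that $M_H(v)$ is smooth projective.

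For comparison, the paper reaches the same combinatorial core by a different route: it uses Jun Li's morphism $\bar\phi$ from $M_H(v)$ to the Uhlenbeck--Yau compactification $M^{\mu ss}_H(v)$, whose strata $\prod_i Y_H(v_i)^{(d_i)}\times S^{(t)}$ package exactly your data (Jordan--H\"older factors of $F^{\ast\ast}$ plus the support cycle of $F^{\ast\ast}/F$). The fibers of $\bar\phi$ are unirational, hence $\bar\phi$ is generically finite because $M_H(v)$ is symplectic, and it then suffices to show every proper stratum has codimension $\geq 4$ downstairs, after which Namikawa's structure theory (Proposition \ref{prop-dissident-locus}) yields codimension $\geq 2$ for $Exc$ upstairs. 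This sidesteps the extension/automorphism bookkeeping in your $\Ext^1(B,A)$ count, which is another place your sketch would need care (the correction terms $\hom(A,B)$, $\hom(B,A)$ and the choice of filtration are not obviously ``bounded'' uniformly). To repair your proof you would need to import Lemma \ref{lem-H-slope-stability-implies-s-geq-r}, isolate the $\StructureSheaf{S}$-factors, and carry out the full case analysis; at that point you would essentially be reproducing the paper's Step 2.
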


%******************************************************************
%
%******************************************************************
%\subsection{Sufficient conditions for the existence of slope-stable 
%vector bundles}
%******************************************************************
%
%******************************************************************
%\subsubsection{Proof of Lemma \ref{lem-codimension-of-Exc}}
%\label{sec-proof-of-lem-codimension-of-Exc}
\begin{proof}
We will use the following notation, in order for large parts of 
the proof to generalize to a proof of Lemma 
\ref{lemma-Exc-has-codimension-at-least-2-again}.
Let $u:=(1,0,1)$ be the class of $\StructureSheaf{S}$. 
Set $\epsilon:=\rank(u)=1$.

\underline{Step 1:} [Jun Li's morphism to the Uhlenbeck-Yau compactification].
Let $Y_H(w)$ be the moduli space of $H$-slope-stable 
locally free sheaves of class $w\in K(S)$.
Let $v_1$, \dots, $v_k$ be distinct classes in $K(S)$, 
with $v_i=(r_i,0,-s_i)$, $r_i>0$, $(v_i,v_i)\geq -2$. 
Let $d_1$, \dots, $d_k$ be positive integers satisfying
\[
r=\sum_{i=1}^kd_ir_i, \ \ \mbox{and} \ \ 
t(\vec{v},\vec{d}):=s-\left(\sum_{i=1}^kd_is_i\right)\geq 0.
\]
Denote the $d$-th symmetric product of $Y_H(v_i)$ by $Y_H(v_i)^{(d)}$. 
Set 
\[
Y(\vec{v},\vec{d}) \ \ := \ \ \prod_{i=1}^k Y_H(v_i)^{(d_i)}\times 
S^{(t(\vec{v},\vec{d}))}.
\]
Note that for $Y_H(v_i)$ to be non-empty, $v_i=(r_i,0,-s_i)$ should satisfy
\begin{equation}
\label{eq-non-emptyness-condition-on-r-s}
r_i=-s_i=1, \ \ \ \mbox{or} \ \ \ s_i\geq r_i\geq 2,
\end{equation}
by Lemma \ref{lem-H-slope-stability-implies-s-geq-r}.
If $r_i=-s_i=1$ then $v_i=u$.  
%is either $(1,0,1)$, or $s_i\geq r_i\geq 2$.

Let $M^{\mu{ss}}_H(v)$ be the moduli space of
$S$-equivalence classes of $H$-slope-semi-stable sheaves of class $v$
(\cite{huybrechts-lehn-book}, section 8.2).
Then $M^{\mu{ss}}_H(v)$ is a projective scheme. Set theoretically, 
$M^{\mu{ss}}_H(v)$ is the disjoint union of all such varieties
$Y(\vec{v},\vec{d})$.
There exists a projective morphism 
\[
\bar{\phi} \ : \ M_H(v) \ \ \ \longrightarrow \ \ \ M^{\mu{ss}}_H(v)
\]
\cite{jun-li}. 
%The image of $\bar{\phi}$ intersects a stratum $Y(\vec{v},\vec{d})$
%in a Zariski dense constractible set, since $H$-stability is an 
%open condition. The image is closed, since $M_H(v)$ is projective, 
%so the image is the union of strata $Y(\vec{v},\vec{d})$.
%Denote by $\overline{Y}(\vec{v},\vec{d})$ the closure of $Y(\vec{v},\vec{d})$ 
%in the projective moduli space $M^{\mu{ss}}_H(v)$. 
%Let $\widetilde{Y}(\vec{v},\vec{d})$ be the normalization of 
%$\overline{Y}(\vec{v},\vec{d})$. 
%The morphism $\bar{\phi}$ factors through a surjective 
%morphism 
%\[
%\phi:M_H(v)\rightarrow \widetilde{Y}(\vec{v}_0,\vec{d}_0),
%\]
%for some $Y(\vec{v}_0,\vec{d}_0)$, since $M_H(v)$ is smooth and irreducible. 
Each irreducible component of each 
fiber of the morphism $\bar{\phi}$ is unirational,
as it is dominated by an iterated construction of 
open subsets in extension bundles and bundles of punctual Quot-schemes 
(\cite{huybrechts-lehn-book}, Theorem 8.2.11).
The morphism $\bar{\phi}$ is thus generically finite, since $M_H(v)$
is holomorphic symplectic. 
%In particular, $\dim[Y(\vec{v}_0,\vec{d}_0)]=\dim M_H(v)$.

It suffices to prove the inequality
\[
\dim{Y}(\vec{v},\vec{d}) \ \ \ \leq \ \ \ \dim M_H(v)-4,
\]
for all strata $Y(\vec{v},\vec{d})\subset M^{\mu ss}_H(v)$, such that 
$Y(\vec{v},\vec{d})\neq Y_H(v)$.
It would then follow that $Y_H(v)$ is non-empty, 
and the image of $\bar{\phi}$ is contained in the closure 
$\overline{Y}_H(v)$ of $Y_H(v)$ in $M_H^{\mu ss}(v)$.
The fiber of $\bar{\phi}$ over a point of $Y_H(v)$ consists of a single point.
Let $\widetilde{Y}_H(v)$ be the normalization of $\overline{Y}_H(v)$.
The morphism $\bar{\phi}$ would then factor through a surjective birational 
morphism 
\[
\phi:M_H(v)\rightarrow \widetilde{Y}_H(v),
\]
since $M_H(v)$ is smooth and irreducible, 
and $Exc=M_H(v)-\nolinebreak Y_H(v)$ would be the exceptional locus of $\phi$. 
It would also follow that the singular locus of $\overline{Y}_H(v)$
has co-dimension $\geq 4$ in $\overline{Y}_H(v)$. 
It would then follow that $Exc$ has co-dimension $\geq 2$ in $M_H(v)$, by
Proposition \ref{prop-dissident-locus}.

\underline{Step 2:} [Upper bounds for $\dim{Y}(\vec{v},\vec{d})$].
Fix a stratum $Y(\vec{v},\vec{d})$. Set $t:=t(\vec{v},\vec{d})$,
and $v':=(r,0,t-s)$. Then $v'=\sum_{i=1}^kd_iv_i$.
Set
\[
c(\vec{v},\vec{d}) \ \ := \ \ \dim M_H(v)-\dim Y(\vec{v},\vec{d}).
\]
We compute:
\begin{eqnarray}
\nonumber
c(\vec{v},\vec{d})&=&
2+(v,v)-\sum_{i=1}^k d_i[(v_i,v_i)+2]-2t
\\
\nonumber
&=&
2+2t(\epsilon r-1)+(v',v')-\sum_{i=1}^k d_i[(v_i,v_i)+2]
\\
\label{eq-c-vec-r-vec-d}
&=& 2+2t(\epsilon r-1)+\sum_{i=1}^k\sum_{j=1}^kd_id_j(r_is_j+r_js_i)-
2\sum_{i=1}^kd_i[r_is_i+1].
\end{eqnarray}

\underline{Case 1:} Suppose that $v_i\neq u$, for all $i$.
Then $s_i\geq r_i\geq 2$, for $1\leq i\leq k$.
Write $c(\vec{v},\vec{d})$ in
the form
\[
2+2t(\epsilon r-1)+2\sum_{i=1}^{k-1}d_i\sum_{j=i+1}^kd_j(r_is_j+r_js_i)
+2\sum_{i=1}^kd_i[(d_i-1)r_is_i-1].
\]
\underline{Case 1.1:} Assume that $k=1$. Then 
\[
c(\vec{v},\vec{d})=2+2t(\epsilon r-1)+2d_1[(d_1-1)r_1s_1-1].
\]
\underline{Case 1.1.1:} 
If $d_1=1$, then $c(\vec{v},\vec{d})=2t(\epsilon r-1)\geq 4t$.
If $t=0$, we are in the open subset where $F$ is locally free and 
$H$-slope-stable. If $t>0$, we see that 
indeed $c(\vec{v},\vec{d})\geq 4$.

\noindent
\underline{Case 1.1.2:} 
If $d_1>1$, then $2d_1[(d_1-1)r_1s_1-1]$ is a positive even number,
so $c(\vec{v},\vec{d})\geq 4+2t(\epsilon r-1)\geq 4.$

\noindent
\underline{Case 1.2:} 
Assume that $k>1$. Then 
\begin{eqnarray*}
c(\vec{v},\vec{d}) & = & 2+2t(\epsilon r-1)+2(A+B), \ \ \ \mbox{where}
\\
A&=&\sum_{i=1}^{k-1}d_i\left\{
\left[\sum_{j=i+1}^kd_j(r_is_j+r_js_i)\right]+
\left[(d_i-1)r_is_i-1\right]
\right\},
\\
B&=& d_k[(d_k-1)r_ks_k-1].
\end{eqnarray*}
We are assuming that $s_i\geq r_i\geq 2$. Hence, 
$(r_is_j+r_js_i)\geq 2r_ir_j\geq 8$.
Hence, $A\geq 7$. Now $B>0$, if $d_k>1$, and $B=-1$, if $d_k=1$.
The desired inequality $c(\vec{v},\vec{d})\geq 4$ follows.

\noindent
\underline{Case 2:} Assume that $v_1=u$.
Note that $r_1s_1+1=0$ and $r_1s_j+r_js_1=s_j-r_j=(u,v_j)$.
Equation (\ref{eq-c-vec-r-vec-d}) becomes
\begin{eqnarray*}
c(\vec{v},\vec{d})&=&A+B+C, \ \ \ \mbox{where}
\\
A& = & 2+2t(\epsilon r-1)-2d_1^2
\\
B& = & 2d_1\sum_{j=2}^kd_j(s_j-r_j)
=2d_1(u,v'-d_1v_1)
=2d_1(2d_1+s-r-\epsilon t).
\\
C & = & 
\sum_{i=2}^k\sum_{j=2}^kd_id_j(r_is_j+r_js_i)-2\sum_{i=2}^kd_i[r_is_i+1],
\end{eqnarray*}
Note the equality
\[
A+B=2+2t(\epsilon r-\epsilon d_1-1)+2d_1(s-r+d_1).
\]

\noindent
\underline{Case 2.1:}
Assume that $k=1$. Then $r=d_1$, $t=(r+s)/\epsilon$, and 
\\
$c(\vec{v},\vec{d})=A=
2+2[(r+s)\left(r-\frac{1}{\epsilon}\right)-r^2]\geq
2+2[(s-1)(r-1)-1]\geq 2r(r-1)\geq 12$.

\noindent
\underline{Case 2.2:} Assume that $k\geq 2$. 
Then $1\leq \epsilon d_1\leq \epsilon r-d_2r_2\leq \epsilon r-2$.
So 
\begin{eqnarray*}
A+B & \geq & 2+2t+2d_1(s-r+1)\geq 2+4d_1\geq 6.
\\
C/2 & = & \sum_{i=2}^{k-1}\sum_{j=i+1}^kd_id_j(r_is_j+r_js_i)+
\sum_{i=2}^k[d_i^2r_is_i-d_i(r_is_i+1)]
\\
 & = & \sum_{i=2}^{k-1}d_i\left(
\left[\sum_{j=i+1}^kd_j(r_is_j+r_js_i)\right]+[(d_i-1)r_is_i]-1
\right)
\\ & & 
+
d_k[(d_k-1)r_ks_k-1].
\end{eqnarray*}
If $k=2$ and $d_2=1$, then $C=-2$. Otherwise, $C\geq 0$. 
We conclude that $c(\vec{v},\vec{d})\geq 4$.
This completes the proof of Lemma \ref{lem-codimension-of-Exc}.
\end{proof}

%************************************************************************
%
%************************************************************************
%\subsection{Proof of Lemma \ref{lemma-Exc-has-codimension-at-least-2-again}}
%\label{sec-proof-of-lemma-Exc-has-codimension-at-least-2-again}
%\hspace{1ex}
%************************************************************************
%
%************************************************************************
\subsubsection{The case with slope equal one half.}
Let $r$ be a positive odd integer, 
$\sigma$ a positive integer, 
and set $n:=r\sigma+1$. Assume that $r\geq 3$, $\sigma\geq 3$, 
and  $\gcd(r,\sigma)=1$. 
Let $S$ be a $K3$ surface with a cyclic Picard group generated by 
an ample line bundle $H$. Set $d:=\deg(H)/2$. 
Choose $(S,H)$, so that $\sigma$ and $d$ have the same parity.
If $d$ is odd, assume  that $\sigma>r$, possibly after interchanging 
$r$ and $\sigma$.
Set $h:=c_1(H)$ and $v := (2r,rh,-b)$, 
where $b:=[\sigma-rd]/2$.
Note that $\gcd(r,b)=\gcd(r,\sigma)=1$. 
Hence, $v$ is a primitive class in $K(S)$, $(v,v)=2n-2$,
and the moduli space $M_H(v)$ is smooth and projective of type 
$K3^{[n]}$. 
Let $Exc\subset M_H(v)$ be the locus parametrizing sheaves $F$
that are not locally free or not $H$-slope-stable. 

\begin{lem}
\label{lemma-Exc-has-codimension-at-least-2-again}
$Exc$ is an algebraic subset of co-dimension\footnote{Note
that the assumption $\sigma>r$, adopted above when $d$ is odd, is necessary,
since otherwise $Exc=M_H(v)$, by Lemma
\ref{lemma-non-existence-of-H-slope-stable-sheaves-with-slope-half}.}
$\geq 2$ in $M_H(v)$.
\end{lem}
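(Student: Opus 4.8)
The plan is to reproduce the architecture of the proof of Lemma~\ref{lem-codimension-of-Exc}, adapting it from slope $0$ to slope $h/2$. First I would invoke Jun Li's morphism $\bar\phi:M_H(v)\to M_H^{\mu ss}(v)$ and stratify the target by Uhlenbeck data. Since $v$ has slope $h/2$ and $\Pic(S)=\Integers h$, every $H$-slope-stable locally free summand occurring in a double dual has even rank, so a stratum has the form $Y(\vec v,\vec d)=\prod_i Y_H(v_i)^{(d_i)}\times S^{(t)}$ with $v_i=(2\rho_i,\rho_i h,-s_i)$, $\rho_i>0$, $(v_i,v_i)\geq-2$, subject to the bookkeeping $\sum_i d_i\rho_i=r$ and $\sum_i d_i v_i=v+t(0,0,1)$ with $t\geq0$. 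Exactly as before, each fiber of $\bar\phi$ is unirational and $M_H(v)$ is holomorphic symplectic, so $\bar\phi$ is generically finite; hence it suffices to prove $c(\vec v,\vec d):=\dim M_H(v)-\dim Y(\vec v,\vec d)\geq 4$ for every stratum other than the open one $Y_H(v)$. Granting this, $Y_H(v)$ is dense, $\bar\phi$ factors through a birational symplectic contraction $\phi:M_H(v)\to\widetilde Y_H(v)$ with exceptional locus $Exc$, and—since all singular strata then lie in codimension $\geq4$—Proposition~\ref{prop-dissident-locus} (with the semismallness of $\phi$) forces $Exc$ to have codimension $\geq2$.

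The computation of $c$ is the core. I would introduce the distinguished class $u$ of Lemmas~\ref{lemma-non-existence-of-H-slope-stable-sheaves-with-slope-half} and~\ref{lemma-condition-for-existence-of-H-slope-stable-sheaves-isotropic-u-case}, namely $u=(2,h,(d+1)/2)$ for $d$ odd and $u=(2,h,d/2)$ for $d$ even, and record the Brill--Noether numbers $a_i:=(u,v_i)$. Slope-stability of each summand implies Gieseker-stability, so those lemmas apply and show every $a_i$ is even, with $a_i\geq0$ except that for $d$ odd the rigid class $u$ itself has $a_i=(u,u)=-2$. Expanding the Mukai pairing gives the reductions $(v_i,v_j)=\rho_i a_j+\rho_j a_i$ when $d$ is even and $(v_i,v_j)=2\rho_i\rho_j+\rho_i a_j+\rho_j a_i$ when $d$ is odd, together with the global identity $\sum_i d_i a_i+2t=(u,v)$, where $(u,v)=\sigma$ for $d$ even and $(u,v)=\sigma-r$ for $d$ odd. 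Substituting into the formula for $c$ obtained exactly as in (\ref{eq-c-vec-r-vec-d}) yields, for $d$ even,
\[
c(\vec v,\vec d)=2+2t(2r-1)+2\sum_i d_i a_i(r-\rho_i)-2\sum_i d_i ,
\]
and the same expression augmented by the non-negative term $2\bigl(r^2-\sum_i d_i\rho_i^2\bigr)$ for $d$ odd. The non-emptiness constraints playing the role of (\ref{eq-non-emptyness-condition-on-r-s}) are precisely these Brill--Noether inequalities, with their recorded equality cases, so the problem reduces to a finite case analysis on the factors.

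For $d$ even every term but $-2\sum_i d_i$ is visibly non-negative, and I would run the same Case~1 / Case~2 split as in Lemma~\ref{lem-codimension-of-Exc} according to whether the isotropic class $u$ (the only factor with $a_i=0$) occurs: on any non-open stratum some index has $r-\rho_i\geq1$, and the identity $\sum_i d_i a_i+2t=\sigma$ together with $r\geq3$, $\sigma\geq3$ feeds enough into $2\sum_i d_i a_i(r-\rho_i)$ or into $2t(2r-1)$ to beat $2\sum_i d_i$. I expect the genuine obstacle to be the $d$ odd case: there $u$ is rigid and may appear freely as a summand carrying the \emph{negative} weight $a_i=-2$, and in addition the class $(2,h,(d-1)/2)$ has $(u,\cdot)=0$ while supporting a four-dimensional moduli space. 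The negative contributions of the $u$-summands must be absorbed using two facts at once: the constraint $\sum_i d_i a_i+2t=\sigma-r$ forces $t$ to grow as more copies of $u$ appear (boosting $2t(2r-1)$), and the extra term $2(r^2-\sum_i d_i\rho_i^2)$, strictly positive as soon as the decomposition is non-trivial, supplies the remaining slack; the hypothesis $\sigma>r$ (i.e.\ $(u,v)>0$) is exactly what guarantees the open stratum $Y_H(v)$ is itself non-empty. Verifying that these always outweigh $-2\sum_i d_i$, and using the parity hypothesis $\sigma\equiv d$ to exclude the borderline codimension-$2$ strata, is the delicate bookkeeping; everything else transcribes directly from the proof of Lemma~\ref{lem-codimension-of-Exc}.
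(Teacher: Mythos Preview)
Your architecture is exactly right and your formulas for $(v_i,v_j)$, for $c(\vec v,\vec d)$, and for the global identity $\sum_i d_i a_i+2t=(u,v)$ are all correct. However, you have missed the one observation that lets the paper avoid the ``delicate bookkeeping'' you anticipate, and your case analysis is only sketched.

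The paper's point is that the substitution you make can be chosen so that the bilinear form $(v_i,v_j)$ has \emph{literally} the same shape as in Lemma~\ref{lem-codimension-of-Exc}. For $d$ odd one sets $s_i:=\rho_i+(u,v_i)=\rho_i+a_i$; then your identity $(v_i,v_j)=2\rho_i\rho_j+\rho_i a_j+\rho_j a_i$ becomes $(v_i,v_j)=\rho_i s_j+\rho_j s_i$, and the non-emptiness constraint from Lemma~\ref{lemma-non-existence-of-H-slope-stable-sheaves-with-slope-half} reads $s_i=-\rho_i=-1$ (when $v_i=u$), or $s_i=\rho_i=1$, or $s_i\geq \rho_i+2$. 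For $d$ even one sets $\sigma_i:=(u,v_i)=a_i$, so again $(v_i,v_j)=\rho_i\sigma_j+\rho_j\sigma_i$. In both cases equation~(\ref{eq-c-vec-r-vec-d}) is recovered verbatim with $\epsilon=2$, and the entire Case~1/Case~2 analysis of Lemma~\ref{lem-codimension-of-Exc} is rerun with only minor numerical adjustments (e.g.\ in Case~1.1.2 for $d$ odd the term $2d_1[(d_1-1)r_1s_1-1]$ can vanish when $d_1=2$, $r_1=s_1=1$, but then $2rt=(u,v-v')=s-r>0$ rescues the estimate). No new closed-form expression for $c$ and no new case split is needed.

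By contrast, your route expands everything in terms of the $a_i$ directly and plans a fresh case analysis. This is equivalent and would succeed, but it forfeits the reuse of Lemma~\ref{lem-codimension-of-Exc}'s structure and leaves you with genuine work still to do (you say as much). One small correction: the parity hypothesis $\sigma\equiv d\pmod 2$ is not used to ``exclude borderline codimension-$2$ strata''; it is a setup condition making $b$ integral, and its only trace in the estimate is the automatic evenness of $(u,v_i)$, which comes from Lemmas~\ref{lemma-non-existence-of-H-slope-stable-sheaves-with-slope-half} and~\ref{lemma-condition-for-existence-of-H-slope-stable-sheaves-isotropic-u-case} rather than from the global parity of $\sigma$.
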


\begin{proof}
{\bf Proof in the case $d$ is odd:}
When $d$ is odd, then $\sigma$ is odd.
Set $u:=(2,h,(d+1)/2)$ and $s:=r+(v,u)$. 
Then $s=\sigma$ is odd. Thus, $s>r$, by assumption, and $s-r$ is even,
so $s\geq r+2$.

Given an $H$-slope-stable locally free sheaf $F_i$ of class $v_i=(2r_i,r_ih,-b_i)$,
set $s_i:=r_i+(v_i,u)$. 
If $v_i=u$, then $s_i=-1$. 
If $v_i\neq u$, then $s_i\geq r_i$, by Lemma 
\ref{lemma-non-existence-of-H-slope-stable-sheaves-with-slope-half}.
Furthermore, $s_i=r_i$, if and only if $v_i=(2,h,(d-1)/2)$.
If $v_i\neq u$ and $s_i\neq r_i$, then 
$s_i\geq r_i+2$, since $s_i-r_i$ is even, by Lemma 
\ref{lemma-non-existence-of-H-slope-stable-sheaves-with-slope-half}.

With the above notation of $s$ and $s_i$, the proof is almost identical to
that of Lemma \ref{lem-codimension-of-Exc}.
Following are the necessary changes.
Replace the class $(1,0,1)$ by the class $u$ defined above.
Then $\epsilon=\rank(u)=2$. Set $\lambda:=(0,0,-1)$.
Then $(u,\lambda)=2$ and $(\lambda,\lambda)=0$.
With the above definition of $s_i$, we have
$v_i=r_iu+\left(\frac{s_i+r_i}{2}\right)\lambda$. 
Hence,
\[
(v_i,v_j)  \ \ \ = \ \ \ r_is_j+r_js_i, 
\]
as in the proof of Lemma \ref{lem-codimension-of-Exc}.

Equation (\ref{eq-non-emptyness-condition-on-r-s}) is replaced by
\[
r_i=-s_i=1, \ \ \ \mbox{or} \ \ \ s_i=r_i=1, \ \ \ \mbox{or} \ \ \ 
s_i\geq r_i+2\geq 3,
\]
by Lemma \ref{lemma-non-existence-of-H-slope-stable-sheaves-with-slope-half}.
Equation (\ref{eq-c-vec-r-vec-d}), 
for the co-dimension $c(\vec{v},\vec{d})$ of $Y(\vec{v},\vec{d})$,
remains valid.
The argument for case 1.1.1 remains valid.
In case 1.1.2 the term $2d_1[(d_1-1)r_1s_1-1]$ vanishes, if $d_1=2$
and $r_1=s_1=1$. However, in that case $(u,v')=(u,2v_1)=0$
and  $2rt=(u,v-v')=(u,v)=s-r>0$. So $t>0$ and
$c(\vec{v},\vec{d})=2+2t(2r-1)\geq 4$.
%note that either $r_1s_1>1$ or $r_1=s_1=1$
%and $d_1=r\geq 3$, so the argument remains valid.

In case 1.2 we are no longer assuming that $r_i\geq 2$.
However, since all $v_i$ are different from $u$, and $k>1$,
then at least one $v_i$, say $v_1$, is different from
$(2,h,(d+1)/2)$. Then $s_1\geq r_1+2\geq 3$.
Thus $r_1s_2+r_2s_1\geq 4$ and $A\geq 3$. 
The rest of the argument in case 1.2 is identical.

In case 2, the equations for $c(\vec{v},\vec{d})$, $A$, $B$, and $C$,
remain valid. The argument in case 2.1 remains valid.
The 
inequality $\epsilon r -d_2r_2\leq \epsilon r-2$,
in the first line of case 2.2, need not hold. Nevertheless, 
$\epsilon r -\epsilon d_1\geq \epsilon d_2\geq 2$. So
\[
A+B\geq 2 + 2d_1(s-r+1)\geq 2+6d_1\geq 8.
\]
The rest of the argument remains valid.

{\bf Proof\footnote{The cases where $\sigma<r$ were proven earlier in \cite[Lemma 3.1]{yoshioka-irreducibility}.} 
in the case $d$ is even.}
When $d$ is even then $\sigma$ is even. 
Set $u:=(2,h,d/2)$. By assumption, $r\geq 3$, $s\geq 3$,
and $\gcd(r,s)=1$. 

Given an $H$-slope-stable sheaf $F_i$ of class $v_i=(2r_i,r_ih,-b_i)$, 
with $r_i>0$, set $\sigma_i:=(u,v_i)=2b_i+r_id$. 
If $v_i=u$, then $\sigma_i=0$. If $v_i\neq u$, then
$\sigma_i$ is a positive even integer, by Lemma
\ref{lemma-condition-for-existence-of-H-slope-stable-sheaves-isotropic-u-case}.
Note also that $(u,v)=\sigma$.

The proof is again almost identical to that of Lemma 
\ref{lem-codimension-of-Exc}.
Following are the necessary changes.
Replace the class $(1,0,1)$ by the class $u$ defined above.
Then $\epsilon=\rank(u)=2$. Set $\lambda:=(0,0,-1)$.
Then $(u,\lambda)=2$ and $(\lambda,\lambda)=0$.
With the above definition of $\sigma_i$, we have
$v_i=r_iu+\left(\frac{\sigma_i}{2}\right)\lambda$. 
Hence,
\[
(v_i,v_j)  \ \ \ = \ \ \ r_i\sigma_j+r_j\sigma_i, 
\]
and we replace $s_i$ by $\sigma_i$ 
in the proof of Lemma \ref{lem-codimension-of-Exc}.
Then Equation (\ref{eq-c-vec-r-vec-d}), 
for the co-dimension $c(\vec{v},\vec{d})$ of $Y(\vec{v},\vec{d})$,
remains valid.

Equation (\ref{eq-non-emptyness-condition-on-r-s}) is replaced by
\[
v_i=u, \ \ \ \mbox{or} \ \ \ \sigma_i \ \ \ \mbox{is a positive even integer,} 
\]
by Lemma 
\ref{lemma-condition-for-existence-of-H-slope-stable-sheaves-isotropic-u-case}.
The argument for case 1.1 remains valid.

In case 1.2 we are no longer assuming that $r_i\geq 2$.
However, since all the $v_i$ are different from $u$, then $\sigma_i\geq 2$, 
for all $i$. Thus, $r_1\sigma_2+r_2\sigma_1\geq 4$ and $A\geq 3$.
The rest of the argument is identical.

In case 2, $v_1=u$, $\sigma_1=0$, and 
$(r_1\sigma_j+r_j\sigma_1)=\sigma_j=(u,v_j)$.
Then $c(\vec{v},\vec{d})=A+B+C$, where
\begin{eqnarray*}
A&=& 2+2t(\epsilon r-1),
\\
B&=& 2d_1\sum_{j=2}^k d_j\sigma_j=2d_1(u,v'-d_1v_1)=
2d_1(\sigma-\epsilon t),
\end{eqnarray*}
and $C$ remains the same. Then 
\[
A+B \ \ \ = \ \ \ 2 + 2t(\epsilon r - \epsilon d_1 -1) +2d_1\sigma.
\]

In case 2.1, $r=d_1$, $v'=ru$,
$\epsilon t =(v-v',u)=\sigma$, and 
$c(\vec{v},\vec{d})=A=2+\sigma(\epsilon r -1)$.
Hence $c(\vec{v},\vec{d})\geq 17$. 

In case 2.2 we assume that $k\geq 2$ and so $\epsilon(r-d_1)\geq 2$. 
So $A+B\geq 2 +2d_1\sigma\geq 8$. The rest of the argument is the same.
This completes the proof of Lemma 
\ref{lemma-Exc-has-codimension-at-least-2-again}.
\end{proof}

%***************************************************************
%
%***************************************************************
\section{Examples of prime exceptional divisors}
\label{sec-examples}

%***************************************************************
%
%***************************************************************
%\section{Elementary invariants  of prime exceptional divisors}
Let $e$, $E$ and $X$ be as in Theorem \ref{thm-2}.
Set $n:=\dim_{\ComplexNumbers}(X)/2$.
The pair $(X,E)$ has the following elementary invariants:
\begin{enumerate}
\item
\label{item-degree}
$(e,e)=-2$, or $(e,e)=2-2n$.
\item
The divisibility $\div(e,\bullet)$ of the class $(e,\bullet)$ in 
$H^2(X,\Integers)^*$ is equal to $(e,e)$ or $(e,e)/2$.
%\item
%\label{item-divisibility-of-E-vee}
%Write $[E]^\vee=ke^\vee$, where $e^\vee$ is a primitive class in 
%$H_2(X,\Integers)$. Then $k=1$, or $k=2$.
\item
\label{item-invariant-k}
Write $[E]=ke$, where $e$ is a primitive class in $H^2(X,\Integers)$.
Then $k=1$, or $k=2$.
%\item
%\label{item-A1-or-A2}
%The variety $Y$ in Proposition \ref{prop-druel} has $A_1$ or $A_2$ 
%singularities, away from the dissident locus of $Y$. 
\end{enumerate}

%The divisibility $\div(e,\bullet)$ of the class $(e,\bullet)$ in 
%$H^2(X,\Integers)^*$ 
Set $[E]^\vee:=\frac{-2([E],\bullet)}{([E],[E])}$.
We have $[E]^\vee=e^\vee$ or $[E]^\vee=2e^\vee$,
where 
%$[E]^\vee$ is given in equation (\ref{eq-E-vee}), 
$e^\vee$ is a primitive class in 
$H_2(X,\Integers)$, and the coefficient 
is determined by Lemma \ref{lemma-divisibility} in terms of
the invariant $\div(e,\bullet)$ and the coefficient
$k$ in (\ref{item-invariant-k}) above. 
%The following relations hold among the above four invariants. 
%\begin{enumerate}
%\item
In particular, if $\div(e,\bullet)=(e,e)/2$, then $[E]=e$, by 
Lemma \ref{lemma-divisibility}.
%\item
%When $(e,e)=-2$ and $\div(e,\bullet)=2$, 
%then $n$ must be congruent to $2$ modulo $4$ 
%(\cite{markman-monodromy-I}, Lemma 8.9). 
%\end{enumerate}

\begin{thm}
\label{thm-a-uniruled-divisor-is-exceptional}
Let $X$ be a smooth projective holomorphically symplectic variety, and $E$
a prime divisor on $X$.
\begin{enumerate}
\item
\label{thm-item-uniruled-implies-exceptional}
(\cite{druel}, Theorem 1.3)
Assume that through a generic point of $E$ passes a rational curve of class 
$\ell\in H_2(X,\Integers)$, such that $[E]\cdot \ell<0$. 
Then $E$ is an exceptional divisor.
\item
\label{thm-item-ell-is-E-vee}
Let $E$ and $\ell$ be as in part \ref{thm-item-uniruled-implies-exceptional}
and $\pi:X'\rightarrow Y$  the birational contraction of $E$ introduced in
Proposition \ref{prop-druel}. Then
$Y$ has $A_i$ singularities away from its dissident locus,\footnote{
See the paragraph preceding Proposition \ref{prop-dissident-locus} 
for the definition of the dissident locus.} 
and $i=1$ or $i=2$. 
Furthermore, 
$\ell=\left\{\begin{array}{ccc}
[E]^\vee & \mbox{if} & i=1,
\\
\frac{1}{2}[E]^\vee & \mbox{if} & i=2.
\end{array}\right.$
\end{enumerate}
\end{thm}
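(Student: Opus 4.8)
The plan is to treat the two parts separately, taking part \ref{thm-item-uniruled-implies-exceptional} as a direct citation and concentrating on the geometric identification in part \ref{thm-item-ell-is-E-vee}. Part \ref{thm-item-uniruled-implies-exceptional} is exactly Druel's uniruledness criterion (\cite{druel}, Theorem 1.3): a prime divisor swept out by rational curves $\ell$ with $[E]\cdot\ell<0$ is exceptional, so nothing remains to prove there. For part \ref{thm-item-ell-is-E-vee}, I would first invoke Proposition \ref{prop-druel} to produce the birational model $X'$, the strict transform $E'$, and the contraction $\pi:X'\rightarrow Y$. Druel's construction shows that the exceptional locus of $X\dashrightarrow X'$ does not dominate $\pi(E')$, so the birational map restricts to an isomorphism over a dense open subset of $E$; hence a general rational curve $\ell\subset E$ is carried to an irreducible rational curve $\ell'\subset E'$ through a general point, with $[\ell']=\ell$ under the identification $H_2(X,\Integers)\cong H_2(X',\Integers)$ induced by the birational map (see Corollary \ref{cor-1}), and with $[E']\cdot\ell'=[E]\cdot\ell<0$.

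The next step is to pin down the transverse singularity type of $Y$ at the generic point of its singular locus. By Proposition \ref{prop-dissident-locus}, $Y$ has canonical, hence Du Val, transverse surface singularities along the codimension-two stratum $\Sigma\setminus\Sigma_0$, while $\Sigma_0$ has codimension at least $4$. By part \ref{item-integrality} of Corollary \ref{cor-1}, the generic fiber of $E'\rightarrow \Sigma$ is either a single smooth rational curve or a union of two homologous smooth rational curves meeting at one point. Comparing with the classification of Du Val exceptional configurations, only the chains $A_1$ (one $(-2)$-curve) and $A_2$ (two $(-2)$-curves meeting transversally, interchanged by the monodromy around $\Sigma_0$, whence homologous) are compatible with this fiber shape; any other $ADE$ type would force a generic fiber with at least three components or a different incidence pattern. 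This yields the assertion that $Y$ has $A_i$ singularities with $i\in\{1,2\}$ off the dissident locus.

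It then remains to match $\ell$ with the fiber classes. Since $\ell'$ meets the effective divisor $E'$ negatively it is contained in $E'$, and I would argue, using the local product structure of $Y$ as $(\text{Du Val germ})\times(\text{smooth factor})$ along $\Sigma\setminus\Sigma_0$ guaranteed by Proposition \ref{prop-dissident-locus}, that $\ell'$ is in fact contracted by $\pi$, and is therefore an irreducible component of the fiber through its general point. In the $A_1$ case the fiber is irreducible and represents the class $[E]^\vee$ of Corollary \ref{cor-1}, so $\ell=[E]^\vee$ and $[E]\cdot\ell=-2$. In the $A_2$ case the fiber is $C_1\cup C_2$ with $[C_1]=[C_2]$ and $[C_1]+[C_2]=[E]^\vee$; a general point of $E'$ lies on exactly one component $C_i$, forcing $\ell'=C_i$, so $\ell=\frac{1}{2}[E]^\vee$ and $[E]\cdot\ell=-1$. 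In both cases the value $[E]\cdot[E]^\vee=-2$ recorded in the proof of Corollary \ref{cor-1} matches the intersection numbers $[E']\cdot C_i=C_i^2+\sum_{j\ne i}C_i\cdot C_j$, equal to $-2$ for $A_1$ and $-1$ for $A_2$, and summing to $-2$ over the full fiber.

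The main obstacle is the middle claim of the last paragraph: that the curve $\ell'$ through a general point of $E'$ is genuinely contracted by $\pi$, rather than a ``horizontal'' curve meeting $E'$ negatively while still dominating a curve in $\Sigma$. I expect to handle this by passing to the étale-local product model along $\Sigma\setminus\Sigma_0$ and writing $[E']=\mathrm{pr}_1^*[\mathrm{Exc}]$ for the projection $\mathrm{pr}_1$ onto the resolved-surface factor, so that $[E']\cdot\ell'=[\mathrm{Exc}]\cdot(\mathrm{pr}_1)_*\ell'$ by the projection formula; negativity then forces $(\mathrm{pr}_1)_*\ell'$ to have an exceptional component, and the generality of the point through which $\ell'$ passes forces $\ell'$ to be a single vertical exceptional component $C_i$. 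Once this reduction is in place, the remainder is the bookkeeping with the $A_1$ and $A_2$ intersection matrices already sketched above.
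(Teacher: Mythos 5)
Your outline agrees with the paper up to the decisive step, and your bookkeeping with the $A_1$/$A_2$ intersection matrices at the end is correct. But the step you yourself flag as ``the main obstacle'' --- showing that the rational curve $\ell'$ through a general point of $E'$ is actually contracted by $\pi$ rather than dominating a curve in $\Sigma$ --- is not closed by the argument you propose. The product decomposition of $Y$ along $\Sigma\setminus\Sigma_0$ furnished by Proposition \ref{prop-dissident-locus} is only \'etale/analytically local, so the projection $\mathrm{pr}_1$ onto the resolved-surface factor is defined only on a small neighborhood of a fiber; a complete rational curve $\ell'$ is not contained in any such chart unless it is already vertical, which is what you are trying to prove. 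Consequently $(\mathrm{pr}_1)_*\ell'$ is not a well-defined cycle and the identity $[E']\cdot\ell'=[\mathrm{Exc}]\cdot(\mathrm{pr}_1)_*\ell'$ has no meaning; the global negativity $[E']\cdot\ell'<0$ cannot be localized to one chart this way. Some genuinely global input is required here.

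The paper supplies that input by citing Druel (\cite{druel}, Proposition 4.5): the family of rational curves dominating a prime exceptional divisor is \emph{unique}. Since the fibers of the rational map $\pi|_{E'}:E'\dashrightarrow\Sigma$ already give a dominating family of rational curves (by the proof of Proposition 1.4 in \cite{druel}), the hypothesized family of curves of class $\ell$ must coincide with it, so $\ell$ is the class of an irreducible component of the generic fiber; the dichotomy $\ell=[E]^\vee$ versus $\ell=\tfrac{1}{2}[E]^\vee$ then follows from part \ref{item-integrality} of Corollary \ref{cor-1} exactly as in your last paragraph. If you want to avoid quoting Druel's uniqueness statement, you would need to replace your local argument with a global one (for instance a deformation-theoretic count showing the family of curves of class $\ell$ has the dimension of a fiber family), which is essentially re-proving that proposition.
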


\begin{proof}
We need only prove part \ref{thm-item-ell-is-E-vee}.
$Y$ has $A_1$ or $A_2$ singularities, by Corollary 
\ref{cor-1}. Let $E'$ be the strict transform of $E$ in $X'$. The 
generic fiber of the restriction of $\pi$  to $E'$ is a rational curve, or a pair of rational curves 
joined at a node. The exceptional locus of the birational transformation from $X$ to $X'$
does not dominate $\pi(E')$, by the proof of Proposition 1.4 in
\cite{druel}. 
The morphism $\pi$ thus restricts to a rational morphism from $E$ to $\pi(E')$,
whose generic fiber is isomorphic to the generic fiber of $E'$ over $\pi(E')$.
The class $\ell$ must be the class of an irreducible component of 
the generic fiber of the restriction of $\pi$ to $E$, 
by the uniqueness of the family of rational curves, which dominates $E$
(\cite{druel}, Proposition 4.5). The equality $\ell=\frac{1}{i}[E]^\vee$ 
follows from part \ref{item-integrality} of Corollary \ref{cor-1}.
\end{proof}

We will say that {\em the prime exceptional divisor is of type} $A_i$, if 
the variety $Y$ in part \ref{thm-item-ell-is-E-vee}
of Theorem \ref{thm-a-uniruled-divisor-is-exceptional} has $A_i$ 
singularities away from its dissident locus. 
All prime exceptional divisor studied in this paper are of type $A_1$.

%*****************************************************************************
%
%*****************************************************************************
\subsection{Brill-Noether exceptional divisors}
\label{sec-Brill-Neother-exceptional-divisors}
Let $S$ be a $K3$ surface, $F_0$ a simple and rigid coherent sheaf, i.e.,
a sheaf satisfying $\End(F_0,F_0)\cong\ComplexNumbers$, and
$\Ext^1(F_0,F_0)=0$. Then the class $v_0$ of $F_0$ is a primitive
class in $K(S)$ with $(v_0,v_0)=-2$. Examples of exceptional divisors
$E$ of degree $-2$ in moduli spaces of sheaves on $S$ seem to arise 
as Brill-Noether loci as follows. Let $v\in K(S)$ be a class 
satisfying $(v_0,v)=0$, and such that there exists 
a smooth and compact moduli space $M(v)$ of stable sheaves of class $v$.
The locus $M(v)^1$, of points representing sheaves $F$ with non-vanishing
$\Ext^1(F,F_0)$, is often an exceptional divisor of degree $-2$. 
The examples considered in this section are all of this type.

\begin{example} 
\label{example-case-degree-e-minus-2-div-1}
The case 
$n\geq 2$, $(e,e)=-2$, and $\div(e,\bullet)= 1$.
%$[E]=e$, $[E]^\vee=e^\vee$, and $A_i=A_1$.
%***********
% \hide
%***********
\hide{
We provide two examples. The first example is simpler, 
but Druel's birational map $X\rightarrow Y$, which contracts $E$, is not
a regular morphism. In the second example it is.

a) 
%***********
% End  hide
%***********
}
Let $S$ be a $K3$ surface containing a smooth irreducible 
rational curve $\Sigma$. Let $E\subset S^{[n]}$, $n\geq 2$, be the divisor
consisting of length $n$ subschemes intersecting $\Sigma$ along
a non-empty subscheme. The class $[E]$ is identified with $[\Sigma]$,
under the embedding of $H^2(S,\Integers)$ as an orthogonal direct summand
in the decomposition (\ref{eq-orthogonal-direct-sum}) 
of $H^2(S^{[n]},\Integers)$.
Thus $([E],[E])=([\Sigma],[\Sigma])=-2$ and $\div([E],\bullet)=1$. 
$E$ is of type $A_1$. 
Let $F_0$ be the direct image of $\StructureSheaf{\Sigma}(-1)$ and
$v_0\in K(S)$ the class of $F_0$. Then $v_0$ is orthogonal to the 
class of the ideal sheaf $I_Z$ of a length $n$ subscheme $Z$ of $S$, and 
$E$ is the Brill-Noether locus, where $\Ext^1(I_Z,F_0)$ does not vanish.
\end{example}
%***********
% \hide
%***********
\hide{

b) Let $S$ be a projective $K3$ surface, $H$ a polarization of degree 
$(H,H)=2n-2$, such that $\Pic(S)={\rm span}_\Integers\{H\}$. 
Let $v\in K(S)$ be the class 
%of rank $0$, $c_1(v)=[H]$, and $\chi(v)=0$, so that 
$(0,H,0)$ in Mukai's notation. 
Let $M_H(v)$ be the moduli space of Gieseker-Simpson 
$H$-stable sheaves with class $v$. Then $M_H(v)$ is smooth,
projective, holomorphic symplectic, and deformation equivalent to 
$S^{[n]}$ (see section \ref{sec-Mukai-notation}). 
Points of $M_H(v)$ represent torsion sheaves, with pure 
one-dimensional support $D$, and $D$ is a curve in the linear system 
$\linsys{H}$. 
Let $E\subset M_H(v)$ be the Brill-Noether locus of 
sheaves $F$ with $h^1(F)>0$.  Let $v_0 \in K(S)$
be the class  of the trivial line bundle. We have the equality
$[E]=-\theta(v_0)$, by \cite{markman-part-two}, Lemma 4.11. 
Hence, $[E]=e$ and $(e,e)=-2$. 
Clearly, $\div(e,\bullet)=1$. 
%Hence $[E]^\vee=e^\vee$, by Lemma \ref{lemma-divisibility}.
Let $sup:M_H(v)\rightarrow \linsys{H}$ be the support morphism
(\cite{le-potier-coherent}, section 2.3). Let
$\theta:v^\perp\rightarrow H^2(M_H(v),\Integers)$ be the Mukai isomorphism
(\ref{eq-Mukai-isomorphism}). Set $e:=\theta(-v_0)$ and $f:=\theta(0,0,1)$.
The following lemma is proven in the Appendix section 
\ref{sec-proof-of-lemma-ample-cone}.
\end{example}

\begin{lem}
\label{lemma-ample-cone}
\begin{enumerate}
\item
\label{lemma-item-f-is-nef}
$f=sup^*\StructureSheaf{\linsys{H}}(1)$.
\item
\label{lemma-item-e-and-f-generate-the-Pef-cone}
The Pseudo-effective cone $Pef(M_H(v))$ is equal to $\langle e,f\rangle$.
Furthermore, $E$ is the unique prime exceptional divisor in $M_H(v)$. 
\item
\label{lemma-item-f-and-e+2f-generate-the-Nef-cone}
The nef cone $Nef(M_H(v))$ is equal to $\langle e+2f,f\rangle$.
Hence, the cone $Nef(M_H(v))$ is dual to $Pef(M_H(v))$ with
respect to the Beauville-Bogomolov pairing.
\item
\label{lemma-item-extremal-class}
There exists a birational morphism $\pi:M_H(v)\rightarrow Y$ onto
a normal projective variety $Y$ with the following property. 
Exactly one of the irreducible components of the exceptional locus of $\pi$
is a divisor, and this component is $E$. 
\item
\label{lemma-item-Y-has-A-1-singularities}
The image $\pi(E)$ of $E$ has codimention $2$ in $Y$. 
$Y$ has $A_1$-singularities along a Zariski dense open subset of 
$\pi(E)$.
\end{enumerate}
\end{lem}
%***********
% End  hide
%***********
}

Let $S$ be a projective $K3$ surface, with a cyclic Picard group generated by
an ample line bundle $H$ of degree $d\geq 2$. 
In the remainder of this section the simple and rigid sheaf $F_0$ will be 
$\StructureSheaf{S}$. Then $\Ext^1(F,\StructureSheaf{S})\cong H^1(F)^*$,
by Serre's Duality. 
We will need the following results.

\begin{lem}
\label{lemma-stability-of-co-kernel-for-primitive-c-1}
(\cite{markman-reflections}, Lemma 3.7 part 3)
Let $F$ be an $H$-stable sheaf on $S$ of rank $r$ and determinant $H$,
and $U\subset H^0(F)$ a subspace of dimension $r'\leq r$. Then the evaluation
homomorphism $U\otimes\StructureSheaf{S}\rightarrow F$ is injective and its
co-kernel is an $H$-stable sheaf.
\end{lem}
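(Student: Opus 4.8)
The plan is to deduce both assertions from slope computations, using crucially that the determinant $c_1(F)=c_1(H)=h$ is primitive in $\Pic(S)=\Integers h$. The first point I would record is that this primitivity promotes $H$-stability to strict $\mu_H$-stability: if $G\subsetneq F$ is a proper subsheaf with $c_1(G)=ah$, then the equality $\mu_H(G)=\mu_H(F)$ reads $ar=\rank(G)$, which is impossible for $a\in\Integers$ and $0<\rank(G)<r$. Hence every proper subsheaf of $F$ satisfies $\mu_H(G)<\mu_H(F)$, and the same holds for the cokernel, whose determinant is again $H$. Working with strict slope inequalities lets me bypass the finer comparison of reduced Hilbert polynomials entirely.

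For injectivity of the evaluation map $\phi\colon U\otimes\StructureSheaf{S}\to F$, let $I\subset F$ be its image and $\rho:=\rank(I)\le r':=\dim U$. Since $I$ is globally generated, $c_1(I)=bh$ with $b\ge 0$, and stability forces $br<\rho\le r$, so $b=0$. Assume for contradiction that $\rho<r'$, and pick a basis $s_1,\dots,s_{r'}$ of $U$ with $s_1,\dots,s_\rho$ independent over the function field $k(\eta)$. These $\rho$ sections define an injection $\StructureSheaf{S}^{\rho}\hookrightarrow F$ with image $I_0\cong\StructureSheaf{S}^{\rho}$, so $c_1(I_0)=0$, and the slope bound $cr<\rho<r$ gives $c_1(\bar I_0)=0$ for the saturation $\bar I_0$ of $I_0$ in $F$. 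Then $\bar I_0/I_0$ has trivial $c_1$, hence finite length, so $(\bar I_0)^{**}=(\StructureSheaf{S}^{\rho})^{**}=\StructureSheaf{S}^{\rho}$, forcing $\bar I_0=I_0=\StructureSheaf{S}^{\rho}$. In particular $F/I_0$ is torsion-free and $H^0(I_0)=\langle s_1,\dots,s_\rho\rangle$. Now $s_{\rho+1}$ is generically a $k(\eta)$-combination of $s_1,\dots,s_\rho$, so its image in the torsion-free quotient $F/I_0$ vanishes, whence $s_{\rho+1}\in H^0(I_0)=\langle s_1,\dots,s_\rho\rangle$, contradicting the $\ComplexNumbers$-linear independence of the $s_i$. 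Thus $\rho=r'$ and $\phi$ is injective.

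For stability of $Q:=\coker(\phi)$, of rank $r-r'$ and determinant $H$, I would take an arbitrary proper subsheaf $Q'\subset Q$, set $\rho':=\rank(Q')$ and $c_1(Q')=b'h$, and let $P\subset F$ be its preimage, so that $0\to U\otimes\StructureSheaf{S}\to P\to Q'\to 0$ yields $\rank(P)=r'+\rho'$ and $c_1(P)=b'h$. Strict stability of $F$ gives $b'r<r'+\rho'$, while a destabilizing hypothesis $\mu_H(Q')\ge\mu_H(Q)$ gives $b'(r-r')\ge\rho'$. Combining the two forces $b'r'<r'$, hence $b'\le 0$ and $\mu_H(Q')\le 0<\mu_H(Q)$, contradicting the assumption. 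So no proper subsheaf destabilizes $Q$, and $Q$ is $\mu_H$-stable, a fortiori $H$-stable.

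The slope bookkeeping above is routine; the crux is the injectivity step, and specifically the identification $\bar I_0=\StructureSheaf{S}^{\rho}$ via the double-dual computation, together with the passage from generic $k(\eta)$-dependence to an actual $\ComplexNumbers$-linear relation through the torsion-freeness of $F/I_0$. I would also flag the boundary case $r'=r$, where $Q$ has rank $0$ and is a pure one-dimensional sheaf of determinant $H$; there the primitivity argument must be restated in terms of the one-dimensional reduced Hilbert polynomial, but the same leading-coefficient bookkeeping applies and delivers stability of $Q$ as a pure sheaf.
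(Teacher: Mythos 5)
The paper does not prove this lemma: it is quoted verbatim from \cite{markman-reflections}, Lemma 3.7 part 3, so there is no internal proof to compare against. Your argument is a correct, self-contained proof in the same spirit one expects from that reference: the primitivity of $c_1(F)=h$ in $\Pic(S)=\Integers h$ upgrades $H$-stability to strict $\mu_H$-stability, the saturation/double-dual computation identifies the image of the evaluation map with a trivial subbundle, and the preimage slope bookkeeping rules out destabilizing subsheaves of the cokernel. Two points deserve one more line each. First, before concluding that $\mu_H$-stability of $Q$ implies $H$-stability you should record that $Q$ is torsion-free: a torsion subsheaf $T\subset Q$ pulls back to $P\subset F$ with $U\otimes\StructureSheaf{S}\subset P$ and $c_1(P)=b'h$ effective, so (when $r'<r$) stability forces $b'=0$, $P/[U\otimes\StructureSheaf{S}]$ has finite length, and your own double-dual argument gives $P=U\otimes\StructureSheaf{S}$, i.e.\ $T=0$. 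Second, the boundary case $r'=r$ is genuinely needed in the paper (the lemma is invoked with $t$-dimensional $U\subset H^0(E)$ and $\rank(E)=t$ in Theorem \ref{thm-brill-noether}), so the purity of the one-dimensional cokernel and its stability, via the irreducibility of curves in $\linsys{H}$ coming from $\Pic(S)=\Integers h$, should be written out rather than only flagged; the argument is exactly the length-count you sketch. Neither point is a gap in the method, only in the write-up.
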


Consider the Mukai vector $v:=(r,H,s)$ and assume that $r\geq 0$ and 
$r+s\geq 0$.
Set $v_0:=(1,0,1)$. Let $M_H(v)^t$ be the Brill-Noether locus 
of $H$-stable sheaves $F$ with $h^1(F)\geq t$. 

\begin{thm} 
\label{thm-brill-noether}
(\cite{markman-reflections}, Corollary 3.19, \cite{yoshioka-brill-noether})
\begin{enumerate}
\item
$M_H(v)^t$ is empty, if and only if $M_H(v+tv_0)$ is.
\item
There exists a smooth surjective projective morphism
\[
f_t \ : \ \left[M_H(v)^t\setminus M_H(v)^{t+1}\right] \ \ \ 
\longrightarrow \ \ \ 
\left[M_H(v+tv_0)\setminus M_H(v+tv_0)^1\right].
\]
\item
\label{thm-fibers-are-grassmannians}
The fiber of $f_t$, over a point representing a sheaf $E$, is naturally 
isomorphic to the Grassmannian $G(t,H^0(E))$. 
Furthermore, $H^0(E)$ is $r+s+2t$-dimensional, and the dimension 
$t(r+s+t)$ of
the fiber is equal to the co-dimension of $M_H(v)^t$ in $M_H(v)$.
\item
\label{thm-item-brill-neother-divisor-has-class-minus-v0}
(\cite{markman-part-two}, Lemma 4.11)
If $s=-r$, then $M(v)^1$ is a prime divisor of class $\theta(-v_0)$.
The class $\ell\in H^2(M_H(v),\Integers)^*$ of a 
$\PP^1$-fiber of $f_1$ is $(\theta(-v_0),\bullet)$.
\end{enumerate}
\end{thm}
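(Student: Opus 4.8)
The plan is to deduce all four parts from a single universal construction—extension by, and division by, copies of $\StructureSheaf{S}$—after first pinning down the cohomological bookkeeping that makes the strata rigid. For an $H$-stable sheaf $F$ of class $v=(r,H,s)$ with $r\geq 0$, Serre duality gives $H^2(F)\cong\Hom(F,\StructureSheaf{S})^*$, and a nonzero map $F\to\StructureSheaf{S}$ would contradict $H$-stability because $c_1(F)=H$ has positive degree (the torsion case $r=0$ being immediate); hence $H^2(F)=0$. Therefore $\chi(F)=h^0(F)-h^1(F)=r+s$, so on $M_H(v)^t\setminus M_H(v)^{t+1}$ one has $h^1(F)=t$ and $h^0(F)=r+s+t$ exactly. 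The same computation applied to a sheaf $E$ of class $v+tv_0$ with $h^1(E)=0$ yields $h^0(E)=r+s+2t$, which is the dimension asserted in part (3).

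Next I would set up the two pointwise operations. Given $F$ with $h^1(F)=t$, the space $\Ext^1(F,\StructureSheaf{S})\cong H^1(F)^*$ is $t$-dimensional, and I would form the universal extension
\[
0\to H^1(F)^*\otimes\StructureSheaf{S}\to E\to F\to 0 ,
\]
whose class is $v+tv_0$ since $v_0=(1,0,1)$ is the Mukai vector of $\StructureSheaf{S}$. In the long exact cohomology sequence, $H^1(\StructureSheaf{S})=0$ and $H^2(\StructureSheaf{S})\cong\ComplexNumbers$ for a $K3$, and by the universality of the extension together with Serre duality the connecting map $H^1(F)\to H^1(F)^*\otimes H^2(\StructureSheaf{S})$ is an isomorphism; hence $H^1(E)=0$ and $h^0(E)=h^0(F)+t=r+s+2t$. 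This defines $f_t(F):=E$, once one checks $E$ is $H$-stable. Conversely, given $E$ of class $v+tv_0$ and any $t$-dimensional $U\subset H^0(E)$, Lemma \ref{lemma-stability-of-co-kernel-for-primitive-c-1} applied to the stable sheaf $E$ of determinant $H$ makes the evaluation $U\otimes\StructureSheaf{S}\to E$ injective with $H$-stable cokernel $F$ of class $v$, and the cohomology sequence shows $h^1(F)\geq t$. These constructions are mutually inverse, which gives the emptiness equivalence of part (1): note $h^0(E)\geq\chi(E)=r+s+2t\geq t$ since $r+s\geq 0$, so a suitable $U$ always exists.

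The substance is to globalize these bijections into a morphism and identify its fibers. Over the open stratum $W:=M_H(v+tv_0)\setminus M_H(v+tv_0)^1$ the invariant $h^1$ vanishes identically, so for a (possibly twisted) universal sheaf $\mathcal{E}$ on $S\times W$ the pushforward $\pi_{W*}\mathcal{E}$ is locally free of rank $r+s+2t$ by cohomology-and-base-change. I would form the relative Grassmannian bundle $\mathbf{Gr}(t,\pi_{W*}\mathcal{E})\to W$, construct over it the evaluation of the tautological rank-$t$ subbundle into $\mathcal{E}$, and show (via Lemma \ref{lemma-stability-of-co-kernel-for-primitive-c-1} fibrewise) that its cokernel is a flat family of $H$-stable sheaves of class $v$; this induces a classifying morphism to $M_H(v)^t\setminus M_H(v)^{t+1}$ which is bijective by the pointwise analysis and is an isomorphism once one checks the inverse universal-extension family deforms flatly (using that $h^1$ is constant $=t$ on the source stratum). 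Realizing $f_t$ as the projection of this Grassmannian bundle simultaneously yields parts (2) and (3): smoothness, surjectivity, the fiber $G(t,H^0(E))$ of dimension $t(r+s+t)$; the codimension claim then follows from $\dim M_H(v)=(v,v)+2$ with $(v_0,v_0)=-2$ and $(v,v_0)=-(r+s)$, giving codimension $t(r+s+t)$ exactly.

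Finally, for part (4) I would set $s=-r$, so $\chi(F)=0$ and $M_H(v)^1$ is the locus $h^0(F)\geq 1$; with $t=1$ the bundle picture presents $M_H(v)^1\setminus M_H(v)^2$ as a $G(1,H^0(E))=\PP^1$-bundle (since $h^0(E)=r+s+2=2$) over a dense open subset of $M_H(v+v_0)$, and the codimension formula gives $1$, so $M_H(v)^1$ is a divisor, prime by irreducibility of $M_H(v+v_0)$ and connectedness of the fibers; the identification of its class as $\theta(-v_0)$ and of the $\PP^1$-fiber class $\ell$ as $(\theta(-v_0),\bullet)$ I would quote from \cite{markman-part-two}, Lemma 4.11. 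I expect the only nonformal step to be this globalization in the third paragraph: making $f_t$ an honest morphism and a Grassmannian bundle requires care with the twisted universal sheaf, with base-change local-freeness holding precisely because $h^1$ is locally constant on each stratum, and with verifying that the cokernel-of-evaluation and universal-extension families are inverse as \emph{families} rather than merely fibrewise. Everything else reduces to the cohomological bookkeeping above, and I would follow \cite{markman-reflections} and \cite{yoshioka-brill-noether} for the stability inputs.
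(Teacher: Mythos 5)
The paper does not prove this theorem; it is quoted from \cite{markman-reflections} (Corollary 3.19), \cite{yoshioka-brill-noether}, and, for part (4), \cite{markman-part-two}, Lemma 4.11. Your reconstruction is correct and follows exactly the route of those references --- and the same mechanism the paper itself deploys in Lemma \ref{lemma-class-of-psi-of-P-1} and its surrounding constructions: the two mutually inverse operations (universal extension by copies of $\StructureSheaf{S}$ on one side, cokernel of evaluation of a $t$-dimensional space of sections on the other, with stability of the outputs supplied by Lemma \ref{lemma-stability-of-co-kernel-for-primitive-c-1} and its extension-side companion), globalized over the strata via Le Potier's moduli of coherent systems \cite{le-potier-coherent}, which is precisely how the morphism $f_t$ is realized as a Grassmannian bundle in the source. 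The one cosmetic slip: the canonical universal extension has kernel $\Ext^1(F,\StructureSheaf{S})^*\otimes\StructureSheaf{S}\cong H^1(F)\otimes\StructureSheaf{S}$ rather than $H^1(F)^*\otimes\StructureSheaf{S}$; the two spaces have the same dimension $t$, so nothing downstream is affected.
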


The embedding $G(t,H^0(E))\hookrightarrow M_H(v)$ in part
\ref{thm-fibers-are-grassmannians} sends a $t$-dimensional subspace 
$U\subset H^0(E)$ to the co-kernel of the evaluation
homomorphism $U\otimes \StructureSheaf{S}\rightarrow E$.
The co-kernel is stable, by Lemma 
\ref{lemma-stability-of-co-kernel-for-primitive-c-1}.

\begin{example} 
\label{example-case-degree-e-minus-2-div-1-brill-noether}
The case $n\geq 2$, $(e,e)=-2$, and $\div(e,\bullet)= 1$ 
was considered in Example \ref{example-case-degree-e-minus-2-div-1}.
Additional examples of such prime exceptional divisors 
are provided in part \ref{thm-item-brill-neother-divisor-has-class-minus-v0}
of Theorem \ref{thm-brill-noether}. $M_H(v)^1$ is exceptional, 
since it is prime of degree $-2$, by part 
\ref{thm-item-brill-neother-divisor-has-class-minus-v0}
of Theorem \ref{thm-brill-noether}. 
%by Theorem \ref{thm-a-uniruled-divisor-is-exceptional}.
Examples of prime exceptional 
Brill-Noether divisors, for more general simple and rigid sheaves, 
can be found in the work of Yoshioka \cite{yoshioka-brill-noether}.
\end{example}

\begin{example}
\label{example-degree-e-minus-2-div-2}
The case $(e,e)=-2$, $\div(e,\bullet)=2$, and $[E]=e$. 
%$[E]^\vee=2e^\vee$, and $A_i=A_1$.
\\
Assume that $n$ is congruent to $2$ modulo $4$ and $n\geq 6$. 
Let $S$ be a projective $K3$ surface with a cyclic Picard group generated by
an ample line bundle $H$ of degree $(H,H)=\frac{n-2}{2}$. 
Then $h^i(H^2)=0$, for $i>0$, and $h^0(H^2)=n$. 
Set $X:=M_H(1,H^2,-1)\cong S^{[n]}$. 
Let $E:=M_H(1,H^2,-1)^1$ be the Brill-Noether divisor
in $M_H(1,H^2,-1)$ of sheaves $F$ with $h^1(S,F)>0$.

We recall the explicit definition of $E$.
Let $\pi_i$, $i=1,2$, be the projection from $S\times M_H(1,H^2,-1)$
onto the $i$-th factor. Let $\Z\subset S\times M_H(1,H^2,-1)$
be the universal subscheme, and $I_\Z$ its ideal sheaf.
Then $\F:=I_\Z\otimes \pi_1^*H^2$ is a universal sheaf over 
$S\times \M_H(1,H^2,-1)$.
We have the short exact sequence
\[
0\rightarrow \F \rightarrow \pi_1^*H^2\rightarrow 
\StructureSheaf{\Z}\otimes \pi_1^*H^2\rightarrow 0,
\]
and the homomorphism of rank $n$ vector bundles 
\[
g \ : \ H^0(H^2)\otimes\StructureSheaf{M_H(v)}
\cong \pi_{2_*}(\pi_1^*H^2) \ \ \ \longrightarrow \ \ \ 
\pi_{2_*}\left(\StructureSheaf{\Z}\otimes\pi_1^*H^2\right).
\]
The homomorphism $g$ is injective, since a generic length $n$ 
subscheme of $S$ induces $n$ independent conditions on a
linear system $\linsys{L}$, provided the line bundle $L$ on $S$ satisfies 
$h^0(L)\geq n$.
The Brill-Noether divisor is the zero divisor of $\Wedge{n}g$.
$E$ is an effective divisor of class $\theta(-v_0)$, where
$v_0:=(1,0,1)$ is the class in $K(S)$ of the trivial line bundle,
and $\theta$ is the Mukai isomorphism given in (\ref{eq-Mukai-isomorphism}).

%**********
% Hide
%**********
\hide{
and let $E\subset S^{[n]}$ be the reduced
divisor\footnote{Consider the case $n=2$ and replace the assumption
that $H$ is ample by the assumption that $S$ admits an elliptic fibration
$\pi:S\rightarrow \PP^1$ and $H:=\pi^*\StructureSheaf{\PP^1}(1)$.
In that case the locus described in equation (\ref{eq-Ideal-sheaves-in-E})
is the whole of $S^{[2]}$. This explains our assumption that $n>2$.
}
supported by
\begin{equation}
\label{eq-Ideal-sheaves-in-E}
\{I_Z \ : \ H^1(S,I_Z\otimes H^2)\neq 0\}.
\end{equation}
A length $n$ subscheme $Z$ belongs to $E$, if and only if the restriction 
homomorphism 
$H^0(S,I_Z\otimes H^2)\rightarrow H^0(Z,\StructureSheaf{Z}\otimes H^2)$
is not an isomorphism. Furthermore, the co-kernel is isomorphic to 
$H^1(S,I_Z\otimes H^2)$. It follows that for a generic $Z$ in $E$,
$h^1(S,I_Z\otimes H^2)=1$.

If we identify $S^{[n]}$ with $M(v)$, $v=(1,H^2,-1)$, then the 
divisor $E$ is identified 
as a Brill-Noether locus in $M_H(1,H^2,-1)$, of sheaves $F$ with $h^1(S,F)>0$.
The class of $E$ is equal to $-\theta(v_0)$, where 
$v_0:=(1,0,1)$ is the class in $K(S)$ of the trivial line bundle.
The proof is similar (???) to that of Lemma 4.11 in \cite{markman-part-two}.
Hence, $([E],[E])=(v_0,v_0)=-2$ and $[E]=e$. Furthermore, 
$(\theta(v_0),\theta(x))=(v_0,x)$, which is divisible by $2$, 
for all $x\in v^\perp$. Hence, $\div([E],\bullet)=2$. 
%We conclude that $[E]^\vee=2e^\vee$, by Lemma \ref{lemma-divisibility}.

%In the general case we may identify $S^{[n]}$ with
%the moduli space $M_H(1,H^2,-1)$. The divisor $E$ is identified 
%as a Brill-Noether locus in $M_H(1,H^2,-1)$, of sheaves $F$ with $h^1(S,F)>0$.
Note the Serre-Duality isomorphism $H^1(S,F)\cong\Ext^1(F,\omega_S)^*$.
$E$ admits a rational dominant map to
the singular moduli space $M_H(2,H^2,0)$ of dimension $2n-2$.
The map is well defined on the dense open subset of $E$, consisting
of ideal sheaves $I_Z$, such that $h^1(S,I_Z\otimes H^2)=1$ and 
the unique non-trivial extension 
\[
0\rightarrow \omega_S\rightarrow G\rightarrow I_Z\otimes H^2 \rightarrow 0
\]
results in an $H$-semistable sheaf $G$. 
The map $E\rightarrow M_H(2,H^2,0)$ sends such an ideal sheaf 
$I_Z$ to the $S$-equivalence class of the sheaf $G$.  
The generic fiber is the smooth rational curve $\PP{H}om(\omega_S,G)$.
Hence, $A_i(S^{[n]},E)=A_1$. 

Consider, for example, the case $n=6$. Then $S$ is a generic $K3$
surface of genus $2$, admitting a double cover 
$
f:S\rightarrow \linsys{H}^*\cong\PP^2.
$
We have the isomorphism 
$f^*:H^0(\PP^2,\StructureSheaf{\PP^2}(2))\rightarrow H^0(S,H^2)$. 
The divisor $E$ in this case consists of ideal sheaves of length
$6$ subschemes $Z$, such that $Z$ is contained in $f^*(D)$,
for some $D\in \linsys{\StructureSheaf{\PP^2}(2)}\cong\PP^5$. 
%***********
% End \hide
%***********
}
\end{example}

\begin{lem}
\label{lemma-brill-noether-divisor-is-exceptional}
$E$ is a prime exceptional divisor
of class $e:=\theta(-v_0)$.
In particular, $(e,e)=-2$ and $\div(e,\bullet)=2$.
\end{lem}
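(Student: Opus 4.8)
The plan is to compute the numerical invariants directly in the Mukai lattice and then to prove that $E$ is a reduced, irreducible divisor of class $\theta(-v_0)$, where $\theta$ is Mukai's isometry (\ref{eq-Mukai-isomorphism}) and $v_0:=(1,0,1)$. First I would record that $v=(1,H^2,-1)$ and $v_0$ satisfy $(v,v_0)=(2h,0)-1\cdot(-1)-(-1)\cdot 1=0$, so $-v_0\in v^\perp$ and $e:=\theta(-v_0)$ is defined; since $\theta$ is an isometry, $(e,e)=(v_0,v_0)=-2$. For the divisibility, note that for $x=(r,c,s)\in v^\perp$ one has $(v_0,x)=-(r+s)$, while $(v,x)=0$ forces $s=r+2(h,c)$; hence $(v_0,x)=-2(r+(h,c))$ is always even and takes every even value, so $\div(e,\bullet)=2$. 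This also clarifies the hypotheses: $(h,h)=(n-2)/2$ must be an even integer $\geq 2$, which is exactly $n\equiv 2$ (mod $4$) and $n\geq 6$.

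Next I would identify $[E]$ with $e$ by a $K$-theoretic computation with the universal sheaf $\F=I_\Z\otimes\pi_1^*H^2$. Since $E$ is the zero divisor of $\Wedge{n}g$ and $\A=H^0(H^2)\otimes\StructureSheaf{X}$ is trivial, $[E]=c_1(\B)-c_1(\A)=c_1(\B)$. Pushing the short exact sequence forward gives $\pi_{2_!}[\F]=[\A]-[\B]$, so $c_1(\pi_{2_!}[\F])=-c_1(\B)$. Substituting $x=-v_0$ into $\theta(x)=c_1(\pi_{2_!}\{\pi_1^!(x^\vee)\otimes[\F]\})$ and using $v_0^\vee=v_0$ and $\pi_1^!\StructureSheaf{S}=\StructureSheaf{S\times X}$ yields $\theta(-v_0)=-c_1(\pi_{2_!}[\F])=c_1(\B)=[E]$. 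In particular $[E]=e$ is \emph{primitive}, as $v_0$ is primitive in $v^\perp$. (This is the content of \cite{markman-part-two}, Lemma 4.11, compare Theorem \ref{thm-brill-noether}, part \ref{thm-item-brill-neother-divisor-has-class-minus-v0}.)

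The heart of the argument is primality. I would \emph{avoid} the Brill--Noether fibration of Theorem \ref{thm-brill-noether}, because $v+v_0=(2,2h,0)=2(1,h,0)$ is imprimitive, so the natural target moduli space carries strictly semistable sheaves and the clean $\PP^1$-bundle structure is unavailable. Instead I would use the description $E=\{Z\in S^{[n]} : h^0(I_Z\otimes H^2)>0\}=\{Z : Z\subset C\ \text{for some}\ C\in\linsys{H^2}\}$ and analyze the incidence variety $\Gamma\subset S^{[n]}\times\linsys{H^2}$ of pairs $(Z,C)$ with $Z\subset C$. For $n\geq 6$ the line bundle $H^2$ is base-point free, so by Bertini a general $C\in\linsys{H^2}\cong\PP^{n-1}$ is smooth and integral of genus $g=n-1$; the fibre of $\Gamma\to\linsys{H^2}$ over such $C$ is $\Sym^n C$, irreducible of dimension $n$. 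Hence $\Gamma$ is irreducible of dimension $2n-1$, and the first projection is birational onto $E$ (a general $Z\in E$ lies on a unique $C$, i.e. $h^0(I_Z\otimes H^2)=1$). Thus $E$ is irreducible of codimension one. Since the effective divisor $Z(\Wedge{n}g)$ is supported on $E$ and carries the primitive class $e$, its multiplicity along $E$ is one; therefore $Z(\Wedge{n}g)=E$ is reduced with $[E]=e$, i.e. $E$ is prime.

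Finally, being a prime divisor of negative Beauville--Bogomolov degree $(e,e)=-2$, $E$ is exceptional by the remark following Definition \ref{def-rational-exceptional}. The hard part is the primality step: one must establish both irreducibility and multiplicity one, and the imprimitivity of $v\pm v_0$ is precisely what obstructs the standard fibration method. The incidence-variety route resolves this, reducing irreducibility to that of $\Sym^n C$ and of $\linsys{H^2}$, while multiplicity one follows formally from the primitivity of $e$ already proved. The only routine facts to check with care are that a general member of $\linsys{H^2}$ is smooth and integral and that a general $Z\in E$ imposes a single condition on $\linsys{H^2}$, which is where $n\geq 6$ and $n\equiv 2$ (mod $4$) are used.
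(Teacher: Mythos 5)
Your route is genuinely different from the paper's, and the comparison is worth recording. The paper does \emph{not} abandon the Brill--Noether fibration: it works with the target $M_H^{ss}(2,H^2,0)$ anyway, invoking the Kaledin--Lehn--Sorger irreducibility theorem and a coherent-systems dimension count to show that the locus $U$ of $H$-slope-stable sheaves $F$ with $h^1(F)=0$ is dense, and then exhibits a $\PP^1$-bundle $G(1,U)\rightarrow U$ mapping isomorphically onto a dense open subset of $E$ (Lemma \ref{lemma-class-of-psi-of-P-1}); reducedness and irreducibility of $E$ then follow because the prime divisor $E'=\overline{\psi(G(1,U))}$ already carries the full primitive class $\theta(-v_0)$. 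So your diagnosis that the fibration is ``unavailable'' is too strong -- what fails is only the clean statement of Theorem \ref{thm-brill-noether}, which assumes primitive determinant. Your incidence-variety argument with $\Gamma\subset S^{[n]}\times\linsys{H^2}$ is more elementary: it bypasses both the KLS theorem and coherent systems, reducing everything to curves in $\linsys{H^2}$. Your lattice computations ($(e,e)=-2$, $\div(e,\bullet)=2$, $[E]=\theta(-v_0)$ via the universal sheaf, primitivity of $e$) are correct and agree with the paper, and the final multiplicity-one step from primitivity is fine.

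The gap is the sentence ``Hence $\Gamma$ is irreducible of dimension $2n-1$.'' Knowing that the fibre over a \emph{general} $C$ is $\Sym^n C$, irreducible of dimension $n$, does not preclude irreducible components of $\Gamma$ lying entirely over the discriminant of $\linsys{H^2}$. Since $\Pic(S)=\Integers H$, the system $\linsys{H^2}$ contains reducible members $C_1+C_2$ with $C_i\in\linsys{H}$ and non-reduced members $2C_0$, and the fibre of $\Gamma$ over such a $C$ is the Hilbert scheme of length-$n$ subschemes of a singular, possibly non-reduced curve. To close the gap: $\Gamma$ is the zero locus of a section of a rank-$n$ bundle on the $(3n-1)$-dimensional $S^{[n]}\times\linsys{H^2}$, so every component has dimension at least $2n-1$; and every fibre of $\Gamma\rightarrow\linsys{H^2}$ has dimension at most $n$, because the punctual Hilbert scheme of any $C\in\linsys{H^2}$ at a point of multiplicity $n_i$ embeds in that of the smooth surface $S$, which has dimension $n_i-1$ by Brian\c{c}on, giving the bound $k+\sum_i(n_i-1)=n$ over each support stratum. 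Together these force every component of $\Gamma$ to dominate $\linsys{H^2}$ and hence to meet the irreducible open set over the smooth locus, whence $\Gamma$ is irreducible. This verification, together with the Riemann--Roch computation showing $h^0(I_Z\otimes H^2)=1$ for general $Z$ on a general smooth $C$ (a general divisor of degree $n=g(C)+1$ is nonspecial, so $h^0(\omega_C(-Z))=0$ and only the section cutting out $C$ survives), must be supplied; as written the argument is incomplete at exactly its load-bearing step.
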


The rest of section \ref{sec-Brill-Neother-exceptional-divisors} 
is devoted to the 
proof of Lemma \ref{lemma-brill-noether-divisor-is-exceptional}.

\begin{lem}
\label{lemma-stability-criteria}
Let $F$ be an $H$-slope-stable sheaf of class $(2,H^2,0)$.
\begin{enumerate}
\item
\label{lemma-item-Q-is-torsion-free}
For every non-zero section $s\in H^0(F)$, the  evaluation
homomorphism $s:\StructureSheaf{S}\rightarrow F$ has a rank $1$ torsion free
co-kernel sheaf.
\item
\label{lemma-item-G-is-H-slope-stable}
If $\epsilon\in \Ext^1(\StructureSheaf{S},F)$ is a non-zero class and
\[
0\rightarrow \StructureSheaf{S}\rightarrow G_\epsilon\rightarrow F\rightarrow 0
\]
the corresponding extension, then the sheaf $G_\epsilon$ is $H$-slope-stable.
\end{enumerate}
\end{lem}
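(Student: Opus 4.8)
The plan is to derive both parts purely from the $H$-slope-stability of $F$, together with the standing hypothesis $\Pic(S)=\Integers H$, which forces every rank-one reflexive sheaf on $S$ to be a power of $H$. Write $h:=c_1(H)$ and $\delta:=(h,h)>0$. The only quantitative inputs I need are the slope computations $\mu_H(F)=\frac{(2h,h)}{2}=\delta$, $\mu_H(\StructureSheaf{S})=0$, and, for the extension $G:=G_\epsilon$ (of class $(3,2h,1)$, so $c_1(G)=2h$ and $\rank(G)=3$), $\mu_H(G)=\frac{(2h,h)}{3}=\frac{2\delta}{3}$. In particular $0<\mu_H(G)<\mu_H(F)$, and this straddling of slopes is what drives the whole argument.

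For part \ref{lemma-item-Q-is-torsion-free}, I would first note that $s$ is injective, since $\StructureSheaf{S}$ is torsion free and $S$ is integral, so the cokernel $Q$ has rank one. Let $M$ be the saturation of $s(\StructureSheaf{S})$ in $F$; it is a rank-one saturated subsheaf, hence reflexive, hence a line bundle on the smooth surface $S$, so $M\cong H^{\otimes a}$. Since $M\supseteq \StructureSheaf{S}$ it carries a non-zero section, whence $a\geq 0$; and $H$-slope-stability of $F$ gives $a\delta=\mu_H(M)<\mu_H(F)=\delta$, forcing $a=0$. Thus $M=\StructureSheaf{S}$, i.e. $\StructureSheaf{S}$ is already saturated in $F$ and $Q=F/\StructureSheaf{S}$ is torsion free.

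For part \ref{lemma-item-G-is-H-slope-stable}, note first that $G$ is torsion free, being an extension of torsion-free sheaves. Assuming $G$ is not $H$-slope-stable, I would select a saturated subsheaf $A\subset G$ with $0<\rank(A)<3$ and $\mu_H(A)\geq \mu_H(G)=\frac{2\delta}{3}$, and exclude both ranks. If $\rank(A)=1$, then $A\cong H^{\otimes a}$ with $a\geq 1$; the composite $A\to G\to F$ is either nonzero, producing (by injectivity of a map from a line bundle into a torsion-free sheaf) a rank-one subsheaf of the stable $F$ of slope $\geq\delta$, which is impossible, or zero, which forces $H^{\otimes a}\hookrightarrow \StructureSheaf{S}$, impossible for $a\geq 1$. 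If $\rank(A)=2$, the torsion-free quotient $B:=G/A$ has rank one with $\mu_H(B)\leq \frac{2\delta}{3}$, so $c_1(B)=bh$ with $b\leq 0$; I then examine the composite $\StructureSheaf{S}\to G\to B$. If it vanishes, then $\StructureSheaf{S}\subseteq A$ and $A/\StructureSheaf{S}\hookrightarrow F$ is a rank-one subsheaf of slope $c_1(A)\cdot h\geq \frac{4\delta}{3}>\delta$, contradicting stability of $F$. If it is nonzero, then $H^0(B)\neq 0$ together with $B\hookrightarrow B^{**}=H^{\otimes b}$, $b\leq 0$, forces $b=0$ and $B\cong\StructureSheaf{S}$, whence $\StructureSheaf{S}\to B\cong\StructureSheaf{S}$ is an isomorphism and the extension splits, contradicting $\epsilon\neq 0$.

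The routine slope bookkeeping is immediate once $\Pic(S)=\Integers H$ is invoked; the genuinely delicate step is the rank-two case of part \ref{lemma-item-G-is-H-slope-stable}, where the splitting of the defining sequence must be extracted from a nonzero composite $\StructureSheaf{S}\to G/A$. This is exactly where the hypothesis $\epsilon\neq 0$ is consumed, and it is the crux of the proof. I would also take care that passing to saturations never raises a subsheaf's slope below the relevant threshold, so that each slope inequality is applied with the correct strictness (strict for subsheaves of the stable $F$, non-strict at the destabilizing level for $G$).
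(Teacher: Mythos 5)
Your proof is correct in substance, and part \ref{lemma-item-G-is-H-slope-stable} follows a genuinely different route from the paper's. The paper takes a destabilizing subsheaf $G'\subset G_\epsilon$ of \emph{maximal} slope, rules out rank one by observing that the image $\bar G$ of $G'$ in $F$ would then be a rank-one subsheaf of slope $\geq \mu(G_\epsilon)$, hence of determinant $H^{k}$ with $k\geq 1$, destabilizing $F$; it then deduces that $G'$ maps isomorphically onto a rank-two subsheaf $\bar G\subset F$ with $\det\bar G\cong H^{2}$, so that $Q:=F/\bar G$ is zero-dimensional, and concludes from $\Ext^1(Q,\StructureSheaf{S})=0$ that the restriction $\Ext^1(F,\StructureSheaf{S})\rightarrow\Ext^1(\bar G,\StructureSheaf{S})$ is injective, while $\epsilon$ pulls back to zero on $\bar G$ (the lift $G'$ splits the pulled-back extension) --- forcing $\epsilon=0$. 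You instead run a case analysis on the rank of a saturated destabilizer $A$ and, in the rank-two case, study the quotient $B=G_\epsilon/A$ and the composite $\StructureSheaf{S}\rightarrow B$, extracting an explicit retraction $G_\epsilon\rightarrow B\cong\StructureSheaf{S}$ of the inclusion. This is more elementary (no local-to-global $\Ext$ vanishing is needed) and consumes the hypothesis $\epsilon\neq 0$ at the analogous place. Your part \ref{lemma-item-Q-is-torsion-free} via saturation likewise repackages the paper's case analysis on the torsion of the cokernel: the paper disposes of zero-dimensional torsion $T$ by $\Ext^1(T,\StructureSheaf{S})=0$, which your step ``$\StructureSheaf{S}\subseteq M$ with $c_1(M)=0$ forces equality'' replaces.

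One imprecision to repair: a saturated rank-one subsheaf of a merely torsion-free sheaf need not be reflexive (for instance $I_Z\oplus 0\subset I_Z\oplus I_W$ is saturated with torsion-free quotient, yet $I_Z$ is not reflexive), and $F$ and $G_\epsilon$ are not assumed locally free here. So ``saturated, hence reflexive, hence $H^{\otimes a}$'' is not justified as written. The fix is routine: a rank-one torsion-free sheaf is of the form $I_Z\otimes H^{\otimes a}$, and only $c_1=ah$ enters the slope inequalities; in part \ref{lemma-item-Q-is-torsion-free} the condition $h^0(I_Z\otimes H^{\otimes a})\neq 0$ then forces $a=0$ and $Z=\emptyset$, and the same remark applies to $A$ in the rank-one case and to $B$ (where your argument already passes through $B^{**}$). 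With that adjustment every step goes through.
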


\begin{proof}
\ref{lemma-item-Q-is-torsion-free})
Denote the co-kernel of $s$ by $Q_s$.
If $T$ is a subsheaf of $Q_s$ with zero-dimensional support, then 
$\Ext^1(T,\StructureSheaf{S})=0$. Thus, the inverse image of $T$ in $F$
would contain a subsheaf isomorphic to $T$. But $F$ is torsion free. 
Hence, the dimension of the support of any subsheaf of $Q_s$ is at least $1$.

If $T$ is a subsheaf of $Q_s$ of one-dimensional support, then its inverse 
image in
$F$ is a rank one subsheaf $F'$ of $F$ with effective determinant line bundle.
Hence, $\det(F')\cong H^k$, for some positive integer $k$. 
This contradicts the slope-stability of $F$. Hence, $Q_s$ is torsion-free.

\ref{lemma-item-G-is-H-slope-stable})
Assume that $G_\epsilon$ is $H$-slope-unstable, and let $G'\subset G_\epsilon$
be an $H$-slope-stable  subsheaf of maximal slope of rank $r\leq 2$.
If $G'$ maps to zero in $F$, then $G'$ is a subsheaf of 
$\StructureSheaf{S}$, and can not destabilize $G_\epsilon$. 
For the same reason, the slope of the image $\bar{G}$ of $G'$  
satisfies $\mu(\bar{G})\geq \mu(G')$. 
Thus $\rank(\bar{G})\neq 1$, since otherwise $\bar{G}$ would destabilize $F$.
Hence, $\rank(G')=2$, $G'$ maps isomorphically onto $\bar{G}$, 
and $\det(G')\cong H^k$, 
for $2/3<k\leq 2$. It follows that $k=2$.

Set $Q:=F/\bar{G}$. 
We get the short exact sequence
\[
0\rightarrow \bar{G}\RightArrowOf{\iota} F \rightarrow Q\rightarrow 0.
\]
$\Ext^1(Q,\StructureSheaf{S})$ vanishes, since $Q$ has zero-dimensional 
support. 
Hence, $\iota^*:\Ext^1(F,\StructureSheaf{S})\rightarrow 
\Ext^1(\bar{G},\StructureSheaf{S})$
is injective. 
On the other hand, the pullback $\iota^*(\epsilon)$ 
vanishes in $\Ext^1(\bar{G},\StructureSheaf{S})$.
This contradicts the assumption that $\epsilon$ is a non-zero class.
\end{proof}

The moduli space $M_H^{ss}(2,H^2,0)$, of $H$-semi-stable sheaves of class 
$(2,H^2,0)$,
is known to be an irreducible normal projective variety of dimension
$2n-2$. Furthermore, the singular locus is equal to the strictly semi-stable 
locus and it 
has co-dimension 2, if $n=6$, and $4$, if $n>6$ (\cite{kaledin-lehn-sorger}, 
Theorem 4.4
and Theorem 5.3). 
A generic $H$-stable sheaf of class $(2,H^2,0)$ is 
$H$-slope-stable. This is equivalent to the corresponding statement
for $M_H(2,0,1-(n/2))$, and follows from the following lemma.
%The locus of non-locally free sheaves in $M_H^{ss}(2,H^2,0)$
%has co-dimension $1$. 

\begin{lem}
\label{lemma-existence-of-slope-stable-vb-of-rank-2}
Let $s$ be an integer $\geq 2$. Then the set of
$H$-stable locally free sheaves of class $(2,0,-s)$ is 
Zariski dense in $M_H^{ss}(2,0,-s)$. Furthermore, 
any $H$-stable locally free sheaf of class $(2,0,-s)$ is 
$H$-slope-stable. 
\end{lem}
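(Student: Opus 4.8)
The second assertion is immediate from Lemma \ref{lemma-locally-free-H-stable-of-rank-2-is-slope-stable}. A locally free $H$-stable sheaf $F$ of class $(2,0,-s)$ has rank $2$ and $c_1(F)=0=\frac{t\cdot 2}{2}h$ with $t=0$, so the final (``in particular'') clause of that Lemma applies and gives that $F$ is $H$-slope-stable. Since conversely $\mu$-stability implies $H$-stability for any class, the locus of locally free $H$-stable sheaves inside the stable locus $M_H(2,0,-s)$ coincides with the locus of locally free $\mu$-stable sheaves. This locus is open, because its complement, the non-locally-free locus, is closed in any flat family. Hence for the first assertion it suffices to prove that it is dense in the irreducible projective variety $M^{ss}_H(2,0,-s)$, of dimension $(v,v)+2=4s+2$ (where $(v,v)=4s$), and irreducibility is the known structure result for moduli of semistable sheaves on a $K3$ surface with $(v,v)\geq -2$.

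Granting irreducibility, density follows from two dimension bounds. First, for the \emph{strictly semistable locus}: a strictly semistable sheaf of class $(2,0,-s)$ is $S$-equivalent to a sum $I_{Z_1}\oplus I_{Z_2}$ of rank one, degree zero, torsion-free sheaves of equal reduced Hilbert polynomial, which forces $Z_1,Z_2$ to share the length $a=s/2+1$ (so this locus is empty unless $s$ is even). The $S$-equivalence classes therefore form $\Sym^2\!\big(M_H(1,0,1-a)\big)=\Sym^2(S^{[a]})$, of dimension $4a=2s+4$, and $2s+4<4s+2$ exactly because $s\geq 2$. Thus the stable locus is dense and open in $M^{ss}_H(2,0,-s)$. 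Second, for the \emph{non-locally-free locus}: if $F$ is non-locally-free and $H$-stable, its double dual $F^{**}$ is locally free of rank $2$ with $c_1=0$, and $F=\ker(F^{**}\twoheadrightarrow T)$ for a finite-length quotient $T$ of length $\ell:=\mathrm{length}(F^{**}/F)\geq 1$, so $F^{**}$ has class $v_\ell:=(2,0,-(s-\ell))$. Stratifying by $\ell$ and fixing $F^{**}=G$, the sheaves $F$ are the points of $\mathrm{Quot}^{\ell}(G)$, of dimension at most $3\ell$ (the generic quotient being supported at $\ell$ distinct points, each contributing a point of $\PP^1$), while $G$ varies in a family of dimension at most $(v_\ell,v_\ell)+2=4(s-\ell)+2$. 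Summing, the non-locally-free locus has dimension at most $\max_{\ell\geq 1}\big(4(s-\ell)+2+3\ell\big)=4s+1<4s+2$. Consequently the locally free stable sheaves form a nonempty dense open subset, which by the first paragraph is exactly the locally free $\mu$-stable locus.

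The heart of the argument, and the main obstacle, is the dimension estimate for the non-locally-free locus. Two points need care. First, the double duals $F^{**}$ are only guaranteed to be $\mu$-semistable (they may fail to be Gieseker-semistable), so one must check that the family of $\mu$-semistable locally free sheaves of class $v_\ell$ still has dimension at most $(v_\ell,v_\ell)+2$; this follows from boundedness of $\mu$-semistable sheaves of fixed class together with the observation that the Gieseker-semistable ones already fill out a variety of that dimension while the strictly $\mu$-semistable but Gieseker-unstable ones, having a filtration by Gieseker-stable factors of equal slope, sweep out strata of strictly smaller dimension. Second, one needs the bound $\dim\mathrm{Quot}^{\ell}(G)\leq 3\ell$ for length-$\ell$ quotients of a rank-$2$ bundle on $S$, including its punctual strata. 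Both are standard facts about sheaves on surfaces, but they are where the genuine work lies; the irreducibility, smoothness of the stable locus, and normality inputs are quoted from the cited results on moduli of sheaves on $K3$ surfaces.
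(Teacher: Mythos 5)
Your treatment of the second assertion matches the paper exactly (it is quoted from Lemma \ref{lemma-locally-free-H-stable-of-rank-2-is-slope-stable}). For the density statement the paper gives essentially no proof: it says the argument is ``similar to that of Lemma \ref{lem-codimension-of-Exc}'' (i.e.\ a dimension count over the strata of Jun Li's morphism to the Uhlenbeck--Yau compactification, indexed by the $\mu$-stable factors of $\mathrm{gr}^{\mu}(F)^{**}$ and a $0$-cycle) and cites O'Grady's Proposition 3.0.5 for $s=2$. Your route --- stratifying directly by the pair $(F^{**},\, F^{**}\twoheadrightarrow T)$ --- is genuinely different and more elementary, but it has a real gap precisely at the point you flag.

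The gap is your claim that the family of locally free $\mu$-semistable sheaves of class $v_\ell=(2,0,\ell-s)$ has dimension at most $(v_\ell,v_\ell)+2=4(s-\ell)+2$. This is false for $s-\ell\leq 1$. Since $\Pic(S)=\Integers H$, a locally free strictly $\mu$-semistable $G$ of class $v_\ell$ sits in an extension $0\to\StructureSheaf{S}\to G\to I_W\to 0$ with $|W|=s-\ell+2$; since $\dim\Ext^1(I_W,\StructureSheaf{S})=h^1(I_W)=|W|-1$ and Cayley--Bacharach is automatic for $K_S\cong\StructureSheaf{S}$, these $G$ form a family of dimension $2|W|+(|W|-2)=3(s-\ell)+4$, which exceeds $4(s-\ell)+2$ exactly when $s-\ell\leq 1$ (e.g.\ for $v_\ell=(2,0,0)$ one gets a $4$-dimensional family versus your bound of $2$). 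Your asserted ``observation'' that the strictly $\mu$-semistable, Gieseker-unstable sheaves sweep out strata of smaller dimension is therefore incorrect. The corrected total for these strata is $3(s-\ell)+4+3\ell=3s+4$, which is still $<4s+2$ when $s\geq 3$, so your count survives there; but for $s=2$ it gives $10=10$ and density does not follow. To close the $s=2$ case you must use the Gieseker-stability of $F$ itself: $F\cap\StructureSheaf{S}=I_{W''}$ with $|W''|\leq\ell$ and $\StructureSheaf{S}/I_{W''}\hookrightarrow F^{**}/F$, while stability of $F$ forces $\chi(I_{W''})<\chi(F)/2$, i.e.\ $|W''|>1+s/2$; for $s=\ell=2$ this is impossible, so these strata contribute no stable sheaves. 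Either carry out that check, or quote O'Grady for $s=2$ as the paper does. (Your bound $\dim\mathrm{Quot}^{\ell}(G)\leq 3\ell$ and the strictly Gieseker-semistable count $\dim\Sym^2(S^{[s/2+1]})=2s+4<4s+2$ are both fine.)
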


The proof of the density statement is similar to that of Lemma 
\ref{lem-codimension-of-Exc} and is omitted. 
The case $s=2$ is proven in \cite{ogrady-10}, Proposition 3.0.5.
The second statement is a special case of 
Lemma \ref{lemma-locally-free-H-stable-of-rank-2-is-slope-stable}.
%We conclude that the generic point in $M_H^{ss}(2,H^2,0)$ represents a
%locally free $H$-stable sheaf. Such a sheaf is $H$-slope-stable, by Lemma
%\ref{lemma-locally-free-H-stable-of-rank-2-is-slope-stable}.

\begin{lem}
%\begin{enumerate}
%\item
%\label{lemma-item-Brill-Noether-locus-is-a-divisor}
%The Brill-Noethr locus $\M_H(1,H^2,-1)^1$ is of pure co-dimension $1$
%in $\M_H(1,H^2,-1)$.
%\item
%\label{lemma-item-generic-F-has-vanishing-h-1}
Let $U\subset M_H^{ss}(2,H^2,0)$ be the subset parametrizing 
$H$-slope-stable sheaves $F$ with $h^1(F)=0$. Then $U$ is a 
Zariski-dense open subset. 
%\end{enumerate}
\end{lem}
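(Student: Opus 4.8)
The plan is to prove openness and density separately, reducing density to the nonemptiness of $U$ via the irreducibility of $M_H^{ss}(2,H^2,0)$ recalled just above from \cite{kaledin-lehn-sorger}. Openness is routine: $H$-slope-stability is an open condition in flat families, and slope-stable sheaves are Gieseker-stable, so they constitute an open subscheme $M_H^s$ of $M_H^{ss}(2,H^2,0)$ carrying a (quasi-)universal sheaf; over it $h^1$ is upper semicontinuous, so the locus $\{h^1=0\}$ is open and hence $U$ is open. Since $M_H^{ss}(2,H^2,0)$ is irreducible of dimension $2n-2$, any nonempty open subset is dense, and it remains only to exhibit a single slope-stable $F$ of class $(2,H^2,0)$ with $h^1(F)=0$.

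To produce such $F$ I would run the extension construction underlying Lemma~\ref{lemma-stability-criteria} in reverse. Every class $(2,H^2,0)$ has $\chi=2$, and any nonzero section of such a slope-stable sheaf has rank-one torsion-free cokernel by Lemma~\ref{lemma-stability-criteria}(\ref{lemma-item-Q-is-torsion-free}), necessarily of class $(1,H^2,-1)$; thus each such $F$ sits in a sequence $0\to\StructureSheaf{S}\to F\to I_Z\otimes H^2\to 0$ with $Z$ of length $n$, and from $H^1(\StructureSheaf{S})=0$ together with $\chi(I_Z\otimes H^2)=0$ and $h^2(I_Z\otimes H^2)=0$ one reads off $h^0(F)=1+h^1(I_Z\otimes H^2)$, so that $h^1(F)=h^1(I_Z\otimes H^2)-1$ once $F$ is known to be slope-stable (whence $\Hom(F,\StructureSheaf{S})=0$ and $h^2(F)=0$). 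Conversely I would fix a length-$n$ scheme $Z$ with $h^0(I_Z\otimes H^2)=h^1(I_Z\otimes H^2)=1$, i.e. a point of the top stratum of the Brill--Noether divisor $E$ of Example~\ref{example-degree-e-minus-2-div-2} (which is a nonempty effective divisor since $h^0(H^2)=n$), and let $F$ be the extension given by the nonzero class $\epsilon\in\Ext^1(I_Z\otimes H^2,\StructureSheaf{S})\cong H^1(I_Z\otimes H^2)^*$, unique up to scalar. Granting slope-stability of $F$, the displayed formula yields $h^1(F)=0$, so $[F]\in U$.

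The main work is slope-stability of $F$, which amounts to excluding a saturated rank-one subsheaf $F'\subset F$ with $c_1(F')=kh$ and $\mu(F')\ge\mu(F)=(h,h)$, i.e. $k\ge 1$; projecting to $I_Z\otimes H^2\subset H^2$ forces $k\in\{1,2\}$. The case $k=2$ cannot occur when $\epsilon\neq 0$: the composite $F'\hookrightarrow I_Z\otimes H^2$ would be injective with zero-dimensional cokernel $T$, so the restriction map $\Ext^1(I_Z\otimes H^2,\StructureSheaf{S})\to\Ext^1(F',\StructureSheaf{S})$, which is injective because $\Ext^1(T,\StructureSheaf{S})=0$, would kill $\epsilon$, contradicting non-splitness. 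The delicate case is $k=1$ (equal slope): such an $F'$ produces a rank-one subsheaf of $I_Z\otimes H^2$ of class $h$, whose saturation in $H^2$ is $H$ twisted down by a curve in $\linsys{H}$, which forces $Z$ to lie on a member of $\linsys{H}$. I expect this to be the principal obstacle, and I would dispose of it by choosing $Z$ off the locus of length-$n$ subschemes lying on a curve of $\linsys{H}$: since $\dim\linsys{H}=n/2$, such subschemes form a family of dimension $\tfrac{3n}{2}$, which is strictly smaller than $\dim E=2n-1$ for $n\ge 6$, while $\{h^1\ge 2\}\subset E$ is of smaller dimension by the standard Brill--Noether codimension estimate. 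Hence a $Z$ with $h^1(I_Z\otimes H^2)=1$ lying on no member of $\linsys{H}$ exists; for that $Z$ no $k=1$ subsheaf occurs, $F$ is slope-stable, and $U\neq\emptyset$, which completes the proof.
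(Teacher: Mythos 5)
Your reduction of the lemma to exhibiting a single $H$-slope-stable $F$ of class $(2,H^2,0)$ with $h^1(F)=0$ is sound (openness of slope-stability, semicontinuity of $h^1$, irreducibility of $M_H^{ss}(2,H^2,0)$), and your case $k=2$ correctly reuses the mechanism of Lemma \ref{lemma-stability-criteria}: the cokernel $T$ of $F'\hookrightarrow I_Z\otimes H^2$ is zero-dimensional, $\Ext^1(T,\StructureSheaf{S})=0$ makes restriction of extension classes injective, and the lift to $F$ forces $\epsilon=0$. The gap is in the case $k=1$, which is exactly where the difficulty of the lemma lives. You assert that a rank-one subsheaf of $I_Z\otimes H^2$ of class $h$ forces $Z$ to lie on a member of $\linsys{H}$. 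This is false: for \emph{every} $D\in\linsys{H}$ and every $Z$, multiplication by the section cutting out $D$ embeds $I_Z\otimes H^2(-D)\cong I_Z\otimes H$ into $I_Z\otimes H^2$ as a rank-one subsheaf of class $h$, so such subsheaves always exist and impose no condition on $Z$. What must be excluded is not their existence but their \emph{lifting} to $F$, i.e.\ the vanishing of $\epsilon$ restricted to such a subsheaf; and here the $k=2$ mechanism breaks down because the cokernel now has one-dimensional support (its $c_1$ is $h\neq 0$), so $\Ext^1$ of it into $\StructureSheaf{S}$ need not vanish and the restriction map on $\Ext^1$ can have a kernel. Concretely, $F$ is slope-stable if and only if $\Hom(F,H)=0$, and the long exact sequence identifies $\Hom(F,H)$ with the kernel of $H^0(H)\RightArrowOf{\cup\epsilon}\Ext^1(I_Z\otimes H^2,H)$; by Serre duality the required vanishing says that multiplication by every nonzero $s\in H^0(H)$ is nonzero as a map $H^1(I_Z\otimes H)\rightarrow H^1(I_Z\otimes H^2)$, a one-dimensional target. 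Nothing in your choice of $Z$ off the locus of subschemes lying on a curve in $\linsys{H}$ addresses this, and establishing it for all $s$ simultaneously is precisely the hard content you would need to supply. (Your count $\dim\linsys{H}=n/2$ is also off --- it equals $(n+2)/4$ here --- though the inequality you draw from it survives.)

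The paper sidesteps this issue entirely with a different argument: assuming the generic value $t$ of $h^1$ on the slope-stable locus is positive, it forms the $\PP^{t-1}$-bundle of pairs $(F,\ell)$ with $\ell\subset\Ext^1(F,\StructureSheaf{S})$, identifies it with an open subset of Le Potier's moduli space of coherent systems, maps it to $M_H(3,H^2,1)$ by taking extensions (slope-stable by Lemma \ref{lemma-stability-criteria}, where the relevant subsheaves have rank $2$ and the cokernels really are zero-dimensional), and derives a contradiction from $\dim(\PP)=2n+t-3$ against the bound $\dim M_H(3,H^2,1)+t+2=2n+t-6$. The nonemptiness of the slope-stable locus is supplied beforehand by Lemma \ref{lemma-existence-of-slope-stable-vb-of-rank-2}; and it is only \emph{after} the present lemma that the paper concludes, in Lemma \ref{lemma-class-of-psi-of-P-1}, that the generic nontrivial extension of $I_Z\otimes H^2$ by $\StructureSheaf{S}$ is slope-stable --- which is the very fact your construction takes as input.
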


\begin{proof}
%\ref{lemma-item-generic-F-has-vanishing-h-1})
Let $M^{\mu s}\subset M_H^{ss}(2,H^2,0)$ be the Zariski open subset of 
$H$-slope-stable sheaves. Note that $M^{\mu s}$ is a dense subset, by Lemma 
\ref{lemma-existence-of-slope-stable-vb-of-rank-2}.
Let $t$ be the minimum
of the set $\{h^1(F) \ : \  [F]\in M^{\mu s}\}$. 
It suffices to prove that $t=0$. Assume that $t>0$.
Let $U'\subset M^{\mu s}$ be the Zariski-open subset 
of sheaves $F$ with $h^1(F)=t$. 
Let $p:\PP\rightarrow U'$ be the projective bundle with fiber 
$\PP{H}^1(F)^*$ over $F$. $\PP$ is a 
Zariski open subset of the moduli space of coherent systems
constructed by Le Potier in \cite{le-potier-coherent}.
A point in $\PP$ parametrizes an equivalence class 
of a pair $(F,\ell)$, consisting of an $H$-slope-stable sheaf $F$ of 
class $(2,H^2,0)$ and a one-dimensional subspace 
$\ell\subset \Ext^1(F,\StructureSheaf{S})$. 
We have $\dim(\PP)=\dim(U')+t-1=2n+t-3$.

There exists a natural morphism 
\[
f : \PP \ \ \ \longrightarrow \ \ \ M_H(3,H^2,1),
\]
sending a pair $(F,\ell)$ to the isomorphism class of the 
sheaf $G_\ell$ in the natural extension
$
0\rightarrow \ell^*\otimes\StructureSheaf{S}\rightarrow G_\ell\rightarrow F
\rightarrow 0.
$
$G_\ell$ is $H$-slope-stable, by Lemma \ref{lemma-stability-criteria} part 
\ref{lemma-item-G-is-H-slope-stable}.

Now $h^0(G_\ell)=h^0(F)+1=t+3$. Furthermore, the data $(F,\ell)$
is equivalent to the data $(G_\ell,\ell)$, where $\ell$ is a one-dimensional 
subspace
of $H^0(G_\ell)$. 
Hence, the fiber of $f$, over the isomorphism class
of $G_\ell$, has dimension at most $t+2$. 
The dimension of $M_H(3,H^2,1)$ is $2n-8$.
Thus, $\dim(\PP)\leq 2n+t-6.$
This contradicts the above computation of the dimension of $\PP$.
%
%\ref{lemma-item-Brill-Noether-locus-is-a-divisor}) The Euler characteristic 
%$\chi(Q)$ vanishes,
%for every sheaf $Q$ in $\M_H(1,H^2,-1)$. Thus, it suffices to prove that 
%the Brill-Noethr locus $\M_H(1,H^2,-1)^1$ is not the whole of 
%$\M_H(1,H^2,-1)$.
%There exists an $H$-slope-stable sheaf $F$ of class $(2,H^2,0)$ 
%with $h^1(F)=0$,
%by part \ref{lemma-item-generic-F-has-vanishing-h-1}. Then $h^0(F)=2>0$.
%Choose a non-zero section $s$ of $F$ and let $Q$ be the co-kernel of 
%$s:\StructureSheaf{S}\rightarrow F$. Then $Q$ is a torsion free sheaf of class
%$(1,H^2,-1)$, by Lemma \ref{lemma-stability-criteria}
%part \ref{lemma-item-Q-is-torsion-free}. Furthermore, $h^1(Q)=1$. 
%
%Suppose that $M_H(1,H^2,-1)^1=\M_H(1,H^2,-1)$. Then there exists a Zariski 
%open subset 
%$V\subset M_H(1,H^2,-1)$, such that a point in $V$ represents a sheaf $Q$ 
%with $\dim\Ext^1(Q,\StructureSheaf{S})=h^1(Q)=1$,
%and the corresponding extension $F$ is an $H$-slope-stable sheaf of
%class $(2,H^2,0)$ with $h^0(H)=2$, since stability is an open condition. 
%The data $Q$ then corresponds to the data
%$(F,\ell)$, where $\ell$ is a one-dimensional subspace of $H^0(F)$. 
%But the latter data corresponds to a $\PP^1$-bundle over the 
%$2n-2$ dimensional moduli space $M_H(2,H^2,0)$, while 
%$\dim M_H(1,H^2,-1)=2n$. A contradiction. 
\end{proof}

Let $G(1,U)$ be the moduli space of equivalence classes of pairs $(F,\lambda)$,
where $F$ is an $H$-slope-stable sheaf of class $(2,H^2,0)$ with $h^1(F)=0$,
and $\lambda\subset H^0(F)$ is a one-dimensional subspace. $G(1,U)$
is a Zariski open subset of the moduli space of coherent systems constructed
by Le Potier in \cite{le-potier-coherent}. The forgetful morphism
$G(1,U)\rightarrow U$ is a $\PP^1$-bundle. Let
\[
\psi \ : \ G(1,U) \ \  \ \longrightarrow \ \ \ M_H(1,H^2,-1)
\]
be the morphism, sending a pair $(F,\lambda)$ to the quotient 
$F/[\lambda\otimes\StructureSheaf{S}]$.
The morphism $\psi$ is well-defined, by Lemma \ref{lemma-stability-criteria}
part \ref{lemma-item-Q-is-torsion-free}.

\begin{lem}
\label{lemma-class-of-psi-of-P-1}
\begin{enumerate}
\item
\label{lemma-item-psi-is-an-isomorphism-onto-open-subset-of-E}
The divisor $E$ is smooth along the image of $\psi$
and $\psi$ maps $G(1,U)$ isomorphically onto a Zariski open subset of $E$.
\item
\label{lemma-item-class-of-rational-curve-is-pairing-with-minus-v-0}
Let $F$ be an $H$-slope-stable sheaf of class $(2,H^2,0)$ with $h^1(F)=0$.
Then $\psi(\PP{H}^0(F))$ is a rational curve of class 
$(\theta(-v_0),\bullet)$ in $H_2(M_H(1,H^2,-1),\Integers)$.
\end{enumerate}
\end{lem}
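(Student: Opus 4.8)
The plan is to analyze $\psi$ entirely through the defining short exact sequence $0\to\StructureSheaf{S}\xrightarrow{s}F\to Q\to 0$, where $Q:=\coker(s)=\psi(F,\lambda)$ and $\lambda=\langle s\rangle$. First I would record the numerology that drives everything. For $F\in U$ slope-stability forces $\Hom(F,\StructureSheaf{S})=0$, hence $h^2(F)=0$; together with $h^1(F)=0$ this gives $h^0(F)=\chi(F)=2$, so $\PP H^0(F)\cong\PP^1$, the forgetful map $G(1,U)\to U$ is a genuine $\PP^1$-bundle, and $\dim G(1,U)=(2n-2)+1=2n-1=\dim E$. Taking cohomology of the sequence and using $h^1(\StructureSheaf{S})=0$, $h^0(\StructureSheaf{S})=h^2(\StructureSheaf{S})=1$, one finds $h^0(Q)=1$ and $h^1(Q)=1$; in particular $\psi$ lands in the locus $E_{=1}\subset E$ where $h^1=1$, which is Zariski open in $E$ by upper semicontinuity.

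For part \ref{lemma-item-psi-is-an-isomorphism-onto-open-subset-of-E}, injectivity comes from reconstructing $(F,\lambda)$ from $Q$. Since $\Ext^1(Q,\StructureSheaf{S})\cong H^1(Q)^*$ is one-dimensional, there is up to isomorphism a unique non-split extension of $Q$ by $\StructureSheaf{S}$; this recovers $F$, and $\lambda$ is the image of $\StructureSheaf{S}$ in $H^0(F)$. Conversely, for $Q\in E_{=1}$ the connecting map $H^1(Q)\to H^2(\StructureSheaf{S})$ of the non-split extension is cup product with the nonzero extension class, hence an isomorphism, so the resulting $F$ automatically has $h^1(F)=0$; on the Zariski open subset of $E_{=1}$ where this $F$ is slope-stable it lies in $U$, and there $\psi$ is bijective. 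To upgrade this to an isomorphism onto an open subset of $E$, and to prove $E$ is smooth there, I would compute $T_QE$. Writing $\phi$ for a generator of $H^0(Q)=\Hom(\StructureSheaf{S},Q)$, a first-order deformation $\xi\in\Ext^1(Q,Q)=T_QX$ preserves the condition $h^0\geq 1$ exactly when $\xi\circ\phi=0$ in $\Ext^1(\StructureSheaf{S},Q)=H^1(Q)$, so $T_QE=\ker\bigl(\mu_\phi:\Ext^1(Q,Q)\to H^1(Q),\ \xi\mapsto\xi\circ\phi\bigr)$. The crux is that $\mu_\phi$ is surjective onto the one-dimensional $H^1(Q)$: this is precisely the statement that the corank of the vector-bundle map $g$ cutting out $E$ (Example \ref{example-degree-e-minus-2-div-2}) drops simply at $Q$, and it follows from the determinantal structure of $E$ together with the vanishing $\Hom(Q,\StructureSheaf{S})\cong H^2(Q)^*=0$. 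Surjectivity yields $\dim T_QE=2n-1=\dim E$, so $E$ is smooth at $Q$; since $\psi$ is an injective morphism from the smooth $(2n-1)$-dimensional $G(1,U)$ whose differential lands in, and by the dimension count fills, $T_QE$, it is an open immersion onto a Zariski open subset of $E$.

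For part \ref{lemma-item-class-of-rational-curve-is-pairing-with-minus-v-0}, fix $F\in U$ and restrict $\psi$ to the fiber $\PP H^0(F)\cong\PP^1$. On $S\times\PP^1$ the tautological inclusion $\StructureSheaf{\PP^1}(-1)\hookrightarrow H^0(F)\otimes\StructureSheaf{\PP^1}$ gives a universal evaluation $\StructureSheaf{S}\boxtimes\StructureSheaf{\PP^1}(-1)\to F\boxtimes\StructureSheaf{\PP^1}$ with cokernel $\mathcal{Q}$, whose restriction to $S\times\{s\}$ is $Q_s$; since $\psi$ is injective, $\psi(\PP H^0(F))$ is a smooth rational curve. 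To identify its homology class it suffices to compute $\theta(x)\cdot[\psi(\PP^1)]$ for every $x\in v^\perp$. Using $[\mathcal{Q}]=[F\boxtimes\StructureSheaf{\PP^1}]-[\StructureSheaf{S}\boxtimes\StructureSheaf{\PP^1}(-1)]$ in $K(S\times\PP^1)$ and Mukai's defining formula for $\theta$, the constant summand $F\boxtimes\StructureSheaf{\PP^1}$ contributes degree $0$ over $\PP^1$, while a Grothendieck--Riemann--Roch computation shows the only nonzero contribution comes from the twist $\StructureSheaf{\PP^1}(-1)$ and yields $\theta(x)\cdot[\psi(\PP^1)]=(-v_0,x)=(\theta(-v_0),\theta(x))$. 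As $x$ ranges over $v^\perp$ this identifies $[\psi(\PP^1)]$ with $(\theta(-v_0),\bullet)\in H^2(X,\Integers)^*\cong H_2(X,\Integers)$, as claimed. A quasi-universal sheaf and its Brauer twist, being pulled back from $X$, do not affect this degree.

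The main obstacle is the surjectivity of $\mu_\phi$, i.e. the simple-corank smoothness of the Brill--Noether divisor $E$ along the $h^1=1$ locus. Once that is established, the injectivity of $\psi$ and the matching dimension count make part \ref{lemma-item-psi-is-an-isomorphism-onto-open-subset-of-E} formal, and part \ref{lemma-item-class-of-rational-curve-is-pairing-with-minus-v-0} reduces to the Riemann--Roch bookkeeping above.
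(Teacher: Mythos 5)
Your part \ref{lemma-item-class-of-rational-curve-is-pairing-with-minus-v-0} is the same $K$-theoretic computation the paper invokes (by reference to Lemma 4.11 of \cite{markman-part-two}), and your opening numerology ($h^0(F)=2$, $h^0(Q)=h^1(Q)=1$, reconstruction of $(F,\lambda)$ from $Q$ as the unique non-split extension of $Q$ by $\StructureSheaf{S}$) is exactly the paper's injectivity argument. Where you genuinely diverge is in passing from injectivity to ``isomorphism onto a Zariski open subset of the smooth locus of $E$''. The paper constructs the \emph{inverse} of $\psi$ as a morphism on the open locus of $E$ characterized by $\dim\Ext^1(Q,\StructureSheaf{S})=1$ together with slope-stability of the unique non-trivial extension (the extension construction done in families, as in Proposition 3.18 of \cite{markman-reflections}), and smoothness of $E$ along the image is then inherited from the smoothness of $G(1,U)$. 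You instead compute $T_QE=\ker(\mu_\phi)$ and try to prove smoothness of the determinantal divisor directly. That route can be made to work, but as written it has a gap precisely at the step you yourself flag.

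The gap is your justification of the surjectivity of $\mu_\phi:\Ext^1(Q,Q)\to H^1(Q)$: ``the determinantal structure of $E$ together with $\Hom(Q,\StructureSheaf{S})=0$'' is not an argument, and that particular vanishing is not the relevant input. By Serre duality, surjectivity of $\mu_\phi$ is equivalent to injectivity of $\phi_*:\Ext^1(Q,\StructureSheaf{S})\to\Ext^1(Q,Q)$, i.e. to the statement that the pushout of $0\to\StructureSheaf{S}\RightArrowOf{s} F\to Q\to 0$ along $\phi:\StructureSheaf{S}\to Q$ does not split. To see this, note that a splitting would produce $\tilde r\in\Hom(F,Q)$ with $\tilde r\circ s=\phi\neq 0$; on the other hand, applying $\Hom(F,-)$ to the same sequence and using $\Hom(F,\StructureSheaf{S})=0$ (slope-stability), $\End(F)=\ComplexNumbers$ ($F$ stable) and $\Ext^1(F,\StructureSheaf{S})\cong H^1(F)^*=0$ gives $\Hom(F,Q)=\ComplexNumbers\cdot q$ with $q\circ s=0$, a contradiction. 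So your claim is true, but the proof must go through $\Hom(F,Q)$, not $\Hom(Q,\StructureSheaf{S})$. A minor further point: to conclude ``open immersion'' you should not say the dimension count makes $d\psi$ ``fill'' $T_QE$; either check that $d\psi$ is injective, or (cleaner) invoke Zariski's main theorem for an injective, generically finite morphism onto a normal --- here smooth --- target.
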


\begin{proof}
\ref{lemma-item-psi-is-an-isomorphism-onto-open-subset-of-E})
The proof is similar to that of parts 6 and 7 of Proposition 3.18 in
\cite{markman-reflections}. Let us first prove that 
the morphism $\psi$ is injective. 
Let $Q$ be a sheaf represented by the point $\psi(F,\lambda)$.
We know, by construction, that $H^i(F)=0$, for $i>0$, and $h^0(F)=\chi(F)=2$.
Hence, $h^0(Q)=1$, $h^1(Q)=1$, and $h^2(Q)=0$. It follows that 
$\dim\Ext^1(Q,\StructureSheaf{S})=1$, $F$ is isomorphic to the 
unique non-trivial extension of $Q$ by $\StructureSheaf{S}$,
and $\lambda$ is the kernel of the homomorphism 
$H^0(F)\rightarrow H^0(Q)$. Hence, $\psi$ is injective.

The image of $\psi$ is Zariski open in $E$, since it is characterized by 
$\dim \Ext^1(Q,\StructureSheaf{S})=1$, and by the $H$-slope-stability of the
unique non-trivial extension. $G(1,U)$ is clearly smooth. It suffices to 
construct the inverse of $\psi$ as a morphism. This is done as in the
proof of Proposition 3.18 in \cite{markman-reflections}.

\ref{lemma-item-class-of-rational-curve-is-pairing-with-minus-v-0})
Let $w\in K(S)$ be a class orthogonal to $(1,H^2,-1)$.
The equality
\[
\int_{\PP{H}^0(F)}\psi^*(\theta(w)) \ \ \ = \ \ \ -(v_0,w)
\]
follows by an argument identical to the proof of Lemma 4.11 in
\cite{markman-part-two}. It follows that $\psi(\PP{H}^0(F))$ 
has class $(\theta(-v_0),\bullet)$ in $H_2(M_H(1,H^2,-1),\Integers)$.
\end{proof}

\begin{lem}
The closure $E'$ of the image of $\psi$ is a prime exceptional divisor
of class $\theta(-v_0)$.
\end{lem}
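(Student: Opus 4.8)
The plan is to show that $E'$ coincides with the Brill--Noether divisor $E$ of Example~\ref{example-degree-e-minus-2-div-2} and then to read off the invariants. First I would check that $E'$ is prime of codimension one. The base $U\subset M_H^{ss}(2,H^2,0)$ is irreducible, being a nonempty Zariski open subset of the irreducible variety $M_H^{ss}(2,H^2,0)$, and $G(1,U)\to U$ is a $\PP^1$-bundle, so $G(1,U)$ is irreducible of dimension $\dim U+1=2n-1$. By Lemma~\ref{lemma-class-of-psi-of-P-1}(\ref{lemma-item-psi-is-an-isomorphism-onto-open-subset-of-E}) the map $\psi$ is an isomorphism onto a Zariski open subset of $E$, so its image is irreducible of dimension $2n-1$, and $E'$, taken with its reduced structure, is a prime divisor in the $2n$-dimensional manifold $M_H(1,H^2,-1)$.

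Next I would place $E'$ inside $E$. Each sheaf $Q$ in the image of $\psi$ satisfies $h^1(Q)=1>0$ (as computed in the proof of Lemma~\ref{lemma-class-of-psi-of-P-1}), hence lies in the Brill--Noether locus $E=M_H(1,H^2,-1)^1$; passing to closures gives $E'\subseteq E$. Since $E$ is smooth along the dense open subset $\psi(G(1,U))$ of $E'$, the component $E'$ appears in the effective divisor $[E]=\theta(-v_0)$ with multiplicity one, and the complementary effective part $D:=[E]-[E']$ does not contain $E'$.

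The heart of the argument is to prove $D=0$. For this I would use the family of rational curves $\ell:=\psi(\PP H^0(F))$, $F\in U$, which sweep out the dense open subset $\psi(G(1,U))$ of $E'$ and all have class $(\theta(-v_0),\bullet)=[E]^\vee$ in $H_2$, by Lemma~\ref{lemma-class-of-psi-of-P-1}(\ref{lemma-item-class-of-rational-curve-is-pairing-with-minus-v-0}). Because $[E]=\theta(-v_0)$, one computes $\ell\cdot[E]=(\theta(-v_0),\theta(-v_0))=(v_0,v_0)=-2$. A generic member of this covering family is not contained in the effective divisor $D$ (otherwise the curves would sweep out $D$, forcing $D\supseteq E'$), so $\ell\cdot D\geq 0$ and hence $\ell\cdot[E']=-2-\ell\cdot D\leq -2<0$. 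Thus $E'$ is a prime divisor through whose generic point passes a rational curve of negative $E'$-degree, so Theorem~\ref{thm-a-uniruled-divisor-is-exceptional}(\ref{thm-item-uniruled-implies-exceptional}) shows $E'$ is exceptional, and Theorem~\ref{thm-a-uniruled-divisor-is-exceptional}(\ref{thm-item-ell-is-E-vee}) then gives $[E]^\vee=[E']^\vee$ in the $A_1$ case and $[E]^\vee=\tfrac12[E']^\vee$ in the $A_2$ case.

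To conclude I would solve this proportionality. Writing $e:=\theta(-v_0)$ and using $[E']^\vee=\frac{-2([E'],\bullet)}{([E'],[E'])}$, the relation $(e,\bullet)=\frac1i[E']^\vee$ forces $[E']=\mu e$ for a positive rational $\mu$; comparing $([E'],[E'])=\mu^2(e,e)=-2\mu^2$ with the value prescribed by $i$ yields $\mu=1$ when $i=1$ and the non-integral $\mu=\tfrac12$ when $i=2$. Hence $E'$ is of type $A_1$, $[E']=e=\theta(-v_0)$, and therefore $D=0$ and $E=E'$ is prime. The remaining invariants are immediate from the Mukai pairing: $(e,e)=(v_0,v_0)=-2$, while for $x=(r,c,s)\in v^\perp$ the relation $(v,x)=0$ makes $r+s$ even, so $(e,\theta(x))=(-v_0,x)=r+s\in 2\Integers$, with value $2$ at $x=v_0$, whence $\div(e,\bullet)=2$. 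The main obstacle is precisely the vanishing $D=0$: this is where Druel's criterion (Theorem~\ref{thm-a-uniruled-divisor-is-exceptional}) does the real work, for without it one would instead have to prove by hand that the generic extension of a general $Q\in E$ by $\StructureSheaf{S}$ is $H$-slope-stable, in order to conclude directly that $\psi$ dominates $E$.
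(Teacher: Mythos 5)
Your proof is correct, and it reaches the key inequality $E'\cdot\ell<0$ by a genuinely different mechanism than the paper. The paper's proof computes $E'\cdot\psi[\PP H^0(F)]=-2$ \emph{exactly} and intrinsically, via adjunction: since $U$ is holomorphic symplectic, $K_{G(1,U)}$ restricts to each fiber $\PP H^0(F)$ as $K_{\PP^1}$, and since the ambient moduli space has trivial canonical bundle, the normal bundle of $\psi(G(1,U))$ is its canonical bundle; restricting to the fiber gives $\deg K_{\PP^1}=-2$. You instead exploit the extrinsic fact, established earlier in the example, that the Brill--Noether divisor $E$ has class $\theta(-v_0)$: writing $E=E'+D$ with $D$ effective and supported away from $\psi(G(1,U))$ (by the smoothness of $E$ there), you get $\ell\cdot[E']\leq\ell\cdot[E]=(\theta(-v_0),\theta(-v_0))=-2$. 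Both routes then invoke Druel's criterion (Theorem \ref{thm-a-uniruled-divisor-is-exceptional}) identically. A second, smaller divergence: the paper reads off the type $A_1$ geometrically from the fact that $\psi$ is an isomorphism onto an open subset of $E$ (so the contracted fibers are single smooth rational curves), whereas you rule out type $A_2$ arithmetically, via the primitivity of $\theta(-v_0)$ forcing the non-integral coefficient $\mu=1/2$. What your route buys is that it needs no normal-bundle computation and it delivers $D=0$, i.e.\ $E=E'$, already inside this lemma (the paper defers that to Lemma \ref{lemma-brill-noether-divisor-is-exceptional}, where it deduces irreducibility of $E$ from the equality of classes). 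What it costs is a dependence on the prior identification $[E]=\theta(-v_0)$ from the determinantal construction; the paper's adjunction argument is self-contained on this point and would work even without knowing the class of $E$ in advance.
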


\begin{proof}
$G(1,U)$ is a $\PP^1$-bundle over $U$. Hence $G(1,U)$ is irreducible
of dimension $2n-1$. 
The image of $\psi$ is irreducible of dimension $2n-1$, as $\psi$ is 
injective. 
%an open subset of $M_H(1,H^2,-1)^1$, since it consists precisely of all 
%torsion free sheaves $Q$ of class $(1,H^2,-1)$ with 
%$\dim\Ext^1(Q,\StructureSheaf{S})=h^1(Q)=1$,
%whose extension yields an $H$-slope-stable sheaf  of class $(2,H^2,0)$.
Hence, $E'$ is irreducible. The canonical line bundle of
$G(1,U)$ restricts to the fiber $\PP{H}^0(F)$ as the canonical
line bundle of the fiber, since $U$ is holomorphic-symplectic. 
The normal of $\psi(G(1,U))$ in $M_H(1,H^2,-1)$ is isomorphic to 
the canonical line bundle of $\psi(G(1,U))$, by
Lemma \ref{lemma-class-of-psi-of-P-1}
part \ref{lemma-item-psi-is-an-isomorphism-onto-open-subset-of-E}.
Hence, $E'\cdot \psi[\PP{H}^0(F)]=-2$, and 
$E'$ is exceptional, by Theorem \ref{thm-a-uniruled-divisor-is-exceptional}
part \ref{thm-item-uniruled-implies-exceptional}.
$E'$ is of type $A_1$, by Lemma \ref{lemma-class-of-psi-of-P-1}
part \ref{lemma-item-psi-is-an-isomorphism-onto-open-subset-of-E}.
The class of $E'$ is $\theta(-v_0)$, 
by Lemma \ref{lemma-class-of-psi-of-P-1} and
Theorem \ref{thm-a-uniruled-divisor-is-exceptional} part 
\ref{thm-item-ell-is-E-vee}.
\end{proof}

\begin{proof} {\bf (of Lemma \ref{lemma-brill-noether-divisor-is-exceptional})}
$E$ is an effective divisor of class $\theta(-v_0)$, by definition of $E$. 
This is also the class of the reduced and irreducible divisor $E'$ 
supporting a component of $E$. 
Hence, $E$ is reduced and irreducible. 
We have the equality 
$([E],\theta(x))=(\theta(-v_0),\theta(x))=-(v_0,x)$, 
which is divisible by $2$, for all $x\in (1,H^2,-1)^\perp$,
since $(1,H^2,-1)-v_0=2(0,H,-1)$. 
Hence, $\div([E],\bullet)=2$. 
\end{proof}

%*****************************************************************************
%
%*****************************************************************************
\subsection{Exceptional divisors of non-locally-free sheaves}
In this section we will consider examples of prime
exceptional divisors that arise as the exceptional locus for the
morphism from the Gieseker-Simpson moduli space of $H$-stable sheaves
to the Uhlenbeck-Yau compactification of the moduli space of 
$H$-slope-stable locally free sheaves. Such divisors 
on a $2n$-dimensional moduli space seem to have 
class $e$ or $2e$, where $e$ is a primitive class of degree $(e,e)=2-2n$. 

\begin{example}
\label{example-diagonal-of-hilbert-scheme}
\cite{beauville}
The case $n\geq 2$, $(e,e)=2-2n$, $\div(e,\bullet)=2n-2$, 
$rs(e)=\{1,n-1\}$, and $[E]=2e$. 
%$[E]^\vee=e^\vee$, and $A_i=A_1$.
\\
%The case 
%\[
%\{(e,e),[E],[E]^\vee,A_i\}=\{2-2n,2e,e^\vee,A_1\}, \ \ n>1.
%\]
Let $S$ be a $K3$ surface, $X:=S^{[n]}$, 
and $E\subset X$ the big diagonal. Then
$[E]=2e$, for a primitive class $e\in H^2(S^{[n]},\Integers)$, and 
$(e,e)=2-2n$. Hence $[E]^\vee=e^\vee$, by Corollary
\ref{cor-1}.
%the class $(e,\bullet)\in H^2(X,\Integers)^*$ is divisible by $2n-2$, 
$E$ is the exceptional locus of the Hilbert-Chow morphism 
$S^{[n]}\rightarrow S^{(n)}$ onto the $n$-th symmetric product.
The symmetric product 
$S^{(n)}$ has $A_1$ singularities away from its dissident locus.
The monodromy-invariant $rs(e)$ is equal to $\{1,n-1\}$,
by Example \ref{example-any-factorization-rs-is-possible}.
\end{example}

%***********
% Hide
%***********
\hide{
\begin{example} 
The case 
\[
\{(e,e),[E],[E]^\vee,A_i\}=\{2-2n,e,e^\vee,A_1\}, \ \ \ n>2.
\]
Let $S$ be a $K3$ surface, $L\in\Pic(S)$ a line bundle of degree $2n-6$.
If $n>3$, assume that $L$ spans $\Pic(S)$. If $n=3$
assume that we have an elliptic fibration
$\pi:S\rightarrow \PP^1$ with reduced and irreducible fibers, and 
$L:=\pi^*\StructureSheaf{\PP^1}(1)$. 
Let $v\in K(S)$ be the class of rank $2$, with $c_1(v)=c_1(L)$,
and $\chi(v)=1$. Then $v:=(2,L,-1)$ in Mukai's notation. 
If $n>3$, set $H:=L$.
If $n=3$, choose a $v$-generic polarization $H$. Then the moduli space
$M:=M_H(v)$ is smooth, projective, and $2n$-dimensional. 

Let $E\subset M$ be the closure of the locus of points representing
$H$-stable sheaves $F$, which are not locally free, but such that 
$F^{**}/F$ has length one. If $F^{**}/F$ has length one and $F$ is
$H$-stable, then the reflexive hull $F^{**}$ is necessarily 
$H$-semi-stable\footnote{The $H$-semi-stability is proven by an easy
modification of the proof of \cite{markman-constraints}, 
Proposition 4.10, Part 1. The assumption that the rank is $2$ is needed.} 
of class $(2,L,0)\in K(S)$. 
The irreducibility of $E$ thus follows from that 
of the moduli space $M_H(2,L,0)$ (for $H$ which is both $v$-generic 
and $(2,L,0)$-generic).
The reflexive hull $F^{**}$, of a generic such $F$, 
is an $H$-slope-stable locally free sheaf in $M_H(2,L,0)$.
Let $Y$ be the normalization of the Uhlenbeck-Yau compactification 
of the moduli space of locally free $H$-slope stable sheaves. 
Then $Y$ is a projective variety and there exists a 
morphism $\phi:M\rightarrow Y$, which exceptional locus
contains the divisor $E$ \cite{jun-li}. 

Let $U\subset M_H(2,L,0)$ be the
locus of $H$-stable locally free sheaves.
Choose a twisted universal sheaf $\G$ over $S\times U$. 
Then $E$ contains a Zariski dense open subset isomorphic to 
the projectivization of $\G$.
We conclude that $A_i(X,E)=A_1$. 

We calculate next the class $[E]^\vee\in H_2(M,\Integers)$. Recall that 
$[E]^\vee$ is the class of the fiber of $E\rightarrow Y$,
by \cite{markman-galois}, Lemma 4.10.
Fix an $H$-slope-stable locally free sheaf $G$ of class 
$(2,L,0)\in K(S)$. 
Fix a point $P\in S$  and denote by $G_P$ the fiber of $G$ at $P$. 
Let $\PP{G}_P$ be the projectivization of the fiber and
\[
0\rightarrow \StructureSheaf{\PP{G}_P}(-1)\rightarrow
G_P\otimes\StructureSheaf{\PP{G}_P}\rightarrow q_{\PP{G}_P}
\rightarrow 0
\]
the short exact sequence of the tautological quotient bundle $q_{\PP{G}_P}$.
Let 
\[
\iota \ : \ \PP{G}_P \ \ \ \rightarrow \ \ \ S \times \PP{G}_P
\]
be the morphism given by $\iota(\ell)=(P,\ell)$.
Let $\pi_i$ be the projection from $S\times \PP{G}_P$ onto the $i$-th factor.
Over $S\times \PP{G}_P$ we get the short exact sequence
\[
0\rightarrow \F \rightarrow \pi_1^*G\RightArrowOf{j}\iota_*(q_{\PP{G}_P})
\rightarrow 0,
\]
where $j$ is the natural homomorphism, and $\F$ its kernel.
Given a point $\ell\in \PP{G}_P$, we denote by $\tilde{\ell}\subset G_P$ 
the corresponding line. The sheaf $\F_\ell$, $\ell\in \PP{G}_P$,
is the subsheaf of $G$, with local sections whose values at $P$ belong to 
$\tilde{\ell}$.
$\F_\ell$ is $H$-slope-stable, since $G$ is. $\F$ is thus a family of 
$H$-stable sheaves, flat over $\PP{G}_P$.
Let
\[
\kappa \ : \ \PP{G}_P \ \ \ \longrightarrow \ \ \ M_H(2,L,-1)
\]
be the classifying morphism associated to $\F$. 
Then $\kappa(\PP{G}_P)$ is a fiber of $E\rightarrow Y$, and we get
the equality of classes in $H_2(M,\Integers)$
\[
[\kappa(\PP{G}_P)] \ \ \ = \ \ \ [E]^\vee.
\]

Let us compose the Mukai isomorphism $\theta$, given in 
(\ref{eq-Mukai-isomorphism}), with pull-back by $\kappa$
\[
v^\perp \LongRightArrowOf{\theta} H^2(M_H(v),\Integers) 
\LongRightArrowOf{\kappa^*} H^2(\PP{G}_P,\Integers).
\]
The composition is given by
\[
\kappa^*(\theta(x)) \ \ = \ \ 
c_1\left\{\pi_{2_!}\left[\pi_1^!(x^\vee)\otimes\F\right]\right\}.
\]
Let $[\F]$ be the class of $\F$  in $K(S\times\PP{G}_P)$.
Then $[\F]=\pi_1^![G]-\iota_![q_{\PP{G}_P}]$. 
%Given $x\in K(S)$, denote by $(x,[G])$ the Mukai pairing 
%of $x$ with the class of $G$. 
We have the equalities
\[
c_1\left\{\pi_{2_!}\left(\pi_1^!(x^\vee\otimes G)\right)\right\}
\ \ = \ \ 0,
\]
\begin{eqnarray*}
c_1\left\{\pi_{2_!}\left(\pi_1^!(x^\vee)\otimes \iota_!(q_{\PP{G}_P})
\right)\right\}
&=&
c_1\left\{\pi_{2_!}\left(\rank(x)\cdot \iota_!(q_{\PP{G}_P})\right)\right\}
\\
&=&
\rank(x)c_1(q_{\PP{G}_P}).
\end{eqnarray*}
We conclude that the following equalities hold, for all $x\in v^\perp$. 
\[
\int_{[E]^\vee}\theta(x)=
\int_{\PP{G}_P}\kappa^*(\theta(x))=-\rank(x).
\]
Now $x=(a,L',b)$ belongs to $v^\perp$, if and only if
$a=2b+(L,L')$. $L$ is assumed primitive, so the rank $a$ of $x$
is one, if we choose $L'$, such that $(L,L')=2b+1$.
In particular, $[E]^\vee$ is a primitive class in $H_2(M,\Integers)$.

The class $[E]$ is the unique class in $H^2(M,\Integers)$
satisfying the equality
\[
\frac{-2([E],\theta(x))}{([E],[E])} \ \ = \ \ \int_{[E]^\vee}\theta(x),
\]
for all $x\in v^\perp$, by Lemma 4.10 in
\cite{markman-galois}. 
We saw that the right hand side is equal to $-\rank(x)$.
Set $w:=(2,L,n-2)$. The equality
\[
\frac{-2(w,x)}{(w,w)} \ \ \ = \ \ \ \rank(x)
\]
is verified by a direct calculation, for all $x\in v^\perp$. 
The equality $[E]=\nolinebreak-\theta(w)$ follows. 
Hence, $([E],[E])=(\theta(w),\theta(w))=(w,w)=2-2n$.
The class $[E]$ is primitive, since $w$ is.
\end{example}
%**********
% End \hide
%**********
}

The following result will be needed in the next example.

\begin{lem}
\label{lemma-class-of-a-contractible-rational-curve}
Let $S$ be a $K3$ surface, $\LB$ a line bundle on $S$, 
$v=(r,\LB,s)$ a class in $K(S)$, satisfying $(v,v)\geq 2$,
and $r\geq 2$. Let $H$ be a $v$-generic polarization.
Assume given an $H$-slope-stable sheaf $G$ of class $(r,\LB,s+1)$
and a point $P\in S$, such that $G$ is locally free at $P$.
For each $2$-dimensional quotient $Q$ of the
fiber $G_P$, there exists a natural embedding
\[
\kappa \ : \ \PP{Q} \ \ \ \longrightarrow \ \ \ M_H(v),
\]
whose image $C:=\kappa(\PP{Q})$ is a smooth rational curve satisfying
\[
\int_C\theta(x) \ \ \ = \ \ \ (w,x) \ \ \ = \ \ \ -\rank(x),
\]
for all $x\in v^\perp$, 
where $w$ is the following rational class in $v^\perp$.
\begin{equation}
\label{eq-class-wich-pair-with-lambda-to-minus-rank-lambda}
w \ \ := \ \  \frac{r}{(v,v)}v+(0,0,1)
\ \ = \ \ \frac{1}{(v,v)}\left(r^2,r\LB,sr+(v,v)\right).
\end{equation}
%The curve $C$ is contracted to a point by the morphism 
%from $M_H(v)$ to the Uhlenbeck-Yau compactification, of the moduli space
%of $H$-slope-poly-stable locally free sheaves.
\end{lem}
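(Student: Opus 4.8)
The plan is to realize the curve as a family of elementary (Hecke) modifications of $G$ at the point $P$. Since $G$ is locally free at $P$, its fibre $G_P$ is an $r$-dimensional vector space, and we are given a $2$-dimensional quotient $q:G_P\to Q$; the hypotheses $r\geq 2$ and $(v,v)\geq 2$ guarantee that $\PP Q\cong\PP^1$ and that $\theta$ is defined. Over $S\times\PP Q$ let $\pi_1,\pi_2$ be the projections, let $\iota:\PP Q\to S\times\PP Q$ be the section $\ell\mapsto(P,\ell)$, and let $q_{\PP Q}$ be the tautological rank-one quotient of $Q\otimes\StructureSheaf{\PP Q}$. I would form the composite surjection $\pi_1^*G\to\iota_*(q_{\PP Q})$ (restrict to $\{P\}\times\PP Q$, then apply the tautological quotient) and set $\F:=\ker(\pi_1^*G\to\iota_*(q_{\PP Q}))$, giving
\[
0\longrightarrow \F\longrightarrow \pi_1^*G\longrightarrow \iota_*(q_{\PP Q})\longrightarrow 0.
\]
As $\iota_*(q_{\PP Q})$ is flat over $\PP Q$, the sheaf $\F$ is flat over $\PP Q$, and its fibre $\F_\ell$ is the kernel of the induced surjection $G\to\ComplexNumbers_P$, so $[\F_\ell]=[G]-(0,0,1)=v$ in $K(S)$.

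Next I would check that each $\F_\ell$ is $H$-stable, so that $\F$ induces a classifying morphism $\kappa:\PP Q\to M_H(v)$. Indeed $\F_\ell$ is a full-rank subsheaf of $G$ with the same determinant $\LB$ (the quotient is supported at $P$), so any proper saturated subsheaf of $\F_\ell$ is a subsheaf of the $H$-slope-stable $G$ of strictly smaller slope; hence $\F_\ell$ is $H$-slope-stable, thus $H$-stable.

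To see that $\kappa$ is a closed immersion onto a smooth rational curve, I would first note injectivity: an isomorphism $\F_\ell\cong\F_{\ell'}$ extends to the reflexive hulls $G=\F_\ell^{**}\cong\F_{\ell'}^{**}=G$, which is a scalar since $\End(G)=\ComplexNumbers$, and a scalar preserves the subsheaf, forcing $\ell=\ell'$. For the differential, $\kappa$ factors through the punctual Quot scheme of length-one quotients of $G$ supported at $P$, and its differential at $[\F_\ell]$ is the connecting map $\delta:\Hom(\F_\ell,\ComplexNumbers_P)\to\Ext^1(\F_\ell,\F_\ell)$ in the sequence obtained by applying $\Hom(\F_\ell,-)$ to $0\to\F_\ell\to G\to\ComplexNumbers_P\to 0$. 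Since $\Hom(\F_\ell,G)=\ComplexNumbers\cdot\iota$ (again by reflexivity and $\End(G)=\ComplexNumbers$) and the inclusion $\iota$ maps to $0$ in $\Hom(\F_\ell,\ComplexNumbers_P)$, the map $\delta$ is injective; restricting to $T_\ell\PP Q$ shows that $\kappa$ is an immersion, so $C=\kappa(\PP Q)$ is a smooth rational curve. I expect this verification of the immersion property to be the most delicate point, though it reduces, as indicated, to the simplicity of $G$.

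Finally, for the intersection numbers I would use that $\F$ is the pullback of a (possibly twisted) universal sheaf, so that $\kappa^*\theta(x)=c_1\{\pi_{2_!}[\pi_1^!(x^\vee)\otimes[\F]]\}$, exactly as in the definition of $\theta$; from the sequence above $[\F]=\pi_1^![G]-\iota_![q_{\PP Q}]$. The first summand yields $c_1\{\pi_{2_!}\pi_1^!(x^\vee\otimes[G])\}=0$, being the first Chern class of a trivial bundle on $\PP Q$. For the second, the projection formula together with $\iota^*\pi_1^!(x^\vee)=\rank(x)\StructureSheaf{\PP Q}$ and $\pi_2\circ\iota=\id$ gives $\pi_{2_!}(\pi_1^!(x^\vee)\otimes\iota_![q_{\PP Q}])=\rank(x)[q_{\PP Q}]$, whose first Chern class integrates to $\rank(x)$ over $\PP Q$. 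Hence $\int_C\theta(x)=-\rank(x)$. It then remains to record the lattice identities: for $x\in v^\perp$ one has $(w,x)=\frac{r}{(v,v)}(v,x)+((0,0,1),x)=((0,0,1),x)=-\rank(x)$, while $(w,v)=r-\rank(v)=0$ shows $w\in v^\perp$, completing the proof.
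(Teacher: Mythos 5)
Your construction is exactly the one the paper uses: the same Hecke-modification family $0\to\F\to\pi_1^*G\to\iota_*(q_{\PP Q})\to 0$, the same stability and injectivity checks, the same splitting $[\F]=\pi_1^![G]-\iota_![q_{\PP Q}]$ in the Chern-class computation, and the same lattice identity for $w$. The proposal is correct and matches the paper's proof essentially step for step (you merely spell out the injectivity of $\kappa$ and of its differential, which the paper records as an elementary verification).
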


\begin{proof}
Consider the short exact sequence of the tautological quotient bundle 
$q_{\PP{Q}}$ over $\PP{Q}$
\[
0\rightarrow \StructureSheaf{\PP{Q}}(-1)\rightarrow
Q\otimes\StructureSheaf{\PP{Q}}\rightarrow q_{\PP{Q}}
\rightarrow 0.
\]
Let 
\[
\iota \ : \ \PP{Q} \ \ \ \rightarrow \ \ \ S \times \PP{Q}
\]
be the morphism given by $\iota(\ell)=(P,\ell)$.
Let $\pi_i$ be the projection from $S\times \PP{Q}$ onto the $i$-th factor.
Over $S\times \PP{Q}$ we get the short exact sequence
\[
0\rightarrow \F \rightarrow \pi_1^*G\RightArrowOf{j}\iota_*(q_{\PP{Q}})
\rightarrow 0,
\]
where $j$ is the natural homomorphism, and $\F$ its kernel.
Given a point $\ell\in \PP{Q}$, we denote by $\tilde{\ell}\subset G_P$ 
the corresponding hyperplane. The sheaf $\F_\ell$, $\ell\in \PP{Q}$,
is the subsheaf of $G$, with local sections whose values at $P$ belong to 
$\tilde{\ell}$.
$\F_\ell$ is $H$-slope-stable, since $G$ is. $\F$ is thus a family of 
$H$-stable sheaves, flat over $\PP{Q}$.
Let $\kappa:\PP{Q}\rightarrow M_H(v)$ 
be the classifying morphism associated to $\F$. 
The morphism $\kappa$ is clearly injective. An elementary calculation verifies
that the differential $d_\ell\kappa$ is injective.
%****************
% Hide
%****************
\hide{
Indeed, the differential $d_\ell\kappa$ is the composition
\[
\Hom(\ell,Q/\ell)\subset \Hom(\tilde{\ell},Q/\ell)
\subset \Hom(F_\ell,(Q/\ell)_P)\rightarrow \Ext^1(F_\ell,F_\ell),
\]
where $(Q/\ell)_P$ is the sky-scraper sheaf supported at $P$ 
with fiber $Q/\ell$. 
The rightmost homomorphism is the connecting homomorphism in the
long exact sequence obtained by applying the functor $\Hom(F_\ell,\bullet)$
to the short exact sequence 
$
0\rightarrow F_\ell\rightarrow G\rightarrow (Q/\ell)_P\rightarrow 0.
$
Each of the homomorphisms in the above composition is clearly injective.
%****************
% End \hide
%****************
}

Let us compose the Mukai isomorphism $\theta$, given in 
(\ref{eq-Mukai-isomorphism}), with pull-back by $\kappa$
\[
v^\perp \LongRightArrowOf{\theta} H^2(M_H(v),\Integers) 
\LongRightArrowOf{\kappa^*} H^2(\PP{Q},\Integers).
\]
The composition is given by
\[
\kappa^*(\theta(x)) \ \ = \ \ 
c_1\left\{\pi_{2_!}\left[\pi_1^!(x^\vee)\otimes\F\right]\right\}.
\]
Let $[\F]$ be the class of $\F$  in $K(S\times\PP{Q})$.
Then $[\F]=\pi_1^![G]-\iota_![q_{\PP{Q}}]$. 
%Given $x\in K(S)$, denote by $(x,[G])$ the Mukai pairing 
%of $x$ with the class of $G$. 
We have the equalities
\[
c_1\left\{\pi_{2_!}\left(\pi_1^!(x^\vee\otimes G)\right)\right\}
\ \ = \ \ 0,
\]
\begin{eqnarray*}
c_1\left\{\pi_{2_!}\left(\pi_1^!(x^\vee)\otimes \iota_!(q_{\PP{Q}})
\right)\right\}
&=&
c_1\left\{\pi_{2_!}\left(\rank(x)\cdot \iota_!(q_{\PP{Q}})\right)\right\}
\\
&=&
\rank(x)c_1(q_{\PP{Q}}).
\end{eqnarray*}
We conclude that the following equality holds, for all $x\in v^\perp$. 
\[
\int_{\PP{Q}}\kappa^*(\theta(x))=-\rank(x).
\]

A direct calculation verifies that 
the class $w$, given in 
(\ref{eq-class-wich-pair-with-lambda-to-minus-rank-lambda}),
is orthogonal to $v$ and satisfies $(w,x)=-\rank(x)$, for all $x\in v^\perp$. 
\end{proof}

\begin{example} 
Let $S$ be a $K3$ surface with a cyclic Picard group 
generated by an ample line bundle $H$. Let  
$b$ be an odd integer, such that there exists a line bundle 
$\LB\in\Pic(S)$ of degree $2n-4b-2$, where $n>2$.
If $c_1(\LB)$ is divisible by $2$, assume that $n>3$.
Let $v\in K(S)$ be the class $(2,\LB,-b)$ in Mukai's notation. Then 
$(v,v)=2n-2$ and 
%Choose a $v$-generic polarization $H$. 
the moduli space
$M:=M_H(v)$ is smooth, projective, and $2n$-dimensional. 

Let $E\subset M$ be the closure of the locus of points representing
$H$-stable sheaves $F$, which are not locally free, but such that 
$F^{**}/F$ has length one. 
%Let $e:=[E]$ be its class in $H^2(M,\Integers)$
%(we will show below that $e$ is primitive).
Let $Y$ be the normalization of the Uhlenbeck-Yau compactification 
of the moduli space of locally free $H$-slope stable sheaves of class $v$. 
Then $Y$ is a projective variety and there exists a 
morphism $\phi:M\rightarrow Y$ whose exceptional locus
contains $E$ \cite{jun-li}. 
\end{example}

\begin{lem}
\label{lemma-class-of-exceptional-locus} 
$E$ is a prime exceptional divisor of type $A_1$. The class 
$[E]\in H^2(M,\Integers)$ is the primitive class 
$e:=\theta(2,\LB,n-b-1)$. In particular, $(e,e)=2-2n$. 
\begin{enumerate}
\item
\label{lemma-item-L-not-divisible-by-2}
If the class $c_1(\LB)$ is not divisible by $2$, then 
$\div(e,\bullet)=n-1$, 
\[
rs(e)=\left\{
\begin{array}{ccl}
\{1,n-1\}, & \mbox{if}& n \ \mbox{is even},
\\
\{1,(n-1)/2\}, & \mbox{if}& n \ \mbox{is odd}.
\end{array}\right.
\]
%$[E]=e$, $[E]^\vee=e^\vee$, 
%The lattice $\widetilde{L}$ is determined by the parity of $n$,
%according to the statement of 
%Lemma \ref{lem-non-unimodular-rank-two-lattice}.
\item
\label{lemma-item-L-divisible-by-2}
If the class $c_1(\LB)$ is divisible by $2$, then $n\equiv 3$ (modulo $4$), 
$\div(e,\bullet)=2n-2$, $rs(e)=\{2,(n-1)/2\}$.
%$[E]=e$, $[E]^\vee=2e^\vee$, 
\end{enumerate}
\end{lem}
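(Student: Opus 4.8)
The plan is to describe $E$ explicitly as a ruled divisor over a moduli space of reflexive hulls, read off the class of its ruling from Lemma \ref{lemma-class-of-a-contractible-rational-curve}, and then deduce $[E]$, $\div(e,\bullet)$ and $rs(e)$ by an arithmetic analysis of the rank-two lattice $v^\perp\subset K(S)$.

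\emph{Geometry of $E$ and type $A_1$.} For $[F]$ in the dense part of $E$ the reflexive hull $G:=F^{**}$ is locally free of class $v':=(2,\LB,1-b)$, and $F$ is the kernel of a surjection $G\to\ComplexNumbers_P$ onto a length-one skyscraper. Since $\rank(G)=2$ and $c_1(G)=c_1(\LB)$ is a multiple of $h$, $H$-stability of $G$ forces $H$-slope-stability by Lemma \ref{lemma-locally-free-H-stable-of-rank-2-is-slope-stable}. I would first verify that for generic $[F]\in E$ the hull $G$ is $H$-stable, using irreducibility of $M_H(v')$ (note $(v',v')=2n-6\geq 0$) and density of its slope-stable locally free locus. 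Applying Lemma \ref{lemma-class-of-a-contractible-rational-curve} with $r=2$, $s=-b$, each pair $(G,P)$ produces an embedded $C:=\kappa(\PP(G_P))\cong\PP^1$; these are exactly the fibers of $\phi$ over the codimension-two locus $\{(G,P)\}\subset Y$. Hence $E$ is a $\PP^1$-bundle over a dense open subset of $M_H(v')\times S$, so it is irreducible of dimension $2n-1$, i.e. a prime divisor, and its generic $\phi$-fiber is a single smooth rational curve; by part \ref{item-integrality} of Corollary \ref{cor-1} this forces type $A_1$ (a reducible fiber would give $A_2$).

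\emph{The class $[E]$.} Lemma \ref{lemma-class-of-a-contractible-rational-curve} gives $\int_C\theta(x)=-\rank(x)$ for all $x\in v^\perp$. Set $\hat e:=(2,\LB,n-b-1)$. One checks directly that $\hat e\in v^\perp$ and $(\hat e,\hat e)=2-2n$, and that the defining relation $(\LB,c)-2d=-ab$ for $x=(a,c,d)\in v^\perp$ yields the clean identity $(\hat e,x)=-(n-1)\rank(x)$; consequently $\frac{-2(\hat e,x)}{(\hat e,\hat e)}=-\rank(x)=\int_C\theta(x)$. By part \ref{item-integrality} of Corollary \ref{cor-1} the integral class $[E]^\vee$ equals $[C]$, and matching the two functionals forces $[E]=\theta(\hat e)=e$; in particular $\int_C[E]=-\rank(\hat e)=-2<0$, which re-establishes exceptionality via part \ref{thm-item-uniruled-implies-exceptional} of Theorem \ref{thm-a-uniruled-divisor-is-exceptional} and, being $-2$ rather than $-1$, again pins down type $A_1$ through part \ref{thm-item-ell-is-E-vee}. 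Writing $\LB=mh$ with $h$ primitive, $\hat e$ is primitive in both cases: for $m$ odd the middle entry is not divisible by $2$, while for $m$ even one has $n$ and $b$ odd, so $n-b-1$ is odd.

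\emph{Divisibility and $rs(e)$.} From $(\hat e,x)=-(n-1)\rank(x)$ we get $\div(e,\bullet)=(n-1)\cdot g$ with $g:=\gcd\{\rank(x):x\in v^\perp\}$. For $x=(a,c,d)$ the constraint $(\LB,c)-2d=-ab$ has left side ranging over $\gcd(m,2)\Integers$. If $m$ is odd this is all of $\Integers$, so $g=1$ and $\div(e,\bullet)=n-1$; if $m$ is even then (as $b$ is odd) $a$ must be even, so $g=2$ and $\div(e,\bullet)=2n-2$, and moreover $(\LB,\LB)=m^2(h,h)\equiv0$ mod $8$ gives $2n-4b-2\equiv0$ mod $8$, i.e. $n\equiv3$ mod $4$. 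For the invariant of Proposition \ref{prop-definition-of-rs} I compute $\hat e-v=(0,0,n-1)$, so $\sigma=n-1$, and $\hat e+v=(4,2\LB,n-2b-1)$; writing $(h,h)=2d$ and using $(\LB,\LB)=2n-4b-2$ gives $n-2b-1=m^2d$, hence $\rho=\gcd(4,2m,m^2d)$. For $m$ odd this is $\gcd(2,d)$, equal to $1$ when $n$ is even and $2$ when $n$ is odd (since $n\equiv d+1$ mod $2$), and Proposition \ref{prop-definition-of-rs} then returns $\{1,n-1\}$ for $n$ even and $\{1,(n-1)/2\}$ for $n$ odd. For $m$ even $\rho=4$, and the ``otherwise'' rule of Proposition \ref{prop-definition-of-rs} gives $rs(e)=\{\rho/2,\sigma/2\}=\{2,(n-1)/2\}$.

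The main obstacle I anticipate is the first step: establishing rigorously that the slope-stable locally free locus is dense in $E$ — equivalently, that the generic $H$-stable non-locally-free sheaf of class $v$ with length-one $F^{**}/F$ has $H$-stable reflexive hull — together with the irreducibility and reducedness that identify $E$ with the closure of this $\PP^1$-bundle. This rests on irreducibility of $M_H(v')$ and on density/deformation results for slope-stable bundles; once $E$ is pinned down geometrically, the remaining steps are purely lattice computations powered by Lemma \ref{lemma-class-of-a-contractible-rational-curve} and Proposition \ref{prop-definition-of-rs}.
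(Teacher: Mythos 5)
Your proposal is correct and follows essentially the same route as the paper: identify $E$ generically as the projectivization of a (twisted) universal sheaf over the locus of locally free $H$-slope-stable reflexive hulls in $M_H(2,\LB,1-b)$, read off $[E]^\vee$ from the fiber class computed in Lemma \ref{lemma-class-of-a-contractible-rational-curve}, deduce $[E]=\theta(2,\LB,n-b-1)$ via Corollary \ref{cor-1}, and then extract $\div(e,\bullet)$ and $rs(e)$ from the arithmetic of $v^\perp$ and of $e\pm v$ exactly as in the table following Lemma \ref{lemma-isometry-orbits-in-rank-2}. The "main obstacle" you flag (density of the slope-stable locally free locus and irreducibility of $E$) is handled in the paper by the irreducibility of $M_H(2,\LB,1-b)$ together with Lemma \ref{lemma-existence-of-slope-stable-vb-of-rank-2} in the case $c_1(\LB)$ divisible by $2$, which is precisely the route you anticipate.
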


\begin{proof} 
When the class $c_1(\LB)$ is divisible by $2$ then $n\equiv 3$
(modulo $4$), since $\deg(\LB)=2n-4b-2$ is divisible by $8$.
In that case $n\geq 7$, by assumption.

If $F^{**}/F$ has length one and $F$ is
$H$-stable, then the reflexive hull $F^{**}$ is necessarily 
$H$-semi-stable\footnote{The $H$-semi-stability is proven by an easy
modification of the proof of \cite{markman-constraints}, 
Proposition 4.10, Part 1. The assumption that the rank is $2$ is needed.
} 
of class $u:=(2,\LB,1-b)\in K(S)$. 
$M_H(u)$ is irreducible of dimension $2n-4$ and its generic
point represents a locally free $H$-slope-stable sheaf.
%for $H$ which is both $v$-generic and $u$-generic 
This is clear if $c_1(\LB)$ is not divisible by $2$.
If $c_1(\LB)$ is divisible by $2$, this follows from Lemma 
\ref{lemma-existence-of-slope-stable-vb-of-rank-2} and 
%(see \cite{ogrady-10} and note that $u$ is not twice an isotropic
%class, by 
the assumption that $n\geq 7$. 
%We conclude that the reflexive hull $F^{**}$, of a generic $F$ in $E$, 
%is an $H$-slope-stable locally free sheaf in $M_H(u)$.

Let $U\subset M_H(u)$ be the
locus of $H$-stable locally free sheaves.
Choose a twisted universal sheaf $\G$ over $S\times U$. 
Then $E$ contains a Zariski dense open subset isomorphic to 
the projectivization of $\G$. $E$ is irreducible, since 
 the moduli space $M_H(u)$ is irreducible. 
We conclude that $E$ is a prime exceptional divisor, since it is contracted
by the morphism to the Uhlenbeck-Yau compactification 
(also by Theorem \ref{thm-a-uniruled-divisor-is-exceptional}).
Furthermore, $E$ is of type $A_1$. 
 
We calculate
%\footnote{Try to generalize this argument for higher rank,
%replacing $G_P$ by a $2$-dimensional quotient $G'_P$ of the fiber of $G$ at 
%$P$. Try to prove, for higher rank, that a non-zero integral multiple of
%the class $e^\vee$ is the class of a contractible rational curve.
%}  
next the class $[E]^\vee\in H_2(M,\Integers)$, given in equation
(\ref{eq-E-vee}). 
Recall that $[E]^\vee$ is the class of the fiber of $E\rightarrow Y$,
by Corollary \ref{cor-1}.
%\cite{markman-galois}, Lemma 4.10.
Fix an $H$-slope-stable locally free sheaf $G$ of class 
$u\in K(S)$. 
Fix a point $P\in S$  and denote by $G_P$ the fiber of $G$ at $P$. 
Let $\PP{G}_P$ be the projectivization of the fiber 
%and
%\[
%0\rightarrow \StructureSheaf{\PP{G}_P}(-1)\rightarrow
%G_P\otimes\StructureSheaf{\PP{G}_P}\rightarrow q_{\PP{G}_P}
%\rightarrow 0
%\]
%the short exact sequence of the tautological quotient bundle $q_{\PP{G}_P}$.
%Let 
%\[
%\iota \ : \ \PP{G}_P \ \ \ \rightarrow \ \ \ S \times \PP{G}_P
%\]
%be the morphism given by $\iota(\ell)=(P,\ell)$.
%Let $\pi_i$ be the projection from $S\times \PP{G}_P$ onto the $i$-th factor.
%Over $S\times \PP{G}_P$ we get the short exact sequence
%\[
%0\rightarrow \F \rightarrow \pi_1^*G\RightArrowOf{j}\iota_*(q_{\PP{G}_P})
%\rightarrow 0,
%\]
%where $j$ is the natural homomorphism, and $\F$ its kernel.
%Given a point $\ell\in \PP{G}_P$, we denote by $\tilde{\ell}\subset G_P$ 
%the corresponding line. The sheaf $\F_\ell$, $\ell\in \PP{G}_P$,
%is the subsheaf of $G$, with local sections which values at $P$ belong to 
%$\tilde{\ell}$.
%$\F_\ell$ is $H$-slope-stable, since $G$ is. $\F$ is thus a family of 
%$H$-stable sheaves, flat over $\PP{G}_P$.
%Let
and denote by
\[
\kappa \ : \ \PP{G}_P \ \ \ \longrightarrow \ \ \ M_H(v)
\]
%be the classifying morphism associated to $\F$. 
the morphism given in Lemma 
\ref{lemma-class-of-a-contractible-rational-curve}.
Then $\kappa(\PP{G}_P)$ is a fiber of $E\rightarrow Y$, and we get
the equality 
$
[\kappa(\PP{G}_P)] = [E]^\vee
$
of classes in $H_2(M,\Integers)$.
%
%Let us compose the Mukai isomorphism $\theta$, given in 
%(\ref{eq-Mukai-isomorphism}), with pull-back by $\kappa$
%\[
%v^\perp \LongRightArrowOf{\theta} H^2(M_H(v),\Integers) 
%\LongRightArrowOf{\kappa^*} H^2(\PP{G}_P,\Integers).
%\]
%The composition is given by
%\[
%\kappa^*(\theta(x)) \ \ = \ \ 
%c_1\left\{\pi_{2_!}\left[\pi_1^!(x^\vee)\otimes\F\right]\right\}.
%\]
%Let $[\F]$ be the class of $\F$  in $K(S\times\PP{G}_P)$.
%Then $[\F]=\pi_1^![G]-\iota_![q_{\PP{G}_P}]$. 
%%Given $x\in K(S)$, denote by $(x,[G])$ the Mukai pairing 
%%of $x$ with the class of $G$. 
%We have the equalities
%\[
%c_1\left\{\pi_{2_!}\left(\pi_1^!(x^\vee\otimes G)\right)\right\}
%\ \ = \ \ 0,
%\]
%\begin{eqnarray*}
%c_1\left\{\pi_{2_!}\left(\pi_1^!(x^\vee)\otimes \iota_!(q_{\PP{G}_P})
%\right)\right\}
%&=&
%c_1\left\{\pi_{2_!}\left(\rank(x)\cdot \iota_!(q_{\PP{G}_P})\right)\right\}
%\\
%&=&
%\rank(x)c_1(q_{\PP{G}_P}).
%\end{eqnarray*}
We conclude that the following equalities hold, for all $x\in v^\perp$. 
\begin{equation}
\label{eq-integral-equal-minus-rank-and-expressed-in-term-of-w}
\int_{[E]^\vee}\theta(x)=
%\int_{\PP{G}_P}\kappa^*(\theta(x))=
-\rank(x)=\frac{1}{n-1}(w,x),
\end{equation}
where $w=(2,\LB,n-b-1)$, by Lemma
\ref{lemma-class-of-a-contractible-rational-curve}.

The class $[E]$ is the unique class in $H^2(M,\Integers)$
satisfying the equality
\[
\frac{-2([E],\theta(x))}{([E],[E])} \ \ = \ \ \int_{[E]^\vee}\theta(x),
\]
for all $x\in v^\perp$, by Corollary \ref{cor-1}.
%Lemma 4.10 in \cite{markman-galois}. 
%We saw that the right hand side is equal to $-\rank(x)$.
%Set $w:=(2,\LB,n-b-1)$. 
Now $(w,w)=2-2n$ and so the equality
%\[
%-\frac{2(w,x)}{(w,w)} \ \ \ = \ \ \ \int_{[E]^\vee}\theta(x)
%\]
%for all $x\in v^\perp$. 
$[E]=\nolinebreak\theta(w)$ follows from 
equation (\ref{eq-integral-equal-minus-rank-and-expressed-in-term-of-w}). 
Hence, $([E],[E])=(\theta(w),\theta(w))=2-2n$.
The class $[E]$ is primitive, since $w$ is.

Let us calculate $\div(e,\bullet)$. 
%We claim that if $c_1(\LB)$ is not divisible by $2$, 
%then $v^\perp$ contains an integral class of rank $1$. 
%In particular, $[E]^\vee$ is a primitive class in $H_2(M,\Integers)$
%in that case, by the first equality in equation 
%(\ref{eq-integral-equal-minus-rank-and-expressed-in-term-of-w}). Indeed 
The class $x=(\rho,\LB',\sigma)$ belongs to $v^\perp$, if and only if
$b\rho=2\sigma-(\LB,\LB')$. 
Hence, $(e,\theta(x))=(w,x)=(1-n)\rho$. 
If $c_1(\LB)$ is divisible by $2$, then every integral class $x\in v^\perp$ 
has even rank $\rho$, and so $\div(e,\bullet)=2n-2$.
If $c_1(\LB)$ is not divisible by $2$, 
choose a line bundle $\LB'$,
such that $(\LB,\LB')$ is odd and set 
$\sigma:=[(\LB,\LB')+b]/2$. 
Then $(1,\LB',\sigma)$ belongs to $v^\perp$.
Hence, $\div(e,\bullet)=n-1$.

The pair $(\widetilde{L},e)$, given in equation (\ref{eq-saturation-of-L}),
may be chosen to consist of the
saturation $\widetilde{L}$ in $K(S)$ of the lattice spanned by the classes
$v$ and $w=\theta^{-1}(e)$, by Theorem 
%\ref{thm-a-natural-orbit-of-embeddings-of-H-2-in-Mukai-lattice} part 
\ref{thm-item-orbit-of-inverse-of-Mukai-isom-is-natural}.
The largest integer dividing $w-v=(0,0,n-1)$ is $\sigma:=n-1$.
Now $w+v=(4,2\LB,n-1-2b)$. The largest integer $\rho$
dividing $w+v$ is $4$, if $c_1(\LB)$ is divisible by $2$.
Otherwise, $\rho=1$, if $n$ is even, and $\rho=2$, if $n$ is odd.
The invariant $rs(e)$ is then calculated via the table after Lemma 
 \ref{lemma-isometry-orbits-in-rank-2}.
%
%The isometry class of $\widetilde{L}$ is determined by $\div(e,\bullet)$
%and the parity of $n$, as in Proposition
%\ref{prop-isometry-class-of-tilde-L-e-is-a-faithful-mon-invariant}.
%The invariant $rs(e)$ is then calculated via Lemma 
%\ref{lemma-isometry-orbits-in-rank-2}. 
%For example, if $c_1(\LB)$ is divisible by $2$, then 
%$\div(e,\bullet)=2n-2$, so $\widetilde{L}\cong U$, by Proposition
%\ref{prop-isometry-class-of-tilde-L-e-is-a-faithful-mon-invariant}. 
%Furthermore, $w-v=(0,0,n-1)$, so $2s=n-1$, and $r=(n-1)/s=2$, by
%Lemma \ref{lemma-isometry-orbits-in-rank-2}.
%In case $c_1(\LB)$ is not divisible by $2$, the computations 
%of $rs(e)$ are similarly straightforward.
\end{proof}

%************************************************************************
%
%************************************************************************
\section{Examples of non-effective monodromy-reflective classes}
\label{sec-non-effective}

We provide examples of monodromy-reflective classes, which are not $\RationalNumbers$-effective. 
Observation \ref{observation-not-Q-effective} guides us to lift these reflections to birational self-maps. 
Let us first prove the observation.
\begin{proof} (of Observation \ref{observation-not-Q-effective})
There exists a Zariski open subset $U\subset X$, 
such that $X\setminus U$ has codimension $\geq 2$, 
$\iota$ restricts to a regular involution of $U$, and
the composition
$
H^2(X,\Integers)\cong H^2(U,\Integers)\LongRightArrowOf{\iota^*}
H^2(U,\Integers)\cong H^2(X,\Integers)
$
is an isometry, by \cite{ogrady-weight-two}, Proposition 1.6.2.  
The isometry $\iota^*$ is assumed to be the reflection $R_{e}$. 
Hence, $\iota^*L\cong L^{-1}$ and $L$ is not $\RationalNumbers$-effective.
\end{proof}

Let $S$ be a projective $K3$ surface with a cyclic Picard group generated by
an ample line bundle $H$. Set $h:=c_1(H)\in H^2(S,\Integers)$.
Set $d:=\deg(H)/2$.

%************************************************************************
%
%************************************************************************
\subsection{Non-effective classes of divisibility $\div(e,\bullet)=2n-2$}
\hspace{1ex}\\

Let $r$, $s$ be integers satisfying $s>r>2$ and $\gcd(r,s)=1$.
Set $n:=rs+1$. 
Set $v:=(r,0,-s)$, $e:=\theta(r,0,s)$, and $M:=M_H(v)$.
$M$ is smooth and projective of dimension $2n$.
Let $\LB\in\Pic(M)$ be the line bundle with class $e$. 
%A sheaf $F$ of class $v$ is $H$-stable, if and only if it is $H$-semi-stable
%(even if $H$ does not generate $\Pic(S)$, see 
%\cite{markman-constraints}, Proposition 4.10 part 1). 
Let $Exc\subset M$ be the locus of $H$-stable sheaves
of class $v$, which are not locally free or not
$H$-slope-stable. $Exc$ is a closed subset of 
co-dimension $\geq 2$ in $M$, by 
Lemma \ref{lem-codimension-of-Exc}.
Let $M^0$ be  the complement $M\setminus Exc$ of $Exc$
and $\eta:M^0\rightarrow M$ the inclusion.
The restriction homomorphism
$\eta^*:H^2(M,\Integers)\rightarrow H^2(M^0,\Integers)$ is an isomorphism.
%since ${\rm codim}(Exc,M)\geq 2$,  
Let $\phi:M^0\rightarrow M^0$ be the involution
sending a point representing the sheaf $F$, to the point representing 
$F^*$. Set $\psi:=(\eta^*)^{-1}\circ \phi^*\circ \eta^*$.

\begin{prop}
\label{prop-vanishing-in-divisibility-2n-2}
\begin{enumerate}
\item
\label{lemma-item-clear}
The class $e$ is monodromy-reflective, $(e,e)=2-2n$, 
$\div(e,\bullet)=2n-2$, and $rs(e)=\{r,s\}$.
\item
\label{lemma-item-R-e-is-minus-dualization}
Let $R_e:H^2(M,\Integers)\rightarrow H^2(M,\Integers)$ be the reflection by 
$e$. Then $R_e(\theta(\lambda))=-\theta(\lambda^\vee)$, for all
$\lambda\in v^\perp$.
\item
\label{lemma-item-psi-is-R-e}
$\psi = R_e.$
\item
\label{lemma-item-vanishing-in-divisibility-2n-2}
$H^0(M,\LB^k)$ vanishes, for all non-zero integral powers $k$.
\end{enumerate}
\end{prop}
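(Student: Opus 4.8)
The plan is to prove the four parts in order, noting that parts \ref{lemma-item-clear} and \ref{lemma-item-R-e-is-minus-dualization} are direct computations, part \ref{lemma-item-vanishing-in-divisibility-2n-2} is a formal consequence of part \ref{lemma-item-psi-is-R-e}, and the genuine content is part \ref{lemma-item-psi-is-R-e}. For part \ref{lemma-item-clear}: since $\theta$ is an isometry and $((r,0,s),(r,0,-s))=rs-rs=0$, the class $(r,0,s)$ lies in $v^\perp$ and $(e,e)=((r,0,s),(r,0,s))=-2rs=2-2n$. The relations $(e+v)/2r=(1,0,0)$ and $(e-v)/2s=(0,0,1)$ exhibit primitive classes, so $\widetilde L\cong U$, $\div(e,\bullet)=2n-2$, and $rs(e)=\{r,s\}$ by Lemma \ref{lemma-isometry-orbits-in-rank-2} and the table following it; this is exactly Example \ref{example-any-factorization-rs-is-possible}. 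Monodromy-reflectivity follows from Proposition \ref{prop-reflection-by-a-numerically-prime-exceptional-is-in-Mon}, as $n-1$ divides $\div(e,\bullet)=2n-2$. For part \ref{lemma-item-R-e-is-minus-dualization}, writing $\lambda=(a,c,b)\in v^\perp$, orthogonality $(\lambda,(r,0,-s))=as-br=0$ gives $as=br$ and $(\lambda,(r,0,s))=-as-br$; since $\theta$ is an isometry, $R_e(\theta(\lambda))=\theta\!\left(\lambda-\frac{2(\lambda,(r,0,s))}{(e,e)}(r,0,s)\right)$, and a one-line substitution shows the argument equals $(-a,c,-b)=-\lambda^\vee$. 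Thus $R_e(\theta(\lambda))=-\theta(\lambda^\vee)$, so $R_e$ coincides with $-\theta\circ(\,\cdot\,)^\vee\circ\theta^{-1}$, and it suffices to show $\psi$ does too.

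The core is part \ref{lemma-item-psi-is-R-e}, computing $\phi^*$ on the image of $\theta$. Let $\F$ be the (possibly Brauer-twisted) universal sheaf on $S\times M$, with class $[\F]\in K(S\times M)$ as in \cite{markman-integral-generators}, and restrict to $S\times M^0$; since $Exc$ has codimension $\geq 2$ by Lemma \ref{lem-codimension-of-Exc}, $\eta^*$ identifies $H^2(M,\Integers)$ with $H^2(M^0,\Integers)$ and the $\theta$-classes restrict. The key geometric input is that $(\mathrm{id}_S\times\phi)^*\F$ and $\F^\vee$ both restrict fiberwise to $F^*$ on the generic (locally free, slope-stable) locus of $M^0$, hence agree up to a twist $\pi_2^*N$ by a line bundle pulled back from $M^0$, the inverse-Brauer-class bookkeeping matching because $\phi^*\alpha=\alpha^{-1}$. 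Then flat base change along the square formed by $\mathrm{id}_S\times\phi$ and $\phi$ moves $\phi^*$ inside $\pi_{2!}$, the projection formula peels off $\pi_2^*N$, and the correction term $c_1(N)$ is killed because the relevant rank is $(x^\vee,v^\vee)=(x,v)=0$ for $x\in v^\perp$. This yields $\phi^*\theta(x)=c_1(\pi_{2!}(\pi_1^!(x^\vee)\otimes[\F]^\vee))$. Finally, duality for $\pi_1^!$ and Grothendieck--Serre duality for $\pi_2$, whose relative dualizing sheaves are trivial (the fibre is a $K3$ and $M^0$ is symplectic, and the degree shifts are even so contribute no sign), turn this into $-c_1(\pi_{2!}(\pi_1^!(x)\otimes[\F]))=-\theta(x^\vee)$. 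Transporting back through $\eta^*$ gives $\psi(\theta(x))=-\theta(x^\vee)=R_e(\theta(x))$ by part \ref{lemma-item-R-e-is-minus-dualization}; since $\theta$ is onto $H^2(M,\Integers)$, $\psi=R_e$.

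Part \ref{lemma-item-vanishing-in-divisibility-2n-2} is then immediate from Observation \ref{observation-not-Q-effective}: $\phi$ is a regular involution of $M^0$ whose complement in $M$ has codimension $\geq 2$, and by part \ref{lemma-item-psi-is-R-e} it induces $R_e$ on $H^2(M,\Integers)$, so $\phi^*\LB\cong\LB^{-1}$ and $\LB$ is not $\RationalNumbers$-effective; that is, $H^0(M,\LB^k)=0$ for every $k\neq 0$. (Concretely, were $\LB^k$ and $\LB^{-k}$ both effective for some $k\neq 0$, then $\StructureSheaf{M}$ would be represented by a nonzero effective divisor, which is impossible.) The main obstacle is the twisted-universal-sheaf bookkeeping in part \ref{lemma-item-psi-is-R-e}: identifying $(\mathrm{id}_S\times\phi)^*\F$ with $\F^\vee\otimes\pi_2^*N$ correctly in the presence of the Brauer class, and verifying that the duality isomorphisms for $\pi_1^!$ and $\pi_2$ produce exactly the single sign $-1$ with no extraneous twist. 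Everything else reduces to the pairing identity $as=br$ and the rank-zero vanishing of the correction terms.
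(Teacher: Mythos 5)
Your proposal is correct and follows essentially the same route as the paper: parts (1) and (2) by the direct lattice computations, part (3) by pulling $\phi^{!}$ inside $\pi_{2!}$ via Künneth/base change, using $(\mathrm{id}\times\phi)^{!}[\G]\cong([\G]\otimes\pi_2^{!}[A])^{\vee}$ and Grothendieck--Verdier duality for the sign, and part (4) from Observation \ref{observation-not-Q-effective}. Your disposal of the line-bundle twist (projection formula plus the vanishing of the rank $\chi(x^\vee\otimes v)=-(x,v)=0$) is just the explicit form of the paper's appeal to the independence of $\theta$ from the choice of universal sheaf, so the two arguments coincide in substance.
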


\begin{proof}
Part \ref{lemma-item-clear} was proven in Example
\ref{example-any-factorization-rs-is-possible}. 
Set $\tilde{e}:=(r,0,s)\in v^\perp$. Part
\ref{lemma-item-R-e-is-minus-dualization} follows from  
the fact that $\theta:v^\perp\rightarrow H^2(M,\Integers)$ is an isometry,
and the equality $R_{\tilde{e}}(\lambda)=-\lambda^\vee$, for all
$\lambda\in v^\perp$. 
Part \ref{lemma-item-vanishing-in-divisibility-2n-2} follows from
part \ref{lemma-item-psi-is-R-e}, 
via Observation \ref{observation-not-Q-effective}.
We proceed to prove part \ref{lemma-item-psi-is-R-e}.
We need to prove the equality 
$\phi^*(\eta^*(y))=\eta^*R_e(y)$, 
for all $y\in H^2(M,\Integers)$.

Let $\pi_i$ be the projection from $S\times M^0$ onto the $i$-th factor, 
$i=1,2$. Let $\F$ be a universal sheaf over $S\times M$, $\G$
its restriction to $S\times M^0$, and $[\G]$ its class
in $K(S\times M^0)$.
The morphism $\phi:M^0\rightarrow M^0$ satisfies
\[
(id\times \phi)^!\G \cong (\G\otimes \pi_2^*A)^*,
\]
for some line bundle $A\in \Pic(M^0)$. 
We have the commutative diagram
\[
\begin{array}{ccc}
K(S\times M)&\LongRightArrowOf{(id\times \eta)^!}& K(S\times M^0)
\\
\pi_{2_!} \ \downarrow \ \hspace{2ex} & & 
\hspace{2ex} \ \downarrow \ \pi_{2_!} 
\\
K(M) & \LongRightArrowOf{\eta^!} & K(M^0),
\end{array}
\]
by the K\"{u}nneth Theorem \cite{atiyah-book}.
Hence, 
\[
\eta^*\theta(x)=
c_1\left[\pi_{2_!}\left(\pi_1^!(x^\vee)\otimes[\G]\right)\right],
\] 
for all $x\in v^\perp\subset K(S)$. This explains the first
equality below.
\begin{eqnarray*}
\phi^*(\eta^*\theta(x))&=&
c_1\left\{\phi^!\pi_{2_!}\left(\pi_1^!(x^\vee)\otimes[\G]\right)\right\}
\\
&=&
c_1\left\{\pi_{2_!}
\left(\pi_1^!(x^\vee)\otimes(id\times \phi)^![\G]\right)\right\}
\\
&=&
c_1\left\{\pi_{2_!}\left(\pi_1^!(x^\vee)\otimes([\G]\otimes \pi_2^![A])^\vee
\right)\right\}
\\
&=&
-c_1\left\{\pi_{2_!}\left(\pi_1^!(x)\otimes([\G]\otimes \pi_2^![A])
\right)\right\}
\\
&=&
\eta^*\theta(-x^\vee) \ \ = \ \ \eta^*(R_e(\theta(x))).
\end{eqnarray*}
The fourth equality follows from Grothendieck-Verdier duality,
the fifth is due to the fact that $\theta$ is independent of 
the choice of a universal sheaf, and the sixth follows from part
\ref{lemma-item-R-e-is-minus-dualization}.
%This verifies the equality in part \ref{lemma-item-psi-is-R-e}.
%
%Assume that $E$ is an effective divisor in the linear system $\linsys{\LB^k}$,
%for $k\neq 0$.
%Then $R_e([E])=-[E]$. Hence, $\phi^*(res^*[E])=-res^*[E]$.
%Consequently, $[\phi(E)]=-[E]$, and both $[E]$ and $-[E]$ are effective
%classes. A contradiction.
\end{proof}
%************************************************************************
%
%************************************************************************
\subsection{Non-effective classes of divisibility $\div(e,\bullet)=n-1$}
\hspace{1ex}\\
Let $r$ be a positive odd integer, 
$\sigma$ a positive integer, 
and set $n:=r\sigma+1$. Assume that $r\geq 3$, $\sigma\geq 3$, 
and  $\gcd(r,\sigma)=1$. 
Let $S$ be a $K3$ surface with a cyclic Picard group generated by 
an ample line bundle $H$. Set $d:=\deg(H)/2$. 
Choose $(S,H)$, so that $\sigma$ and $d$ have the same parity.
If $d$ is odd, assume  that $\sigma>r$, possibly after interchanging 
$r$ and $\sigma$.
Set $h:=c_1(H)$ and
\[
v \ \ \ := \ \ \ (2r,rh,-b),
\]
where $b:=[\sigma-rd]/2$.
Note that $\gcd(r,b)=\gcd(r,\sigma)=1$. 
Hence, $v$ is a primitive class in $K(S)$, $(v,v)=2n-2$,
and the moduli space $M_H(v)$ is smooth and projective of type 
$K3^{[n]}$. 
Let $Exc\subset M_H(v)$ be the locus parametrizing sheaves $F$,
which are not locally free or not $H$-slope-stable. 
$Exc$ is an algebraic subset of co-dimension
$\geq 2$ in $M_H(v)$, by Lemma 
\ref{lemma-Exc-has-codimension-at-least-2-again}.

Let $M^0$ be the complement $M\setminus Exc$ and $\eta:M^0\rightarrow M$
the inclusion. Let $\phi:M^0\rightarrow M^0$
be the involution sending a point $[F]$, representing 
the sheaf $F$, to the point representing 
$F^*\otimes H$. The homomorphism 
$\eta^*:H^2(M,\Integers)\rightarrow H^2(M^0,\Integers)$ is an isomorphism,
by Lemma \ref{lemma-Exc-has-codimension-at-least-2-again}.
Set $\psi:=(\eta^*)^{-1}\circ \phi^*\circ \eta^*$. 

Set $e:=(2r,rh,\sigma-b)$. 
Note that $\gcd(r,\sigma-b)=\gcd(r,2\sigma-2b)=\gcd(r,\sigma+rd)=
\gcd(r,\sigma)=1$.
Hence, $e$ is a primitive class in $v^\perp$ of degree $(e,e)=2-2n$.

\begin{lem}
\label{lemma-Type-B-divisibility-n-1}
\hspace{1ex}
\begin{enumerate}
\item
\label{lemma-item-e-is-monodromy-reflective-of-divisibility-n-1}
The class $\theta(e)$ is monodromy-reflective and 
$\div(\theta(e),\bullet)=n-1$.  
\item
\label{lemma-item-s-versus-sigma}
$\widetilde{L}$ and $rs(e) \ \ = \ \ \left\{
\begin{array}{cl}
H_{ev} \ \mbox{and} \ \{r,\sigma\} & \mbox{if} \ \sigma \ \mbox{is odd}
\ (n \ \mbox{even}),
\\
U(2) \ \mbox{and} \ \{r,\sigma/2\} & \mbox{if} \ \sigma \ \mbox{is even}
\ (n \ \mbox{odd}).
\end{array}
\right.
$
\item
\label{lemma-item-R-e-dualize-and-then-tensorize-with-H}
Let $R_e:v^\perp\rightarrow v^\perp$ be the reflection by $e$.
Then 
$
R_e(\lambda) =  -[\lambda^\vee]\otimes\nolinebreak H,
$
for all $\lambda\in v^\perp$. 
\item
\label{lemma-item-psi-equal-R-e-once-again}
$\psi=R_e$.
\item
\label{lemma-item-e-of-type-B-again}
Let $\LB$ be the line bundle on $M_H(v)$ with $c_1(\LB)=\theta(e)$. Then
$H^0(M_H(v),\LB^k)$ vanishes, for all non-zero integral powers $k$.
\end{enumerate}
\end{lem}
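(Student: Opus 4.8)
The plan is to follow the template of the already-proved Proposition \ref{prop-vanishing-in-divisibility-2n-2}, adapting its argument for the divisibility $2n-2$ case by replacing the dualization involution $F\mapsto F^*$ with $\phi\colon F\mapsto F^*\otimes H$. The computational engine behind all five parts is the value of the Mukai pairing $(e,x)$ for $x=(\rho,c,s')\in v^\perp$. First I would record two identities: from $(e,e)=r^2(h,h)-4r(\sigma-b)$ and $b=[\sigma-rd]/2$ one gets $(e,e)=-2r\sigma=2-2n$; and from the constraint $r[(h,c)-2s']=-b\rho$ defining $v^\perp$, a one-line substitution into $e=(2r,rh,\sigma-b)$ gives
\[
(e,x) \ = \ -b\rho-(\sigma-b)\rho \ = \ -\sigma\rho \ = \ -\sigma\,\rank(x).
\]

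For part \ref{lemma-item-e-is-monodromy-reflective-of-divisibility-n-1} I would argue that since $\gcd(r,b)=1$ the constraint forces $r\mid\rho$, while $\rho=r$ is realized by some $x$ (e.g.\ $x=(r,c,0)$ with $(h,c)=-b$, available because $h$ is primitive in the even unimodular lattice $H^2(S,\Integers)$). Hence the values $(e,x)=-\sigma\rho$ range exactly over $(n-1)\Integers$, so $\div(\theta(e),\bullet)=n-1$; combined with $(e,e)=2-2n$, Proposition \ref{prop-reflection-by-a-numerically-prime-exceptional-is-in-Mon} yields that $\theta(e)$ is monodromy-reflective. For part \ref{lemma-item-s-versus-sigma} I would compute the divisibilities of $e\pm v$ inside $K(S)$ (equal to those inside the saturated lattice $\widetilde L$, which by Theorem \ref{thm-item-orbit-of-inverse-of-Mukai-isom-is-natural} may be taken as the saturation of $\mathrm{span}\{v,e\}$): here $e-v=(0,0,\sigma)$ has divisibility $\sigma$ and $e+v=(4r,2rh,rd)$ has divisibility $r\gcd(2,d)$. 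Since $\sigma$ and $d$ share parity, for $\sigma$ odd ($n$ even) the divisibilities are $(r,\sigma)$ with $\gcd 1$, giving $\widetilde L\cong H_{ev}$ and $rs(e)=\{r,\sigma\}$ by row~2 of the table after Lemma \ref{lemma-isometry-orbits-in-rank-2}; for $\sigma$ even ($n$ odd) they are $(2r,\sigma)$ with $\gcd 2$ and product $2n-2$, giving $\widetilde L\cong U(2)$ and $rs(e)=\{r,\sigma/2\}$ by row~3 (the product $\rho\sigma=2n-2$ selecting $U(2)$ even when $n\equiv1$ modulo $8$, by Proposition \ref{prop-isometry-class-of-tilde-L-e-is-a-faithful-mon-invariant}).

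For part \ref{lemma-item-R-e-dualize-and-then-tensorize-with-H} I would introduce $D(\lambda):=[\lambda^\vee]\otimes H$, an isometric involution of $K(S)$, and check directly from the Mukai-vector formula for $\otimes H$ (with $ch(H)=(1,h,d)$) that $D(v)=v$ and $D(e)=e$, so $D$ preserves $v^\perp$. Then for $\lambda=(\rho,c,s')\in v^\perp$ I would expand $-D(\lambda)=(-\rho,\,c-\rho h,\,(h,c)-\rho d-s')$ and $R_e(\lambda)=\lambda-\tfrac{2(e,\lambda)}{(e,e)}e=\lambda-\tfrac{\rho}{r}e$, and verify the third coordinates coincide after substituting the constraint $(h,c)=2s'-\rho b/r$ together with $2b-\sigma=-rd$. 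This establishes $R_e=-D$ on $v^\perp$.

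Part \ref{lemma-item-psi-equal-R-e-once-again} is the main obstacle and the geometric heart, mirroring part~3 of Proposition \ref{prop-vanishing-in-divisibility-2n-2}. The key input is that, because $v^\vee\otimes H=v$, the map $\phi\colon F\mapsto F^*\otimes H$ is a regular involution of $M^0$, and comparing universal families gives $(\mathrm{id}\times\phi)^![\G]=[\G]^\vee\otimes\pi_1^![H]\otimes\pi_2^![A]$ for some $A\in\Pic(M^0)$. I would then run the chain $\phi^*(\eta^*\theta(x))=c_1\{\pi_{2_!}(\pi_1^!(x^\vee)\otimes(\mathrm{id}\times\phi)^![\G])\}$, absorb $\pi_1^![H]$ into $\pi_1^!(x^\vee\otimes H)=(\pi_1^!(x\otimes H^{-1}))^\vee$, apply Grothendieck--Verdier duality on the $K3$ fibers of $\pi_2$ (trivial relative canonical, so $\pi_{2_!}$ commutes with $\vee$ and $c_1$ changes sign), arriving at $-c_1\{\pi_{2_!}(\pi_1^!(x\otimes H^{-1})\otimes[\G]\otimes\pi_2^![A^{-1}])\}$. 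The $\pi_2^![A^{-1}]$-twist is invisible since $-x^\vee\otimes H\in v^\perp$ forces the relevant rank $-(-x^\vee\otimes H,v)=0$, which is exactly the independence of $\theta|_{v^\perp}$ from the universal sheaf; comparing with $\eta^*\theta(-x^\vee\otimes H)=-c_1\{\pi_{2_!}(\pi_1^!(x\otimes H^{-1})\otimes[\G])\}$ and invoking part \ref{lemma-item-R-e-dualize-and-then-tensorize-with-H} gives $\psi=R_{\theta(e)}$. The delicate bookkeeping is the extra $H$-twist, absent from the earlier proof, and confirming the vanishing of the rank of the shifted class. Finally, part \ref{lemma-item-e-of-type-B-again} is immediate: $\phi$ is a regular involution on $M^0$ with $M\setminus M^0=Exc$ of codimension $\geq2$ (Lemma \ref{lemma-Exc-has-codimension-at-least-2-again}), hence a bimeromorphic involution of $M$ inducing $R_{\theta(e)}$; since $\theta(e)$ is monodromy-reflective, Observation \ref{observation-not-Q-effective} shows $\LB$ is not $\RationalNumbers$-effective, i.e.\ $H^0(M_H(v),\LB^k)=0$ for all nonzero $k$.
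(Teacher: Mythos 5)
Your proposal is correct and follows essentially the same route as the paper: the same rank-divisibility computation $(e,\lambda)=-\sigma\,\mathrm{rank}(\lambda)$ with the witness $(r,c,0)$ for part (1), a direct coordinate check for part (3), the $\phi^*$-versus-$\theta$ chain of equalities adapted from Proposition \ref{prop-vanishing-in-divisibility-2n-2} for part (4) (correctly noting that the $\pi_2$-twist is invisible because $-x^\vee\otimes H\in v^\perp$ has pushforward of rank zero), and Observation \ref{observation-not-Q-effective} for part (5). The only cosmetic difference is in part (2), where the paper exhibits explicit integral bases $\alpha,\beta$ of $\widetilde{L}$, while you read off $\widetilde{L}$ and $rs(e)$ from the divisibilities $\rho=r\gcd(2,d)$, $\sigma$ of $e\pm v$ via the table after Lemma \ref{lemma-isometry-orbits-in-rank-2} — an equivalent packaging the paper itself uses elsewhere.
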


\begin{proof}
\ref{lemma-item-e-is-monodromy-reflective-of-divisibility-n-1})
Let $\lambda:=(x,c,y)\in K(S)$. Then $\lambda$ belongs to $v^\perp$,
if and only if 
$
r(h,c)-2ry+bx=0.
$ 
In particular, $x$ is divisible by $r$, since $\gcd(r,b)=1$. Now
$(\lambda,e)=(\lambda,v)-x\sigma=-x\sigma$.
Thus $(e,\lambda)$ is divisible by $r\sigma=n-1$. 
There exists a class $c\in H^2(S,\Integers)$, satisfying
$(c,h)=-b$, since the class $h$ is primitive and $H^2(S,\Integers)$
is unimodular. Then the class $\lambda:=(r,c,0)$ belongs to $v^\perp$, 
and $(e,\lambda)=-r\sigma=1-n$.
Hence, $\div(\theta(e),\bullet)=n-1$.
The class $\theta(e)$ is monodromy-reflective, by Proposition
\ref{prop-reflection-by-a-numerically-prime-exceptional-is-in-Mon}.

\ref{lemma-item-s-versus-sigma})
If $\sigma$ is odd, then $n=r\sigma+1$ is even, and
$\widetilde{L}\cong H_{ev}$, by Lemma 
\ref{lem-non-unimodular-rank-two-lattice}.
Now $(e-v)/\sigma=(0,0,1)$ is primitive. Hence, $rs(e)=\{r,\sigma\}$,
by Lemma \ref{lemma-isometry-orbits-in-rank-2}.
If $\sigma$ is even, then $n$ is odd and $d$ is even. 
The classes  $\alpha:=(e-v)/\sigma=(0,0,1)$ 
and $\beta:=(e+v)/2r=(2,h,d/2)$ are integral isotropic classes
and $(\alpha,\beta)=-2$. Hence, $\{\alpha,\beta\}$ 
spans the primitive sublattice 
$\widetilde{L}\cong U(2)$. Consequently, $rs(e)=\{r,\sigma/2\}$,
by Lemma \ref{lemma-isometry-orbits-in-rank-2}.

Part \ref{lemma-item-R-e-dualize-and-then-tensorize-with-H} is verified
by a straightforward calculation. 
Part \ref{lemma-item-psi-equal-R-e-once-again} follows from 
part \ref{lemma-item-R-e-dualize-and-then-tensorize-with-H} by
the same argument used in the proof of Proposition
\ref{prop-vanishing-in-divisibility-2n-2}.
Part \ref{lemma-item-e-of-type-B-again} follows from part 
\ref{lemma-item-psi-equal-R-e-once-again}, by Observation 
\ref{observation-not-Q-effective}.
\end{proof}

\begin{example}
\label{example-non-effective-divisibility-n-1-and-n-is-cong-1-mod-8}
Let $r$ and $s$ be positive integers satisfying $s>r$, one of $r$ or $s$
is even, and $\gcd(r,s)=1$. 
%$s$ be an odd positive integer, $r$ an even positive integer and 
%$\gcd(r,s)=1$.
Set $n=4rs+1$. Note that $n\equiv 1$ (modulo $8$).
Let $S$ be a $K3$ surface with a cyclic Picard  group generated by an ample
line bundle $H$. Set $h:=c_1(H)$ and
$d:=(h,h)/2$. Assume that $d$ is odd.
Then $s+rd$ is odd, since $r$ and $s$ consist of one odd and one even integer,
by assumption. 
Set
$v:=(4r,2rh,-s+rd)$ and $e:=(4r,2rh,s+rd)$. 
Then the classes $v$ and $e$ are primitive, 
$(e,v)=0$,  $(v,v)=2n-2$ and $(e,e)=2-2n$.
We have $(e+v)/2r=(4,2h,d)$ and $(e-v)/2s=(0,0,1)$.
We claim that $\div(e,\bullet)=n-1$.
Indeed, if $\lambda=(x,c,y)$ belongs to $v^\perp$,
then $2r$ divides $x$. Hence, $(e,\lambda)=(v,\lambda)-2sx=-2sx$ is divisible
by $4rs=n-1$. Furthermore, let $c$ be a class in $H^2(S,\Integers)$ satisfying $(c,h)=rd-s$,
and set $\lambda=(2r,c,0)$. Then $(\lambda,e)=1-n$.
Hence, $\div(e,\bullet)=n-1$.
%Set
%\begin{eqnarray*}
%\alpha & := & \frac{1}{2}\left[
%\frac{e+v}{2s} - \frac{e-v}{2r}
%\right]=\left(2,h,\frac{d-1}{2}\right), 
%\\
%\beta & := & \frac{1}{2}\left[
%\frac{e+v}{2s} + \frac{e-v}{2r}
%\right]=\left(2,h,\frac{d+1}{2}\right).
%\end{eqnarray*}
%Then $(\alpha,\beta)=0$, $(\alpha,\alpha)=2$, and $(\beta,\beta)=-2$.
Hence, $\widetilde{L}=H_{ev}$, by Proposition
\ref{prop-isometry-class-of-tilde-L-e-is-a-faithful-mon-invariant}. 
Thus $rs(e)=\{r,s\}$, by Lemma \ref{lemma-isometry-orbits-in-rank-2}.
Let $Exc\subset M_H(v)$ be the locus parametrizing sheaves which are 
not locally-free or not $H$-slope-stable. $Exc$ is an algebraic subset 
of co-dimension $\geq 2$ in $M_H(v)$, 
by the proof of the odd $d$ case of  
Lemma \ref{lemma-Exc-has-codimension-at-least-2-again}
(the symbol $r$ in that proof should remain $\rank(v)/2$ 
and so should be set equal to twice the symbol $r$ above,
and similarly the symbol $s$ in that proof should be set equal to
twice the symbol $s$ above).
Let $\LB$ be the line bundle over $M_H(v)$ with 
$c_1(\LB)=\theta(e)$. 
We conclude that $H^0(M_H(v),\LB^k)$ vanishes, for all non-zero integers $k$,
by the same argument used to prove Lemma \ref{lemma-Type-B-divisibility-n-1}.
\end{example}
%***********
% Hide
%***********
\hide{
%************************************************************************
%
%************************************************************************
\section{Applications of the divisorial Zariski decomposition}
\label{sec-zariski-decomposition}
We review the definition of the Zariski-decomposition for
effective divisors on irreducible holomorphic symplectic projective
manifolds. An effective divisor $D$, of negative Beauville-Bogomolov degree,
which is not exceptional (Definition \ref{def-rational-exceptional}), 
must have a non-trivial Zariski-decomposition.

Next we provide an  
example of a non-exceptional effective divisor $D$, 
of negative Beauville-Bogomolov degree, on the
Hilbert scheme $S^{[7]}$ of a $K3$ surface $S$ 
(Example \ref{example-effective-non-prime-divisors-of-negative-degree}). 
The example is particularly interesting
since the reflection $R_{[D]}$, with respect to the 
class of $D$, is an integral isometry of 
$H^2(S^{[7]},\Integers)$, which does not belong to $Mon^2(S^{[7]})$. 
%is twice a primitive class $e$, but $e$ is not monodromy-reflective 
%(Definition \ref{def-monodromy-reflective}).

The existence, of the divisorial Zariski decomposition, 
implies the emptiness of linear systems 
$\linsys{L}$ for line bundles $L$ with 
negative Beauville-Bogomolov degree, on a projective $X$
which does not contain exceptional classes.
We prove a weak analogue of this result for non-projective 
irreducible holomorphic symplectic manifolds 
(Lemma \ref{lemma-generic-vanishing}).
%prove the vanishing of the space of $H^0(Y,L^k)$ 
%for a generic irreducible holomorphic symplectic manifold $Y$ 
%of $K3^{[n]}$-type, $n\geq 2$, 
%with a rank one Picard lattice, generated by a line bundle $L$ 
%with negative Beauville-Bogomolov degree, such that $L$ is not
%monodromy-reflective (Lemma \ref{lemma-generic-vanishing}).

We recall first Boucksom's divisorial Zariski decomposition in the 
case of a projective irreducible holomorphic symplectic manifold $X$.
The {\em pseudo-effective cone} $\Pef(X)$ is the closure in
$H^{1,1}(X,\RealNumbers)$ of the cone of effective divisors. 
\begin{defi}
\label{def-divizorial-Zariski-decomposition}
\cite{boucksom}
Let $D$ be a rational divisor on $X$.
The {\em divisorial Zariski decomposition}\footnote{This definition applies
only to the special case of irreducible holomorphic symplectic manifolds.
It depends on the fact that the {\em modified nef come}, introduced by
Boucksom, coincides with $\Pef(X)^*$ in this case
(\cite{boucksom}, Proposition 4.4).
}
of $D$ is the unique decomposition as a sum
\[
D \ \ \ = \ \ \ P(D)+N(D),
\]
with $N(D)$ either zero or an exceptional 
$\RationalNumbers$-divisor (Definition
\ref{def-rational-exceptional}),  
and $P(D)$ belongs to the 
cone $\Pef(X)^*$, dual to the pseudo-effective cone 
with respect to the Beauville-Bogomolov pairing. 
\end{defi}
The existence and uniqueness of the divisorial Zariski decomposition,
of every effective $\RationalNumbers$-divisor on $X$,
is established in \cite{boucksom}.
Boucksom proves an analogue of this decomposition without assuming that $X$
is projective.

\begin{example} 
\label{example-effective-non-prime-divisors-of-negative-degree}
Let $S$ be a $K3$ surface, which is the double cover of $\PP^2$ 
branched over a smooth sextic. 
Let $L$ be the corresponding line bundle of degree $2$ on $S$.
Set $X:=S^{[7]}$ and let $d\in H^2(S^{[7]},\Integers)$ be half the
class of the big diagonal. Let $e\in H^2(S^{[7]},\Integers)$ be the class 
\[
%\label{eq-Zariski-decomposition}
e \ := \ 2c_1(L)+d, 
\]
using the orthogonal direct sum decomposition 
(\ref{eq-orthogonal-direct-sum}). 
Then $(e,e)=-4$ and so $-12<(e,e)<-2$. 
Thus $e$ is not monodromy-reflective. 
Let $R\in OH^2(S^{[7]},\Integers)$ be the reflection 
$
R(x) = x-\frac{2(e,x)}{(e,e)}e
$
by $e$.
$R$ is an integral isometry, since $(e,x)$ is even, for all 
$x\in H^2(S^{[7]},\Integers)$. Nevertheless, $R$ is not a monodromy
operator (\cite{markman-constraints}, Example 4.8). 

We observe finally that $2e$ is effective.  
Choose a smooth curve $C\subset S$ in $\linsys{L^4}$.
Denote by 
$\widetilde{C}$ the divisor in $S^{[7]}$ consisting of ideal sheaves 
of length $7$ subschemes with non-empty intersection with $C$.
Let $\Delta$ be the big diagonal in $S^{[7]}$. 
Then the class of $D:=\widetilde{C}+\Delta$ is $2e$. 
Note that this decomposition of $D$ 
%(\ref{eq-Zariski-decomposition})
is the divisorial Zariski decomposition.
\end{example}

%In the rest of this section we 
%specialize
%\footnote{All we need is to be able to conclude in part 
%\ref{item-monodromy-operator}
%of Corollary \ref{cor-1} that $R$ belongs to $Mon^2(X)$. 
%This holds true if $Mon^2(X)$ is a normal subgroup of $OH^2(X,\Integers)$
%(which is true in the $K3^{[n]}$-type case),
%or if the isometry between $H^2(X,\Integers)$ and 
%$H^2(X',\Integers)$, used in Corollary \ref{cor-1}, 
%is a parallel transport operator.
%}
%to the $K3^{[n]}$-type case,
%$n\geq 2$. 

\begin{lem}
\label{lem-vanishing-in-the-absence-of-numerically-exceptional-classes}
Let $X$ be a projective\footnote{We
need the projectivity assumption, since we used it in Corollary \ref{cor-1}.
We do not know that a prime exceptional divisor $E\subset X$ is monodromy 
reflective, if $X$ is not projective.
} 
symplectic manifold of $K3^{[n]}$-type, $n\geq 2$.
Assume that $\Pic(X)$ does not contain any monodromy-reflective class
(Definition \ref{def-monodromy-reflective}).
Then $H^0(X,L)$ vanishes, for every line bundle $L$ on $X$ with
negative Beauville-Bogomolov degree.
\end{lem}

\begin{proof}
A proof by contradiction. 
Assume that $L$ has negative Beauville-Bogomolov degree and
$D$ is a non-zero divisor in $\linsys{L}$. Let $D=P(D)+N(D)$
be its Zariski decomposition. 
%, where both $P(D)$ and $N(D)$ are effective 
%$\RationalNumbers$-divisors, $P(D)$ is in the closure of
%the moveable cone, and 
%$N(D)=\sum_{i=1}^kn_iD_i$, where $D_i$ are prime divisors,
%$n_i$ are positive rational numbers, and
%the Gram-matrix $([D_i],[D_j])_{i,j}$ is negative definite,
%by \cite{boucksom}, Proposition 3.11, Theorem 3.12, 
%Theorem 4.5, and Corollary 4.11.
The prime divisors $D_i$ in the support of $N(D)$ are exceptional. Hence,
their classes
$[D_i]\in H^2(X,\Integers)$ are multiples of monodromy-reflective
classes, by Corollary \ref{cor-1}. Our assumption implies that $D=P(D)$. 
Then $([D],[D])=([D],[P(D)])\geq 0$. A contradiction.
%But a line bundle in the closure of the moveable cone has a non-negative 
%Beauville-Bogomolov degree, by \cite{boucksom}, Proposition 4.2.
%A contradiction. 
\end{proof}

\begin{lem}
\label{lemma-generic-vanishing}
Fix a non-zero integer $k$.
$H^0(Y,B^k)$ vanishes for a generic pair $(Y,B)$, 
of an irreducible holomorphic symplectic manifold $Y$ of $K3^{[n]}$-type 
and a line bundle $B$ of negative degree, 
such that $c_1(B)$ is a primitive class in $H^2(Y,\Integers)$ and 
$B$ is not monodromy-reflective.
\end{lem}

\begin{proof}
The proof is by contradiction.
Assume that there exists a non-zero integer $k$, such that 
$H^0(Y,B^k)$ does not vanish, for a generic such pair $(Y,B)$. 
Then $H^0(X,A^k)$ does not vanish for any pair $(X,A)$, 
deformation equivalent to the pair $(Y,B)$, in the sense of Definition 
\ref{def-deformation-equivalent-pairs-with-effective-divisors},
by the Semi-Continuity Theorem. 

We will prove next the vanishing for some pair $(X,A)$, 
deformation equivalent to the pair $(Y,B)$, obtaining a contradiction.
There exists a pair $(X,A)$, deformation equivalent to $(Y,B)$,
such that $\Pic(X)$ is a free abelian group of 
rank $2$, generated by $A$ and a line bundle $L$, with
$\ell:=c_1(L)$ of positive degree $(\ell,\ell)=-(a,a)k^2$, where $a:=c_1(A)$,
$k$ is an integer larger than $2n-2$, and $(a,\ell)=0$.
%such that
%$\Pic(X)$ does not contain any class of degree $-2$ or $2-2n$.
Such $X$ is projective, by Huybrechts projectivity criterion
\cite{huybrects-basic-results}. 
%Set $a:=c_1(A)$. Then this is the case, for example, if
%the intersection matrix of $\{a,\ell\}$ is 
%$\left(\begin{array}{cc}
%(a,a) & 0\\
%0 & -(a,a)k^2
%\end{array}\right)$, and $k$ is an integer larger than $2n-2$.
We claim that $\Pic(X)$ does not contain any class of degree $-2$ or $2-2n$.
The degree of every class in $\Pic(X)$ is divisible by $(a,a)$,
and thus $\Pic(X)$ does not contain $-2$ classes. 
For $xa+y\ell$ to have degree $2-2n$, the equation 
\begin{equation}
\label{eq-inconsistent}
2-2n \ \ = \ \ (a,a)[x^2-(yk)^2] \ \ = \ \ (a,a)(x-yk)(x+yk)
\end{equation} 
should have as integral solution $(x,y)$. 
If $(a,a)=2-2n$ and $(x,y)=(\pm1,0)$, then the class $xa+y\ell$ is not
monodromy-reflective, by assumption. 
We may thus assume that $y\neq 0$.
The right hand side of equation (\ref{eq-inconsistent}) is either zero,
or it has absolute value larger than $2n-2$. 
Hence, $\Pic(X)$ does not contain any monodromy-reflective class.
The vanishing of $H^0(X,A^k)$ follows from Lemma
\ref{lem-vanishing-in-the-absence-of-numerically-exceptional-classes}.
\end{proof}

%******************
% End Hide
%******************
}

\hide{
%****************************************************************
%
%****************************************************************
\section{Appendix: A calculation of an ample cone}
\label{sec-proof-of-lemma-ample-cone}
We prove Lemma \ref{lemma-ample-cone} in this section.
We keep the notation of the Lemma. 
Part \ref{lemma-item-f-is-nef} is straightforward. 
We prove part \ref{lemma-item-e-and-f-generate-the-Pef-cone} next.
Let $D$ be a prime divisor, whose class $xe+yf$ is not a scalar multiple
of neither $e$ nor $f$. Both $e$ and $f$ are classes of prime divisors.
Hence, $(xe+yf,f)=x$ and $(xe+yf,e)=y-2x$ are both non-negative,
by \cite{boucksom}, Proposition 4.2. Thus $y\geq 2x\geq 0$. 
Hence, $Pef(M_H(v))=\langle e,f\rangle$.
If $D$ is a prime exceptional divisor on $M_H(v)$ with class $d=xe+yf$, 
then $x\geq 0$, $y\geq 0$, and $(d,d)=2x(y-x)<0$. 
We have seen above, that if $y>0$, then $y\geq 2x$.
Hence $x>0$ and $y=0$. Thus $D$ belongs to the linear system
$\linsys{yE}$. But this linear system consists of a single divisor
$yE$, by \cite{boucksom}, Proposition 3.13. 
Part \ref{lemma-item-e-and-f-generate-the-Pef-cone} follows.

Set $w:=v+kv_0=(k,H,k)$, $k\geq 0$. 
Denote by $M_H(w)^t$ the Brill-Noether locus, consisting of sheaves
$F$ with $h^1(F)\geq t$. There exists a regular morphism
\[
\beta_t \ : \ M_H(w)^t\setminus M_H(w)^{t+1} \ \ \
\longrightarrow \ \ \ M_H(w+tv_0)\setminus M_H(w+tv_0)^1,
\]
which is surjective and a $G(t+k,2t+2k)$-bundle 
(\cite{markman-reflections}, Theorem 3.15). 
$M_H(w+tv_0)\setminus M_H(w+tv_0)^1$ is a Zariski dense subset of
$M_H(w+tv_0)$. In particular, $M_H(w)^t$ is non-empty,
if and only if $M_H(w+tv_0)$ is non-empty, if and only if 
$(w+tv_0,w+tv_0)\geq -2$ (\cite{markman-reflections}, Lemma 3.17).

The proof of part \ref{lemma-item-f-and-e+2f-generate-the-Nef-cone}
will be done by induction. 
Part  \ref{lemma-item-f-and-e+2f-generate-the-Nef-cone} is a special case 
of following Lemma.
Set $\tilde{e}:=-v_0=(-1,0,-1)$ and $\tilde{f}:=(0,0,1)$. 
Let $\theta_w:w^\perp\rightarrow H^2(M_H(w),\Integers)$ be the Mukai 
homomorphism given in (\ref{eq-Mukai-isomorphism}).

\begin{lem}
\label{lemma-line-bundle-e+2f-is-nef}
The class $\theta_w(\tilde{e}+2\tilde{f})$ is nef, 
for all $w:=v+kv_0=(k,H,k)$, with $k\geq 0$, and for which $M_H(w)$
is non-empty.
\end{lem}

\begin{proof}
The proof is by induction on the length of the Brill-Noether stratification.
Assume that $M_H(w)^1$ is empty. Then $w=v+kv_0$, with $k>0$, since 
$M_H(v)^1=E$ is non-empty. 
If $\dim[M_H(w)]=0$, the statement is trivial.
If $\dim[M_H(w)]=2$, then $M_H(w)$ is a $K3$ surface with Picard number $1$,
and the statement is easily verified. Assume that $\dim[M_H(w)]>2$.
Set $\sigma:=\tilde{e}+2\tilde{f}=(-1,0,1)$ and 
$\sigma_w:=\theta_w(\sigma)$. Note that $(\sigma,\sigma)=2$
and $(\sigma,v)=(\sigma,v_0)=0$.
There exists a regular involution
$\iota:M_H(w)\rightarrow M_H(w)$, which induces on 
$H^2(M_H(w),\Integers)$ the isometry $-R_{\sigma_w}$
(minus the reflection by the class $\sigma_w$),
by \cite{markman-reflections}, Theorem 3.21.
Now $\sigma_w$ spans the $\iota^*$-invariant sub-lattice of $\Pic(M_H(w))$.
Hence, either $\sigma_w$ or $-\sigma_w$ is nef. 

We claim that $\sigma_w$ is nef. We show this by an argument, 
which will be useful in the induction step as well.
The closure of the graph of $\beta_t$ is a smooth
correspondence
\[
I_t(w) \ \ \ \subset \ \ \ M_H(w)^t\times M_H(w+tv_0),
\]
by \cite{le-potier-coherent}, Theorem 4.12
(see also \cite{markman-reflections}, section 3.4, where 
$I_t(w)$ is denoted by $G^0(2t+2k,M_H(t+k,H,t+k))$). 
Let $\pi_1:I_t(w)\rightarrow M_H(w)^t$ and 
$\pi_2:I_t(w)\rightarrow M_H(w+tv_0)$ be the two projections.
%The class $\pi_1^*\sigma_w$ restricts trivially to fibers of
%$\pi_2$, by \cite{markman-part-two}, Lemma 4.11 part 1. 
%Similarly, the class $\pi_2^*\sigma_{w+tv_0}$ restricts trivially to fibers of
%$\pi_1$, by \cite{markman-part-two}, Lemma 4.11 part 1 again. 

\begin{claim} 
\label{claim-equality-of-two-pullbacks}
We have the following equality in $\Pic(I_t(w))$.
\begin{equation}
\label{eq-two-pullbacks-of-sigma-are-equal}
\pi_1^*\sigma_w \ \ \ = \ \ \ \pi_2^*(\sigma_{w+tv_0}).
\end{equation}
\end{claim} 

\begin{proof}
A coherent system is a pair $(F,U)$, consisting of a
sheaf $F$ over $S$ and a subset $U\subset H^0(S,F)$ of global sections.
The correspondence $I_t(w)$ is a connected component of the moduli space
of coherent systems over $S$. It parametrizes pairs 
$(F,U)$, consisting of an $H$-stable sheaf $F$ of class $w+tv_0$
and a $t$-dimensional subspace $U$ of its global sections 
(\cite{markman-reflections}, section 3.4). 
$I_t(w)$ thus represents a functor from the category of schemes $T$
of finite type over $\ComplexNumbers$ to sets \cite{le-potier-coherent}.
Associated to a scheme $T$ is the set of equivalence classes 
of pairs $(\F,q)$, where $\F$ is a coherent sheaf over $S\times T$, 
flat over $T$, which is a family of $H$-stable sheaves of class $w+tv_0$,
and $q:R^2_{\pi_{T_*}}(\F)\rightarrow W$ is a surjective homomorphism
onto a locally free $\StructureSheaf{T}$-module $W$ of rank $t$. The
equivalence relation is the natural one; 
we refer to Le Potier for its detailed definition.

Associated to a pair $(\F,q)$ as above is its classifying morphism 
$\kappa:T\rightarrow I_t(w)$.
We also get the short exact sequence
\begin{equation}
\label{eq-short-exact-sequence-associated-to-a-family-of-coherent-systems}
0\rightarrow \pi_{T_*}^*W^* \rightarrow \F\rightarrow Q\rightarrow 0,
\end{equation}
by Grothendieck-Verdier duality and the
triviality of the canonical line bundle of $S$.
The sheaf $Q$ is flat over $T$ and is a family of $H$-stable sheaves of class 
$w$, by \cite{markman-reflections}, Lemma 3.7 and section 3.4.

We have the equalities
\begin{eqnarray*}
\kappa^*\pi_1^*\theta_w(\lambda) & = & 
\det\left(R_{\pi_{T_*}}[\pi_S^*(\lambda^\vee)\otimes Q]\right), \ 
\mbox{for all} \ \lambda\in w^\perp,
\\
\kappa^*\pi_2^*\theta_{w+tv_0}(\lambda) & = & 
\det\left(R_{\pi_{T_*}}[\pi_S^*(\lambda^\vee)\otimes \F]\right), \ 
\mbox{for all} \ \lambda\in (w+tv_0)^\perp.
\end{eqnarray*}
Note that the class $\sigma$ in $K(S)$ is the class of the ideal sheaf $G$
of a length two zero-dimensional subscheme of $S$.
Equation (\ref{eq-two-pullbacks-of-sigma-are-equal})
would thus follow from the existence of an isomorphism, depending canonically 
on the equivalence class of $(\F,q)$, 
\[
\det\left(R_{\pi_{T_*}}[\pi_S^*(G^\vee)\otimes Q]\right)
 \ \ \ \cong \ \ \ 
\det\left(R_{\pi_{T_*}}[\pi_S^*(G^\vee)\otimes \F]\right).
\]
The construction of the 
above isomorphism reduces to a natural trivialization of the 
line bundle
$\det\left(R_{\pi_{T_*}}[\pi_S^*(G^\vee)\otimes \pi_T^*W^*]\right)$,
by the exact sequence 
(\ref{eq-short-exact-sequence-associated-to-a-family-of-coherent-systems}).
Note that $\chi(G)=0$ and
$R_{\pi_{T_*}}[\pi_S^*(G^\vee)\otimes \pi_T^*W^*]\cong 
RHom(G,\StructureSheaf{S})\otimes_{\StructureSheaf{T}}W^*$,
by the projection formula. Hence,
\[
\det\left(R_{\pi_{T_*}}[\pi_S^*(G^\vee)\otimes \pi_T^*W^*]\right)=
\det(W^*)^{\chi(G^\vee)}=\StructureSheaf{T}.
\]
\end{proof}

Fix a smooth curve $D\in\linsys{H}$. Then $Pic^{g(D)-1}(D)$ is a fiber of 
$sup:M_H(v)\rightarrow \linsys{H}$. 
The pullback of $\sigma_w$, to the intersection of $M_H(v)^k$
with $Pic^{g(D)-1}(D)$, is equal to the restriction of $E$,
by equation (\ref{eq-two-pullbacks-of-sigma-are-equal}). Now $E$
restricts to $Pic^{g(D)-1}(D)$ as the theta divisor, which is ample.
Hence, $\sigma_w$ is nef.

\underline{Induction step:}
Let $C$ be a reduced and irreducible curve in $M_H(w)$.
If $w=v$ and $C$ is not contained in $E=M_H(v)^1$, then 
$\int_C(e+2f)\geq \int_C e\geq 0$.
We may thus assume that $C$ is 
contained in $M_H(w)^t$, $t+k>0$, but not in $M_H(w)^{t+1}$.

Let $\nu:\widetilde{C}\rightarrow C$ be the normalization.
%Assume that $C$ is contained in $M_H(w)^t$, but not in in $M_H(w)^{t+1}$.
Let $\tilde{p}:\widetilde{C}\rightarrow M_H(w+\nolinebreak tv_0)$ 
be the morphism extending
the restriction of $\beta_t$. %We have two cases. 
%\noindent
%\underline{Case $w=v$ and $C$ is not contained in $E=M_H(v)^1$.}\\
%In that case clearly $\int_C(e+2f)\geq \int_C e\geq 0$.
%\smallskip
%\noindent
%\underline{Case $C$ is contained in $M_H(w)^t$, $t+k>0$, but
%not in $M_H(w)^{t+1}$.}\\
The morphism $\pi_1:\nolinebreak I_t(w)\rightarrow M_H(w)^t$ 
is a birational morphism, which restricts as an isomorphism
over $\pi_1^{-1}\left[M_H(w)^t\setminus M_H(w)^{t+1}\right]$.
Hence, the morphism $\nu:\widetilde{C}\rightarrow C$ factors through a morphism
$\tilde{\nu}:\widetilde{C}\rightarrow I_t(w)$, satisfying
$\tilde{p}=\pi_2\circ \tilde{\nu}$ and $\nu=\pi_1\circ\tilde{\nu}$.
We have the equality
$\int_C\sigma_w=\int_{\widetilde{C}}\nu^*(\sigma_w)=
%\int_{\widetilde{C}}\tilde{p}^*\sigma_{w+tv_0}=
\deg\left(\tilde{p}^*\sigma_{w+tv_0}\right)$,
by equation (\ref{eq-two-pullbacks-of-sigma-are-equal}).
%We prove, by reversed induction on $t$, that 
%$\deg\left(\tilde{p}^*\sigma_{w+tv_0}\right)\geq 0$.
%The equality 
%\[
%\int_{\widetilde{C}}\nu^*(\sigma_w)=
%\int_{\widetilde{C}}\tilde{p}^*\sigma_{w+tv_0}
%\]
%follows from equation (\ref{eq-two-pullbacks-of-sigma-are-equal}).
If $C$ is contained in a fiber of $\beta_t$, then 
$\deg\left(\tilde{p}^*\sigma_{w+tv_0}\right)=0$.
Otherwise, the morphism $\tilde{p}:\widetilde{C}\rightarrow M_H(w+tv_0)$ 
is non-constant, and $\deg\left(\tilde{p}^*\sigma_{w+tv_0}\right)\geq 0$, 
by the induction hypothesis.
%\noindent
%\underline{Case $w=v+kv_0$, $k>0$, and $C$ is not contained in $M_H(v)^1$.}

Proof of part \ref{lemma-item-extremal-class})
The degree of the nef class $e+2f$ is $(e+2f,e+2f)=2$, which is positive,
so $e+2f$ is also big (\cite{huybrects-basic-results}, Corollary 3.10). 
Hence $e+2f$ is semi-ample, by the Base-point-free Theorem,
since the canonical line bundle of $M_H(v)$ is trivial 
(\cite{kollar-mori-book}, Theorem 3.3).
Thus, there exists a sufficiently large integer $m$, such that the line bundle
$L$ with $c_1(L)=m(e+2f)$ induces a regular morphism
$\varphi_L:M_H(v)\rightarrow \linsys{L}^*$, which is birational onto its image.
Let $Y$ be the normalization of the image and $\pi:M_H(v)\rightarrow Y$
the natural morphism. 
The line bundle $L$ restricts to the trivial line bundle on each fiber of 
$\beta_1: [E\setminus M_H(v)^2]\rightarrow M_H(v+v_0)$, 
by \cite{markman-part-two}, Lemma 4.11. 
Hence, the exceptional locus $Exc(\pi)$ of $\pi$ contains
$E$. Now $Exc(\pi)$ can not contain any other divisor, since $E$
is the unique exceptional divisor on $M_H(v)$, by part
\ref{lemma-item-e-and-f-generate-the-Pef-cone}.

%(???) It suffices to prove that 
%the class $[E]^\vee$ generates an extremal ray of the 
%cone of effective curves.
%This follows from the identification of the nef cone in part 
%\ref{lemma-item-f-and-e+2f-generate-the-Nef-cone}, 
%and the fact that $[E]^\vee$ is effective, being the class of a fiber
%of $\beta_1: [E\setminus M_H(v)^2]\rightarrow M_H(v+v_0)$
%(see \cite{markman-part-two}, Lemma 4.11).

Proof of Part \ref{lemma-item-Y-has-A-1-singularities})
The image $\pi(E)\subset Y$ has codimension $2$ in $Y$, 
by Lemma \ref{lemma-line-bundle-e+2f-is-nef} and 
Claim \ref{claim-equality-of-two-pullbacks}. These results show,
furthermore, that 
the restriction of $\pi$ to $E$ factors through $\beta_1$ and a dominant 
birational map from $M_H(v+v_0)$ to $\pi(E)$. Hence, 
the generic fiber of $\restricted{\pi}{E}$ is a smooth rational curve.
Part \ref{lemma-item-Y-has-A-1-singularities} thus follows from Proposition
\ref{prop-dissident-locus} 
(see also Namikawa's classification of singularities in 
section 1.8 of \cite{namikawa}).
%(\cite{markman-reflections}, Theorem 3.2).
This completes the proof of Lemma \ref{lemma-ample-cone}.
\end{proof}
%************
% End Hide
%************
}

\bigskip

{\bf Acknowledgements:} This work was influenced by S. Druel's talk 
at the workshop ``Holomorphically symplectic varieties and moduli spaces'',
which took place at the Universite des Sciences et Technologies de Lille,
in June 2009.
I thank  S. Druel for his clear exposition of his 
interesting work on the divisorial Zariski decomposition \cite{druel},
and for correcting inaccuracies in an earlier draft of this paper.
I would like to express my gratitude to the organizers, 
Dimitri Markushevich and Valery Gritsenko, for the invitation to
participate in the workshop, their hospitality, 
and for their wonderful work organizing
this stimulating workshop.
The paper is influenced by the work of Brendan Hassett
and Yuri Tschinkel \cite{hassett-tschinkel-conj,hassett-tschinkel}.
I thank them for communicating to me a 
draft of their recent preprint on the subject
\cite{hassett-tschinkel-monodromy}. 
Section \ref{sec-deformation-equivalence}
elaborates on ideas found already in \cite{hassett-tschinkel-monodromy}. 
I thank Brendan also for reading an early draft of the 
proof of Theorem \ref{thm-2}. 
I thank Misha Verbitsky for answering my questions about his preprint
\cite{verbitsky}. I thank Valery Gritsenko for pointing out reference
\cite[Corollary 3.4]{GHS-K3}.
I thank Kota Yoshioka for pointing out the relationship between sections 2 and 3 of his paper 
\cite{yoshioka-irreducibility}, and section \ref{sec-sufficient-conditions} above.

%****************************************************************
% Bibliography:
%****************************************************************

\end{document}